\date{today}
\newtheorem{remark}{Remark}[section]
\newtheorem{theorem}{Theorem}[section]
\newtheorem{proposition}{Proposition}[section]
\newtheorem{lemma}{Lemma}[section]
\newcommand{\beq}{\begin{equation}}
\newcommand{\eeq}{\end{equation}}
\newcommand{\ben}{\begin{eqnarray}}
\newcommand{\een}{\end{eqnarray}}
\newcommand{\beno}{\begin{eqnarray*}}
\newcommand{\eeno}{\end{eqnarray*}}
\numberwithin{equation}{section}
\begin{document}
\title[Stability of 3D Boussinesq system]{Stability threshold of Couette flow for 3D Boussinesq system in Sobolev spaces}
%Stability and transition threshold of the Couette flow for 3D Boussinesq system 
%in Sobolev space}
\author{Shikun~Cui}
\address[Shikun~Cui]{School of Mathematical Sciences, Dalian University of Technology, Dalian, 116024,  China}
\email{cskmath@163.com}
\author{Lili~Wang}
\address[Lili~Wang]{School of Mathematical Sciences, Dalian University of Technology, Dalian, 116024,  China}
\email{wanglili\_@mail.dlut.edu.cn}
\author{Wendong~Wang}
\address[Wendong~Wang]{School of Mathematical Sciences, Dalian University of Technology, Dalian, 116024,  China}
\email{wendong@dlut.edu.cn}
\date{\today}
\vspace*{-0.6cm}
\maketitle

\vspace{-0.4cm}

\begin{abstract}
In this paper,
we investigate  the nonlinear stability and transition threshold for the 3D Boussinesq system in Sobolev space under the high Reynolds number and small thermal
diffusion in $\mathbb{T}\times\mathbb{R}\times\mathbb{T} $.
It is proved that if the initial velocity $v_{\rm in}$ and the initial temperature $ \theta_{\rm in} $ satisfy $ \|v_{\rm in}-(y,0,0)\|_{H^{2}}\leq \varepsilon\nu, \|\theta_{\rm in}\|_{H^{2}}\leq \varepsilon\nu^{2} $, respectively for some $ \varepsilon>0 $ independent of  the Reynolds number or thermal
diffusion, then the solutions of 3D Boussinesq system are global in time.
\end{abstract}

{\small {\bf Keywords:} 	3D Boussinesq system;
Couette flow;
enhanced dissipation;
stability}
%\tableofcontents

\parskip5pt
\parindent=1.5em
\section{Introduction}
Consider the following 3D Boussinesq system in $ (x,y,z)\in\Omega=\mathbb{T}\times\mathbb{R}\times\mathbb{T} $:
\begin{equation}\label{ini}
\left\{
\begin{array}{lr}
	\partial_{t}v-\nu\triangle v+v\cdot\nabla v+\nabla p=\theta ge_{2}, \\
	\partial_{t}\theta-\mu\triangle\theta+v\cdot\nabla\theta=0,\\
	\nabla\cdot v=0,\\
	v|_{t=0}=v_{\rm in}(x,y,z),~~\theta|_{t=0}=\theta_{\rm in}(x,y,z), 
\end{array}
\right.
\end{equation}
where $ v=(v_{1}(t,x,y,z), v_{2}(t,x,y,z), v_{3}(t,x,y,z)) $ denotes the velocity of the fluid, $ p $ is the pressure, $ \theta $ is the temperature, $ g $ is the gravitational constant, $ e_{2}=(0,1,0) $ represents the unit vector, $ \nu $ is the viscosity coefficient, and $ \mu $ is the thermal diffusivity. For simplicity, we focus on $\nu=\mu$ and $g=1.$

The Boussinesq system is widely used to describe heat transfer phenomena and consist of a coupled system that combines the Navier-Stokes equations for fluid motion with a diffusion equation for temperature distribution, and it is also employed to simulate atmospheric and oceanic flows \cite{G1982,M2003,P1987}, as well as to model Rayleigh-B$\acute{{\rm e}}$nard convection in both laminar and turbulent flow regimes, as demonstrated in works such as \cite{CD1996,DC1996,G1998}. Due to its extensive applications in physics and its mathematical significance
the theory of Boussinesq equations has gained considerable attention.
Our aim is to investigate the stability threshold problem constructed by Bedrossian-Germain-Masmoudi \cite{Bedro1} as follows:
{\it Given a norm
$\|\cdot\|_X$, find a $\beta=\beta(X)$ so that
$$~\|u_{\rm in}\|_X\leq Re^{-\beta}\Longrightarrow {\rm stability},$$
$$\quad\|u_{\rm in}\|_X\gg Re^{-\beta}\Longrightarrow {\rm instability}.$$}
The exponent $\beta$ is referred to as the transition threshold in the applied literature. 

%Since the famous work of Reynolds in 1883 \cite{Re1883}, understanding the stability of shear flow in a stratified medium is a topic of interest in various fields, including fluid dynamics, geophysics, astrophysics, mathematics, and others.  This field is mainly concerned with how the laminar flows become unstable and transition to turbulence \cite{SH2001,Y2012}. In order to better understand the mechanism of transition, an important question firstly proposed by Trefethen et al. \cite{TTRD1993} is to study the transition threshold problem, which is concerned with how much disturbance will lead to the instability of the flow and the dependence of disturbance on the Reynolds number. 

If $ \theta=0 $, the system (\ref{ini}) is reduced to the Navier-Stokes equations. Significant progresses have been made on the stability of Couette flow in Navier-Stokes equations. At this time the possible transition threshold depends on the domain or $X$-norm.
%To quantify the stability threshold, the most commonly studied dynamical system is the NS equations. 
Let us briefly recall some developments on this topic.

{\it The 2D Navier-Stokes (NS) equations via Couette flow.}  In $ \mathbb{T}\times\mathbb{R}$, 
Bedrossian-Masmoudi-Vicol showed that if $X$ is taken as Gevrey-$m$ with $m < 2$ and $\beta=0$, then the solution is stable \cite{BMV2016}. Bedrossian-Vicol-Wang proved that Couette flow is stable in Sobolev norms for $\beta=\frac12$ by Fourier-multiplier method in \cite{BVW2018};  recently the sharp case of $\beta=\frac13$ was proved by  Masmoudi-Zhao in  \cite{MZ2022} and Wei-Zhang in  \cite{WZ2023}, respectively.
In $ \mathbb{T}\times[-1,1]$, $\beta=\frac12$ was proved by Chen-Li-Wei-Zhang in  \cite{CLWZ2020} with no-slip boundary data by establishing some sharp resolvent estimates for the linearized operator under the Navier-slip boundary condition and non-slip boundary condition.
%  and  $\beta=\frac13 $ was proved by Wei-Zhang in \cite{WZ0} by a new method of proving the optimal enhanced dissipation and inviscid damping estimates. 

{\it The 3D Navier-Stokes (NS) equations via Couette flow.}  In $ \mathbb{T}\times\mathbb{R}\times\mathbb{T}$, Fourier multipliers that precisely encode the interplay between the dissipation and possible growth are designed, and $\beta=\frac32$ was proved by  Bedrossian-Germain-Masmoudi in  \cite{Bedro1} for Sobolev spaces; $\beta=1$ was proved by the same authors in  \cite{BGM1} for Gevrey  spaces. Later, $\beta=1$ was proved by Wei-Zhang in  \cite{wei2} for Sobolev spaces. In $ \mathbb{T}\times[-1,1]\times\mathbb{T}, $   $\beta=1$ was proved by Chen-Wei-Zhang in  \cite{Chen1} for the results of 3D Couette flow.
For Poiseuille flow, Kolmogorov flow and other shear flows, we refer to \cite{ZEW2020, ZOTTO2023, DL2022, CDLZ,WZZ2020,LWZ2020,LZ2024,BH2024} and the references therein.

%One may refer to \cite{,MZ2022,WZ2023} for the results of 2D Couette flow in $ \mathbb{T}\times\mathbb{R}, $ to for the results of 2D Couette flow in $ \mathbb{T}\times[-1,1], $ to \cite{Bedro1,BGM1,BGM2,wei2} for the results of 3D Couette flow in $ \mathbb{T}\times\mathbb{R}\times\mathbb{T}, $ and to

{\it The 2D Boussinesq equations via Couette flow.}
Recently, the impact of the temperature field in the Boussinesq equations on the mixing mechanism of Couette flow has attracted some attentions.  Masmoudi, Said-Houari and Zhao \cite{MSZ2022} proved the stability of the Couette flow in $ \mathbb{T}\times\mathbb{R} $ without thermal diffusivity for the initial perturbation in Gevrey-$ \frac{1}{s}, (\frac13<s\leq 1). $ Bedrossian-Bianchini-Coti Zelati-Dolce \cite{BBZD2023} investigated the long-time properties of the two-dimensional inviscid Boussinesq equations in $ \mathbb{T}\times\mathbb{R} $ near a stably
stratified Couette flow for an initial Gevrey perturbation.
Deng-Wu-Zhang \cite{DWZ2021} proved that the solution with only 
vertical dissipation on $ \mathbb{T}\times\mathbb{R} $ remains close to the Couette at the same order provided the initial perturbation 
from the Couette flow is no more than the viscosity to 
a suitable power (in the Sobolev space $ H^{b} $ with $ b>\frac43$, $\beta=\frac23$).
Zhang-Zi \cite{Zhangzi2023} studied the stability near  the Couette flow $ v_{s}=(y,0)^{T} $ and $\theta_{s}=1 $, and showed that if 
$$ \|u_{\rm in}\|_{H^{N+1}}+\nu^{-\frac12}\|\theta_{\rm in}\|_{H^{N}}+\nu^{-\frac13}\||\partial_{x}|^{\frac13}\theta\|_{H^{N}}\lesssim\nu^{\frac13},\quad N>7 ,\quad{\rm in }\quad\mathbb{T}\times\mathbb{R}, $$
then solution is global in time.
%Zhang-Zi \cite{Zhangzi2023} showed that if the initial perturbations $ u_{\rm in} $ and
%$ \theta_{\rm in} $ to the Couette flow $ v_{s}=(y,0)^{T} $ and $\theta_{s}=1 $, respectively, satisfy 
%$ \|u_{\rm in}\|_{H^{N+1}}+\nu^{-\frac12}\|\theta_{\rm in}\|_{H^{N}}+\nu^{-\frac13}\||\partial_{x}|^{\frac13}\theta\|_{H^{N}}<<\nu^{\frac13}, N>7 $ in $ \mathbb{T}\times\mathbb{R} $, then the resulting solution remains close to the Couette flow in $ L^{2} $ at the same order for all time. 
Niu-Zhao \cite{NZ2024} improved the size of temperature from $ \nu^{\frac56} $ to $ \nu^{\frac23}$  in the paper of Zhang-Zi \cite{Zhangzi2023} by using the quasi-linearization.
Zhai-Zhao \cite{ZZ2023} studied the nonlinear asymptotic stability of the Couette flow in the stably stratified regime on $ \mathbb{T}\times\mathbb{R} $ with the Richardson number greater than $ \frac14. $ 
Arbon \cite{A2025} considered the quantitative asymptotic stability of the stably stratified Couette flow solution to the fully dissipative nonlinear Boussinesq system on $ \mathbb{R}^{2} $ with large Richardson number greater that $ \frac14 $. Moreover, we refer to \cite{WGF2024} for the result of Poiseuille flow.

{\it The 3D Boussinesq equations via Couette flow.}
For 3D Boussinesq equations,  Coti Zelati-Del Zotto   \cite{CotiDel2025} established linear enhanced dissipation results for the three-dimensional Boussinesq
equations around a stably stratified Couette flow by introducing a change of variables grounded in a Fourier space symmetrization
framework in $ \mathbb{T}\times\mathbb{R}\times\mathbb{T}$. Furthermore, 
Coti Zelati-Del Zotto-Widmayer \cite{Zelati2024} established a bound for the nonlinear
transition threshold, which is quantitatively larger than the inverse Reynolds number,  around a stably stratified Couette flow of 
\begin{equation*}
v_{s}=(y,0,0),\quad \partial_{y}p_{s}=g(1+\alpha y), \quad \theta_{s}=1+\alpha y, \quad\alpha>0 .
\end{equation*}
%\ben\label{eq: stratified Couette}
%v_{s}=(y,0,0),\quad \partial_{y}p_{s}=g(1+\alpha y), \quad \theta_{s}=1+\alpha y, \quad\alpha>0 .
%\een
Introduce $v = v_s + u,~\theta= \theta_s-\sqrt{\frac{\alpha}{g}}\Theta$, then 
the perturbations $(u, \Theta)$  in (\ref{ini}) satisfy
\begin{equation*}\label{ini00}
\left\{
\begin{array}{lr}
	\partial_{t}u+y\partial_x u+u_2e_1-\nu\triangle u+u\cdot\nabla u+\nabla p=-\gamma\Theta ge_{2}, \\
	\partial_{t}\Theta-\gamma u_2-\mu\triangle\Theta+v\cdot\nabla\Theta=0,\\
	\nabla\cdot u=0,
\end{array}
\right.
\end{equation*}
where $\gamma=\sqrt{\alpha g}$ is the Brunt-V\"{a}is\"{a}l\"{a} frequency, reflecting the strength of the response of the fluid to
displacements in the direction of gravity.  
By exploring the coupling between velocity and temperature, the 3D lift-up effect can be suppressed.
In particular, they quantify the size of the
stability transition as $\beta=\frac{11}{12}<1$ when $\gamma>\frac12.$
More references on Navier-Stokes equations with rotation, MHD model and other models, we refer to \cite{LISS2020,HSX2024-1,HSX2024-2,ZZZ2022,{RZZ2025}, CotiDelW2025} and the references therein. 

Motivated by \cite{Zelati2024}, we are interested  the case of $\alpha=0,$ since the symmetrization
framework fails  and  the 3D lift-up effect takes effect at this time. 

%The choice of sign α > 0 hereby assures that with respect to the direction of gravity, warmer fluid is on top of colder fluid.

%\textcolor[rgb]{0,0,0}{
%	Bedrossian-Germain-Masmoudi proved $\beta\leq \frac{3}{2}$ for the 3D Couette flow \cite{Bedro1} in Sobolev space and $\beta\leq 1$ in Gevrey class \cite{BGM2022}.}
%In Sobolev space, we refer to Wei-Zhang \cite{wei2} and Chen-Wei-Zhang \cite{Chen1} for  recent results of $\beta\leq 1$. More references on MHD, Boussinesq equations or other models, we refer to \cite{HSX2024-1,HSX2024-2,LISS2020,NZ2024,ZZZ2022} and the references therein.
%For the stability of the 2D Navier-Stokes equations and related models, there are  very rich research progress on this topic, and we refer to \cite{BMV2016,BVW2018,WZ2023,MZ2022,DWZ2021,BHIW2023} and the references therein.

%The purpose of this paper is to understand the stability and large-time behavior of perturbations near the Couette flow.
In details, let
\begin{equation*}
\tilde{v}=(y,0,0),\quad \tilde{p}=y+c,\quad \tilde{\theta}=0,
\end{equation*}
be a stationary solution of (\ref{ini}), and we introduce the perturbations
$$u=v-\tilde{v},\quad P=p-\tilde{p},\quad \Theta=\theta-\tilde{\theta}, $$  satisfying $u\big|_{t=0}=u_{\rm in}=(u_{1,\rm in}, u_{2,\rm in}, u_{3,\rm in})$ and $ \Theta|_{t=0}=\Theta_{\rm in}$. Then rewrite the Boussinesq equations (\ref{ini}) into
\begin{equation}\label{ini1}
\left\{
\begin{array}{lr}
	\partial_tu-\nu\triangle u+y\partial_{x}u+\left(
	\begin{array}{c}
		u_2 \\
		0 \\
		0 \\
	\end{array}
	\right)
	+u\cdot\nabla u+\nabla P^{N_1}+\nabla P^{N_2}+\nabla P^{N_{3}}=\left(
	\begin{array}{c}
		0 \\
		\Theta \\
		0 \\
	\end{array}
	\right), \\
	\partial_{t}\Theta-\nu\triangle\Theta+y\partial_{x}\Theta+u\cdot\nabla\Theta=0,
	\\
	\nabla \cdot u=0,
\end{array}
\right.
\end{equation}
where the pressure $P^{N_1}$, $ P^{N_{2}} $ and $P^{N_3}$ are determined by
\begin{equation}\label{pressure_1}
\left\{
\begin{array}{lr}
	\triangle P^{N_1}=-2\partial_xu_2, \\
	\triangle P^{N_2}=-{\rm div}~(u\cdot\nabla u),\\
	\triangle P^{N_{3}}=\partial_{y}\Theta.
\end{array}
\right.
\end{equation}

Our main result is stated as follows.
\begin{theorem}\label{main result}
Assume that $u_{\rm in}\in H^{2}(\mathbb{T}\times\mathbb{R}\times\mathbb{T})$ and $ \Theta_{\rm in}\in H^{2}(\mathbb{T}\times\mathbb{R}\times\mathbb{T}). $ There are constants $ \varepsilon>0, \nu\in(0,1), $ such that if $ \|u_{\rm in}\|_{H^{2}}\leq \varepsilon\nu$ and $ \|\Theta_{\rm in}\|_{H^{2}}\leq \varepsilon\nu^{2}, $ then the solution $ (u,\Theta)$ to the system (\ref{ini1})-(\ref{pressure_1}) with the initial data $(u_{\rm in}, \Theta_{\rm in} )$ is global in time. 
\end{theorem}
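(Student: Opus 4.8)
The plan is to prove Theorem~\ref{main result} via a bootstrap/continuity argument in suitably weighted, time-dependent Sobolev norms, exploiting two distinct stabilizing mechanisms: the \emph{enhanced dissipation} on the nonzero horizontal frequencies (decay on a timescale $\nu^{-1/3}$ rather than $\nu^{-1}$) and the \emph{inviscid damping} of $u_2$ associated with the stretching by the Couette shear. First I would set up the Fourier side in $(x,z)$, writing the equations for each mode $(k,\ell)$, and change to the moving frame $X = x - ty$ so that the transport term $y\partial_x$ is absorbed; in these variables the profile equations become a non-autonomous system whose linear part is the $3$D Couette semigroup studied in \cite{Bedro1, wei2}. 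The zero-$x$-frequency part $(u_{1,0}, u_{2,0}, u_{3,0}, \Theta_0)$ must be treated separately: here there is no enhanced dissipation and the worst term is the lift-up $u_{2,0}\mapsto u_{1,0}$, which grows $u_{1,0}$ linearly in time at the linear level; this is precisely why the result needs $\|u_{\rm in}\|_{H^2}\le\varepsilon\nu$ rather than $\varepsilon$, since a factor $\nu^{-1}$ from the lift-up must be paid.

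Next I would introduce the bootstrap hypotheses. Roughly, one posits bounds of the form: $\|u_{1,0}\|_{H^2}\lesssim \varepsilon\nu$, $\|(u_{2,0},u_{3,0})\|_{H^2}$ with an extra power of $\nu$ and the natural $\langle t\rangle$-decay from heat flow on $\mathbb{R}$, $\|\Theta_0\|_{H^2}\lesssim\varepsilon\nu^2$, and for the nonzero frequencies a Fourier-multiplier-weighted energy $\|A(t,\nabla)u_{\ne}\|_{L^2}$, $\|A(t,\nabla)\Theta_{\ne}\|_{L^2}$ controlled by $\varepsilon\nu$ and $\varepsilon\nu^2$ respectively, where $A$ is the ghost/multiplier weight of \cite{Bedro1} or \cite{wei2} that captures the enhanced dissipation and the inviscid-damping gain for $u_2$ and controls the growth of $u_3$ through the $3$D mechanism. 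The main work is then the energy estimates closing these hypotheses: differentiate the weighted energies in time, substitute the equations, integrate by parts, and bound the nonlinear terms $u\cdot\nabla u$, $u\cdot\nabla\Theta$, the pressure contributions $\nabla P^{N_1}, \nabla P^{N_2}, \nabla P^{N_3}$ (using the elliptic relations \eqref{pressure_1} and the fact that $\triangle^{-1}\partial_x\partial_x$-type operators are bounded), and the buoyancy coupling $\Theta e_2$ in the $u_2$ equation. Each nonlinear term is estimated by product/commutator estimates in $H^2$ on $\mathbb{T}\times\mathbb{R}\times\mathbb{T}$, distributing derivatives and using the decay/smallness from the bootstrap so that every contribution is either absorbed by the good terms (dissipation, enhanced dissipation, the CK/commutator terms produced by $\partial_t A$) or is of size $\varepsilon^2$ times the ambient bound, beating the $\varepsilon$ in the hypothesis for $\varepsilon$ small.

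The key structural point to watch is the interaction between the buoyancy term and the lift-up. In the $u_2$ equation the forcing $\Theta$ appears at order $O(\varepsilon\nu^2)$, which is smaller than the target $O(\varepsilon\nu)$ for $u$, so buoyancy is harmless for $u$; conversely, $\Theta$ is only forced through $u\cdot\nabla\Theta$, a genuinely nonlinear (quadratic) term, so $\|\Theta\|$ stays at the quadratically small scale and the $\nu^2$ threshold on $\Theta_{\rm in}$ is consistent. The interplay to be careful about is that the zero mode $u_{1,0}$, sitting at size $\varepsilon\nu$ with no time decay, feeds into the nonzero-mode and nonzero-$\Theta$ equations through transport terms like $u_{1,0}\partial_x$; these must be handled either by the null-structure/coordinate choice or by absorbing them against the enhanced dissipation, and this is where the $3$D loss (the extra power of $\nu$ relative to $2$D) is spent.

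\medskip

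\textbf{Main obstacle.} The hard part will be closing the estimate for the nonzero-frequency velocity component $u_3$ (and the associated vorticity-type quantity), because in $3$D the lift-up and the shear conspire to produce transient algebraic growth of size $\sim\langle k t\rangle$ before enhanced dissipation kicks in; controlling this requires the carefully tuned Fourier multiplier $A(t,\nabla)$ from \cite{Bedro1, wei2} together with a precise accounting of how the nonlinear terms — especially those involving $u_{1,0}$ and the pressure $P^{N_1}$ arising from $\partial_x u_2$ — interact with that multiplier, so that the commutator (Cauchy–Kovalevskaya-type) terms generated by $\partial_t A$ dominate the bad contributions. Verifying that the thresholds $\varepsilon\nu$ and $\varepsilon\nu^2$ are exactly what is needed to absorb all such terms, and no more, is the crux of the argument.
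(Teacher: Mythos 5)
Your high-level strategy (bootstrap in time-weighted norms, zero/non-zero mode splitting, the $\nu$ vs.\ $\nu^{2}$ hierarchy between $u$ and $\Theta$, and the observation that the buoyancy forcing is harmless while $\Theta$ is only forced quadratically) matches the skeleton of the paper's argument. However, there are two concrete gaps. First, your bootstrap hypothesis $\|u_{1,0}\|_{H^{2}}\lesssim\varepsilon\nu$ cannot close: as you yourself note, the lift-up term $u_{2,0}$ in the $u_{1,0}$ equation forces growth $u_{1,0}\sim\int_0^t u_{2,0}\,ds\sim \varepsilon\nu\, t$, which reaches size $\varepsilon$ (not $\varepsilon\nu$) on the dissipative timescale $t\sim\nu^{-1}$. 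The paper's functional $E_{1}$ is accordingly only required to be $O(\varepsilon_0)$, and $\|u_{1,0}\|_{H^2}\le CE_1\min(\nu t+\nu^{2/3},1)$. This is not cosmetic: once $u_{1,0}$ is $O(1)$ in $\nu$, the transport term $u_{1,0}\partial_x u_{\neq}$ is a genuine $O(\varepsilon_0)$ perturbation of the shear and cannot be absorbed against enhanced dissipation at the threshold $\varepsilon\nu$; your fallback of ``absorbing against the enhanced dissipation'' fails here. The paper's resolution is the decomposition $u_{1,0}=\widehat{u_{1,0}}+\widetilde{u_{1,0}}$ of \eqref{decom u10}, the quasi-linearization $V=y+\widehat{u_{1,0}}$ with the operator $\mathcal{L}_V$, and the good unknowns $Q=u_{2,\neq}+\kappa u_{3,\neq}$ with $Q=Q_1+\nu Q_2+Q_3$; none of these (or an equivalent device) appear in your plan.

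Second, the machinery you propose — the Fourier multiplier/ghost weight $A(t,\nabla)$ of \cite{Bedro1} in the sheared frame — is known to yield only the Sobolev threshold $\beta=3/2$ in 3D; the $\beta=1$ result (which is exactly the $\|u_{\rm in}\|_{H^2}\le\varepsilon\nu$ hypothesis of Theorem \ref{main result}) is obtained in \cite{wei2} by a different route, namely space-time (resolvent-type) estimates for $\mathcal{L}$ and $\mathcal{L}_V$ combined with the quasi-linearization above, not by a multiplier $A$. So as written your scheme would prove a weaker theorem. Finally, you do not address the specific new difficulty created by the temperature coupling: the forcing $\Theta_{\neq}-\partial_yP^{N_3}_{\neq}=\triangle^{-1}(\partial_x^2+\partial_z^2)\Theta_{\neq}$ in the $u_{2,\neq}$ equation and the lift-up acting inside the $\Theta_{\neq}$ equation, which the paper handles by assigning the larger time weight $e^{b\nu^{1/3}t}$ (with $a<b<2a$) to $\partial_x^2\Theta_{\neq}$ and $\partial_x^2u_{j,\neq}$ while keeping $e^{a\nu^{1/3}t}$ elsewhere ($E_5$, $E_6$ versus $E_4$). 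Without some such mechanism the coupled estimates for $E_4$ and $E_5$ do not close at the stated sizes $\varepsilon\nu$ and $\varepsilon\nu^{2}$.
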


\begin{remark} For the stratified Couette flow considered by 
Coti Zelati-Del Zotto-Widmayer in \cite{Zelati2024}, which shows that warmer fluid is on top of colder fluid for $\alpha>0$, the instability effect of the lift-up term can be well suppressed. It is difficult to apply 
the similar symmetrization
framework to overcome the lift-up effect for the perturbations under the constant temperature. Here we use the quasi-linearization technique introduced by Wei-Zhang in \cite{wei2}, and the main obstacle comes from some coupled terms. We introduce some new good unknowns  and the decomposition of $Q=u_{2,\neq}+\kappa u_{3,\neq}$ (see (\ref{Q}) for some details) that effectively
capture the phenomena of enhanced dissipation and inviscid damping to close the energy estimates.
\end{remark}

\begin{remark} The system  (\ref{ini1}) is reduced to the Navier-Stokes system when $\Theta=0$. 
Through a formal asymptotic
analysis of the Navier–Stokes equations, it was found in \cite{Cha2002} that for
streamwise initial perturbations and oblique initial perturbations, $\beta=1$, which is rigorously proven in \cite{wei2}.
In fact, there will be an  increase at a level of $O(\nu^{-1})$ for the first velocity $u_{1,0}$ due to the 3D lift-up effect.
For the current Boussinesq system and $ \|u_{\rm in}\|_{H^{2}}\leq \varepsilon\nu$, it follows from the zero-mode estimates (for example, see (\ref{U20 U30 end})) that
\begin{equation*}
	\begin{aligned}
		\partial_{t}\left(\|u_{2,0}\|_{L^{2}}^{2}+\|u_{3,0}\|_{L^{2}}^{2} \right)+\nu\left(\|\nabla u_{2,0}\|_{L^{2}}^{2}+\|\nabla u_{3,0}\|_{L^{2}}^{2} \right)
		\leq C\nu^{-1}\left(\||u_{\neq}|^{2}\|_{L^{2}}^{2}+\|\nabla\Theta_{0}\|_{L^{2}}^{2} \right),
	\end{aligned}
\end{equation*}
which implies that 
\beno 
\nu^{-1}\|\nabla\Theta_{0}\|_{L^2_tL^{2}}^{2}\sim \nu^{-2}\|\Theta_{0}\|_{L^\infty_tL^{2}}^{2}\sim \|(u_{2,0},u_{3,0})\|_{L^\infty_tL^{2}}^{2}\sim \nu^2
\eeno
and it should be  $ \|\Theta_{\rm in}\|_{H^{2}}\leq \varepsilon\nu^{2}. $ 
Therefore, it seems that the assumptions of $\|u_{\rm in}\|_{H^{2}}\leq \varepsilon\nu$ and $\|\Theta_{\rm in}\|_{H^{2}}\leq \varepsilon\nu^{2}$ are optimal. 

%which along with {Lemma \ref{lem 2}} imply that
%	\begin{equation}\label{U20U30''}
	%		\begin{aligned}
		%			\|u_{2,0}\|_{Y_{0}}^{2}+\|u_{3,0}\|_{Y_{0}}^{2}
		%			&\leq C\left(\|(u_{\rm in})_0\|_{L^{2}}^{2}+\nu^{-2}E_{4}^{4}+
		%			\nu^{-2}E_{3}^{2}\right).
		%		\end{aligned}
	%	\end{equation}
\end{remark}	

\begin{remark}
Here,  we consider an infinite domain $(x,y,z)\in\mathbb{T}\times\mathbb{R}\times\mathbb{T} $ to avoid the
boundary effect. Thus, it remains open whether the transition threshold conjecture
holds for the Boussinesq system in the finite domain. Recently, Chen-Wei-Zhang \cite{Chen1} developed the resolvent estimates method
and proved that transition threshold for the 
Couette flow in the finite domain $\mathbb{T}\times\mathbb{I}\times\mathbb{T}$ is $\beta=1.$ It's interesting that whether $\beta<1$ holds for the stratified Couette flow as in \cite{Zelati2024}.
%We are optimistic that using their method will effectively solve this problem.
\end{remark}

%\begin{remark}
%Compared with 
%
%\end{remark}

Here are some notations used in this paper.

\noindent\textbf{Notations}:
\begin{itemize}
%\item Summation notation is assumed: the repea ted upper and lower indices are summed over $ i,j\in\{1,2,3\} $ and $ \alpha, \beta\in\{2,3\}. $

\item The Fourier transform is defined by
\begin{equation*}
	f(t,x,y,z)=\sum_{k_{1},k_{3}\in\mathbb{Z}}\frac{1}{2\pi}\int_{k_{2}\in\mathbb{R}}\widehat{f}_{k_{1},k_{2},k_{3}}(t)e^{ik_{2}y}dk_{2}e^{i(k_{1}x+k_{3}z)},
\end{equation*}
where $\widehat{f}_{k_1,k_{2},k_3}(t)=\frac{1}{|\mathbb{T}|^2}\int_{\mathbb{T}\times\mathbb{T}}\int_{\mathbb{R}}{f}(t,x,y,z)e^{-ik_{2}y}dy{\rm e}^{-i(k_1x+k_3z)}dxdz.$

\item 
For a given function $f=f(t,x,y,z)$,  the zero and non-zero modes are denoted by
$$P_0f=f_0=\frac{1}{|\mathbb{T}|}\int_{\mathbb{T}}f(t,x,y,z)dx \ {\rm and}\ P_{\neq}f=f_{\neq}=f-f_0.$$
Especially, we use $u_{k,0}$ and $u_{k,\neq}$ to represent the zero mode
and non-zero mode of the velocity $u_k(k=1,2,3)$, respectively.
Similarly, we use $\omega_{k,0}$ and $\omega_{k,\neq}$ to represent the zero mode
and non-zero mode of the vorticity $\omega_k (k=1,2,3)$.

\item The norm of the $L^p$ space is defined by
$\|f\|_{L^p(\mathbb{T}\times\mathbb{R}\times\mathbb{T})}=\big(\int_{\mathbb{T}\times\mathbb{R}\times\mathbb{T}}|f|^p dxdydz\big)^{\frac{1}{p}},$
and $\langle\cdot,\cdot\rangle$ denotes the standard $L^2$ scalar product.
\item The time-space norm  $\|f\|_{L^qL^p}$ is defined by
$\|f\|_{L^qL^p}=\big\|  \|f\|_{L^p(\mathbb{T}\times\mathbb{R}\times\mathbb{T})}\ \big\|_{L^q(0,t)}.$
\item For $ a\geq 0 $ and $ k\in\mathbb{N^{+}}, $ we introduce the following norms 
\begin{equation*}
	\begin{aligned}
		&\|f\|_{Y_{0}}^2=\|f\|^2_{L^{\infty}L^{2}}+\nu\|\nabla f\|^2_{L^{2}L^{2}},\\
		&\|f\|_{Y_{0}^{k}}^{2}=\|f\|_{L^{\infty}H^{k}}^{2}+\nu\|\nabla f\|_{L^{2}H^{k}}^{2},
	\end{aligned}
\end{equation*}
and
\begin{equation*}
	\begin{aligned}
		\|f\|_{X_{a}}^2
		=\|{\rm e}^{a\nu^{\frac{1}{3}}t}f\|^2_{L^{\infty}L^{2}}+\|e^{a\nu^{\frac13}t}\nabla\triangle^{-1}\partial_{x}f\|_{L^{2}L^{2}}^{2}
		+\nu^{\frac{1}{3}}\|{\rm e}^{a\nu^{\frac{1}{3}}t}f\|^2_{L^{2}L^{2}}
		+\nu\|{\rm e}^{a\nu^{\frac{1}{3}}t}\nabla f\|^2_{L^{2}L^{2}}.
	\end{aligned}
\end{equation*}

\item Throughout this paper, we denote by $ C $ a positive constant independent of $\nu$, $t$,  $x$, $y$, $z$ and the initial data, and it may be different from line to line.
\end{itemize}

\section{Key ingredients and  proof of Theorem \ref{main result}}
\subsection{Reformulation of the perturbation system }
%Let $ \mathcal{L}:=\partial_{t}-\nu\triangle+y\partial_x, $ it follows from (\ref{ini1}) that
%\begin{equation*}
%\left\{
%\begin{array}{lr}
%\mathcal{L}u+\left(
%\begin{array}{c}
%u_{2} \\
%0 \\
%0 \\
%\end{array}
%\right)+u\cdot\nabla u+\nabla P^{N_{1}}+\nabla P^{N_{2}}+\nabla P^{N_{3}}=\left(
%\begin{array}{c}
%0 \\
%\Theta \\
%0 \\
%\end{array}
%\right),\\\mathcal{L}\Theta+u\cdot\nabla\Theta=0,
%\end{array}
%\right.
%\end{equation*}
%and the pressure $ P^{N_{1}}, $ $ P^{N_{2}} $ and $ P^{N_{3}} $ are given by
%\begin{equation*}
%\triangle P^{N_{1}}=-2\partial_{x}u_{2},~~\triangle P^{N_{2}}=-{\rm div}(u\cdot\nabla u)=-\partial_{i}u_{j}\partial_{j}u_{i},~~\triangle P^{N_{3}}=\partial_{y}\Theta.
%\end{equation*}
Firstly, inspired by Chen-Wei-Zhang's work \cite{Chen1}, we decompose 
\begin{equation*}
u_{1,0}=\widehat{u_{1,0}}+\widetilde{u_{1,0}},
\end{equation*}
which satisfies
\begin{equation}\label{decom u10}
\left\{
\begin{array}{lr}
	\partial_{t}\widehat{u_{1,0}}-\nu\triangle\widehat{u_{1,0}}+u_{2,0}+u_{2,0}\partial_{y}\widehat{u_{1,0}}+u_{3,0}\partial_{z}\widehat{u_{1,0}}=0, \\
	\partial_{t}\widetilde{u_{1,0}}-\nu\triangle\widetilde{u_{1,0}}+u_{2,0}\partial_{y}\widetilde{u_{1,0}}+u_{3,0}\partial_{z}\widetilde{u_{1,0}}+(u_{\neq}\cdot\nabla u_{1,\neq})_{0}=0,\\
	\widehat{u_{1,0}}|_{t=0}=0,\quad \widetilde{u_{1,0}}|_{t=0}=(u_{1,{\rm in}})_0.
\end{array}
\right.
\end{equation}
In this way, $\widetilde{u_{1,0}}$ will not be affected by the 3D lift-up effect and $\widetilde{u_{1,0}}$ has a better decay, thus $ \widetilde{u_{1,0}}\partial_{x} $ could be viewed as a perturbation.

The nonlinear pressure $ P^{N_{2}} $ can be decomposed into five parts as in \cite{wei2}:
\begin{equation}\label{P_N2}
P^{N_{2}}=P^{0}+P^{1}+P^{2}+P^{3}+P^{4},
\end{equation}
where
\begin{equation}\label{P_N2 fenjie}
\left\{
\begin{array}{lr}
	\triangle P^{0}=-2\left(\partial_{y}\widetilde{u_{1,0}}\partial_{x}u_{2,\neq}+\partial_{z}\widetilde{u_{1,0}}\partial_{x}u_{3,\neq} \right),
	\\
	\triangle P^{1}=-2\left(\partial_y \widehat{u_{1,0}}\partial_x u_{2,\neq}+\partial_z \widehat{u_{1,0}}\partial_x u_{3,\neq} \right),
	\\\triangle P^{2}=-\partial_{i}u_{j,0}\partial_{j}u_{i,0}, \\
	\triangle P^{3}=-2\partial_{\alpha}u_{\beta,0}\partial_{\beta}u_{\alpha,\neq},\quad\alpha, \beta\in\{2,3\},\\
	\triangle P^{4}=-\partial_{i}u_{j,\neq}\partial_{j}u_{i,\neq}.
\end{array}
\right.
\end{equation}
For the above decomposition, $ P^{2} $ corresponds to the interaction between zero modes, $ P^{4} $ corresponds to the interaction between non-zero modes, and $ P^{0}, P^{1}, P^{3} $ correspond to the interaction between zero and non-zero modes where the pressure $P^{1}$ contains the worse interaction due to the 3D lift-up effect. 
We denote
\begin{equation*}
P^{5}=P^{N_{1}}+P^{1},\quad V(t,y,z)=y+\widehat{u_{1,0}}(t,y,z).
\end{equation*}
As $ \partial_{y}V=1+\partial_{y}\widehat{u_{1,0}} $ and $ \partial_{z}V=\partial_{z}\widehat{u_{1,0}}, $ one  obtains 
\begin{equation}\label{p5}
\begin{aligned}
	\triangle P^{5}=&-2\partial_{x}u_{2}-2\left( \partial_{y}\widehat{u_{1,0}}\partial_{x}u_{2,\neq}+\partial_{z}\widehat{u_{1,0}}\partial_{x}u_{3,\neq}\right)=-2\left(\partial_{y}V\partial_{x}u_{2,\neq}+\partial_{z}V\partial_{x}u_{3,\neq} \right).
\end{aligned}
\end{equation}
Direct calculations indicate that
\begin{equation}\label{23}
\left\{
\begin{array}{lr}
	\partial_{t}u_{2}-\nu\triangle u_{2}+V\partial_{x}u_{2}+\partial_{y}P^{5}+h_{2}+\widetilde{u_{1,0}}\partial_{x}u_{2}=\Theta-\partial_{y}P^{N_{3}},\\\partial_{t}u_{3}-\nu\triangle u_{3}+V\partial_{x}u_{3}+\partial_{z}P^{5}+h_{3}+\widetilde{u_{1,0}}\partial_{x}u_{3}=-\partial_{z}P^{N_{3}},
\end{array}
\right.
\end{equation}
where $h_{j}$ can be viewed as good terms satisfying
\begin{equation}\label{hj}
h_{j}=\left(u_{2,0}\partial_{y}+u_{3,0}\partial_{z} \right)u_{j}+u_{\neq}\cdot\nabla u_{j}+\partial_{j}(P^{0}+P^{2}+P^{3}+P^{4})\quad{\rm for}~~j=2,3.
\end{equation}
Next, we denote
\begin{equation*}
\mathcal{L}_{V}:=\partial_{t}-\nu\triangle+V\partial_{x},
\end{equation*}
which can be viewed as a perturbation of $ \mathcal{L}=\partial_{t}-\nu\triangle+y\partial_{x} $ under the condition
\begin{equation*}
\|\widehat{u_{1,0}}\|_{H^{4}}+\nu^{-1}\|\partial_{t}\widehat{u_{1,0}}\|_{H^{2}}<\delta
\end{equation*}
for some small constant $ \delta. $ Then it follows from (\ref{23}) that
\begin{equation}\label{U_neq}
\left\{
\begin{array}{lr}
	\mathcal{L}_{V}u_{2,\neq}+\partial_{y}P^{5}+h_{2,\neq}+{\widetilde{u_{1,0}}\partial_{x}u_{2,\neq}}=\Theta_{\neq}-\partial_{y}P_{\neq}^{N_{3}},\\
	\mathcal{L}_{V}u_{3,\neq}+\partial_{z}P^{5}+h_{3,\neq}+{\widetilde{u_{1,0}}\partial_{x}u_{3,\neq}}=-\partial_{z}P^{N_{3}}_{\neq}.
\end{array}
\right.
\end{equation}
%Understanding the nonlinear interactions between different modes is crucial in the computational process. 
%For the nonlinear terms $ h_{j} $ in (\ref{hj}), we further write $ h_{j}=\sum_{k=1}^{7}h_{j,k}, $ where
%$$ h_{j,1}=\left(u_{2,0}\partial_{y}+u_{3,0}\partial_{z} \right)u_{j},\quad h_{j,2}=u_{\neq}\cdot\nabla u_{j,0},\quad h_{j,3}=u_{\neq}\cdot\nabla u_{j,\neq}, $$
%$$h_{j,k+2}=\partial_{j}P^{k}\quad {\rm for}\quad k\in\{2,3,4\},\quad h_{j,7}=\partial_{j}P^{0}.$$
%Due to $ P_{\neq}^{2}=0, $ we have $ (h_{j,4})_{\neq}=\partial_{j}P_{\neq}^{2}=0. $
% Besides, for simplicity,  we let $ H_{2}=(h_{2}+\kappa h_{3})_{\neq} $ and denote
%\begin{equation*}
%H_{2}=\sum_{j=1}^{7}H_{2,j},\quad H_{2,j}=(h_{2,j}+\kappa h_{3,j})_{\neq}.
%\end{equation*}

\subsection{Construction of energy functional}
We introduce the following energy functional of the system (\ref{ini1}):
\begin{equation*}
\begin{aligned}
	&E_{1,1}=\|\widehat{u_{1,0}}\|_{L^{\infty}H^{4}}+\nu^{\frac12}\|\nabla\widehat{u_{1,0}}\|_{L^{2}H^{4}}+\nu^{-1}\|\partial_{t}\widehat{u_{1,0}}\|_{L^{\infty}H^{2}},
	\\&E_{1,2}=\|\widetilde{u_{1,0}}\|_{L^{\infty}H^{2}}+\nu^{\frac12}\|\nabla\widetilde{u_{1,0}}\|_{L^{2}H^{2}},
	\\&E_{2}=\|\triangle u_{2,0}\|_{Y_{0}}+\|u_{3,0}\|_{Y_{0}}+\|\nabla u_{3,0}\|_{Y_{0}}+\|\min(\nu^{\frac23}+\nu t, 1)^{\frac12}\triangle u_{3,0}\|_{Y_{0}},
	\\&E_{3}=\|\Theta_{0}\|_{Y_{0}}+\|\nabla\Theta_{0}\|_{Y_{0}}+\|\partial_{z}\nabla\Theta_{0}\|_{Y_{0}},
	\\&E_{4,1}=\|\triangle u_{2,\neq}\|_{X_a}+\|(\partial_x^2+\partial_z^2) u_{3,\neq}\|_{X_a},
	\\&E_{4,2}=\nu^{\frac13}\big(\|e^{a\nu^{\frac13}t}\nabla\omega_{2,\neq}\|_{L^{\infty}L^{2}}
	+\nu^{\frac12}\|e^{a\nu^{\frac13}t}\triangle\omega_{2,\neq}\|_{L^{2}L^{2}} \big),
	\\&E_{5}=\|\partial_{x}^{2}\Theta_{\neq}\|_{X_{b}}+\|\partial_{z}^{2}\Theta_{\neq}\|_{X_{a}},
	\\&E_{6}=\|\partial_{x}^{2}u_{2,\neq}\|_{X_{b}}+\|\partial_{x}^{2}u_{3,\neq}\|_{X_{b}},
\end{aligned}	
\end{equation*}
where $ a $ and $ b $ are positive constants with $ 0<a<b<2a $. Moreover, we denote 
\begin{equation*}
\begin{aligned}
	&E_{1}=E_{1,1}+\nu^{-\frac23}E_{1,2}, \quad E_{4}=E_{4,1}+E_{4,2}.
\end{aligned}
\end{equation*}

\begin{remark} Let us give some explanations about the energy functional:

$\bullet$ $ E_{1} $ is introduced to control the zero mode $ u_{1,0} $. Due to the 3D lift-up effect, $ E_{1} $ is expected to be $ o(1) $, which implies 
$ \|({u}_{1,\rm in})_0\|_{H^{2}}\leq \varepsilon_{0}\nu^{\frac23}$ is enough in Theorem \ref{main result} for the initial data of the component $ u_{1,0}$.

$ \bullet $ $ E_{2} $ is introduced to control good components $ u_{2,0}, u_{3,0}. $ Since there is no lift-up in the equations of $ u_{2,0} $ and $ u_{3,0}, $ $ E_{2} $ is expected to be $ o(\nu). $

$ \bullet $ $ E_{3} $ is introduced to control the zero mode of $ \Theta $. Due to the linear transfer mechanism  of energies between $ u_{2,0} $ and $ \Theta_{0} $, $ E_{3} $ is expected to be $ o(\nu^{2}). $

$ \bullet $ $ E_{4,1} $ is introduced to control good components $ \triangle u_{2,\neq} $ and $ (\partial_{x}^{2}+\partial_{z}^{2})u_{3,\neq}$, which is more simplified than that in \cite{wei2}, since there is an additional term $\nu^{\frac23}\|\triangle u_{3,\neq}\|_{X_b}$.  $ E_{4,2} $ is introduced to control the energy $E_{1,2}$. $ E_{4} $ is expected to be $ o(\nu). $

$\bullet$ $E_5$ is introduced to control the non-zero modes of $\Theta$ and control the energy $E_3$, and it is expected to be $ o(\nu^{2}). $
It is noted that the time weight of $\partial_{x}^{2}\Theta_{\neq}$ is $b$ rather than $a,$ which is an ingredient for 
dealing with 3D lift-up effect in the temperature $\Theta.$

$\bullet$ $E_6$ is introduced to deal with the 3D lift-up effect in the velocity $u$ and is expected to be $ o(\nu).$
We also need to note that the time weight of $\partial_{x}^{2}u_{2,\neq}$ and $\partial_{x}^{2}u_{3,\neq}$ is also $b$ rather than $a.$ 
\end{remark}
%
%
%$\bullet$ \underline{Estimate $ E_{6}. $} 

\subsection{Proof of Theorem \ref{main result}}
First, we define $T$ to be the largest time such that the following bootstrap assumptions hold:
\begin{equation}\label{ass}
E_{1}\leq\varepsilon_{0},~~ E_{2}\leq\varepsilon_{0}\nu,~~ E_{3}\leq \varepsilon_{0}\nu^{2},~~  E_{4}\leq\varepsilon_{0}\nu,~~ E_{5}\leq \varepsilon_{0}\nu^{2},
~~  E_{6}\leq\varepsilon_{0}\nu,
\end{equation}
where $ \varepsilon_{0} $ is determined later. 

Using  Prop. \ref{prop:E1}, Prop. \ref{prop:E21}, Prop. \ref{prop:E22}, Prop. \ref{prop:E3},  Prop. \ref{propE4} and Prop. \ref{E5 E6},
we can obtain that 
\begin{equation*}
	\begin{aligned}
			&E_{1}\leq C\big(\nu^{-\frac23}\|u_{\rm in}\|_{H^{2}}+\nu^{-1}E_{2}+\nu^{-2}E_{4}^{2} \big),\\
			&E_{2}\leq C\left(\|u_{\rm in}\|_{H^{2}}+\nu^{-1}E_{4}^{2}+\nu^{-1}E_{3} \right),\\
			&E_{3}\leq C\left(\|\Theta_{\rm in}\|_{H^{2}}+\nu^{-1}E_{2}E_{3}+\nu^{-1}E_{4}E_{5} \right),\\
			&E_{4}\leq C\left(\|u_{\rm in}\|_{H^{2}}+\nu^{-1}E_{4}^{2}+\nu^{-1}E_{5}\right),\\
			&E_{5}\leq C\big(\|\Theta_{\rm in}\|_{H^{2}}+\nu^{-\frac23}E_{3}E_{6}+\nu^{-1}E_{3}E_{4}\big),\\
			&E_6\leq C\left(\|u_{\rm in}\|_{H^{2}}+\nu^{-1}E_{4}^{2}+\nu^{-1}E_{5}\right).
	\end{aligned}
\end{equation*}
It is straightforward to observe that by choosing a sufficiently small $\varepsilon_0$, 
we conclude the following bounds for $t\in[0,T]:$
$$E_{3}\leq C\varepsilon\nu^{2},\quad E_{5}\leq C\varepsilon\nu^{2}.$$
Moreover, we also have
\begin{equation*}
E_{2}\leq C\varepsilon\nu,\quad E_{4}\leq C\varepsilon\nu, \quad E_{6}\leq C\varepsilon\nu.
\end{equation*}
Finally, there holds
\begin{equation*}
E_{1}\leq C\varepsilon.
\end{equation*}
By setting $\varepsilon$ small enough satisfying $C\varepsilon\leq \frac{\varepsilon_0}{2},$
we obtain that 
\begin{equation*}
	E_{1}\leq\frac{\varepsilon_{0}}{2},~~ E_{2}\leq\frac{\varepsilon_{0}}{2}\nu,~~ 
	E_{3}\leq \frac{\varepsilon_{0}}{2}\nu^{2},~~E_{4}\leq\frac{\varepsilon_{0}}{2}\nu,~~ 
	E_{5}\leq \frac{\varepsilon_{0}}{2}\nu^{2},~~E_{6}\leq\frac{\varepsilon_{0}}{2}\nu.
\end{equation*}
The argument as above implies that $T=+\infty.$

%Then, our aim is to improve the above assumptions:
%for $t\in[0,T],$ there are
%$$ E_{1}\leq \frac{\varepsilon_{0}}{2},~~ E_{2}\leq \frac{\varepsilon_{0}}{2}\eta,~~ E_{3}\leq \frac{\varepsilon_{0}}{2}\eta^{2},~~ 
%E_{4}\leq \frac{\varepsilon_{0}}{2}\eta,~~ E_{5}\leq \frac{\varepsilon_{0}}{2}\eta^{2},~~ E_{6}\leq \frac{\varepsilon_{0}}{2}\eta.$$

\section{The velocity estimates in terms of the energy}

In this section, we give some embedding inequalities controlled by energy functional $E_1, \cdots, E_6$, which will be frequently used below.

\begin{lemma}\label{lem omega}
For $k\geq 0$, it holds that
\begin{equation}\label{estimate_u}
	\begin{aligned}
		&\|\nabla^{k}(\partial_{x},\partial_{z})\partial_{x}u_{\neq}\|_{L^{2}}
		\leq C\left(\|\nabla^{k}(\partial_{x}^{2}+\partial_{z}^{2})u_{3,\neq}\|_{L^{2}}+\|\nabla^{k}\triangle u_{2,\neq}\|_{L^{2}}\right),\\
		&\|e^{\frac{a+b}{2}\nu^{\frac13}t}\partial_{x}u_{\neq}\|_{Y_{0}}^{2}+\|e^{\frac{a+b}{2}\nu^{\frac13}t}\partial_{z}u_{3,\neq}\|_{Y_{0}}^{2}\leq~CE_{4}E_{6},\\	&\|\triangle(\partial_{x},\partial_{z})u_{\neq}\|_{L^{2}}\leq C\left(\|\triangle\omega_{2,\neq}\|_{L^{2}}+\|\nabla\triangle u_{2,\neq}\|_{L^{2}}\right).
	\end{aligned}
\end{equation}
\end{lemma}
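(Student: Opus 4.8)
The plan is to reduce all three inequalities in Lemma~\ref{lem omega} to purely algebraic facts about the Fourier symbols, using incompressibility $\nabla\cdot u=0$ and the definition of the vorticity. For the non-zero modes we work on the Fourier side with $k_1\neq 0$ throughout, so that $|\partial_x|$ is comparable to $|k_1|\geq 1$; this is the structural reason the estimates for $u_{\neq}$ are available but would fail for the zero mode.

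For the first inequality, I would start from the divergence-free condition $ik_1\widehat{u_1}+ik_2\widehat{u_2}+ik_3\widehat{u_3}=0$, which lets me solve $\widehat{u_1}=-(k_2\widehat{u_2}+k_3\widehat{u_3})/k_1$. Thus every component of $(\partial_x,\partial_z)\partial_x u_{\neq}$ at frequency $(k_1,k_2,k_3)$ with $k_1\neq 0$ can be written as a bounded multiplier applied to the pair $\bigl((k_1^2+k_3^2)\widehat{u_3},\ |k|^2\widehat{u_2}\bigr)$: indeed $|k_1|(|k_1|+|k_3|)|k|$ (the weight coming from $\nabla^k(\partial_x,\partial_z)\partial_x$, modulo the harmless $\nabla^k=|k|^k$ factor) must be dominated by $\max\bigl(|k_1^2+k_3^2|\,|k_2|\text{-type terms},\ |k|^2\bigr)$, and one checks this componentwise after substituting the expression for $\widehat{u_1}$. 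Since $|k_1|\geq 1$ we may freely insert powers of $|k_1|$ in denominators. Then Plancherel converts the symbol bound into the stated $L^2$ estimate; the exponential weight $e^{a\nu^{1/3}t}$ in the second inequality is a multiplier in $t$ only and commutes with everything, so the second inequality follows from the first by squaring, taking $L^\infty_t$ of the $L^2$ part and $\nu\|\nabla(\cdot)\|_{L^2_tL^2}$ of the gradient part, and recognizing $\|e^{b\nu^{1/3}t}(\partial_x^2+\partial_z^2)u_{3,\neq}\|_{X_b}$-type quantities controlled by $E_6$ together with $\|\triangle u_{2,\neq}\|_{X_a}\le E_{4,1}\le E_4$ — here one uses $\tfrac{a+b}{2}<b$ and Cauchy–Schwarz to split the weight $e^{\frac{a+b}{2}\nu^{1/3}t}\le e^{\frac{a}{2}\nu^{1/3}t}e^{\frac{b}{2}\nu^{1/3}t}$ across the two factors.

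For the third inequality I would use the vorticity relation: $\omega_2=\partial_z u_1-\partial_x u_3$, so on the Fourier side $\widehat{\omega_2}=ik_3\widehat{u_1}-ik_1\widehat{u_3}$; combined again with incompressibility this recovers $\widehat{u_3}$ (and $\widehat{u_1}$) from $\widehat{\omega_2}$ and $\widehat{u_2}$ via $\widehat{u_3}=-\dfrac{k_1 k_2\widehat{u_2}+ k_3(\text{stuff})}{k_1^2+k_3^2}$-type formulas — more precisely $\widehat{u_1}=\dfrac{-k_1k_2\widehat{u_2}+k_1k_3\widehat{u_2}\cdot 0 \ldots}{k_1^2+k_3^2}$; the clean statement is that $(k_1^2+k_3^2)\widehat{u_3}$ and $(k_1^2+k_3^2)\widehat{u_1}$ are bounded-multiplier combinations of $\widehat{\omega_2}$ and $k_2\widehat{u_2}$. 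Hence $|k|^2|k_1|(\text{or }|k_3|)$ times any component of $\widehat{u_{\neq}}$ is controlled by $|k|^2|\widehat{\omega_2}|+|k|^2|k_2|\,|\widehat{u_2}| \lesssim |k|^2|\widehat{\omega_2}| + |k|^3|\widehat{u_2}|$, and $|k|^3|\widehat{u_2}|=|\nabla\triangle u_2|$ in Fourier while $|k|^2|\widehat{\omega_2}|\lesssim|\triangle\omega_2|$; Plancherel finishes it.

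The main obstacle — really the only non-bookkeeping point — is verifying that the symbol ratios are genuinely bounded, i.e.\ that in each case the numerator weight is absorbed by the chosen "good" combination for \emph{all} $(k_1,k_2,k_3)$ with $k_1\neq0$, including the regime $|k_2|$ large (where one must not lose a power of $k_2$) and the regime $|k_1|$ small relative to $|k_2|,|k_3|$. The second inequality additionally requires care that the time-weight splitting $e^{\frac{a+b}{2}\nu^{1/3}t}$ distributes correctly so that each factor sees a weight no larger than its own ($a$ for the $E_4$ factor, $b$ for the $E_6$ factor), which is exactly why the hypothesis $0<a<b<2a$ — in particular $\frac{a+b}{2}<b$ and $\frac{a+b}{2}<a+\frac{b-a}{2}$ — is invoked; this is where I would be most careful to match the definitions of the $X_a$, $X_b$ and $Y_0$ norms.
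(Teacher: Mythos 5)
Your treatment of the first and third inequalities is sound and essentially the paper's: both reduce to bounded Fourier multipliers on the non-zero modes, using $k_1\widehat{u_1}=-(k_2\widehat{u_2}+k_3\widehat{u_3})$ for the first, and the algebraic identity $|k_3 a-k_1 b|^2+|k_1a+k_3b|^2=(k_1^2+k_3^2)(|a|^2+|b|^2)$ applied to $(u_{1,\neq},u_{3,\neq})$ (equivalently, solving the $2\times2$ system for $\widehat{u_1},\widehat{u_3}$ in terms of $\widehat{\omega_2}$ and $k_2\widehat{u_2}$) for the third.

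The second inequality, however, has a genuine gap as you describe it. It cannot "follow from the first by squaring": the first inequality bounds everything by the quantities $\|(\partial_x^2+\partial_z^2)u_{3,\neq}\|_{L^2}+\|\triangle u_{2,\neq}\|_{L^2}$, which are only controlled with the weight $e^{a\nu^{1/3}t}$ (they live in $E_{4}$), and the ratio $e^{\frac{a+b}{2}\nu^{1/3}t}/e^{a\nu^{1/3}t}=e^{\frac{b-a}{2}\nu^{1/3}t}$ is unbounded since $b>a$; so no bound with the $\frac{a+b}{2}$ weight can be extracted this way. Your appeal to "Cauchy--Schwarz across the two factors" points at the right mechanism but the two factors are misidentified: $\|(\partial_x^2+\partial_z^2)u_{3,\neq}\|_{X_b}$ is \emph{not} controlled by $E_6$ (which only contains $\partial_x^2 u_{3,\neq}$ in $X_b$), and pairing it against $\|\triangle u_{2,\neq}\|_{X_a}$ mixes the $u_2$ and $u_3$ contributions incorrectly. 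The missing step is a derivative interpolation, performed separately for $u_2$ and for $u_3$: after reducing via $\operatorname{div}u_{\neq}=0$ to $\|\nabla^{k+1}u_{2,\neq}\|_{L^2}+\|\nabla^{k}(\partial_x,\partial_z)u_{3,\neq}\|_{L^2}$, one uses
\begin{equation*}
\|\nabla^{k+1}u_{2,\neq}\|_{L^2}^2\le\|\nabla^{k}\triangle u_{2,\neq}\|_{L^2}\,\|\nabla^{k}u_{2,\neq}\|_{L^2},\qquad
\|\nabla^{k}(\partial_x,\partial_z)u_{3,\neq}\|_{L^2}^2\le\|\nabla^{k}(\partial_x^2+\partial_z^2)u_{3,\neq}\|_{L^2}\,\|\nabla^{k}u_{3,\neq}\|_{L^2},
\end{equation*}
so that the weight $e^{(a+b)\nu^{1/3}t}$ splits as $e^{a\nu^{1/3}t}$ on the high-derivative factor (an $X_a$ quantity, hence $\le E_4$) and $e^{b\nu^{1/3}t}$ on the low-derivative factor, which for non-zero modes satisfies $\|u_{j,\neq}\|_{X_b}\le\|\partial_x^2u_{j,\neq}\|_{X_b}\le E_6$. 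This simultaneous splitting of derivatives and weights is the whole content of the second inequality and is what produces $E_4E_6$; without it the estimate is not obtainable.
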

\begin{proof}
\underline{{\bf Estimate $ (\ref{estimate_u})_{1}. $}} Notice that $ {\rm{div}}~u_{\neq}=\partial_{x}u_{1,\neq}+\partial_{y}u_{2,\neq}+\partial_{z}u_{3,\neq}=0, $ then for any $ k\geq 0, $ we get
\begin{equation*}
	\begin{aligned}
		&\|\nabla^{k}(\partial_{x},\partial_{z})\partial_{x}u_{\neq}\|_{L^{2}}\leq&C\left(\|\nabla^{k}(\partial_{x}^{2}+\partial_{z}^{2})u_{3,\neq}\|_{L^{2}}+\|\nabla^{k}\triangle u_{2,\neq}\|_{L^{2}} \right).
	\end{aligned}
\end{equation*}

\underline{{\bf Estimate $ (\ref{estimate_u})_{2}.$}} Due to $ {\rm div}~u_{\neq}=0,$ direct calculations indicate that
\begin{equation*}
	\begin{aligned}
		&\|\nabla^{k}\partial_{x}u_{\neq}\|_{L^{2}}+\|\nabla^{k}\partial_{z}u_{3,\neq}\|_{L^{2}}
		\leq C\left(\|\nabla^{k+1}u_{2,\neq}\|_{L^{2}}+\|\nabla^{k}(\partial_{x},\partial_{z})u_{3,\neq}\|_{L^{2}}\right)
		\\\leq&C\big(\|\nabla^{k}\triangle u_{2,\neq}\|_{L^{2}}^{\frac12}
		\|\nabla^{k} u_{2,\neq}\|_{L^{2}}^{\frac12}+\|\nabla^{k}(\partial_{x}^{2}
		+\partial_{z}^{2})u_{3,\neq}\|_{L^{2}}^{\frac12}\|\nabla^{k} u_{3,\neq}\|_{L^{2}}^{\frac12} \big)
	\end{aligned}
\end{equation*}
and
\begin{equation*}
	\begin{aligned}
		&\|e^{\frac{a+b}{2}\nu^{\frac13}t}\partial_{x}u_{\neq}\|_{Y_{0}}^{2}+\|e^{\frac{a+b}{2}\nu^{\frac13}t}\partial_{z}u_{3,\neq}\|_{Y_{0}}^{2}
		\\\leq&C\big(\|\triangle u_{2,\neq}\|_{X_{a}}
		\|u_{2,\neq}\|_{X_{b}}+\|(\partial_{x}^{2}+\partial_{z}^{2})u_{3,\neq}\|_{X_{a}}\|u_{3,\neq}\|_{X_{b}}\big)\leq CE_{4}E_{6}.
	\end{aligned}
\end{equation*}

\underline{{\bf Estimate $ (\ref{estimate_u})_{3}. $}} By $ \|(\partial_{x},\partial_{z})(f_{1},f_{2})\|_{L^{2}}^{2}=\|(\partial_{z}f_{1}-\partial_{x}f_{2},\partial_{x}f_{1}+\partial_{z}f_{2})\|_{L^{2}}^{2}, $ we get
\begin{equation*}
	\|\triangle(\partial_{x},\partial_{z})(u_{1,\neq},u_{3,\neq})\|_{L^{2}}^{2}=\|\triangle\omega_{2,\neq}\|_{L^{2}}^{2}+\|\triangle\partial_{y}u_{2,\neq}\|_{L^{2}}^{2}.
\end{equation*}
\end{proof}

\begin{lemma}\label{zero mode}
It holds that
\begin{equation*}
	\|u_{2,0}\|_{H^{2}}+\|\nabla u_{2,0}\|_{H^{1}}+\|u_{3,0}\|_{H^{1}}+\|\partial_{z}u_{3,0}\|_{H^{1}}\leq CE_{2},
\end{equation*}
\begin{equation*}
	\|u_{j,0}\|_{L^{\infty}L^{\infty}}+\nu^{\frac12}\|\nabla u_{j,0}\|_{L^{2}L^{\infty}}\leq CE_{2},   \quad {\rm for}~~j\in\{2,3\}.
\end{equation*}
\end{lemma}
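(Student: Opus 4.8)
The plan is to read off every norm in the statement from the definition of $E_2$ using only the divergence-free constraint together with standard elliptic and Sobolev estimates on $\mathbb{R}\times\mathbb{T}$. The structural input is as follows. Since $u_{j,0}$ is the $x$-average of $u_j$ we have $\partial_x u_{1,0}=0$, so $\nabla\cdot u=0$ reduces for the zero modes to $\partial_y u_{2,0}+\partial_z u_{3,0}=0$; averaging this in $z\in\mathbb{T}$ gives $\partial_y\langle u_{2,0}\rangle_z=0$, so the $(k_x,k_z)=(0,0)$-Fourier mode of $u_{2,0}$ is $y$-independent and square-integrable, hence vanishes. Therefore on the Fourier support of $u_{2,0}$ the symbol of $-\triangle$ is $k_y^2+k_z^2\ge 1$, so $\triangle$ is boundedly invertible on the space in which $u_{2,0}$ lives; in particular $\|u_{2,0}\|_{H^2}\lesssim\|\triangle u_{2,0}\|_{L^2}$, $\|\nabla^3 u_{2,0}\|_{L^2}\lesssim\|\nabla\triangle u_{2,0}\|_{L^2}$, and the Poincar\'e-type bound $\|\triangle u_{2,0}\|_{L^2}\lesssim\|\nabla\triangle u_{2,0}\|_{L^2}$ holds. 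I also use the Plancherel identities $\|\nabla^2 f\|_{L^2}=\|\triangle f\|_{L^2}$ and $\|\nabla^3 f\|_{L^2}=\|\nabla\triangle f\|_{L^2}$ on $\mathbb{R}_y\times\mathbb{T}_z$, which turn the dissipative part of $\|\nabla u_{3,0}\|_{Y_0}$ into the bound $\|\triangle u_{3,0}\|_{L^2L^2}\lesssim\nu^{-1/2}E_2$.

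With these facts the first display is immediate: $\|u_{2,0}\|_{H^2}$ and $\|\nabla u_{2,0}\|_{H^1}$ are bounded by $\|\triangle u_{2,0}\|_{L^\infty L^2}\le E_2$; $\|u_{3,0}\|_{H^1}$ is bounded directly by $\|u_{3,0}\|_{Y_0}+\|\nabla u_{3,0}\|_{Y_0}\le E_2$, since the $L^\infty L^2$ part of $Y_0$ dominates the pointwise-in-time $L^2$ norm; and $\|\partial_z u_{3,0}\|_{H^1}$ follows from the divergence identity $\partial_z u_{3,0}=-\partial_y u_{2,0}$, giving $\|\partial_z u_{3,0}\|_{H^1}\le\|\nabla u_{2,0}\|_{H^1}\le\|u_{2,0}\|_{H^2}\le E_2$.

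For the second display the only real difficulty is that the two-dimensional embedding into $L^\infty$ costs essentially two derivatives in $L^2$, whereas $u_{3,0}$ is not mean-free in $z$ and has $H^2$-control only in the time-integrated (dissipative) sense. I would use the two-dimensional Agmon inequality $\|g\|_{L^\infty}^2\lesssim\|g\|_{L^2}\|g\|_{H^2}$ for the $u_{2,0}$-controlled quantities, and for $u_{3,0}$ and its first derivatives its anisotropic refinement, obtained by a one-dimensional Sobolev embedding in the compact variable $z$ followed by a one-dimensional Agmon inequality in $y$, which bounds $\|g\|_{L^\infty(\mathbb{R}_y\times\mathbb{T}_z)}$ in terms of $g,\partial_y g,\partial_z g,\partial_y\partial_z g$ only, i.e. without the top-order $\partial_y^2$-derivative. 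Applying this with $g=u_{3,0}$ and $g=\partial_y u_{3,0}$, every mixed derivative $\partial_y\partial_z u_{3,0}$, $\partial_y^2\partial_z u_{3,0}$ is rewritten through $\partial_z u_{3,0}=-\partial_y u_{2,0}$ as a derivative of $u_{2,0}$ and controlled by the $\triangle^{-1}$-bounds above, while $\partial_y^2 u_{3,0}$ is replaced by $\triangle u_{3,0}+\partial_y\partial_z u_{2,0}$; all the resulting second-order quantities $\triangle u_{2,0}$, $\triangle u_{3,0}$, $\nabla\triangle u_{2,0}$ sit in $L^2L^2$ with norm $\lesssim\nu^{-1/2}E_2$. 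Then $\|u_{j,0}\|_{L^\infty L^\infty}\lesssim E_2$ holds pointwise in time, and for $\nu^{1/2}\|\nabla u_{j,0}\|_{L^2L^\infty}$ one inserts the Agmon bound into $\nu\int_0^t\|\nabla u_{j,0}\|_{L^\infty}^2\,ds$ and closes by Cauchy--Schwarz in time: each product $\nu\|A\|_{L^2L^2}\|B\|_{L^2L^2}$ with $\|A\|_{L^2L^2},\|B\|_{L^2L^2}\lesssim\nu^{-1/2}E_2$ contributes exactly $E_2^2$.

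The step I expect to be the main obstacle is this last one, the $L^\infty$-in-space control of $u_{3,0}$ and of $\partial_y u_{3,0}$: here $H^2\hookrightarrow L^\infty$ cannot be used pointwise in time, and it is precisely the interplay of the anisotropic embedding, the divergence identity $\partial_z u_{3,0}=-\partial_y u_{2,0}$, and the Plancherel identity $\|\nabla^2 u_{3,0}\|_{L^2}=\|\triangle u_{3,0}\|_{L^2}$ (which lets one exploit the dissipative part of $\|\nabla u_{3,0}\|_{Y_0}$) that makes these estimates close with the correct powers of $\nu$. Everything else reduces to elementary elliptic regularity on $\mathbb{R}\times\mathbb{T}$.
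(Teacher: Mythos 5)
Your proposal is correct and follows essentially the same route as the paper: the first display comes from the divergence-free relation $\partial_y u_{2,0}+\partial_z u_{3,0}=0$ together with the mean-zero-in-$z$ property of $u_{2,0}$ (hence $\|u_{2,0}\|_{H^2}\lesssim\|\triangle u_{2,0}\|_{L^2}$), and the second display from anisotropic $L^\infty$ embeddings that avoid the full $\partial_y^2$-derivative of $u_{3,0}$, with the missing $\partial_z$-derivatives converted into derivatives of $u_{2,0}$. The only cosmetic differences are that you re-derive the anisotropic embedding (1D Sobolev in $z$ plus 1D Agmon in $y$) where the paper invokes its Lemma B.1, and you close the $L^2_tL^\infty_x$ bounds by Cauchy--Schwarz in time on bilinear products rather than by a pointwise-in-time $H^1+\partial_z H^1$ bound; both yield the same powers of $\nu$.
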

\begin{proof}
As $ \partial_{y}u_{2,0}+\partial_{z}u_{3,0}=0 $ and $ \frac{1}{|\mathbb{T}|}\int_{\mathbb{T}}u_{2,0}dz=0, $ we have
\begin{equation*}
	\begin{aligned}
		&\|u_{2,0}\|_{H^{2}}+\|\nabla u_{2,0}\|_{H^{1}}+\|u_{3,0}\|_{H^{1}}+\|\partial_{z}u_{3,0}\|_{H^{1}}
		\\\leq&C\left(\|\triangle u_{2,0}\|_{Y_{0}}+\|u_{3,0}\|_{Y_{0}}+\|\nabla u_{3,0}\|_{Y_{0}} \right)\leq CE_{2}.
	\end{aligned}
\end{equation*}

Thanks to ${\rm div}~u_{0}=0$ and Lemma \ref{lem zero nonzero},
direct calculations indicate that
\begin{equation*}
	\begin{aligned}
		\|u_{2,0}\|_{L^{\infty}L^{\infty}}+\|u_{3,0}\|_{L^{\infty}L^{\infty}}\leq C(\|u_{2,0}\|_{L^{\infty}H^{2}}+\|u_{3,0}\|_{L^{\infty}H^{1}})\leq CE_{2}
	\end{aligned}
\end{equation*}
and
\begin{equation*}
	\begin{aligned}
		&\nu^{\frac12}\|\nabla u_{2,0}\|_{L^{2}L^{\infty}}\leq C\nu^{\frac12}\|\nabla u_{2,0}\|_{L^{2}H^{2}}
		\leq C\|\triangle u_{2,0}\|_{Y_{0}}\leq CE_{2},\\
		&\nu^{\frac12}\|\nabla u_{3,0}\|_{L^{2}L^{\infty}}
		\leq C\nu^{\frac12}\left(\|\nabla u_{3,0}\|_{L^{2}H^{1}}
		+\|\partial_{z}\nabla u_{3,0}\|_{L^{2}H^{1}} \right)\leq CE_{2}.
	\end{aligned}
\end{equation*}	
\end{proof}

\begin{lemma}\label{U10}
It holds that
\begin{equation*}
	\begin{aligned}
		&\|u_{1,0}\|_{H^{2}}\leq CE_{1}\min\big(\nu t+\nu^{\frac23}, 1 \big),\\
		&\|\kappa\nabla u_{3,0}\|_{H^{1}}\leq CE_1E_{2}.			
	\end{aligned}
\end{equation*}
\end{lemma}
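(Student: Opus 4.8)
\textbf{Proof proposal for Lemma \ref{U10}.}

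The plan is to derive both bounds directly from the structure of the zero-mode equations and the already-established energy bounds. For the first inequality $\|u_{1,0}\|_{H^2}\leq CE_1\min(\nu t+\nu^{2/3},1)$, I would use the decomposition $u_{1,0}=\widehat{u_{1,0}}+\widetilde{u_{1,0}}$ from \eqref{decom u10}. The term $\widetilde{u_{1,0}}$ is controlled by $E_{1,2}$, and since $E_1=E_{1,1}+\nu^{-2/3}E_{1,2}$, we have $\|\widetilde{u_{1,0}}\|_{H^2}\leq E_{1,2}\leq \nu^{2/3}E_1$, which already gives the $\nu^{2/3}$ (hence the $\min(\cdot,1)$) scaling on that piece. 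For $\widehat{u_{1,0}}$, whose initial data vanishes, I would integrate the first equation of \eqref{decom u10} in time: writing $\widehat{u_{1,0}}(t)=-\int_0^t e^{\nu(t-s)\triangle}\big(u_{2,0}+u_{2,0}\partial_y\widehat{u_{1,0}}+u_{3,0}\partial_z\widehat{u_{1,0}}\big)(s)\,ds$, the linear forcing term $u_{2,0}$ is the dominant one. Using $\|u_{2,0}\|_{L^\infty H^2}\leq E_2\leq \varepsilon_0\nu$ from the bootstrap assumption \eqref{ass} and Lemma \ref{zero mode}, the Duhamel integral of the forcing is bounded in $H^2$ by $C\nu t\cdot \|u_{2,0}\|_{L^\infty H^2}\cdot \nu^{-1}\lesssim \nu t\, E_1$ after accounting for the normalization (here one uses that $E_1$ is the natural size of $\nu^{-1}u_{2,0}$-type quantities via the first displayed estimate after \eqref{ass}); combined with the a priori bound $\|\widehat{u_{1,0}}\|_{L^\infty H^2}\leq \|\widehat{u_{1,0}}\|_{L^\infty H^4}\leq E_{1,1}\leq E_1$ and the smallness of $E_1$ to absorb the quadratic terms $u_{j,0}\partial\widehat{u_{1,0}}$, this yields the claimed $\min(\nu t+\nu^{2/3},1)$ growth.

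For the second inequality $\|\kappa\nabla u_{3,0}\|_{H^1}\leq CE_1E_2$, the key point is that $\kappa$ is a Fourier multiplier (to be defined precisely later in the paper, presumably of the form $\kappa=\partial_z\triangle^{-1}$ or a frequency cutoff weighted by $u_{1,0}$-quantities) whose operator norm on the relevant frequency space is controlled by $\|u_{1,0}\|_{H^2}$ or equivalently by $E_1$. Granting that $\|\kappa f\|_{H^1}\leq CE_1\|f\|_{H^1}$ (or the analogous bound with an appropriate derivative count), one simply applies this to $f=\nabla u_{3,0}$ and invokes $\|\nabla u_{3,0}\|_{H^1}\leq E_2$ from Lemma \ref{zero mode}, giving $\|\kappa\nabla u_{3,0}\|_{H^1}\leq CE_1\|\nabla u_{3,0}\|_{H^1}\leq CE_1E_2$.

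The main obstacle I anticipate is the first bound, specifically obtaining the sharp $\min(\nu t+\nu^{2/3},1)$ time weight rather than a cruder bound. The subtlety is twofold: first, one must track carefully that the forcing $u_{2,0}$ enters linearly and integrates to give a factor $\nu t$ at small times (not $t$, because the heat semigroup and the $\nu$-smallness of $E_2$ conspire), while the $\nu^{2/3}$ floor comes entirely from $\widetilde{u_{1,0}}$; and second, one must confirm that the nonlinear self-interaction terms $u_{2,0}\partial_y\widehat{u_{1,0}}+u_{3,0}\partial_z\widehat{u_{1,0}}$ are genuinely lower order — this requires the bootstrap smallness $E_2\leq\varepsilon_0\nu$ together with $E_1\leq\varepsilon_0$ so that these terms are absorbed into the left side. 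For large times one simply uses the uniform bound $\|u_{1,0}\|_{H^2}\lesssim E_1$ directly, so the two regimes are patched by the $\min$. The bound on $\kappa\nabla u_{3,0}$ is then routine once the mapping properties of $\kappa$ are in hand.
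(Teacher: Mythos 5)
There are two genuine gaps here, one in each half of the lemma. For the first bound, your Duhamel strategy is both heavier than needed and does not close at the level this lemma lives at: integrating the forcing $u_{2,0}$ against the heat semigroup gives $\|\widehat{u_{1,0}}(t)\|_{H^{2}}\lesssim t\,\|u_{2,0}\|_{L^{\infty}H^{2}}\lesssim tE_{2}$, and turning $tE_{2}$ into $\nu tE_{1}$ requires $E_{2}\lesssim\nu E_{1}$, which is a consequence of the later propositions (and the bootstrap), not of the definitions of the energies. Lemma \ref{U10} sits in the section of pure embedding inequalities and must follow from the energy functionals alone. The point you missed is that $E_{1,1}$ contains the term $\nu^{-1}\|\partial_{t}\widehat{u_{1,0}}\|_{L^{\infty}H^{2}}$ by definition, so since $\widehat{u_{1,0}}(0)=0$ one simply writes $\widehat{u_{1,0}}(t)=\int_{0}^{t}\partial_{t}\widehat{u_{1,0}}(s)\,ds$ and gets $\|\widehat{u_{1,0}}(t)\|_{H^{2}}\leq E_{1,1}\nu t$ in one line, with the cap $\|\widehat{u_{1,0}}\|_{H^{2}}\leq\|\widehat{u_{1,0}}\|_{H^{4}}\leq E_{1,1}$ providing the $\min(\cdot,1)$; the $\nu^{2/3}$ floor from $\widetilde{u_{1,0}}$ you handle correctly.

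For the second bound, two problems. First, $\kappa$ is not a Fourier multiplier: it is the function $\kappa=\partial_{z}\widehat{u_{1,0}}/(1+\partial_{y}\widehat{u_{1,0}})$ of \eqref{define kappa}, estimated via the product/composition bounds of Lemma \ref{kappa}. Second, and more seriously, your step $\|\nabla u_{3,0}\|_{H^{1}}\leq E_{2}$ is false: Lemma \ref{zero mode} controls $\|u_{3,0}\|_{H^{1}}$ and $\|\partial_{z}u_{3,0}\|_{H^{1}}$, but the full $\|\triangle u_{3,0}\|_{L^{2}}$ enters $E_{2}$ only with the time weight $\min(\nu^{2/3}+\nu t,1)^{1/2}$. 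A bound of the form $\|\kappa f\|_{H^{1}}\leq CE_{1}\|f\|_{H^{1}}$ applied to $f=\nabla u_{3,0}$ therefore cannot be closed by $E_{2}$. The paper's proof exploits exactly the missing ingredient: interpolating $\|\kappa\|_{H^{2}}\leq\|\kappa\|_{H^{1}}^{1/2}\|\kappa\|_{H^{3}}^{1/2}\leq CE_{1}\min(\nu t,1)^{1/2}$ (using $\|\kappa\|_{H^{1}}\leq C\|\widehat{u_{1,0}}\|_{H^{2}}\leq CE_{1}\min(\nu t,1)$ from the first part of the lemma) and then transferring the factor $\min(\nu t,1)^{1/2}\leq\min(\nu^{2/3}+\nu t,1)^{1/2}$ onto $\triangle u_{3,0}$ so that the weighted quantity in $E_{2}$ appears. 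Without this time-decay of $\kappa$ the estimate does not follow from the stated energies.
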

\begin{proof}
As $ \widehat{u_{1,0}}(0)=0, $ then it follows from $ \widehat{u_{1,0}}(t)=\int_{0}^{t}\partial_{t}\widehat{u_{1,0}}(s)ds $ that
\begin{equation*}
	\begin{aligned}
		\|\widehat{u_{1,0}}(t)\|_{H^{2}}\leq\int_{0}^{t}\|\partial_{t}\widehat{u_{1,0}}(s)\|_{H^{2}}ds\leq E_{1,1}\nu t.
	\end{aligned}
\end{equation*}
On the other hand, $ \|\widehat{u_{1,0}}(t)\|_{H^{2}}\leq \|\widehat{u_{1,0}}(t)\|_{H^{4}}\leq E_{1,1}, $ thus 
\begin{equation}\label{widehat u10 H2}
	\|\widehat{u_{1,0}}\|_{H^{2}}\leq E_{1,1}\min(\nu t, 1).
\end{equation}
From this, along with $ \|\widetilde{u_{1,0}}\|_{H^{2}}\leq E_{1,2}\leq \nu^{\frac23}E_{1}, $ we get
\begin{equation*}
	\|u_{1,0}\|_{H^{2}}\leq \|\widehat{u_{1,0}}\|_{H^{2}}+\|\widetilde{u_{1,0}}\|_{H^{2}}\leq CE_{1}\big(\min(\nu t, 1)
	+\nu^{\frac23} \big)\leq CE_{1}\min\big(\nu t+\nu^{\frac23}, 1 \big).
\end{equation*}

Due to Lemma \ref{kappa}, there hold
\begin{equation*}
	\|\kappa\|_{H^{1}}\leq C\|\widehat{u_{1,0}}\|_{H^{2}}\leq CE_{1}\min\{\nu t, 1\}, ~~{\rm and}~~ \|\kappa\|_{H^{3}}\leq C\|\widehat{u_{1,0}}\|_{H^{4}}\leq CE_{1},
\end{equation*}
which imply that
\begin{equation*}
	\begin{aligned}
		\|\kappa\nabla u_{3,0}\|_{H^{1}}\leq&C\|\kappa\|_{H^{2}}\left(\|\nabla u_{3,0}\|_{L^{2}}+\|\triangle u_{3,0}\|_{L^{2}} \right)
		%			\\\leq& C\|\kappa\|_{H^{1}}^{\frac12}\|\kappa\|_{H^{3}}^{\frac12}\left(\|\nabla u_{3,0}\|_{L^{2}}+\|\triangle u_{3,0}\|_{L^{2}} \right)
		%			\\\leq&CE_{1}\min(\nu t, 1)^{\frac12}\big(\|\nabla u_{3,0}\|_{L^{2}}+\|\triangle u_{3,0}\|_{L^{2}} \big)
		\\\leq&CE_{1}\big(\|\nabla u_{3,0}\|_{L^{2}}+\|\min(\nu^{\frac23}+\nu t, 1)^{\frac12}\triangle u_{3,0}\|_{L^{2}} \big)\leq CE_{1}E_{2}.
	\end{aligned}
\end{equation*}
\end{proof}

\section{Nonlinear interactions}
In this section, we are  devoted to estimate the nonlinear estimates.
To estimate
$E_1, \cdots, E_6$ in the energy functional, it is necessary to consider the estimates of all nonlinear terms in (\ref{ini1}), \eqref{decom u10} and \eqref{23}. 
For simplicity, we sometimes use the following auxiliary energy
\begin{equation*}
E_{7}=\sum_{j=2}^{3}\left(\|\partial_{x}^{2}u_{j,\neq}\|_{X_{b}}^{2}+\|\partial_{x}(\partial_{z}-\kappa\partial_{y})u_{j,\neq}\|_{X_{b}}^{2} \right)+\|\partial_{x}\nabla Q\|_{X_{b}}^{2}.
\end{equation*}

Recall that the nonlinear terms $ h_{j} (j=2,3) $ in (\ref{hj}) satisfy
\begin{equation*}
h_{j}=\left(u_{2,0}\partial_{y}+u_{3,0}\partial_{z} \right)u_{j}+u_{\neq}\cdot\nabla u_{j}+\partial_{j}\left(P^{0}+P^{2}+P^{3}+P^{4} \right).
\end{equation*} 
For simplicity, we further write $ h_{j}=\sum_{k=1}^{7}h_{j,k}, $ where
$$ h_{j,1}=\left(u_{2,0}\partial_{y}+u_{3,0}\partial_{z} \right)u_{j},\quad h_{j,2}=u_{\neq}\cdot\nabla u_{j,0},\quad h_{j,3}=u_{\neq}\cdot\nabla u_{j,\neq}, $$
$$h_{j,k+2}=\partial_{j}P^{k}\quad {\rm for}\quad k\in\{2,3,4\},\quad h_{j,7}=\partial_{j}P^{0}.$$
As $ P^{2}_{\neq}=0, $ there holds $ (h_{j,4})_{\neq}=\partial_{j}P^{2}_{\neq}=0. $
Besides, we let $ H_{2}=(h_{2}+\kappa h_{3})_{\neq} $ and denote
\begin{equation*}
H_{2}=\sum_{j=1}^{7}H_{2,j},\quad H_{2,j}=(h_{2,j}+\kappa h_{3,j})_{\neq}.
\end{equation*}
For $ Q=u_{2,\neq}+\kappa u_{3,\neq}, $ direct calculation shows that
\begin{equation*}
H_{2,1}=\left(u_{2,0}\partial_{y}+u_{3,0}\partial_{z} \right)Q-u_{3,\neq}\left(u_{2,0}\partial_{y}+u_{3,0}\partial_{z} \right)\kappa.
\end{equation*}

\subsection{Interaction between zero modes}

\begin{lemma}\label{lem u20 u30}
It holds that
\begin{equation*}
	\|u_{2,0}\partial_{y}\widehat{u_{1,0}}\|_{Y_{0}^{2}}+\|u_{3,0}\partial_{z}\widehat{u_{1,0}}\|_{Y_{0}^{2}}\leq CE_{1,1}E_{2}.
\end{equation*}
\end{lemma}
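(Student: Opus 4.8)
The goal is to bound the $Y_0^2$-norm of the products $u_{2,0}\partial_y\widehat{u_{1,0}}$ and $u_{3,0}\partial_z\widehat{u_{1,0}}$ by $CE_{1,1}E_2$. Recall that $\|f\|_{Y_0^k}^2=\|f\|_{L^\infty H^k}^2+\nu\|\nabla f\|_{L^2 H^k}^2$, so we must control $H^2$-norms in $L^\infty_t$ and $H^2$-norms of one derivative in $L^2_t$ with the parabolic weight $\nu^{1/2}$. The plan is to treat each product as ``(zero mode of $u_2$ or $u_3$) times (first derivative of $\widehat{u_{1,0}}$)'' and to exploit that $\widehat{u_{1,0}}$ is the ``slow'', well-controlled piece carrying the full $H^4$ regularity of $E_{1,1}$, while $u_{2,0},u_{3,0}$ carry $E_2$ via Lemma \ref{zero mode}. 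The key structural point is that both factors are functions of $(y,z)$ only (zero modes), so this is a two-dimensional product estimate.

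First I would record the ingredients. From Lemma \ref{zero mode} we have $\|u_{2,0}\|_{H^2}+\|u_{3,0}\|_{H^1}\le CE_2$ in $L^\infty_t$, together with $L^\infty_tL^\infty$ and $\nu^{1/2}\|\nabla u_{j,0}\|_{L^2_tL^\infty}\le CE_2$ bounds. From the definition of $E_{1,1}$ we have $\|\widehat{u_{1,0}}\|_{L^\infty H^4}+\nu^{1/2}\|\nabla\widehat{u_{1,0}}\|_{L^2 H^4}\le E_{1,1}$, hence in particular $\|\partial_y\widehat{u_{1,0}}\|_{H^3}\le E_{1,1}$ in $L^\infty_t$ and a matching $\nu^{1/2}L^2_t H^3$ bound on $\nabla\partial_y\widehat{u_{1,0}}$. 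Then for the $L^\infty_t H^2$ part of $\|u_{2,0}\partial_y\widehat{u_{1,0}}\|_{Y_0^2}$, I expand $\partial^\beta(u_{2,0}\partial_y\widehat{u_{1,0}})$ by Leibniz with $|\beta|\le 2$ and distribute derivatives, estimating each term by Hölder with the 2D Gagliardo--Nirenberg/Sobolev embeddings (e.g. $H^2\hookrightarrow L^\infty$, $H^1\hookrightarrow L^4$ in 2D), always placing the ``extra'' regularity on the $\widehat{u_{1,0}}$ factor which has $H^4$ to spare; this yields $\le C\|u_{2,0}\|_{H^2}\|\partial_y\widehat{u_{1,0}}\|_{H^3}\le CE_2E_{1,1}$. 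For the $\nu\|\nabla(\cdot)\|_{L^2_t H^2}^2$ part, one derivative hits either factor: when it lands on $\widehat{u_{1,0}}$ I use $\nu^{1/2}\|\nabla\widehat{u_{1,0}}\|_{L^2 H^4}\le E_{1,1}$ paired with $\|u_{2,0}\|_{L^\infty_t H^2}\le E_2$; when it lands on $u_{2,0}$ I use the parabolic gain on $u_{2,0}$ from Lemma \ref{zero mode}, namely $\nu^{1/2}\|\nabla u_{2,0}\|_{L^2 H^2}\le C\|\triangle u_{2,0}\|_{Y_0}\le CE_2$, paired with $\|\partial_y\widehat{u_{1,0}}\|_{L^\infty_t H^3}\le E_{1,1}$. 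The $u_{3,0}\partial_z\widehat{u_{1,0}}$ term is handled identically, except $u_{3,0}$ only sits in $H^1$ at $L^\infty_t$; this is compensated by the extra derivative available on $\widehat{u_{1,0}}$ (using $\partial_z\widehat{u_{1,0}}\in H^3$) and by the $\|\nabla u_{3,0}\|_{Y_0}$ and $\|\min(\nu^{2/3}+\nu t,1)^{1/2}\triangle u_{3,0}\|_{Y_0}$ contributions inside $E_2$ for the $L^2_t$ gradient estimate.

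The main obstacle I anticipate is purely bookkeeping: making sure that in the $L^2_t H^2$ term with a derivative on $u_{3,0}$ one does not need $\|\triangle u_{3,0}\|_{Y_0}$ without the degenerate weight (which is not available), so one must arrange the Leibniz expansion so that the highest-order derivative of $u_{3,0}$ is always paired, via Hölder, with a factor of $\widehat{u_{1,0}}$ in $L^\infty$ (not $L^2$ or $L^4$) — i.e. one should put $\triangle u_{3,0}$ against $\|\partial_z\widehat{u_{1,0}}\|_{H^3}\le E_{1,1}$ in $L^\infty_t$ and use $\nu^{1/2}\|\nabla u_{3,0}\|_{L^2 H^1}\le CE_2$, never asking more of $u_{3,0}$ than $E_2$ provides. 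Once the derivative-counting is organized this way, every term closes as $CE_{1,1}E_2$ and the lemma follows by summing the finitely many Leibniz terms.
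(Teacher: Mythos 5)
Your treatment of the first product $u_{2,0}\partial_{y}\widehat{u_{1,0}}$ is fine and matches the paper: since both factors depend only on $(y,z)$, the 2D algebra property $\|f_{1}f_{2}\|_{H^{2}}\leq C\|f_{1}\|_{H^{2}}\|f_{2}\|_{H^{2}}$ plus Poincar\'e in $z$ for $u_{2,0}$ gives $\|u_{2,0}\|_{Y_{0}^{2}}\leq C\|\triangle u_{2,0}\|_{Y_{0}}\leq CE_{2}$ and $\|\partial_{y}\widehat{u_{1,0}}\|_{Y_{0}^{2}}\leq\|\widehat{u_{1,0}}\|_{Y_{0}^{4}}\leq CE_{1,1}$, exactly as you describe.

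For $u_{3,0}\partial_{z}\widehat{u_{1,0}}$, however, there is a genuine gap. You correctly identify that the dangerous Leibniz term is $\triangle u_{3,0}\cdot\partial_{z}\widehat{u_{1,0}}$, but your proposed fix — ``put $\triangle u_{3,0}$ against $\|\partial_{z}\widehat{u_{1,0}}\|_{H^{3}}\leq E_{1,1}$ in $L^{\infty}_{t}$'' — is precisely the pairing that fails: $E_{2}$ does \emph{not} control $\|\triangle u_{3,0}\|_{L^{\infty}_{t}L^{2}}$, only the weighted quantity $\|\min(\nu^{\frac23}+\nu t,1)^{\frac12}\triangle u_{3,0}\|_{Y_{0}}$, so an unweighted pairing loses a factor $\min(\nu^{\frac23}+\nu t,1)^{-\frac12}\leq\nu^{-\frac13}$ and you would only obtain $C\nu^{-\frac13}E_{1,1}E_{2}$. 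The same loss occurs in the $L^{2}_{t}$ part through $\nu^{\frac12}\|\nabla\triangle u_{3,0}\|_{L^{2}L^{2}}$. One cannot dodge $\triangle u_{3,0}$ altogether: the term $\partial_{y}^{2}u_{3,0}\cdot\partial_{z}\widehat{u_{1,0}}$ is not removable by the divergence constraint. The missing ingredient is the time-decay of $\widehat{u_{1,0}}$: since $\widehat{u_{1,0}}|_{t=0}=0$ and $E_{1,1}$ contains $\nu^{-1}\|\partial_{t}\widehat{u_{1,0}}\|_{L^{\infty}H^{2}}$, one has $\|\widehat{u_{1,0}}(t)\|_{H^{2}}\leq E_{1,1}\min(\nu t,1)$, and interpolating with the $H^{4}$ bound gives $\|\partial_{z}\widehat{u_{1,0}}\|_{H^{2}}\leq\|\widehat{u_{1,0}}\|_{H^{2}}^{\frac12}\|\widehat{u_{1,0}}\|_{H^{4}}^{\frac12}\leq CE_{1,1}\min(\nu t,1)^{\frac12}$. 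This factor $\min(\nu t,1)^{\frac12}$ is exactly what cancels the degenerate weight in front of $\triangle u_{3,0}$ (and of $\nabla\triangle u_{3,0}$ in the $L^{2}_{t}$ piece), which is how the paper closes the estimate at $CE_{1,1}E_{2}$. Without this observation your argument does not yield the stated bound.
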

\begin{proof}
As $ \|f_{1}f_{2}\|_{H^{2}}\leq C\|f_{1}\|_{H^{2}}\|f_{2}\|_{H^{2}}$, we get
\begin{equation*}\label{y0}
	\begin{aligned}
		\|f_{1}f_{2}\|_{Y_{0}^{2}}^{2}=&\|f_{1}f_{2}\|_{L^{\infty}H^{2}}^{2}+\nu\|\nabla(f_{1}f_{2})\|_{L^{2}H^{2}}^{2}\leq C\|f_{1}\|_{Y_{0}^{2}}^{2}\|f_{2}\|_{Y_{0}^{2}}^{2},
	\end{aligned}
\end{equation*}
thus $ \|u_{2,0}\partial_{y}\widehat{u_{1,0}}\|_{Y_{0}^{2}}^{2}\leq C\|u_{2,0}\|_{Y_{0}^{2}}^{2}\|\partial_{y}\widehat{u_{1,0}}\|_{Y_{0}^{2}}^{2}. $
Moreover, due to $ \frac{1}{|\mathbb{T}|}\int_{\mathbb{T}}u_{2,0}dz=0, $ there holds
\begin{equation*}\label{U20 U0'}
	\|u_{2,0}\partial_{y}\widehat{u_{1,0}}\|_{Y_{0}^{2}}\leq C\|\triangle u_{2,0}\|_{Y_{0}}\|\widehat{u_{1,0}}\|_{Y_{0}^{4}}\leq CE_{2}E_{1,1}.
\end{equation*}
From (\ref{widehat u10 H2}), due to
\begin{equation}\label{partial_z U0}
	\|\partial_{z}\widehat{u_{1,0}}\|_{H^{2}}\leq \|\widehat{u_{1,0}}\|_{H^{3}}\leq \|\widehat{u_{1,0}}\|_{H^{2}}^{\frac12}\|\widehat{u_{1,0}}\|_{H^{4}}^{\frac12}\leq CE_{1,1}\min(\nu t, 1)^{\frac12},
\end{equation}
we have
\begin{equation}\label{U30 U0‘}
	\begin{aligned}
		&\|u_{3,0}\partial_{z}\widehat{u_{1,0}}\|_{H^{2}}\leq C\left(\|u_{3,0}\|_{L^{2}}+\|\triangle u_{3,0}\|_{L^{2}} \right)E_{1,1}\min(\nu t, 1)^{\frac12}\\\leq&C\big(\|u_{3,0}\|_{L^{\infty}L^{2}}+\|\min(\nu^{\frac23}+\nu t, 1)^{\frac12}\triangle u_{3,0}\|_{Y_{0}} \big)E_{1,1}\leq CE_{2}E_{1,1}.
	\end{aligned}
\end{equation}
Using Lemma \ref{sob_04} and (\ref{partial_z U0}),  direct calculations show that
\begin{equation*}
	\begin{aligned}
		&\|\nabla(u_{3,0}\partial_{z}\widehat{u_{1,0}})\|_{H^{2}}\\\leq&C\left(\|u_{3,0}\|_{L^{2}}\|\partial_{z}\widehat{u_{1,0}}\|_{H^{3}}+\|\triangle u_{3,0}\|_{L^{2}}\|\widehat{u_{1,0}}\|_{H^{4}}+\|\nabla\triangle u_{3,0}\|_{L^{2}}\|\partial_{z}\widehat{u_{1,0}}\|_{H^{2}} \right)\\\leq&C\big(E_{2}\|\nabla \widehat{u_{1,0}}\|_{H^{4}}+\|\triangle u_{3,0}\|_{L^{2}}E_{1,1}+\|\nabla\triangle u_{3,0}\|_{L^{2}}E_{1,1}\min(\nu t, 1)^{\frac12} \big),
	\end{aligned}
\end{equation*}
which indicates that
\begin{equation}\label{nable U30 U0'}
	\begin{aligned}
		\nu\|\nabla(u_{3,0}\partial_{z}\widehat{u_{1,0}})\|_{L^{2}H^{2}}^{2}\leq CE_{2}^{2}E_{1,1}^{2}.
	\end{aligned}
\end{equation}
It follows from (\ref{U30 U0‘}) and (\ref{nable U30 U0'}) that
\begin{equation*}\label{U30 U0' Y0}
	\|u_{3,0}\partial_{z}\widehat{u_{1,0}}\|_{Y_{0}^{2}}\leq CE_{2}E_{1,1}.
\end{equation*}
\end{proof}

\subsection{Interaction between non-zero modes}
\begin{lemma}\label{lem u theta}
It holds that
\begin{equation*}
	\|e^{2a\nu^{\frac13}t}\partial_{x}^{2}\left(u_{\neq}\Theta_{\neq} \right)\|_{L^{2}L^{2}}^{2}
	+\|e^{2a\nu^{\frac13}t}\partial_{z}(u_{\neq}\cdot\nabla\Theta_{\neq})\|_{L^{2}L^{2}}^{2}
	+\|e^{2a\nu^{\frac13}t}u_{\neq}\cdot\nabla\Theta_{\neq}\|_{L^{2}L^{2}}^{2}
	\leq C\nu^{-1}E_{4}^{2}E_{5}^{2}.
\end{equation*}
\end{lemma}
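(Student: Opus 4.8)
The plan is to estimate the three nonlinear quantities by expanding the product rule, distributing derivatives onto the factors, and then controlling each resulting term by H\"older's inequality together with the Sobolev embeddings already recorded in Lemma~\ref{lem omega} (which bounds $\|e^{\frac{a+b}{2}\nu^{1/3}t}\partial_x u_{\neq}\|_{Y_0}^2$ and $\|e^{\frac{a+b}{2}\nu^{1/3}t}\partial_z u_{3,\neq}\|_{Y_0}^2$ by $CE_4E_6$, hence in particular by $CE_4E_5$ after Cauchy--Schwarz on the energies, or more directly by splitting the weight $e^{2a\nu^{1/3}t}=e^{a\nu^{1/3}t}\cdot e^{a\nu^{1/3}t}$). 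The key point throughout is the weight bookkeeping: since $0<a<b<2a$ we have $2a\le a+b$ and $2a\le b+ a$, so a factor carrying weight $e^{a\nu^{1/3}t}$ (controlled by $X_a$) times a factor carrying weight $e^{a\nu^{1/3}t}$ (again $X_a$ or $X_b$ since $a\le b$) produces total weight $e^{2a\nu^{1/3}t}$, which is exactly what the left-hand side demands. So one always puts one copy of the exponential on $u_{\neq}$ and one on $\Theta_{\neq}$ (or its derivative).

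For the first term $\|e^{2a\nu^{1/3}t}\partial_x^2(u_{\neq}\Theta_{\neq})\|_{L^2L^2}^2$, I would write $\partial_x^2(u_{\neq}\Theta_{\neq}) = (\partial_x^2 u_{\neq})\Theta_{\neq} + 2(\partial_x u_{\neq})(\partial_x\Theta_{\neq}) + u_{\neq}(\partial_x^2\Theta_{\neq})$, then in each term estimate the lower-regularity factor in $L^\infty_{x,y,z}$ via the 3D embedding $H^2\hookrightarrow L^\infty$ (or the anisotropic embedding of Lemma~\ref{lem zero nonzero}-type used elsewhere) and the higher one in $L^2$. The factor $\partial_x^2 u_{\neq}$ pairs with $X_b$-control coming from $E_6$, $\partial_x^2\Theta_{\neq}$ with $X_b$-control from $E_5$, and the genuinely second-order $\Theta_{\neq}$ or $u_{\neq}$ factors with $E_4$, $E_5$ respectively; the $\nu^{-1}$ arises because in the $X_a$ norm the dissipative piece is $\nu\|e^{a\nu^{1/3}t}\nabla(\cdot)\|_{L^2L^2}^2$, so recovering an $L^2_tL^2$-bound on a first-derivative quantity costs $\nu^{-1}$. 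For the second term $\|e^{2a\nu^{1/3}t}\partial_z(u_{\neq}\cdot\nabla\Theta_{\neq})\|_{L^2L^2}^2$ I would again use the product rule, $\partial_z(u_{\neq}\cdot\nabla\Theta_{\neq}) = (\partial_z u_{\neq})\cdot\nabla\Theta_{\neq} + u_{\neq}\cdot\nabla\partial_z\Theta_{\neq}$, using $\mathrm{div}\,u_{\neq}=0$ to rewrite $u_{\neq}\cdot\nabla = \mathrm{div}(u_{\neq}\,\cdot\,)$ when convenient, and handle $\partial_z u_{\neq}$ through the divergence-free relation $\partial_z u_{3,\neq} = -\partial_x u_{1,\neq}-\partial_y u_{2,\neq}$ so that it is controlled by $\partial_x u_{\neq}$ and $\nabla u_{2,\neq}$, i.e.\ ultimately by $E_4$ and $E_6$; the third term is the same computation without the $\partial_z$. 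Crucially $\nabla\Theta_{\neq}$ must be expressed via $\partial_x^2$, $\partial_z^2$ derivatives controlled in $E_5$ using interpolation $\|\nabla\Theta_{\neq}\|\lesssim\|(\partial_x^2+\partial_z^2)\Theta_{\neq}\|^{1/2}\|\Theta_{\neq}\|^{1/2}$ together with the divergence-free/Poisson-type recovery already used in Lemma~\ref{lem omega}.

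I expect the main obstacle to be obtaining $\nabla\Theta_{\neq}$ (and more generally the first-order derivatives of the non-zero modes) with the correct power of $\nu$ and the correct exponential weight: since $E_5$ only directly controls $\partial_x^2\Theta_{\neq}$ (with weight $b$) and $\partial_z^2\Theta_{\neq}$ (with weight $a$), while there is \emph{no direct control of $\partial_y^2\Theta_{\neq}$ or of $\Theta_{\neq}$ itself at a useful order}, one has to be careful that the interpolation producing $\nabla\Theta_{\neq}$ really closes at the level $\nu^{-1}E_4^2E_5^2$ rather than leaking an extra negative power of $\nu$ or a worse weight (recall $2a<a+b$ strictly, so there is a small exponential margin to absorb lower-order weights, and this margin is exactly what must be used). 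The $\partial_z(u_{\neq}\cdot\nabla\Theta_{\neq})$ term is the delicate one because it is effectively a third-order nonlinear quantity, so the allocation of the three derivatives among $u_{\neq}$ and $\Theta_{\neq}$ — always keeping the top-order factor in $L^2$ and the remaining factor in $L^\infty$ with $H^2$-control — has to be chosen so that every piece lands in one of the controlled norms $E_4$, $E_5$, $E_6$; once that bookkeeping is set up the estimate follows by routine H\"older and interpolation.
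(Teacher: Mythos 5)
Your overall strategy (product rule, H\"older with anisotropic $L^\infty$--$L^2$ splittings from Lemma \ref{lem nonzreo}, the divergence-free identity to convert $\partial_x u_{1,\neq}$ and $\partial_z u_{3,\neq}$ into controlled quantities, and recovering the $\nu^{-1}$ from the dissipative part of the $X$-norms) is the same as the paper's. But there is a concrete error in your energy bookkeeping. You assert that the $u_{\neq}$ factors pair with ``$X_b$-control coming from $E_6$'' and that the resulting $CE_4E_6$ can be bounded ``in particular by $CE_4E_5$ after Cauchy--Schwarz on the energies.'' This substitution is false: under the bootstrap \eqref{ass} one has $E_6\sim\nu$ while $E_5\sim\nu^2$, so $E_4E_6$ is a full power of $\nu$ larger than $E_4E_5$, and a bound of the form $\nu^{-1}E_4^2E_6^2$ would not close the estimate for $E_3$ in Proposition \ref{prop:E22} (which needs exactly $\nu^{-2}E_4^2E_5^2\lesssim\nu^4$). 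The correct observation, which the paper uses, is that $E_6$ never enters: every $u_{\neq}$ factor appearing here, up to $\nabla\triangle u_{2,\neq}$ and $\nabla(\partial_x^2+\partial_z^2)u_{3,\neq}$, is controlled by $E_4$ alone via $(\ref{estimate_u})_1$ and the Poincar\'e inequality $\|f_{\neq}\|_{L^2}\le\|\partial_x f_{\neq}\|_{L^2}$ for non-zero modes, with exponential weight $a$ on each of the two factors so that the total weight $2a$ is exactly met.

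The second issue is your diagnosis that ``there is no direct control of $\partial_y^2\Theta_{\neq}$ or of $\Theta_{\neq}$ itself at a useful order.'' Since $\Theta_{\neq}$ has zero $x$-average, $\|\Theta_{\neq}\|_{L^2}\le\|\partial_x^2\Theta_{\neq}\|_{L^2}$ and $\|\nabla\Theta_{\neq}\|_{L^2}\le\|\nabla\partial_x^2\Theta_{\neq}\|_{L^2}$; the latter is controlled in $L^2_tL^2$ at cost $\nu^{-1/2}$ by the term $\nu\|e^{a\nu^{1/3}t}\nabla\partial_x^2\Theta_{\neq}\|_{L^2L^2}^2$ inside $\|\partial_x^2\Theta_{\neq}\|_{X_b}^2\le E_5^2$. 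This is how the paper obtains all occurrences of $\partial_y\Theta_{\neq}$ and $\partial_y\partial_z\Theta_{\neq}$ (via Lemma \ref{sob_15} paired with $\|\triangle u_{2,\neq}\|$). Your proposed interpolation $\|\nabla\Theta_{\neq}\|\lesssim\|(\partial_x^2+\partial_z^2)\Theta_{\neq}\|^{1/2}\|\Theta_{\neq}\|^{1/2}$ would not work for this purpose, since $\partial_x^2+\partial_z^2$ is not the full Laplacian and the inequality fails to see the $\partial_y$ component. With the $E_6$ dependence removed and the $\partial_y$-derivatives of $\Theta_{\neq}$ routed through $\nabla\partial_x^2\Theta_{\neq}$ and $\nabla\partial_z^2\Theta_{\neq}$ as above, your argument reduces to the paper's.
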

\begin{proof}
Due to Lemma \ref{lem omega} and Lemma \ref{lem nonzreo}, there holds
\begin{equation*}
	\begin{aligned}
		&\|\partial_{x}u_{\neq}\partial_{x}\Theta_{\neq}\|_{L^{2}}\leq\|\partial_{x}u_{\neq}\|_{L^{\infty}_{z}L^{2}_{x,y}}\|\partial_{x}\Theta_{\neq}\|_{L^{\infty}_{x,y}L^{2}_{z}}
		\\\leq&C\left(\|(\partial_{x}^{2}+\partial_{z}^{2})u_{3,\neq}\|_{L^{2}}+\|\triangle u_{2,\neq}\|_{L^{2}} \right)\|\nabla\partial_{x}^{2}\Theta_{\neq}\|_{L^{2}},
	\end{aligned}
\end{equation*}
which implies that 
\begin{equation}\label{0}
	\begin{aligned}
		\|e^{2a\nu^{\frac13}t}\partial_{x}u_{\neq}\partial_{x}\Theta_{\neq}\|_{L^{2}L^{2}}^{2}\leq C\nu^{-1}E_{4}^{2}E_{5}^{2}.	
	\end{aligned}
\end{equation}
Similarly,	by Lemma \ref{lem nonzreo} and Lemma \ref{lem omega} again, we have
\begin{equation*}
	\begin{aligned}
		&\|e^{2a\nu^{\frac13}t}\Theta_{\neq}\partial_{x}^{2}u_{\neq}\|_{L^{2}L^{2}}^{2}
		\leq\|e^{a\nu^{\frac13}t}\Theta_{\neq}\|_{L^{2}L^{\infty}}^{2}
		\|e^{a\nu^{\frac13}t}\partial_{x}^{2}u_{\neq}\|_{L^{\infty}L^{2}}^{2}
		\\\leq&C\|e^{a\nu^{\frac13}t}(\partial_{x}^{2},\partial_{z}^{2})\Theta_{\neq}\|_{L^{2}H^1}^{2}
		\big(\|e^{a\nu^{\frac13}t}(\partial_{x}^{2},\partial_{z}^{2})u_{3,\neq}\|_{L^{\infty}L^{2}}^{2}
		+\|e^{a\nu^{\frac13}t}\triangle u_{2,\neq}\|_{L^{\infty}L^{2}}^{2} \big)\leq C\nu^{-1}E_{4}^{2}E_{5}^{2}
	\end{aligned}
\end{equation*}
and
\begin{equation*}
	\begin{aligned}
		&\|e^{2a\nu^{\frac13}t}u_{\neq}\partial_{x}^{2}\Theta_{\neq}\|_{L^{2}L^{2}}^{2}
		\leq\|e^{a\nu^{\frac13}t}u_{\neq}\|_{L^{2}L^{\infty}}^{2}
		\|e^{a\nu^{\frac13}t}\partial_{x}^{2}\Theta_{\neq}\|_{L^{\infty}L^{2}}^{2}
		\\\leq&C\big(\|e^{a\nu^{\frac13}t}\nabla(\partial_{x}^{2}+\partial_{z}^{2})u_{3,\neq}\|_{L^{2}L^{2}}^{2}
		+\|e^{a\nu^{\frac13}t}\nabla\triangle u_{2,\neq}\|_{L^{2}L^{2}}^{2} \big)\|e^{a\nu^{\frac13}t}\partial_{x}^{2}\Theta_{\neq}\|_{L^{\infty}L^{2}}^{2}
		\leq C\nu^{-1}E_{4}^{2}E_{5}^{2},
	\end{aligned}
\end{equation*}
which along with (\ref{0}) imply that 
\begin{equation*}\label{xx u theta neq}
	\begin{aligned}
		\|e^{2a\nu^{\frac13}t}\partial_{x}^{2}(u_{\neq}\Theta_{\neq})\|_{L^{2}L^{2}}^{2}\leq C\nu^{-1}E_{4}^{2}E_{5}^{2}.
	\end{aligned}	
\end{equation*}

By Lemma \ref{lem nonzreo} and Lemma \ref{lem omega}, we have
\begin{equation*}\label{u infty}
	\|u_{\neq}\|_{L^{\infty}}\leq C\|\nabla(\partial_{x},\partial_{z})\partial_{x}u_{\neq}\|_{L^{2}}\leq C\left(\|\nabla(\partial_{x}^{2}+\partial_{z}^{2})u_{3,\neq}\|_{L^{2}}+\|\nabla\triangle u_{2,\neq}\|_{L^{2}} \right),
\end{equation*}
and for $ k\in\{1,3\}, $ there holds
\begin{equation*}
	\begin{aligned}
		\|\partial_{z}u_{k,\neq}\partial_{k}\Theta_{\neq}\|_{L^{2}}\leq C\left(\|(\partial_{x}^{2}+\partial_{z}^{2})u_{3,\neq}\|_{L^{2}}+\|\triangle u_{2,\neq}\|_{L^{2}} \right)\|\nabla(\partial_{x}^{2}+\partial_{z}^{2})\Theta_{\neq}\|_{L^{2}}.
	\end{aligned}
\end{equation*}
Moreover, Lemma \ref{sob_15} shows that
\begin{equation*}
	\|\partial_{z}u_{2,\neq}\partial_{y}\Theta_{\neq}\|_{L^{2}}\leq C\left(\|\partial_{y}\partial_{z}\Theta_{\neq}\|_{L^{2}}+\|\partial_{y}\Theta_{\neq}\|_{L^{2}} \right)\|\triangle u_{2,\neq}\|_{L^{2}}.
\end{equation*}
Using the above estimations, we obtain 
\begin{equation*}
	\begin{aligned}
		\|\partial_{z}(u_{\neq}\cdot\nabla\Theta_{\neq})\|_{L^{2}}^{2}
		\leq& C\left(\|(\partial_{x}^{2}+\partial_{z}^{2})u_{3,\neq}\|_{L^{2}}^{2}+\|\triangle u_{2,\neq}\|_{L^{2}}^{2} \right)\|\nabla(\partial_{x}^{2}+\partial_{z}^{2})\Theta_{\neq}\|_{L^{2}}^{2}\\	&{+C\left(\|\nabla(\partial_{x}^{2}+\partial_{z}^{2})u_{3,\neq}\|_{L^{2}}^{2}+\|\nabla\triangle u_{2,\neq}\|_{L^{2}}^{2} \right)\|(\partial_{x},\partial_{z})\partial_{z}\Theta_{\neq}\|_{L^{2}}^{2},
		}
	\end{aligned}
\end{equation*}
which implies that
\begin{equation*}\label{2a u theta'}
	\begin{aligned}
		\|e^{2a\nu^{\frac13}t}\partial_{z}(u_{\neq}\cdot\nabla\Theta_{\neq})\|_{L^{2}L^{2}}^{2}\leq C\nu^{-1}E_{4}^{2}E_{5}^{2}.
	\end{aligned}
\end{equation*}
Similarly, there hold
\begin{equation*}
	\begin{aligned}
	&\|u_{k,\neq}\partial_{k}\Theta_{\neq}\|_{L^{2}}\leq C\left(\|(\partial_{x}^{2}+\partial_{z}^{2})u_{3,\neq}\|_{L^{2}}+\|\triangle u_{2,\neq}\|_{L^{2}} \right)\|\nabla(\partial_{x}^{2}+\partial_{z}^{2})\Theta_{\neq}\|_{L^{2}},\\
	&\|u_{2,\neq}\partial_{y}\Theta_{\neq}\|_{L^{2}}\leq C\left(\|\partial_{y}\partial_{z}\Theta_{\neq}\|_{L^{2}}+\|\partial_{y}\Theta_{\neq}\|_{L^{2}} \right)\|\triangle u_{2,\neq}\|_{L^{2}},		
	\end{aligned}
\end{equation*}
which implies that 
$$\|e^{2a\nu^{\frac13}t}u_{\neq}\cdot\nabla\Theta_{\neq}\|_{L^{2}L^{2}}^{2}
\leq C\nu^{-1}E_{4}^{2}E_{5}^{2}.$$

We finish the proof.
\end{proof}
\begin{lemma}\label{lem 2}
It holds that
\begin{itemize}
	\item[(i)] 
	$\|e^{2a\nu^{\frac13}t}\nabla h_{2,3}\|_{L^{2}L^{2}}^{2}+\|e^{2a\nu^{\frac13}t}\triangle P^{4}\|_{L^{2}L^{2}}^{2}+\|e^{2a\nu^{\frac13}t}\nabla h_{2,6}\|_{L^{2}L^{2}}^{2}+\|e^{2a\nu^{\frac13}t}\nabla h_{3,6}\|_{L^{2}L^{2}}^{2}\\+\|e^{2a\nu^{\frac13}t}|u_{\neq}|^{2}\|_{L^{2}L^{2}}^{2}+\|e^{2a\nu^{\frac13}t}u_{\neq}\cdot\nabla u_{\neq}\|_{L^{2}L^{2}}^{2}+\|e^{2a\nu^{\frac13}t}(\partial_{x},\partial_{z})h_{3,3}\|_{L^{2}L^{2}}^{2}\leq C\nu^{-1}E_{4}^{4},$
	\item[(ii)]  
	$\|e^{2a\nu^{\frac13}t}\nabla h_{3,3}\|_{L^{2}L^{2}}^{2}+\|e^{2a\nu^{\frac13}t}\nabla(u_{\neq}\cdot\nabla u_{\neq})\|_{L^{2}L^{2}}^{2}
	\leq C\nu^{-\frac53}E_{4}^{4}.$
\end{itemize}
\end{lemma}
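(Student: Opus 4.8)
The plan is to regard each quantity listed in (i) and (ii) as a product, or a single derivative of a product, of non-zero modes, to reduce every derivative landing on the factors to the ``good'' pieces $\triangle u_{2,\neq}$, $(\partial_x^2+\partial_z^2)u_{3,\neq}$, their gradients, and the vorticity quantities $\nabla\omega_{2,\neq}$, $\triangle\omega_{2,\neq}$ by means of $\mathrm{div}\,u_\neq=0$ and Lemma~\ref{lem omega}, and finally to close the space--time norms by splitting the weight $e^{2a\nu^{\frac13}t}=e^{a\nu^{\frac13}t}e^{a\nu^{\frac13}t}$ evenly over the two factors and applying (anisotropic) H\"older in $(x,y,z)$ so as to place one factor in $L^\infty_t L^2_x$ and the other in $L^2_t L^2_x$, exactly as in the proof of Lemma~\ref{lem u theta}. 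Since $E_4=E_{4,1}+E_{4,2}$, the $L^\infty L^2$, the inviscid-damping, and the dissipative components of the $X_a$-norms inside $E_{4,1}$, together with the two components of $E_{4,2}$, are the only building blocks that occur; note both $X_a$ and $E_{4,2}$ carry the weight $e^{a\nu^{\frac13}t}$, matching the split above.

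For item (i) I would first establish, for each of $\nabla(u_\neq\cdot\nabla u_{2,\neq})$, $\triangle P^4=-\partial_i u_{j,\neq}\partial_j u_{i,\neq}$, $\nabla\partial_j P^4$ ($j=2,3$), $|u_\neq|^2$, $u_\neq\cdot\nabla u_\neq$ and $(\partial_x,\partial_z)(u_\neq\cdot\nabla u_{3,\neq})$, a pointwise-in-$t$ bound of its $L^2_x$-norm by a finite sum of products in which the first factor involves at most two derivatives of $u_\neq$ --- hence is controlled in $L^2_x$ by a sum of $\triangle u_{2,\neq}$ and $(\partial_x^2+\partial_z^2)u_{3,\neq}$, via $(\ref{estimate_u})_1$, $(\ref{estimate_u})_3$ and the anisotropic embeddings of Lemma~\ref{lem nonzreo}, Lemma~\ref{sob_04}, Lemma~\ref{sob_15} --- and the second factor is the gradient of such a ``good'' quantity. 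The structural point in (i) is that at most one derivative falls on the product beyond what the embeddings absorb, so the surplus derivative can be entirely assigned to the $L^2_t L^2_x$-factor; that factor is then bounded by $\nu^{-\frac12}E_4$ using the $\nu\|e^{a\nu^{\frac13}t}\nabla(\cdot)\|_{L^2L^2}^2$ term of $X_a$ (or, when the surplus derivative happens to be $\partial_x$, by $E_4$ with no loss via the inviscid-damping term $\|e^{a\nu^{\frac13}t}\nabla\triangle^{-1}\partial_x(\cdot)\|_{L^2L^2}$), while the $L^\infty_t L^2_x$-factor is bounded by $CE_4$. Multiplying gives $\le C\nu^{-\frac12}E_4^2$ in $L^2_t L^2_x$ for each term of (i), which squares to the asserted $C\nu^{-1}E_4^4$.

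For item (ii) the expressions $\nabla h_{3,3}=\nabla(u_\neq\cdot\nabla u_{3,\neq})$ and $\nabla(u_\neq\cdot\nabla u_\neq)$ put a second derivative onto $u_{3,\neq}$ in the $y$-direction --- the troublesome pieces being $u_{2,\neq}\partial_y^2 u_{3,\neq}$ and $\nabla u_{2,\neq}\,\partial_y u_{3,\neq}$. Because $\mathrm{div}\,u_\neq=0$ expresses only $\partial_y u_{2,\neq}$, not $\partial_y u_{3,\neq}$, in terms of $\partial_x,\partial_z$-derivatives, these cannot be reduced to $E_{4,1}$-quantities; I would instead write $\partial_y^2 u_{3,\neq}=\triangle u_{3,\neq}-(\partial_x^2+\partial_z^2)u_{3,\neq}$, recover $\triangle u_{3,\neq}$ from $\nabla\omega_{2,\neq}$ and the good components via $\mathrm{curl}$, and thereby see that the only genuinely new object in the $L^2_t L^2_x$-slot is a second gradient of $\omega_{2,\neq}$. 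This is controlled only through the $\nu^{\frac12}\|e^{a\nu^{\frac13}t}\triangle\omega_{2,\neq}\|_{L^2L^2}$ component of $E_{4,2}$ (possibly after interpolating with its $\|e^{a\nu^{\frac13}t}\nabla\omega_{2,\neq}\|_{L^\infty L^2}$ component), and since the total power of $\nu$ multiplying $\|\triangle\omega_{2,\neq}\|_{L^2L^2}$ inside $E_{4,2}$ is $\nu^{\frac56}$ rather than the $\nu^{\frac12}$ attached to $\|\nabla\triangle u_{2,\neq}\|_{L^2L^2}$ inside $X_a$, this slot is now only $\le \nu^{-\frac56}E_4$. The remaining factor, of first order in $u_\neq$, is still bounded by $CE_4$ in $L^\infty_t L^2_x$. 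Multiplying gives $\le C\nu^{-\frac56}E_4^2$ in $L^2_t L^2_x$ for each term of (ii), i.e. $\le C\nu^{-\frac53}E_4^4$ after squaring.

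The main obstacle is precisely the bookkeeping underlying the previous paragraph: identifying which derivative combinations of $u_{3,\neq}$ are recoverable from incompressibility and the $X_a$-controlled good pieces --- and so enjoy the full $\nu^{-1}$ dissipative gain --- and which (namely the $\partial_y$-derivatives of $u_{3,\neq}$) are not, so must be routed through the coarser $E_{4,2}$-norm, at the cost of the degradation from $\nu^{-1}$ to $\nu^{-\frac53}$ in (ii). The rest --- the exact choice of anisotropic Lebesgue exponents and the systematic use of the embedding lemmas --- is routine and parallels the proof of Lemma~\ref{lem u theta}.
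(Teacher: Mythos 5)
Your proposal is correct and follows essentially the same route as the paper: anisotropic product estimates (Lemmas \ref{lem nonzreo}, \ref{sob_04}, \ref{sob_15}) reduce every factor to $\triangle u_{2,\neq}$, $(\partial_x^2+\partial_z^2)u_{3,\neq}$ and their gradients via ${\rm div}\,u_{\neq}=0$ and Lemma \ref{lem omega}, the weight is split evenly and the $L^\infty_tL^2$ part of $X_a$ is paired with the $\nu\|\nabla\cdot\|_{L^2L^2}^2$ part to give $\nu^{-1}E_4^4$ in (i). Your diagnosis of (ii) also matches the paper's: the full gradient forces second $y$-derivatives of $u_{1,\neq},u_{3,\neq}$, which are only recoverable through $\triangle\omega_{2,\neq}$ in $E_{4,2}$ at the cost $\nu^{-5/6}$ instead of $\nu^{-1/2}$, yielding $\nu^{-5/3}E_4^4$ after squaring.
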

\begin{proof} 
For convenience, we denote 
\begin{equation*}
	\begin{aligned}
		&\Gamma_{1}=\|(\partial_{x}^{2}+\partial_{z}^{2})u_{3,\neq}\|_{L^{2}}+\|\triangle u_{2,\neq}\|_{L^{2}},\\
		&\Gamma_{2}=\|\nabla(\partial_{x}^{2}+\partial_{z}^{2})u_{3,\neq}\|_{L^{2}}+\|\nabla\triangle u_{2,\neq}\|_{L^{2}}. 
	\end{aligned}
\end{equation*}
%	Direct calculations indicate that
%	\begin{equation}\label{gamma 1}
	%		\begin{aligned}
		%			\|e^{a\nu^{\frac13}t}\Gamma_{1}\|_{L^{\infty}(0,T)}\leq&\|e^{a\nu^{\frac13}t}(\partial_{x}^{2}+\partial_{z}^{2})u_{3,\neq}\|_{L^{\infty}L^{2}}+\|e^{a\nu^{\frac13}t}\triangle u_{2,\neq}\|_{L^{\infty}L^{2}}\\\leq&\|(\partial_{x}^{2}+\partial_{z}^{2})u_{3,\neq}\|_{X_{a}}+\|\triangle u_{2,\neq}\|_{X_{a}}\leq E_{4},
		%		\end{aligned}
	%	\end{equation}
%	and
%	\begin{equation}\label{gamma 2}
	%		\begin{aligned}
		%			\|e^{a\nu^{\frac13}t}\Gamma_{2}\|_{L^{2}(0,T)}\leq&\|e^{a\nu^{\frac13}t}\nabla(\partial_{x}^{2}+\partial_{z}^{2})u_{3,\neq}\|_{L^{2}L^{2}}+\|\nabla\triangle u_{2,\neq}\|_{L^{2}L^{2}}\\\leq&\nu^{-\frac12}\|(\partial_{x}^{2}+\partial_{z}^{2})u_{3,\neq}\|_{X_{a}}+\nu^{-\frac12}\|\triangle u_{2,\neq}\|_{X_{a}}\leq\nu^{-\frac12}E_{4}.
		%		\end{aligned}
	%	\end{equation}

\textbf{Proof of (i).}	{\underline{\textbf{Estimate $ |u_{\neq}|^{2},$ $ u_{\neq}\cdot\nabla u_{\neq}  $ and $ (\partial_{x},\partial_{z})h_{3,3}. $ }}} Using Lemma \ref{sob_04}, we get
\begin{equation*}
	\begin{aligned}
		\||u_{\neq}|^{2}\|_{L^{2}}\leq&C\left(\|\partial_{x}u_{\neq}\|_{H^{1}}+\|u_{\neq}\|_{H^{1}} \right)\left(\|\partial_{x}u_{\neq}\|_{L^{2}}+\|u_{\neq}\|_{L^{2}} \right)\\\leq&C\|\nabla\partial_{x}^{2}u_{\neq}\|_{L^{2}}\|\partial_{x}^{2}u_{\neq}\|_{L^{2}},
	\end{aligned}
\end{equation*}
and for $ s\in\{1,3\}, $ there hold
\begin{equation}\label{U1}
	\begin{aligned}
		\|u_{s,\neq}\partial_{s}u_{\neq}\|_{L^{2}}+\|\partial_{x}(u_{s,\neq}\partial_{s}u_{\neq})\|_{L^{2}}\leq&C\|(\partial_{x}u_{s,\neq},u_{s,\neq})\|_{H^{1}}\|(\partial_{x}\partial_{s}u_{\neq}, \partial_{s}u_{\neq})\|_{L^{2}}\\&+C\|(\partial_{x}u_{s,\neq},u_{s,\neq})\|_{L^{2}}\|(\partial_{x}\partial_{s}u_{\neq},\partial_{s}u_{\neq})\|_{H^{1}}\\\leq&C\|\nabla(\partial_{x},\partial_{z})\partial_{x}u_{\neq}\|_{L^{2}}\|(\partial_{x},\partial_{z})\partial_{x}u_{\neq}\|_{L^{2}}
	\end{aligned}
\end{equation}
and
\begin{equation*}
	\begin{aligned}
		\|\partial_{z}\left(u_{s,\neq}\partial_{s}u_{3,\neq} \right)\|_{L^{2}}\leq&C\|\nabla(\partial_{x},\partial_{z})\partial_{x}u_{\neq}\|_{L^{2}}\|(\partial_{x}^{2}+\partial_{z}^{2})u_{3,\neq}\|_{L^{2}}\\&+C\|(\partial_{x},\partial_{z})\partial_{x}u_{\neq}\|_{L^{2}}\|\nabla(\partial_{x}^{2}+\partial_{z}^{2})u_{3,\neq}\|_{L^{2}}.
	\end{aligned}
\end{equation*}
For $ s=2, $ by Lemma \ref{sob_15}, we obtain
\begin{equation}\label{U1 1}
	\begin{aligned}
		\|u_{2,\neq}\partial_{y}u_{\neq}\|_{L^{2}}+\|(\partial_{x},\partial_{z})(u_{2,\neq}\partial_{y}u_{\neq})\|_{L^{2}}
		\leq C\|\nabla(\partial_{x},\partial_{z})\partial_{x}u_{\neq}\|_{L^{2}}\|\triangle u_{2,\neq}\|_{L^{2}}.
	\end{aligned}
\end{equation}

Due to ${\rm div}~u_{\neq}=0,$ we infer from above estimates that
\begin{equation}\label{U h33}
	\begin{aligned}
		&\||u_{\neq}|^{2}\|_{L^{2}}+\|u_{\neq}\cdot\nabla u_{\neq}\|_{L^{2}}+\|(\partial_{x},\partial_{z})h_{3,3}\|_{L^{2}}\\\leq&C\|\nabla(\partial_{x},\partial_{z})\partial_{x}u_{\neq}\|_{L^{2}}\big( \|(\partial_{x}^{2}+\partial_{z}^{2})u_{3,\neq}\|_{L^{2}}+\|\triangle u_{2,\neq}\|_{L^{2}}	\big)\\&+C\|(\partial_{x},\partial_{z})\partial_{x}u_{\neq}\|_{L^{2}}\|\nabla(\partial_{x}^{2}+\partial_{z}^{2})u_{3,\neq}\|_{L^{2}},
	\end{aligned}
\end{equation}
which along with Lemma \ref{lem omega} and  (\ref{U h33}) imply that 
\begin{equation}\label{result U h33}
	\begin{aligned}
		&\|e^{2a\nu^{\frac13}t}|u_{\neq}|^{2}\|_{L^{2}L^{2}}^{2}+\|e^{2a\nu^{\frac13}t}u_{\neq}\cdot\nabla u_{\neq}\|_{L^{2}L^{2}}^{2}+\|e^{2a\nu^{\frac13}t}(\partial_{x},\partial_{z})h_{3,3}\|_{L^{2}L^{2}}^{2}\leq C\nu^{-1}E_{4}^{4}.
	\end{aligned}
\end{equation}

{\underline{\textbf{Estimate $ \nabla h_{2,3}. $}}} For $ s\in\{1,3\}, $ using Lemma \ref{sob_15}, we have
\begin{equation*}
	\begin{aligned}
		\|\nabla\left(u_{s,\neq}\partial_{s}u_{2,\neq} \right)\|_{L^{2}}\leq&\|\nabla u_{s,\neq}\partial_{s}u_{2,\neq}\|_{L^{2}}+\|u_{s,\neq}\nabla\partial_{s}u_{2,\neq}\|_{L^{2}}\\\leq&C\left(\|\nabla\partial_{s}u_{s,\neq}\|_{L^{2}}+\|\nabla u_{s,\neq}\|_{L^{2}} \right)\|\triangle u_{2,\neq}\|_{L^{2}}\\&+C\left(\|\partial_{s}u_{s,\neq}\|_{L^{2}}+\|u_{s,\neq}\|_{L^{2}} \right)\|\nabla\triangle u_{2,\neq}\|_{L^{2}}.
	\end{aligned}
\end{equation*}
For $ s=2, $ it follows that
\begin{equation*}
	\begin{aligned}
		\|\nabla\left(u_{2,\neq}\partial_{y}u_{2,\neq} \right)\|_{L^{2}}\leq C\|u_{2,\neq}\|_{H^{2}}\|\nabla u_{2,\neq}\|_{H^{2}}\leq C\|\triangle u_{2,\neq}\|_{L^{2}}\|\nabla\triangle u_{2,\neq}\|_{L^{2}}.
	\end{aligned}
\end{equation*}
Hence for $ h_{2,3}=u_{\neq}\cdot\nabla u_{2,\neq}, $ by using  Lemma \ref{lem omega}, we obtain
\begin{equation*}
	\begin{aligned}
		\|\nabla h_{2,3}\|_{L^{2}}\leq&C\left(\|(\partial_{x}^{2}+\partial_{z}^{2})u_{3,\neq}\|_{L^{2}}+\|\triangle u_{2,\neq}\|_{L^{2}} \right)\\&\cdot\left(\|\nabla(\partial_{x}^{2}+\partial_{z}^{2})u_{3,\neq}\|_{L^{2}}+\|\nabla\triangle u_{2,\neq}\|_{L^{2}} \right)
	\end{aligned}
\end{equation*}	
and
\begin{equation}\label{h23}
	\begin{aligned}
		\|e^{2a\nu^{\frac13}t}\nabla h_{2,3}\|_{L^{2}L^{2}}^{2}\leq C\|e^{a\nu^{\frac13}t}\Gamma_{1}\|_{L^{\infty}(0,T)}^{2}\|e^{a\nu^{\frac13}t}\Gamma_{2}\|_{L^{2}(0,T)}^{2}\leq C\nu^{-1}E_{4}^{4}.
	\end{aligned}
\end{equation}

{\underline{\textbf{Estimate $ \triangle P^{4}, \nabla h_{2,6} $ and $ \nabla h_{3,6}. $}}} Due to $ {\rm{div}}~u_{\neq}=0, $ we notice that
\begin{equation*}
	\triangle P^{4}=-\partial_{i}u_{j,\neq}\partial_{j}u_{i,\neq}=-\partial_{j}\left(\partial_{i}u_{j,\neq}u_{i,\neq} \right)=-\partial_{j}\left(u_{\neq}\cdot\nabla u_{j,\neq} \right),
\end{equation*}
which implies that 
\begin{equation}\label{first P4}
	\begin{aligned}
		\|e^{2a\nu^{\frac13}t}\triangle P^{4}\|_{L^{2}L^{2}}^{2}\leq&\|e^{2a\nu^{\frac13}t}\partial_{x}(u_{\neq}\cdot\nabla u_{1,\neq})\|_{L^{2}L^{2}}^{2}+\|e^{2a\nu^{\frac13}t}\partial_{y}(u_{\neq}\cdot\nabla u_{2,\neq})\|_{L^{2}L^{2}}^{2}\\&+\|e^{2a\nu^{\frac13}t}\partial_{z}(u_{\neq}\cdot\nabla u_{3,\neq})\|_{L^{2}L^{2}}^{2}:=J_{1}+J_{2}+J_{3}.
	\end{aligned}
\end{equation}
Using (\ref{U1}), (\ref{U1 1}), (\ref{result U h33}), (\ref{h23}) and Lemma \ref{lem omega}, we obtain 
\begin{equation*}
	\begin{aligned}
		&J_{1}\leq C\|e^{a\nu^{\frac13}t}\Gamma_{1}\|_{L^{\infty}(0,T)}^{2}\|e^{a\nu^{\frac13}t}\Gamma_{2}\|_{L^{2}(0,T)}^{2}\leq C\nu^{-1}E_{4}^{4},\\
		&J_{2}\leq\|e^{2a\nu^{\frac13}t}\nabla h_{2,3}\|_{L^{2}L^{2}}^{2}\leq C\nu^{-1}E_{4}^{4},\\
		&J_{3}\leq\|e^{2a\nu^{\frac13}t}\partial_{z}h_{3,3}\|_{L^{2}L^{2}}^{2}\leq C\nu^{-1}E_{4}^{4}.
	\end{aligned}
\end{equation*}
Combining the estimates of $ J_{1}-J_{3}, $ (\ref{first P4}) yields that
\begin{equation}\label{P4}
	\|e^{2a\nu^{\frac13}t}\triangle P^{4}\|_{L^{2}L^{2}}^{2}\leq C\nu^{-1}E_{4}^{4}.
\end{equation}

Recall $ h_{s,6}=\partial_{s}P^{4} $ for $ s\in\{2,3\}, $ then there holds
\begin{equation}\label{h26 h36}
	\|e^{2a\nu^{\frac13}t}\nabla h_{s,6}\|_{L^{2}L^{2}}^{2}\leq \|e^{2a\nu^{\frac13}t}\triangle P^{4}\|_{L^{2}L^{2}}^{2}\leq C\nu^{-1}E_{4}^{4}.
\end{equation}
Collecting (\ref{result U h33}), (\ref{h23}), (\ref{P4}) and (\ref{h26 h36}), the first inequality holds.

{\textbf{Proof of (ii).}} 
Using $(\ref{f_neq}),$ one deduces
\begin{equation*}
	\begin{aligned}
		\|\nabla(f_{1}f_{2})\|_{L^{2}}\leq&\|f_{2}\|_{L^{\infty}_{x}L^{2}_{y,z}}\|\nabla f_{1}\|_{L^{\infty}_{y,z}L^{2}_{x}}+\|f_{1}\|_{L^{\infty}_{x,z}L^{2}_{y}}\|\nabla f_{2}\|_{L^{\infty}_{y}L^{2}_{x,z}}\\\leq&C\left(\|(\partial_{z}f_{1}, f_{1})\|_{H^{2}}\|(\partial_{x}f_{2}, f_{2})\|_{L^{2}}+\|(\partial_{x}\partial_{z}f_{1},\partial_{x}f_{1},\partial_{z}f_{1}, f_{1})\|_{L^{2}}\|f_{2}\|_{H^{2}} \right).
	\end{aligned}
\end{equation*}
For $ s\in\{1, 3\},$ by Lemma \ref{lem omega}, there holds
\begin{equation*}
	\begin{aligned}
		\|\nabla(u_{s,\neq}\partial_{s}u_{\neq})\|_{L^{2}}^{2}
		\leq C\left(\|\nabla\triangle u_{2,\neq}\|_{L^{2}}^{2}+\|\triangle\omega_{2,\neq}\|_{L^{2}}^{2} \right)
		\left(\|(\partial_{x}^{2}+\partial_{z}^{2})u_{3,\neq}\|_{L^{2}}^{2}+\|\triangle u_{2,\neq}\|_{L^{2}}^{2} \right),
	\end{aligned}
\end{equation*}
and for $ s=2, $ we have
\begin{equation*}
	\begin{aligned}
		\|\nabla(u_{2,\neq}\partial_{y}u_{\neq})\|_{L^{2}}^{2}\leq&C\|(\partial_{z}u_{2,\neq},u_{2,\neq})\|_{H^{1}}\|(\partial_{x}\partial_{y}u_{\neq},\partial_{y}u_{\neq})\|_{H^{1}}\\\leq&C\|\triangle u_{2,\neq}\|_{L^{2}}^{2}\left(\|\nabla\triangle u_{2,\neq}\|_{L^{2}}^{2}+\|\triangle\omega_{2,\neq}\|_{L^{2}}^{2} \right).
	\end{aligned}
\end{equation*}
Therefore, we conclude that
\begin{equation*}\label{u cdot u'}
	\begin{aligned}
		\|e^{2a\nu^{\frac13}t}\nabla(u_{\neq}\cdot\nabla u_{\neq})\|_{L^{2}L^{2}}^{2}
		\leq C\left(\nu^{-1}E_{4,1}^{2}+\nu^{-\frac53}E_{3,3}^{2} \right)E_{4,1}^{2}\leq C\nu^{-\frac53}E_{4}^{4},
	\end{aligned}
\end{equation*}
which along with $\|\nabla h_{3,3}\|_{L^{2}}\leq \|\nabla(u_{\neq}\cdot\nabla u_{\neq})\|_{L^{2}} $ give the second result.
\end{proof}

\subsection{Interaction between zero mode and non-zero mode}
\begin{lemma}\label{lem non zero mode 1}
For $ j\in\{2,3\}, $ it holds that
\begin{equation}\label{uj0 theta}
	\begin{aligned}
		\|e^{a\nu^{\frac13}t}\partial_{z}(u_{j,0}\partial_{j}\Theta_{\neq})\|_{L^{2}L^{2}}^{2}&\leq C\nu^{-1}E_{2}^{2}E_{5}^{2},\\
		\|e^{a\nu^{\frac13}t}\partial_{z}(u_{j,\neq}\partial_{j}\Theta_{0})\|_{L^{2}L^{2}}^{2}&\leq C\nu^{-1}E_{3}^{2}E_{4}^{2},\\
		\nu^{-\frac13}\|e^{a\nu^{\frac13}t}\partial_{z}\widehat{u_{1,0}}\partial_{x}\partial_{z}\Theta_{\neq}\|_{L^{2}L^{2}}^{2}+\nu^{-1}\|e^{a\nu^{\frac13}t}&\partial_{z}\widehat{u_{1,0}}\partial_{x}\Theta_{\neq}\|_{L^{2}L^{2}}^{2}\leq CE_{1}^{2}E_{5}^{2}.
	\end{aligned}
\end{equation}
\end{lemma}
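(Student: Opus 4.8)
The plan is to establish the three inequalities in (\ref{uj0 theta}) by a common strategy: reduce each product to a pointwise $L^2$ bound, split the derivative by the Leibniz rule, put one factor in an $L^\infty$-type norm (in suitable mixed Lebesgue spaces to exploit the anisotropy) and the other in $L^2$, then bound the two factors by the energy functionals using Lemma~\ref{lem omega}, Lemma~\ref{zero mode}, Lemma~\ref{U10} and the nonzero-mode embeddings, and finally integrate in time against the exponential weight, trading one spatial derivative for the $\nu^{-1/2}$ coming from the viscous part of the $X$- and $Y_0$-norms.

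For the first inequality I would write $\partial_z(u_{j,0}\partial_j\Theta_{\neq})=\partial_z u_{j,0}\,\partial_j\Theta_{\neq}+u_{j,0}\,\partial_z\partial_j\Theta_{\neq}$. The factors involving $u_{j,0}$ are zero modes, so they depend only on $(y,z)$ and by Lemma~\ref{zero mode} satisfy $\|u_{j,0}\|_{L^\infty L^\infty}\le CE_2$ and $\nu^{1/2}\|\nabla u_{j,0}\|_{L^2 L^\infty}\le CE_2$; the $\Theta_{\neq}$ factors are controlled, after at most two $\partial_x$ or $\partial_z$ derivatives, by $E_5=\|\partial_x^2\Theta_{\neq}\|_{X_b}+\|\partial_z^2\Theta_{\neq}\|_{X_a}$ together with the nonzero-mode Poincaré inequality $\|\Theta_{\neq}\|_{L^2}\le C\|\partial_x\Theta_{\neq}\|_{L^2}$. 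One must be a little careful with the term $u_{2,0}\partial_z\partial_y\Theta_{\neq}$, where a $y$-derivative hits $\Theta$; here I would use the anisotropic Sobolev lemma (as in the proof of Lemma~\ref{lem u theta}, invoking the analogue of Lemma~\ref{sob_15}) to bound $\|u_{2,0}\partial_y\partial_z\Theta_{\neq}\|_{L^2}\le C\|\triangle u_{2,0}\|_{L^\infty}$-type quantity times $\|(\partial_x,\partial_z)\partial_z\Theta_{\neq}\|_{L^2}$, still closing against $E_2 E_5$. Choosing the $L^\infty_t$ norm on the $u_{j,0}$ factor and $L^2_t$ on the $\Theta$ factor, and noting the weight $e^{a\nu^{1/3}t}$ is already present in the $X_a$, $X_b$ norms, gives $\le C\nu^{-1}E_2^2 E_5^2$ after absorbing the $\nu^{-1/2}$ from $\nu^{1/2}\|\nabla u_{2,0}\|_{L^2L^\infty}$.

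The second inequality is completely symmetric: now $u_{j,\neq}$ is controlled by $E_4$ through Lemma~\ref{lem omega} (in particular $\|u_{\neq}\|_{L^\infty}\le C\Gamma_2$ and $\|\partial_z u_{j,\neq}\|$ estimates as in Lemma~\ref{lem u theta}), while $\Theta_0$ and its derivatives up to $\partial_z\nabla$ are controlled by $E_3=\|\Theta_0\|_{Y_0}+\|\nabla\Theta_0\|_{Y_0}+\|\partial_z\nabla\Theta_0\|_{Y_0}$; the same Leibniz split and $L^\infty_t L^2$ / $L^2_t L^\infty$ pairing, using $\nu^{1/2}\|\nabla\Theta_0\|_{L^2 H^1}\le CE_3$, yields $\le C\nu^{-1}E_3^2 E_4^2$. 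For the third inequality, $\partial_z\widehat{u_{1,0}}$ is a zero mode with the decay from Lemma~\ref{U10}: $\|\widehat{u_{1,0}}\|_{H^2}\le E_{1,1}\min(\nu t,1)$ and, by interpolation, $\|\partial_z\widehat{u_{1,0}}\|_{H^2}\le CE_{1,1}\min(\nu t,1)^{1/2}$ as in (\ref{partial_z U0}); pairing $\partial_z\widehat{u_{1,0}}$ in $L^\infty_z L^\infty$ (bounded by $CE_1$ up to the $\min(\nu t,1)^{1/2}$ gain) against $\partial_x\partial_z\Theta_{\neq}$ or $\partial_x\Theta_{\neq}$ in $L^2$, and using $\|\partial_x\Theta_{\neq}\|_{L^2}\le C\|\partial_x^2\Theta_{\neq}\|_{L^2}\le$ (the $X_b$ part of $E_5$) together with $\int_0^\infty e^{2a\nu^{1/3}t}\min(\nu t,1)\,dt\lesssim \nu^{-1/3}\cdot\nu^{1/3}=1$ versus the $\nu^{-1}$ (resp. $\nu^{-1/3}$) prefactors, accounts for the stated powers. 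The main obstacle I anticipate is bookkeeping the correct exponential weights: $E_5$ mixes the two rates $a$ and $b$ (with $b<2a$), so I would have to check that $\|\partial_x\Theta_{\neq}\|$ is paired against the $b$-weighted piece $\|\partial_x^2\Theta_{\neq}\|_{X_b}$ while $\partial_z$-loaded terms use the $a$-weighted $\|\partial_z^2\Theta_{\neq}\|_{X_a}$, so that after multiplying by $e^{a\nu^{1/3}t}$ the total weight never exceeds what the $X$-norms supply (this is exactly the point flagged in the remark on why the weight on $\partial_x^2$-quantities is $b$ and not $a$); the rest is routine anisotropic Sobolev interpolation.
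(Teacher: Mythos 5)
Your treatment of the first two inequalities follows the paper's route: Leibniz split, anisotropic product estimates (Lemma~\ref{sob_15} / Lemma~\ref{lem zero nonzero}), the zero-mode factor in $L^\infty_t$ and the factor carrying a $y$-derivative in $L^2_t$ so that the $\nu^{-1}$ is harvested from the dissipation part of the $X$- and $Y_0$-norms. One small bookkeeping correction for $(\ref{uj0 theta})_1$: since $E_5$ gives no $L^\infty_t$ control of $y$-derivatives of $\Theta_{\neq}$ (only $\partial_x^2$ and $\partial_z^2$ appear, and $\partial_y$ is reachable only through $\nu\|\nabla\partial_x^2\Theta_{\neq}\|_{L^2L^2}^2$-type terms), the pairing is forced the other way from what your last sentence suggests: $u_{j,0}$ must sit in $L^\infty_t$ (via $\|\triangle u_{j,0}\|_{L^\infty L^2}\le E_2$, using Lemma~\ref{sob_15} to move derivatives off $u_{j,0}$) and the $\Theta_{\neq}$ factor in $L^2_tL^2$; the $\nu^{-1}$ then comes from $\Theta$, not from $\nu^{1/2}\|\nabla u_{2,0}\|_{L^2L^\infty}$.

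The genuine gap is in $(\ref{uj0 theta})_3$, precisely the term with the $\nu^{-1}$ prefactor. Bounding $\partial_z\widehat{u_{1,0}}$ in $L^\infty$ costs $H^3$ regularity, and by the interpolation \eqref{partial_z U0} this only yields the decay $\min(\nu t,1)^{1/2}$, not $\min(\nu t,1)$. Tracking the powers: $\|\partial_z\widehat{u_{1,0}}\partial_x\Theta_{\neq}\|_{L^2}\le CE_1(\nu t)^{1/2}\|\partial_x^2\Theta_{\neq}\|_{L^2}$, and since $\nu t\le C\nu^{2/3}e^{(b-a)\nu^{1/3}t}$ one gets
\begin{equation*}
\nu^{-1}\|e^{a\nu^{\frac13}t}\partial_z\widehat{u_{1,0}}\partial_x\Theta_{\neq}\|_{L^2L^2}^2
\le C\nu^{-1}E_1^2\cdot\nu^{\frac23}\|e^{b\nu^{\frac13}t}\partial_x^2\Theta_{\neq}\|_{L^2L^2}^2
\le C\nu^{-\frac23}E_1^2E_5^2,
\end{equation*}
which overshoots the target by $\nu^{-2/3}$. (Your displayed integral $\int_0^\infty e^{2a\nu^{1/3}t}\min(\nu t,1)\,dt$ is also divergent as written, which signals that the weight accounting is off.) The paper closes this term differently: it keeps the \emph{full} linear decay $\|\triangle\widehat{u_{1,0}}\|_{L^2}\le E_{1,1}\min(\nu t,1)$ at the $H^2$ level by applying the product estimate of Lemma~\ref{sob_15} with $\widehat{u_{1,0}}$ in the $\|\triangle\cdot\|_{L^2}$ slot (so no $L^\infty$ bound on $\widehat{u_{1,0}}$ is ever needed), transfers the derivatives onto $\Theta_{\neq}$, and then splits them via $\|\partial_x\partial_z f\|_{L^2}\le\|\partial_x^2f\|_{L^2}^{1/2}\|\partial_z^2f\|_{L^2}^{1/2}$ so that the entire growth $(\nu t)^2\le C\nu^{4/3}e^{2(b-a)\nu^{1/3}t}$ is absorbed by the $b$-weighted factor $\|\partial_x^2\Theta_{\neq}\|_{X_b}$ while the other half lands on the $a$-weighted $\|\partial_z^2\Theta_{\neq}\|_{X_a}$; this yields $\nu^{-1}\cdot\nu^{4/3}\cdot\nu^{-1/6}\cdot\nu^{-1/6}=\nu^0$ and closes. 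So the ingredient you are missing is not the $a$-versus-$b$ weight interplay (which you correctly flag) but the need to measure $\widehat{u_{1,0}}$ in $H^2$ rather than $L^\infty$ in order to retain the full factor $\nu t$ rather than $(\nu t)^{1/2}$.
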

\begin{proof}
{\bf\underline{Estimate $ (\ref{uj0 theta})_{1}. $}} For $ j\in\{2,3\}, $ using Lemma \ref{sob_15}, we get
\begin{equation*}
	\begin{aligned}
		&\|\partial_{z}(u_{j,0}\partial_{j}\Theta_{\neq})\|_{L^{2}}\leq \|\partial_{z}u_{j,0}\partial_{j}\Theta_{\neq}\|_{L^{2}}+\|u_{j,0}\partial_{z}\partial_{j}\Theta_{\neq}\|_{L^{2}}
		\\\leq&C\left(\|\partial_{z}\partial_{j}\Theta_{\neq}\|_{L^{2}}+\|\partial_{j}\Theta_{\neq}\|_{L^{2}} \right)\|\triangle u_{j,0}\|_{L^{2}}+\|u_{j,0}\|_{L^{\infty}}\|\partial_{z}\partial_{j}\Theta_{\neq}\|_{L^{2}},
	\end{aligned}
\end{equation*}
which along with  Lemma \ref{zero mode} indicate that 
\begin{equation*}\label{02}
	\begin{aligned}
		&\|e^{a\nu^{\frac13}t}\partial_{z}(u_{j,0}\partial_{j}\Theta_{\neq})\|_{L^{2}L^{2}}^{2}\\\leq&C\left(\|e^{a\nu^{\frac13}t}\partial_{z}\partial_{j}\Theta_{\neq}\|_{L^{2}L^{2}}^{2}+\|e^{a\nu^{\frac13}t}\partial_{j}\Theta_{\neq}\|_{L^{2}L^{2}}^{2} \right)\|\triangle u_{j,0}\|_{L^{\infty}L^{2}}^{2}\leq C\nu^{-1}E_{2}^{2}E_{5}^{2}.
	\end{aligned}
\end{equation*}

{\bf\underline{Estimate $ (\ref{uj0 theta})_{2}. $}} 
Direct calculations show that 
\begin{equation*}\label{04}
	\begin{aligned}
		&\|e^{a\nu^{\frac13}t}\partial_{z}(u_{2,\neq}\partial_{y}\Theta_{0})\|_{L^{2}L^{2}}^{2}\leq C\|\partial_{y}\Theta_{0}\|_{L^{2}H^{1}}^{2}\|e^{a\nu^{\frac13}t}\triangle u_{2,\neq}\|_{L^{\infty}L^{2}}^{2}\\\leq&C\nu^{-1}\left(\|\Theta_{0}\|_{Y_{0}}^{2}+\|\partial_{z}\nabla\Theta_{0}\|_{Y_{0}}^{2} \right)\|\triangle u_{2,\neq}\|_{X_{a}}^{2}\leq C\nu^{-1}E_{3}^{2}E_{4}^{2}.
	\end{aligned}
\end{equation*}
Using Lemma \ref{lem zero nonzero}, Lemma \ref{lem nonzreo} and Lemma \ref{lem omega}, one deduces
\begin{equation*}
	\begin{aligned}
		\|\partial_{z}(u_{3,\neq}\partial_{z}\Theta_{0})\|_{L^{2}}\leq&\|\partial_{z}u_{3,\neq}\|_{L^{\infty}_{z}L^{2}_{y,x}}\|\partial_{z}\Theta_{0}\|_{L^{\infty}_{y}L^{2}_{z}}
		+\|u_{3,\neq}\|_{L^{\infty}}\|\partial_{z}^{2}\Theta_{0}\|_{L^{2}}\\\leq& C\left(\|\nabla(\partial_{x}^{2}+\partial_{z}^{2})u_{3,\neq}\|_{L^{2}}+\|\nabla\triangle u_{2,\neq}\|_{L^{2}}^{2} \right)\|\nabla\partial_{z}\Theta_{0}\|_{L^{2}},
	\end{aligned}
\end{equation*}
which follows that
\begin{equation*}
	\begin{aligned}
		&\|e^{a\nu^{\frac13}t}\partial_{z}(u_{3,\neq}\partial_{z}\Theta_{0})\|^2_{L^{2}L^2}\\\leq&C\nu^{-1}\left(\|(\partial_{x}^{2}+\partial_{z}^{2})u_{3,\neq}\|_{X_{a}}^{2}+\|\triangle u_{2,\neq}\|_{X_{a}}^{2} \right)\|\nabla\partial_{z}\Theta_{0}\|_{Y_{0}}^{2}\leq C\nu^{-1}E_{3}^{2}E_{4}^{2}.
	\end{aligned}
\end{equation*}

{\bf\underline{Estimate $ (\ref{uj0 theta})_{3}. $}} 	Due to Lemma \ref{sob_15} and Lemma \ref{U10}, there holds
\begin{equation*}
	\begin{aligned}
		\|\partial_{z}\widehat{u_{1,0}}\partial_{x}\partial_{z}\Theta_{\neq}\|_{L^{2}}\leq& C\left(\|\partial_{z}^{2}\partial_{x}\Theta_{\neq}\|_{L^{2}}+\|\partial_{x}\partial_{z}\Theta_{\neq}\|_{L^{2}} \right)\|\triangle \widehat{u_{1,0}}\|_{L^{2}}\\\leq&CE_{1}\nu t \|\nabla\partial_{x}^{2}\Theta_{\neq}\|_{L^{2}}^{\frac12}\|\nabla\partial_{z}^{2}\Theta_{\neq}\|_{L^{2}}^{\frac12}
	\end{aligned}
\end{equation*}
and 
\begin{equation*}
	\begin{aligned}
		\|\partial_{z}\widehat{u_{1,0}}\partial_{x}\Theta_{\neq}\|_{L^{2}}\leq& C\left(\|\partial_{x}\partial_{z}\Theta_{\neq}\|_{L^{2}}+\|\partial_{x}\Theta_{\neq}\|_{L^{2}} \right)\|\triangle \widehat{u_{1,0}}\|_{L^{2}}\\\leq&CE_{1}\nu t\|\partial_{x}^{2}\Theta_{\neq}\|_{L^{2}}^{\frac12}\|\partial_{z}^{2}\Theta_{\neq}\|_{L^{2}}^{\frac12}.
	\end{aligned}
\end{equation*}
We conclude that
\begin{equation*}\label{01}
	\begin{aligned}
		&\nu^{-\frac13}\|e^{a\nu^{\frac13}t}\partial_{z}\widehat{u_{1,0}}\partial_{x}\partial_{z}\Theta_{\neq}\|_{L^{2}L^{2}}^{2}+\nu^{-1}\|e^{a\nu^{\frac13}t}\partial_{z}\widehat{u_{1,0}}\partial_{x}\Theta_{\neq}\|_{L^{2}L^{2}}^{2}\\\leq&C\nu^{-\frac13}E_{1}^{2}\|(\nu t)^{2}e^{a\nu^{\frac13}t}\nabla\partial_{x}^{2}\Theta_{\neq}\|_{L^{2}L^{2}}\|e^{a\nu^{\frac13}t}\nabla\partial_{z}^{2}\Theta_{\neq}\|_{L^{2}L^{2}}\\&+C\nu^{-1}E_{1}^{2}\|(\nu t)^{2}e^{a\nu^{\frac13}t}\partial_{x}^{2}\Theta_{\neq}\|_{L^{2}L^{2}}\|e^{a\nu^{\frac13}t}\partial_{z}^{2}\Theta_{\neq}\|_{L^{2}L^{2}}
		\leq CE_{1}^{2}E_{5}^{2}.
	\end{aligned}
\end{equation*}
\end{proof}

\begin{lemma}\label{lem 1}
It holds that
\begin{itemize}
	\item[(i)]
	$\|e^{a\nu^{\frac13}t}\nabla(h_{2,1})_{\neq}\|_{L^{2}L^{2}}^{2}+\|e^{a\nu^{\frac13}t}\partial_{z}(h_{3,1})_{\neq}\|_{L^{2}L^{2}}^{2}\leq C\nu^{-1}E_{2}^{2}E_{4}^{2},$
	\item[(ii)]
	$\|e^{a\nu^{\frac13}t}\nabla(u_{1,0}\partial_{x}u_{2,\neq})\|_{L^{2}L^{2}}^{2}+\|e^{a\nu^{\frac13}t}\triangle P^{1}\|_{L^{2}L^{2}}^{2}+ \|e^{a\nu^{\frac13}t}(\partial_{x},\partial_{z})(u_{1,0}\partial_{x}u_{3,\neq})\|_{L^{2}L^{2}}^{2} \leq C\nu E_{1}^{2}E_{4}E_{6}.$
\end{itemize}	
\end{lemma}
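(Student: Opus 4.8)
The plan is to estimate each of the terms in Lemma~\ref{lem 1} by the same scheme: expand derivatives by the Leibniz rule, split factors in anisotropic $L^p_x L^q_{y,z}$ (or $L^p_z L^q_{x,y}$) spaces using the Sobolev embeddings recorded earlier (Lemma~\ref{sob_04}, Lemma~\ref{sob_15}, Lemma~\ref{lem zero nonzero}, Lemma~\ref{lem nonzreo}), then bound the zero-mode factors by $E_2$ or $E_1$ via Lemma~\ref{zero mode} and Lemma~\ref{U10}, and the non-zero-mode factors by $E_4$, $E_6$ via Lemma~\ref{lem omega}, absorbing the exponential weight $e^{a\nu^{1/3}t}$ (splitting it as $e^{\frac{a}{2}\nu^{1/3}t}\cdot e^{\frac{a}{2}\nu^{1/3}t}$ or using $a+b\ge 2a$ when a $b$-weight is available) and collecting the final powers of $\nu$ from $\nu\|\nabla(\cdot)\|_{L^2L^2}^2 \le (\cdot)_{X_a}^2$ type bounds.

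For part (i), recall $(h_{j,1})_{\neq}=\big((u_{2,0}\partial_y+u_{3,0}\partial_z)u_j\big)_{\neq}=(u_{2,0}\partial_y+u_{3,0}\partial_z)u_{j,\neq}$ since the zero-mode transport of a zero mode is again a zero mode. For $\nabla(h_{2,1})_{\neq}$ I would apply $\nabla$, putting one derivative on the coefficient $u_{k,0}$ and estimating $\|\nabla u_{k,0}\partial_\ell u_{2,\neq}\|_{L^2}$ by $\|\nabla u_{k,0}\|_{L^2_{y,z}L^\infty\text{-type}}$ against $\|\triangle u_{2,\neq}\|_{L^2}$ (this is exactly where Lemma~\ref{sob_15} is used, since $u_{2,\neq}$ controls two derivatives via $\triangle u_{2,\neq}$ in $X_a$), and the term with the derivative on $u_{2,\neq}$ by $\|u_{k,0}\|_{L^\infty L^\infty}\|\nabla\partial_\ell u_{2,\neq}\|_{L^2}$, using Lemma~\ref{zero mode} for $\|u_{k,0}\|_{L^\infty L^\infty}\le CE_2$; then time-integrate against $\|e^{a\nu^{1/3}t}\nabla\triangle u_{2,\neq}\|_{L^2L^2}^2\le \nu^{-1}\|\triangle u_{2,\neq}\|_{X_a}^2\le \nu^{-1}E_4^2$. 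The term $\partial_z(h_{3,1})_{\neq}$ is handled the same way but only one $\partial_z$ needs to be distributed, and $u_{3,\neq}$ is controlled through $(\partial_x^2+\partial_z^2)u_{3,\neq}$; the $L^\infty$ bound $\nu^{1/2}\|\nabla u_{k,0}\|_{L^2L^\infty}\le CE_2$ from Lemma~\ref{zero mode} absorbs the $\nu^{-1/2}$. In all cases one lands on $C\nu^{-1}E_2^2E_4^2$.

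For part (ii), the key point is that the coefficient is now $u_{1,0}=\widehat{u_{1,0}}+\widetilde{u_{1,0}}$, which only obeys the weak bound $\|u_{1,0}\|_{H^2}\le CE_1\min(\nu t+\nu^{2/3},1)$ from Lemma~\ref{U10} — this is the main obstacle, since a crude estimate would produce a lossy power of $\nu$. The resolution is to exploit the time weight: the paired non-zero factor $\partial_x u_{2,\neq}$ (resp.\ $\partial_x u_{3,\neq}$) carries the \emph{stronger} weight $b$ through $E_6$, so writing $e^{a\nu^{1/3}t}=e^{-( b-a)\nu^{1/3}t}\,e^{b\nu^{1/3}t}$ and using $\min(\nu t,1)\,e^{-(b-a)\nu^{1/3}t}\lesssim \nu^{1/3}$ (or absorbing two powers $(\nu t)^2 e^{a\nu^{1/3}t}\lesssim \nu^{-2/3}e^{b\nu^{1/3}t}$ as in the proof of Lemma~\ref{lem non zero mode 1}) converts the bad time growth into gained powers of $\nu$. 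Concretely, $\|\nabla(u_{1,0}\partial_x u_{2,\neq})\|_{L^2}\le \|\triangle u_{1,0}\|_{L^2}\|\partial_x u_{2,\neq}\|_{L^\infty\text{-type}}+\|u_{1,0}\|_{L^\infty}\|\nabla\partial_x u_{2,\neq}\|_{L^2}$; bound $\|\triangle u_{1,0}\|_{L^2}$ and $\|u_{1,0}\|_{L^\infty}$ by $CE_1\min(\nu t,1)$, bound $\partial_x u_{2,\neq}$ and $\partial_x u_{3,\neq}$ in $Y_0$ using the second line of Lemma~\ref{lem omega} (which gives $\|e^{\frac{a+b}{2}\nu^{1/3}t}\partial_x u_{\neq}\|_{Y_0}^2\le CE_4E_6$), and match weights via $a+b>2a$. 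Since $\triangle P^1=-2(\partial_y\widehat{u_{1,0}}\partial_x u_{2,\neq}+\partial_z\widehat{u_{1,0}}\partial_x u_{3,\neq})$ has exactly this structure (with $\widehat{u_{1,0}}$ in $H^4$, even better), the same computation applies, and $(\partial_x,\partial_z)(u_{1,0}\partial_x u_{3,\neq})$ needs one fewer derivative and is lower order. Collecting, each term is bounded by $C\nu E_1^2 E_4 E_6$, as claimed.
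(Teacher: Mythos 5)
Part (i) of your proposal is essentially the paper's argument: expand $(h_{j,1})_{\neq}=(u_{2,0}\partial_y+u_{3,0}\partial_z)u_{j,\neq}$ by Leibniz, put the zero-mode coefficients in $L^\infty$ (or $H^2$/$H^1$) via Lemma \ref{zero mode}, and pay $\nu^{-1}$ (in fact only $\nu^{-1/3}$ for the first term) through the $X_a$-norms of $\triangle u_{2,\neq}$ and $(\partial_x^2+\partial_z^2)u_{3,\neq}$. That part is fine.

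Part (ii) has a genuine quantitative gap. You correctly identify that $\|u_{1,0}\|_{H^2}\leq CE_1\min(\nu t+\nu^{2/3},1)\leq CE_1\nu^{2/3}(1+\nu^{1/3}t)$ and that the polynomial growth is absorbed by trading the weight $e^{a\nu^{1/3}t}$ for $e^{b\nu^{1/3}t}$. But the route you then take — bounding $\|\nabla\partial_x u_{j,\neq}\|_{L^2L^2}$ through the $Y_0$-estimate $\|e^{\frac{a+b}{2}\nu^{1/3}t}\partial_x u_{\neq}\|_{Y_0}^2\leq CE_4E_6$ of Lemma \ref{lem omega} — only gives $\|e^{\frac{a+b}{2}\nu^{1/3}t}\nabla\partial_x u_{\neq}\|_{L^2L^2}^2\leq C\nu^{-1}E_4E_6$, since the gradient sits inside the $\nu\|\nabla\cdot\|_{L^2L^2}^2$ component of $Y_0$. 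Combined with the prefactor $\nu^{4/3}E_1^2$ this yields only $C\nu^{1/3}E_1^2E_4E_6$, a loss of $\nu^{2/3}$ against the claimed $C\nu E_1^2E_4E_6$. This loss is fatal: in Proposition \ref{prop:E3} the term enters with a prefactor $\nu^{-1}$, so your bound would contribute $\nu^{-2/3}E_1^2E_4E_6\sim\varepsilon_0^4\nu^{4/3}$ to $E_{4,1}^2$, which is not $O(\nu^2)$ and the bootstrap $E_4\leq\varepsilon_0\nu$ would not close. The missing idea is the frequency interpolation
\begin{equation*}
\|\partial_x\nabla u_{2,\neq}\|_{L^2}\leq \|\partial_x^2 u_{2,\neq}\|_{L^2}^{\frac12}\,\|\triangle u_{2,\neq}\|_{L^2}^{\frac12}
\end{equation*}
(and likewise $\|(\partial_x,\partial_z)\partial_x u_{3,\neq}\|_{L^2}\leq\|\partial_x^2u_{3,\neq}\|_{L^2}^{\frac12}\|(\partial_x^2+\partial_z^2)u_{3,\neq}\|_{L^2}^{\frac12}$), followed by Cauchy--Schwarz \emph{in time}: this keeps both factors at exactly two derivatives, so each is controlled by the $\nu^{1/3}\|e^{\cdot\nu^{1/3}t}\cdot\|_{L^2L^2}^2$ component of $X_b$ resp.\ $X_a$ at a cost of only $\nu^{-1/6}$ per factor, giving $\nu^{4/3}\cdot\nu^{-1/6}E_6\cdot\nu^{-1/6}E_4=\nu E_1^2E_4E_6$ as required. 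Without rebalancing the derivative count in this way, every variant of your splitting pays $\nu^{-1/2}$ for at least one full gradient and falls short.
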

\begin{proof}
\underline{\textbf{Estimate (i).}}
Recall that $ (h_{2,1})_{\neq}=\left(u_{2,0}\partial_{y}+u_{3,0}\partial_{z} \right)u_{2,\neq}, $ by  Lemma \ref{zero mode}, and we have
\begin{equation}\label{h2,1}
	\begin{aligned}
		\|e^{a\nu^{\frac13}t}\nabla(h_{2,1})_{\neq}\|_{L^{2}L^{2}}^{2}\leq& C\left(\|u_{2,0}\|_{L^{\infty}H^2}^{2}+\|u_{3,0}\|_{L^{\infty}H^1}^{2} \right)\|e^{a\nu^{\frac13}t}\triangle u_{2,\neq}\|_{L^2L^2}^{2}\\\leq& C\nu^{-\frac13}E_2^2E_4^2.
	\end{aligned}
\end{equation}
Due to $ (h_{3,1})_{\neq}=\left(u_{2,0}\partial_{y}+u_{3,0}\partial_{z} \right)u_{3,\neq}, $ one obtains that
\begin{equation*}
	\partial_{z}(h_{3,1})_{\neq}=\left(u_{2,0}\partial_{y}+u_{3,0}\partial_{z} \right)\partial_{z}u_{3,\neq}+\left(\partial_{z}u_{2,0}\partial_{y}+\partial_{z}u_{3,0}\partial_{z} \right)u_{3,\neq}.
\end{equation*}
Then using Lemma \ref{lem zero nonzero}, Lemma \ref{zero mode} and ${\rm div}~u_0=0$, we get
\begin{equation*}
	\begin{aligned}
		\|\partial_{z}(h_{3,1})_{\neq}\|_{L^{2}}\leq&\left(\|u_{2,0}\|_{H^{2}}+\|u_{3,0}\|_{H^{1}} \right)\|\nabla\partial_{z}u_{3,\neq}\|_{L^{2}},
	\end{aligned}
\end{equation*}
which indicates that 
\begin{equation}\label{partial_z h31}
	\begin{aligned}
		\|e^{a\nu^{\frac13}t}\partial_{z}(h_{3,1})_{\neq}\|_{L^{2}L^{2}}^{2}\leq&CE_{2}^{2}\nu^{-1}\|(\partial_{x}^{2}+\partial_{z}^{2})u_{3,\neq}\|_{X_{a}}^{2}\leq C\nu^{-1}E_{2}^{2}E_{4}^{2}.
	\end{aligned}
\end{equation}  

Combining (\ref{h2,1}) with (\ref{partial_z h31}), the first inequality holds.

\underline{\textbf{Estimate (ii).}}
It follows from Lemma \ref{U10} that
\begin{equation*}
	\begin{aligned}
		\|\nabla(u_{1,0}\partial_{x}u_{2,\neq})\|_{L^{2}}\leq&\|u_{1,0}\partial_{x}u_{2,\neq}\|_{H^{1}}\leq C\|u_{1,0}\|_{H^{2}}\|\partial_{x}\nabla u_{2,\neq}\|_{L^2}
		\\\leq& C\nu^{\frac23} E_{1}(1+\nu^{\frac13}t)\|\partial_{x}^{2}u_{2,\neq}\|_{L^{2}}^{\frac12}\|\triangle u_{2,\neq}\|_{L^{2}}^{\frac12}.
	\end{aligned}
\end{equation*}
By using the fact
\begin{equation}\label{a b}
	(1+\nu^{\frac13}t)^{2}\leq Ce^{(b-a)\nu^{\frac13}t},
\end{equation}
we get
\begin{equation}\label{1}
	\begin{aligned}
		&	\|e^{a\nu^{\frac13}t}\nabla\left(u_{1,0}\partial_{x}u_{2,\neq} \right)\|_{L^{2}L^{2}}^{2}\\\leq&C\nu^{\frac43}E_{1}^{2}\|(1+\nu^{\frac13}t)^{2}e^{a\nu^{\frac13}t}\partial_{x}^{2}u_{2,\neq}\|_{L^{2}L^{2}}\|e^{a\nu^{\frac13}t}\triangle u_{2,\neq}\|_{L^{2}L^{2}}\\\leq&C\nu^{\frac43}E_{1}^{2}\|e^{b\nu^{\frac13}t}\partial_{x}^{2}u_{2,\neq}\|_{L^{2}L^{2}}\|e^{a\nu^{\frac13}t}\triangle u_{2,\neq}\|_{L^{2}L^{2}}\leq  C\nu E_{1}^{2}E_{4}E_{6}.
	\end{aligned}
\end{equation}
Recall that $ \triangle P^{1}=-2\left(\partial_{y}\widehat{u_{1,0}}\partial_{x}u_{2,\neq}+\partial_{z}\widehat{u_{1,0}}\partial_{x}u_{3,\neq} \right), $ by  Lemma \ref{U10} and $ (\ref{a b}) $, and we get
\begin{equation*}
	\begin{aligned}
		\|\triangle P^{1}\|_{L^{2}}\leq&C\|u_{1,0}\|_{H^{2}}\left(\|\nabla\partial_{x}u_{2,\neq}\|_{L^{2}}+\|(\partial_{x},\partial_{z})\partial_{x}u_{3,\neq}\|_{L^{2}} \right)\\\leq&C\nu^{\frac23}E_{1}(1+\nu^{\frac13}t)\big(\|\partial_{x}^{2}u_{2,\neq}\|_{L^{2}}^{\frac12}\|\triangle u_{2,\neq}\|_{L^{2}}^{\frac12}+\|\partial_{x}^{2}u_{3,\neq}\|_{L^{2}}^{\frac12}\|(\partial_{x}^{2}+\partial_{z}^{2})u_{3,\neq}\|_{L^{2}}^{\frac12}\big),
	\end{aligned}
\end{equation*}
which implies that 
\begin{equation}\label{2}
	\begin{aligned}
		\|e^{a\nu^{\frac13}t}\triangle P^{1}\|_{L^{2}L^{2}}^{2}\leq&C\nu E_{1}^{2}E_{4}E_{6}.
	\end{aligned}
\end{equation}
Thanks to Lemma \ref{sob_15}, Lemma \ref{U10} and (\ref{a b}), there holds
\begin{equation*}
	\begin{aligned}
		\|(\partial_{x},\partial_{z})(u_{1,0}\partial_{x}u_{3,\neq})\|_{L^{2}}\leq&C\|u_{1,0}\|_{H^{2}}\|(\partial_{x},\partial_{z})\partial_{x}u_{3,\neq}\|_{L^{2}}
		\\\leq&C\nu^{\frac23} E_{1}(1+\nu^{\frac13}t)\|\partial_{x}^{2}u_{3,\neq}\|_{L^{2}}^{\frac12}\|(\partial_{x}^{2}+\partial_{z}^{2})u_{3,\neq}\|_{L^{2}}^{\frac12},
	\end{aligned}
\end{equation*}
which gives that
\begin{equation}\label{3}
	\begin{aligned}
		&\|e^{a\nu^{\frac13}t}(\partial_{x},\partial_{z})(u_{1,0}\partial_{x}u_{3,\neq})\|_{L^{2}L^{2}}^{2}\leq C\nu E_{1}^{2}E_{4}E_{6}.
	\end{aligned}
\end{equation}

Thus (\ref{1}), (\ref{2}) and (\ref{3}) imply the second result.

\end{proof}

\begin{lemma}\label{lem: zero and non-zero mode}
It holds that
\begin{itemize}
	\item[(i)] 
	$\|e^{b\nu^{\frac13}t}\nabla H_{2,1}\|_{L^{2}L^{2}}^{2}+\|e^{b\nu^{\frac13}t}\partial_{x}h_{3,1}\|_{L^{2}L^{2}}^{2}
	+\|e^{b\nu^{\frac13}t}\nabla h_{2,2}\|_{L^{2}L^{2}}^{2}+\|e^{b\nu^{\frac13}t}\nabla h_{3,2}\|_{L^{2}L^{2}}^{2}\\
	+\|e^{b\nu^{\frac13}t}\triangle P^{3}\|_{L^{2}L^{2}}^{2}+\|e^{b\nu^{\frac13}t}\nabla h_{2,5}\|_{L^{2}L^{2}}^{2}+\|e^{b\nu^{\frac13}t}\nabla h_{3,5}\|_{L^{2}L^{2}}^{2}\leq C\nu^{-1}E_{2}^{2}E_{7},$
	\item[(ii)] 
	$\|e^{b\nu^{\frac13}t}\nabla(h_{3,1})_{\neq}\|_{L^{2}L^{2}}^{2}\leq C\nu^{-\frac53}E_{2}^{2}\left(E_{7}+\nu^{\frac43}\|\triangle u_{3,\neq}\|_{X_{b}}^{2} \right),$
	\item[(iii)] 
	$\|e^{b\nu^{\frac13}t}\nabla(h_{2,7})_{\neq}\|_{L^{2}L^{2}}^{2}
	+\|e^{b\nu^{\frac13}t}\nabla(h_{3,7})_{\neq}\|_{L^{2}L^{2}}^{2}\leq C\nu E_{1}^{2}E_{7}.$
\end{itemize}
\end{lemma}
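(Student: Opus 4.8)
The plan is to estimate the eleven bilinear quantities one at a time, in each case writing the term as a product of a zero-mode factor and a non-zero-mode factor. The zero-mode factors ($u_{2,0}$, $u_{3,0}$, $u_{1,0}$, $\widetilde{u_{1,0}}$ and their derivatives) will be controlled by $E_2$ or $E_1$ through Lemma \ref{zero mode} and Lemma \ref{U10}; the non-zero-mode factors will carry the weight $e^{b\nu^{\frac13}t}$ and will be controlled by $E_7$, except in part (ii) where $\|\triangle u_{3,\neq}\|_{X_b}$ must also enter. Three facts are used throughout: for a non-zero mode one has $\|f_{\neq}\|_{L^2}\le\|\partial_x f_{\neq}\|_{L^2}$, so any $\nabla Q$ or $\nabla u_{j,\neq}$ appearing in a term may be traded for a $\partial_x$-weighted quantity (such as $\partial_x\nabla Q$ or $\partial_x^2u_{j,\neq}$) occurring in $E_7$, the $\kappa\partial_y$-corrections implicit in $E_7$ being harmless because $\|\kappa\|_{H^3}\le CE_1$ is small by Lemma \ref{U10}; the $X_b$-norm gives $\|e^{b\nu^{\frac13}t}g\|_{L^2L^2}^2\le\nu^{-\frac13}\|g\|_{X_b}^2$ and $\|e^{b\nu^{\frac13}t}\nabla g\|_{L^2L^2}^2\le\nu^{-1}\|g\|_{X_b}^2$, which supply the negative powers of $\nu$; and, after splitting each product by H\"older in $(x,y,z)$ via Lemma \ref{sob_04}, Lemma \ref{sob_15}, Lemma \ref{lem zero nonzero}, Lemma \ref{lem nonzreo} and the divergence-free identities of Lemma \ref{lem omega}, one splits in $t$, placing the zero-mode factor in $L^\infty_tH^k$ or in $L^2_tL^\infty$ (the latter costing a factor $\nu^{-\frac12}$, since $\nu^{\frac12}\|\nabla u_{j,0}\|_{L^2L^\infty}\le CE_2$), and reads off the stated bound with the weight left on the non-zero-mode factor.

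For part (i), in $\nabla H_{2,1}=\nabla\big[(u_{2,0}\partial_y+u_{3,0}\partial_z)Q\big]-\nabla\big[u_{3,\neq}(u_{2,0}\partial_y+u_{3,0}\partial_z)\kappa\big]$ the transport piece generates $\nabla u_{j,0}\cdot\nabla Q$ (estimated by $\nu^{-\frac12}E_2$ against $\|e^{b\nu^{\frac13}t}\nabla Q\|_{L^\infty L^2}\le\|\partial_x\nabla Q\|_{X_b}$) and $u_{j,0}\cdot\nabla^2Q$ (estimated by $E_2$ against $\|e^{b\nu^{\frac13}t}\nabla^2Q\|_{L^2L^2}\le C\nu^{-\frac12}\|\partial_x\nabla Q\|_{X_b}$), while the commutator piece is lower order because $\kappa,\nabla\kappa$ are small, so that $u_{3,\neq}$ and $\partial_xu_{3,\neq}$ are left in the $E_7$-slot. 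The remaining six quantities $\partial_xh_{3,1}$, $\nabla h_{2,2}=\nabla(u_{\neq}\cdot\nabla u_{2,0})$, $\nabla h_{3,2}$, $\triangle P^3=-2\partial_\alpha u_{\beta,0}\partial_\beta u_{\alpha,\neq}$, $\nabla h_{2,5}=\nabla\partial_2P^3$ and $\nabla h_{3,5}=\nabla\partial_3P^3$ are of exactly the same bilinear shape --- (zero-mode derivative) times ($\partial_x$-weighted non-zero-mode quantity) --- using for $P^3$ that $\nabla\partial_j\triangle^{-1}$ is bounded on $L^2$; each contributes $C\nu^{-1}E_2^2E_7$.

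For part (ii), the difficulty is that $\nabla(h_{3,1})_{\neq}=\nabla\big[(u_{2,0}\partial_y+u_{3,0}\partial_z)u_{3,\neq}\big]$ produces a bare $\partial_yu_{3,\neq}$ and $\partial_y^2u_{3,\neq}$, which $E_7$ cannot absorb (its ingredients always carry a $\partial_x$, and the combination $\partial_z-\kappa\partial_y$ degenerates as $\kappa\to0$). I would estimate $\|\partial_y^2u_{3,\neq}\|_{L^2}\le\|\triangle u_{3,\neq}\|_{L^2}$ and, by interpolation, $\|\partial_yu_{3,\neq}\|_{L^2}\le\|u_{3,\neq}\|_{L^2}^{\frac12}\|\triangle u_{3,\neq}\|_{L^2}^{\frac12}\le\|\partial_x^2u_{3,\neq}\|_{L^2}^{\frac12}\|\triangle u_{3,\neq}\|_{L^2}^{\frac12}$, keeping $\partial_x^2u_{3,\neq}$ in the $E_7$-slot and $\triangle u_{3,\neq}$ in the $\|\triangle u_{3,\neq}\|_{X_b}$-slot; pairing with $\|u_{j,0}\|_{L^\infty L^\infty}+\nu^{\frac12}\|\nabla u_{j,0}\|_{L^2L^\infty}\le CE_2$ and using $\|e^{b\nu^{\frac13}t}\triangle u_{3,\neq}\|_{L^2L^2}^2\le\nu^{-\frac13}\|\triangle u_{3,\neq}\|_{X_b}^2$ produces a mixed term of size $\nu^{-1}E_2^2E_7^{\frac12}\|\triangle u_{3,\neq}\|_{X_b}$, which Young's inequality with the unbalanced weights $\nu^{-\frac23}$ and $\nu^{\frac23}$ distributes into $C\nu^{-\frac53}E_2^2\big(E_7+\nu^{\frac43}\|\triangle u_{3,\neq}\|_{X_b}^2\big)$. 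This is the main obstacle of the lemma: it is precisely the $\partial_y$-derivative of a bare $u_{3,\neq}$, as opposed to of $Q$ or of a $\partial_x$-weighted quantity, that forces the full Laplacian into the estimate and degrades the exponent from $\nu^{-1}$ to $\nu^{-\frac53}$.

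For part (iii), since $\triangle P^0=-2(\partial_y\widetilde{u_{1,0}}\,\partial_xu_{2,\neq}+\partial_z\widetilde{u_{1,0}}\,\partial_xu_{3,\neq})$ is purely a non-zero mode and $\nabla\partial_j\triangle^{-1}$ is bounded on $L^2$, one has $\|\nabla(h_{j,7})_{\neq}\|_{L^2}=\|\nabla\partial_jP^0\|_{L^2}\le C\|\triangle P^0\|_{L^2}$; then $\|e^{b\nu^{\frac13}t}\triangle P^0\|_{L^2L^2}$ is controlled by placing $\partial_y\widetilde{u_{1,0}}$ and $\partial_z\widetilde{u_{1,0}}$ in an anisotropic $L^\infty$-in-$(y,z)$ norm bounded by $\|\widetilde{u_{1,0}}\|_{H^2}\le E_{1,2}\le C\nu^{\frac23}E_1$, and the non-zero factors $\partial_xu_{j,\neq}$, with the weight, in $L^2L^2$ via $\|e^{b\nu^{\frac13}t}\partial_x^2u_{j,\neq}\|_{L^2L^2}^2\le\nu^{-\frac13}E_7$ together with Lemma \ref{lem omega}; the powers multiply to $\nu^{\frac43}\cdot\nu^{-\frac13}=\nu$, giving $C\nu E_1^2E_7$. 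I expect part (iii) to be routine once the bookkeeping of parts (i)--(ii) is set up.
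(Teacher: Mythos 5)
Your overall architecture is the paper's, and two pieces of your plan are essentially identical to the published proof: the treatment of $\nabla H_{2,1}$ and $\partial_x h_{3,1}$ in (i) (zero mode in $L^\infty_tL^\infty$ or $L^2_tL^\infty$ against $\|\partial_x\nabla Q\|_{X_b}$ and $\|\partial_x^2u_{3,\neq}\|_{X_b}$, with the $\kappa$-commutator absorbed by $\|\nabla\kappa\|_{L^\infty}\leq C$), and all of part (ii), where your interpolation $\|\partial_yu_{3,\neq}\|_{L^2}\leq\|\partial_x^2u_{3,\neq}\|_{L^2}^{1/2}\|\triangle u_{3,\neq}\|_{L^2}^{1/2}$ followed by Young with weights $\nu^{-2/3},\nu^{2/3}$ is exactly the paper's computation in (4.22).

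The gap is in the remaining terms of (i) (namely $\nabla h_{2,2}$, $\nabla h_{3,2}$, $\triangle P^{3}$, $\nabla h_{j,5}$) and in all of (iii). These are \emph{not} "of exactly the same bilinear shape'': there the non-zero-mode factor enters undifferentiated (as $u_{j,\neq}$ or $\partial_xu_{j,\neq}$), the outer $\nabla$ may fall on the zero-mode factor, and the key assertion in your preamble --- that the $\kappa\partial_y$-corrections implicit in $E_7$ are harmless --- is false in the direction you need it. Converting $\partial_z$ into the good derivative costs $\partial_x\partial_zu_{3,\neq}=\partial_x(\partial_z-\kappa\partial_y)u_{3,\neq}+\kappa\,\partial_x\partial_yu_{3,\neq}$, and the remainder can only be bounded through $\|\triangle u_{3,\neq}\|$, which $E_7$ does not control: by \eqref{U3''} it costs an extra $\nu^{-4/3}$, so the smallness of $\|\kappa\|_{H^3}$ does not save you as $\nu\to0$. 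The alternative of a plain H\"older split $L^\infty\times L^2$ also fails: $E_2$ only gives $u_{2,0}\in L^\infty_tH^2$, and $H^2\not\hookrightarrow W^{1,\infty}$ on the two-dimensional cross-section, while putting a full gradient on the non-zero factor (so as to use $\|\nabla\partial_x^2u_{j,\neq}\|$) either overspends the $\nu$-budget or, through the anisotropic embeddings of Lemma \ref{lem nonzreo}, reintroduces the same $\partial_x\partial_z$-derivatives.

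What is missing from your toolkit is precisely Lemma \ref{sob_14}, which you do not cite: its product estimates require only $\|f_1\|_{H^1}$ of the $x$-independent factor (hence only $\|u_{2,0}\|_{H^2}\leq CE_2$, resp.\ $\|\widetilde{u_{1,0}}\|_{H^2}\leq E_{1,2}$) and place the good derivative $(\partial_z-\kappa\partial_y)$ \emph{directly} on the non-zero-mode factor, producing exactly the quantities $\|\partial_x(\partial_z-\kappa\partial_y)u_{j,\neq}\|_{X_b}$ stored in $E_7$ with no remainder to absorb. This is also what makes the $\nu$-count in your part (iii) come out to $\nu^{4/3}\cdot\nu^{-1/3}=\nu$: with Lemma \ref{sob_14} the non-zero factor is measured in plain $L^2$ (cost $\nu^{-1/6}$ in $L^2_t$ from $X_b$), whereas your anisotropic-$L^\infty$ route forces at least one extra derivative onto it and degrades the bound to $\nu^{1/3}E_1^2E_7$ at best, with an unwanted $\|\triangle u_{3,\neq}\|_{X_b}^2$ contribution. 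So the statement is provable by your general strategy, but only after replacing the H\"older/interpolation splittings for these terms by the $(\partial_z-\kappa\partial_y)$-adapted product estimates.
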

\begin{proof}
{\underline{\textbf{Estimate $ \nabla H_{2,1}. $}}} Recall $$ H_{2,1}=\left(u_{2,0}\partial_{y}+u_{3,0}\partial_{z} \right)Q-u_{3,\neq}\left(u_{2,0}\partial_{y}+u_{3,0}\partial_{z} \right)\kappa, $$ 
where $ Q=u_{2,\neq}+\kappa u_{3,\neq}, $ then using  Lemma \ref{zero mode}, we get
\begin{equation}\label{H_21}
	\begin{aligned}
		&\|e^{b\nu^{\frac13}t}\nabla H_{2,1}\|_{L^{2}L^{2}}^{2}\leq C\nu^{-1}\|e^{b\nu^{\frac13}t}H_{2,1}\|_{Y_{0}}^{2}\\\leq&C\nu^{-1}\left(\|e^{b\nu^{\frac13}t}(u_{2,0}\partial_{y}+u_{3,0}\partial_{z})Q\|_{Y_{0}}^{2}+\|e^{b\nu^{\frac13}t}u_{3,\neq}(u_{2,0}\partial_{y}+u_{3,0}\partial_{z})\kappa\|_{Y_{0}}^{2} \right)\\\leq&C\nu^{-1}E_{2}^{2}\left(\|e^{b\nu^{\frac13}t}\nabla Q\|_{Y_{0}}^{2}+\|e^{b\nu^{\frac13}t}u_{3,\neq}\nabla\kappa\|_{Y_{0}}^{2} \right).
	\end{aligned}
\end{equation}
According to Lemma \ref{kappa}, due to $ \|\nabla\kappa\|_{L^{\infty}}\leq C\|\kappa\|_{H^{3}}\leq C\|\widehat{u_{1,0}}\|_{H^{4}}\leq C, $ there holds
\begin{equation*}
	\begin{aligned}
		&\|u_{3,\neq}\nabla\kappa\|_{L^{2}}\leq \|u_{3,\neq}\|_{L^{2}}\|\nabla\kappa\|_{L^{\infty}}\leq C\|\partial_{x}^{2}u_{3,\neq}\|_{L^{2}},\\
		&\|\nabla(u_{3,\neq}\nabla\kappa)\|_{L^{2}}\leq C\|\kappa\|_{H^3}^2\|\nabla\partial_{x}^{2}u_{3,\neq}\|_{L^{2}}\leq C\|\nabla\partial_{x}^{2}u_{3,\neq}\|_{L^{2}},
	\end{aligned}
\end{equation*}
which implies that 
\begin{equation}\label{U3 k}
	\begin{aligned}
		\|e^{b\nu^{\frac13}t}u_{3,\neq}\nabla\kappa\|_{Y_{0}}^{2}\leq&C\|\partial_{x}^{2}u_{3,\neq}\|_{X_{b}}^{2}\leq CE_7.
	\end{aligned}
\end{equation}
On the other hand, we have
\begin{equation}\label{Q'}
	\|e^{b\nu^{\frac13}t}\nabla Q\|_{Y_{0}}^{2}\leq\|e^{b\nu^{\frac13}t}\partial_{x}\nabla Q\|_{Y_{0}}^{2}\leq \|\partial_{x}\nabla Q\|_{X_{b}}^{2}\leq E_{7}.
\end{equation}

It follows from (\ref{H_21}), (\ref{U3 k}) and (\ref{Q'}) that
\begin{equation}\label{result H21}
	\|e^{b\nu^{\frac13}t}\nabla H_{2,1}\|_{L^{2}L^{2}}^{2}\leq C\nu^{-1}E_{2}^{2}E_{7}.
\end{equation}

{\underline{\textbf{Estimate $ \partial_{x}h_{3,1} $ and $ \nabla(h_{3,1})_{\neq}. $}}}	Due to $ \partial_{x}h_{3,1}=\left(u_{2,0}\partial_{y}+u_{3,0}\partial_{z} \right)\partial_{x}u_{3}, $ using  Lemma \ref{zero mode}, there holds
\begin{equation}\label{result h31}
	\begin{aligned}
		\|e^{b\nu^{\frac13}t}\partial_{x}h_{3,1}\|_{L^{2}L^{2}}^{2}
		&\leq C\|(u_{2,0},u_{3,0})\|_{L^{\infty}L^{\infty}}^{2}\|e^{b\nu^{\frac13}t}\partial_x\nabla u_{3,\neq}\|_{L^2L^2}^{2}\\
		&\leq C\nu^{-1}E_{2}^{2}\|\partial_{x}^{2}u_{3,\neq}\|_{X_{b}}^{2}\leq C\nu^{-1}E_{2}^{2}E_{7}.
	\end{aligned}
\end{equation}
By  Lemma \ref{zero mode} and 
$$\|\nabla u_{3,\neq}\|_{L^{2}}^{2}\leq\|u_{3,\neq}\|_{L^{2}}\|\triangle u_{3,\neq}\|_{L^{2}}\leq 
\nu^{-\frac23}\|\partial_{x}^{2}u_{3,\neq}\|_{L^{2}}^{2}+\nu^{\frac23}\|\triangle u_{3,\neq}\|_{L^{2}}^{2},$$
we get
\begin{equation}\label{h31}
	\begin{aligned}
		&\|e^{b\nu^{\frac13}t}\nabla(h_{3,1})_{\neq}\|_{L^{2}L^{2}}^{2}
		=\|e^{b\nu^{\frac13}t}\nabla\left( u_{2,0}\partial_{y}u_{3,\neq}+u_{3,0}\partial_{z}u_{3,\neq} \right)\|_{L^{2}L^{2}}^{2}
		\\\leq&\|\nabla (u_{2,0},u_{3,0})\|_{L^{2}L^{\infty}}^{2}\|e^{b\nu^{\frac13}t}\nabla u_{3,\neq}\|_{L^{\infty}L^{2}}^{2}+\|(u_{2,0},u_{3,0})\|_{L^{\infty}L^{\infty}}^{2}\|e^{b\nu^{\frac13}t}\triangle u_{3,\neq}\|_{L^{2}L^{2}}^{2}\\\leq&C\nu^{-1}E_{2}^{2}\|\nabla u_{3,\neq}\|_{X_{b}}^{2}\leq C\nu^{-\frac53}E_{2}^{2}\big(\|\partial_{x}^{2}u_{3,\neq}\|_{X_{b}}^{2}+\nu^{\frac43}\|\triangle u_{3,\neq}\|_{X_{b}}^{2} \big).
	\end{aligned}
\end{equation}

{\underline{\textbf{Estimate $ \nabla h_{2,2}. $}}} 
For $ h_{2,2}=u_{\neq}\cdot\nabla u_{2,0} $ and $ s\in\{2,3\}, $ it follows from Lemma \ref{sob_14} and  Lemma \ref{zero mode} that
\begin{equation*}
	\begin{aligned}
		\|\nabla h_{2,2}\|_{L^{2}}=&\|\nabla\left(u_{s,\neq}\partial_{s}u_{2,0} \right)\|_{L^{2}}\leq C\|\partial_{s}u_{2,0}\|_{H^{1}}\left(\|u_{s,\neq}\|_{H^{1}}+\|(\partial_{z}-\kappa\partial_{y}) u_{s,\neq}\|_{H^{1}}\right)\\\leq&C\|\nabla u_{2,0}\|_{H^{1}}\left(\|\nabla\partial_{x}^{2}u_{s,\neq}\|_{L^{2}}+\|\nabla\partial_{x}(\partial_{z}-\kappa\partial_{y})u_{s,\neq}\|_{L^{2}} \right)
	\end{aligned}
\end{equation*}	
and
\begin{equation}\label{h22}
	\begin{aligned}
		\|e^{b\nu^{\frac13}t}\nabla h_{2,2}\|_{L^{2}L^{2}}^{2}\leq&CE_{2}^{2}\left(\|e^{b\nu^{\frac13}t}\nabla\partial_{x}^{2}u_{s,\neq}\|_{L^{2}L^{2}}^{2}+\|e^{b\nu^{\frac13}t}\nabla\partial_{x}(\partial_{z}-\kappa\partial_{y})u_{s,\neq}\|_{L^{2}L^{2}}^{2} \right)\\\leq&C\nu^{-1}E_{2}^{2}\left(\|\partial_{x}^{2}u_{s,\neq}\|_{X_{b}}^{2}+\|\partial_{x}(\partial_{z}-\kappa\partial_{y})u_{s,\neq}\|_{X_{b}}^{2} \right)\leq C\nu^{-1}E_{2}^{2}E_{7}.
	\end{aligned}
\end{equation}

{\underline{\textbf{Estimate $ \nabla h_{3,2}. $}}} For $ h_{3,2}=u_{\neq}\cdot\nabla u_{3,0}, $ we rewrite it into
\begin{equation*}
	h_{3,2}=\left(u_{2,\neq}\partial_{y}+u_{3,\neq}\partial_{z} \right)u_{3,0}=Q\partial_{y}u_{3,0}+u_{3,\neq}\left(\partial_{z}u_{3,0}-\kappa\partial_{y}u_{3,0} \right),
\end{equation*}
where $ Q=u_{2,\neq}+\kappa u_{3,\neq}. $ Then
\begin{equation}\label{nabla h32}
	\begin{aligned}
		\|e^{b\nu^{\frac13}t}\nabla h_{3,2}\|_{L^{2}L^{2}}^{2}\leq&\|e^{b\nu^{\frac13}t}\nabla(Q\partial_{y}u_{3,0})\|_{L^{2}L^{2}}^{2}+\|e^{b\nu^{\frac13}t}\nabla\left(u_{3,\neq}(\partial_{z}u_{3,0}-\kappa\partial_{y}u_{3,0}) \right)\|_{L^{2}L^{2}}^{2}\\:=&I_{1}+I_{2}.
	\end{aligned}
\end{equation}
By Lemma \ref{zero mode}, we get
\begin{equation*}
	\begin{aligned}
		I_{1}\leq C\nu^{-1}E_{2}^{2}\|e^{b\nu^{\frac13}t}\nabla\partial_{x}Q\|_{L^{\infty}L^{2}}^{2}\leq C\nu^{-1}E_{2}^{2}E_{7}.
	\end{aligned}
\end{equation*}
For $ I_{2}, $ it follows from Lemma \ref{sob_14},  Lemma \ref{zero mode} and Lemma \ref{U10} that
\begin{equation*}
	\begin{aligned}
		&\|\nabla\left(u_{3,\neq}(\partial_{z}u_{3,0}-\kappa\partial_{y}u_{3,0}) \right)\|_{L^{2}}
		\\\leq&C\left(\|\partial_{z}u_{3,0}\|_{H^{1}}+\|\kappa\nabla u_{3,0}\|_{H^{1}} \right)\left(\|u_{3,\neq}\|_{H^{1}}+\|(\partial_{z}-\kappa\partial_{y})u_{3,\neq}\|_{H^{1}} \right)\\\leq& C\left(E_{2}+E_{1}E_{2}\right)\left(\|\nabla\partial_{x}^{2}u_{3,\neq}\|_{L^{2}}+\|\nabla\partial_{x}(\partial_{z}-\kappa\partial_{y})u_{3,\neq}\|_{L^{2}} \right)
	\end{aligned}
\end{equation*}
and
\begin{equation*}
	\begin{aligned}
		I_{2}\leq &CE_{2}^{2}\left(\|e^{b\nu^{\frac13}t}\nabla\partial_{x}^{2}u_{3,\neq}\|_{L^{2}L^{2}}^{2}+\|e^{b\nu^{\frac13}t}\nabla\partial_{x}(\partial_{z}-\kappa\partial_{y})u_{3,\neq}\|_{L^{2}L^{2}}^{2} \right)\\\leq& CE_{2}^{2}\nu^{-1}\left(\|\partial_{x}^{2}u_{3,\neq}\|_{X_{b}}^{2}+\|\partial_{x}(\partial_{z}-\kappa\partial_{y})u_{3,\neq}\|_{X_{b}}^{2} \right)\leq C\nu^{-1}E_{2}^{2}E_{7},
	\end{aligned}
\end{equation*}
where we use $E_1\leq C.$

Combining the estimates of $ I_{1} $ with $ I_{2}, $ (\ref{nabla h32}) shows that
\begin{equation}\label{h32}
	\|e^{b\nu^{\frac13}t}\nabla h_{3,2}\|_{L^{2}L^{2}}^{2}\leq C\nu^{-1}E_{2}^{2}E_{7}.
\end{equation}

{\underline{\textbf{Estimate $ \triangle P^{3}, \nabla h_{2,5} $ and $ \nabla h_{3,5}. $ }}}
Due to $ \partial_{y}u_{2,0}+\partial_{z}u_{3,0}=0, $ for $ \alpha, \beta\in\{2,3\}, $ there holds
\begin{equation*}
	\begin{aligned}
		\triangle P^{3}=-2\partial_{\alpha}u_{\beta,0}\partial_{\beta}u_{\alpha,\neq}=-2\partial_{\beta}\left(\partial_{\alpha}u_{\beta,0}u_{\alpha,\neq}\right)= -2\left(\partial_{y}h_{2,2}+\partial_{z}h_{3,2} \right),
	\end{aligned}
\end{equation*}
which follows that
\begin{equation}\label{p3}
	\begin{aligned}
		\|e^{b\nu^{\frac13}t}\triangle P^{3}\|_{L^{2}L^{2}}^{2}\leq C\left(\|e^{b\nu^{\frac13}t}\nabla h_{2,2}\|_{L^{2}L^{2}}^{2}+\|e^{b\nu^{\frac13}t}\nabla h_{3,2}\|_{L^{2}L^{2}}^{2} \right)\leq C\nu^{-1}E_{2}^{2}E_{7},
	\end{aligned}
\end{equation}
where we use (\ref{h22}) and (\ref{h32}).

Recall $ h_{s,5}=\partial_{s}P^{3} $ for $ s\in\{2,3\}, $ then there holds
\begin{equation}\label{h25 h35}
	\|e^{b\nu^{\frac13}t}\nabla h_{s,5}\|_{L^{2}L^{2}}^{2}\leq C\|e^{b\nu^{\frac13}t}\triangle P^{3}\|_{L^{2}L^{2}}^{2}\leq C\nu^{-1}E_{2}^{2}E_{7}.
\end{equation}

Collecting (\ref{result H21}), (\ref{result h31}), (\ref{h22}), (\ref{h32}), (\ref{p3}) and (\ref{h25 h35}), the first inequality holds.	

\underline{\textbf{Estimate $ \nabla (h_{2,7})_{\neq} $ and 
		$\nabla (h_{3,7})_{\neq}$.}} Recall $ h_{j,7}=\partial_{j}P^{0} $ for $ j=2,3, $ and
$$ \triangle P^{0}=-2\left(\partial_{y}\widetilde{u_{1,0}}\partial_{x}u_{2,\neq}+\partial_{z}\widetilde{u_{1,0}}\partial_{x}u_{3,\neq} \right), $$
one deduces
\begin{equation}\label{P0}
	\begin{aligned}
		&\|e^{b\nu^{\frac13}t}\nabla(h_{2,7})_{\neq}\|_{L^{2}L^{2}}^{2}+\|e^{b\nu^{\frac13}t}\nabla (h_{3,7})_{\neq}\|_{L^{2}L^{2}}^{2}\leq\|e^{b\nu^{\frac13}t}\triangle P^{0}_{\neq}\|_{L^{2}L^{2}}^{2}\\\leq&C\big(\|e^{b\nu^{\frac13}t}\partial_{y}\widetilde{u_{1,0}}\partial_{x}u_{2,\neq}\|_{L^{2}L^{2}}^{2}+\|e^{b\nu^{\frac13}t}\partial_{z}\widetilde{u_{1,0}}\partial_{x}u_{3,\neq}\|_{L^{2}L^{2}}^{2} \big).
	\end{aligned}
\end{equation}
Using Lemma \ref{sob_14}, for $j=2,3,$ we arrive
\begin{equation*}
	\|\partial_{j}\widetilde{u_{1,0}}\partial_{x}u_{j,\neq}\|_{L^{2}}\leq C\|\partial_{j}\widetilde{u_{1,0}}\|_{H^{1}}\left(\|\partial_{x}u_{j,\neq}\|_{L^{2}}+\|(\partial_{z}-\kappa\partial_{j})\partial_{x}u_{j,\neq}\|_{L^{2}} \right),
\end{equation*}
which along with (\ref{P0}), there holds
\begin{equation*}
	\begin{aligned}
		\|e^{b\nu^{\frac13}t}\nabla(h_{2,7})_{\neq}\|_{L^{2}L^{2}}^{2}+\|e^{b\nu^{\frac13}t}\nabla (h_{3,7})_{\neq}\|_{L^{2}L^{2}}^{2}
		\leq C\nu^{-\frac13}E_{1,2}^{2}E_{7}\leq C\nu E_{1}^{2}E_{7}.
	\end{aligned}
\end{equation*}
\end{proof}

\begin{lemma}\label{lem: u0 u_neq}
It holds that
\begin{equation}\label{u0 neq}
	\begin{aligned}
		&\|e^{a\nu^{\frac13}t}\nabla(u_{0}\cdot\nabla u_{\neq})\|_{L^{2}L^{2}}^{2}\leq C\nu^{\frac13}E_{1}^{2}E_{4}E_{6}+C\nu^{-\frac53}E_{2}^{2}E_{4}^{2},\\
		&\|e^{a\nu^{\frac13}t}\nabla(u_{\neq}\cdot\nabla u_{1,0})\|_{L^{2}L^{2}}^{2}\leq C\nu^{\frac13}E_{1}^{2}E_{7},\\
		&\|e^{a\nu^{\frac13}t}\partial_{x}(u_{\neq}\cdot\nabla u_{3,0})\|_{L^{2}L^{2}}^{2}\leq C\nu^{-1}E_{2}^{2}E_{4}^{2}.
	\end{aligned}
\end{equation}
\end{lemma}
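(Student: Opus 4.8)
The plan is to decompose each left-hand side according to which zero-mode component of $u_{0}$ does the transporting (first inequality) or is differentiated (second and third), and then to match each resulting piece either to an estimate already proved in Lemma~\ref{lem 1} and Lemma~\ref{lem: zero and non-zero mode}, or to the velocity embeddings of Lemma~\ref{lem omega}, Lemma~\ref{zero mode} and Lemma~\ref{U10}, combined with the anisotropic product estimates (Lemmas~\ref{sob_04}, \ref{sob_14}, \ref{sob_15}) and the weight-exchange inequality $(\ref{a b})$.

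\textbf{First inequality.} Write $u_{0}\cdot\nabla u_{\neq}=u_{1,0}\partial_{x}u_{\neq}+\big(u_{2,0}\partial_{y}+u_{3,0}\partial_{z}\big)u_{\neq}$. The second, incompressible-transport group is, component by component, $\nabla(h_{2,1})_{\neq}$, $\nabla(h_{3,1})_{\neq}$, and the first-component term $\nabla\big((u_{2,0}\partial_{y}+u_{3,0}\partial_{z})u_{1,\neq}\big)$, which I reduce via $\mathrm{div}\,u_{\neq}=0$ (expressing $u_{1,\neq}=\partial_{x}^{-1}(-\partial_{y}u_{2,\neq}-\partial_{z}u_{3,\neq})$ on non-zero modes) to combinations of $h_{2,1},h_{3,1}$-type quantities; Lemma~\ref{lem 1}(i) handles the $j=2$ piece and Lemma~\ref{lem: zero and non-zero mode}(ii) the $j=3$ piece (the auxiliary norms on the right of the latter being $O(E_{4})$ after interpolation and the div-free relations), producing the contribution $C\nu^{-5/3}E_{2}^{2}E_{4}^{2}$, and the target weight $e^{a\nu^{1/3}t}$ is weaker than the $e^{b\nu^{1/3}t}$ coming out of Lemma~\ref{lem: zero and non-zero mode}(ii) since $a<b$. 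For the lift-up group $u_{1,0}\partial_{x}u_{\neq}$ I use Lemma~\ref{U10} in the form $\|u_{1,0}\|_{H^{2}}\le C\nu^{2/3}(1+\nu^{1/3}t)E_{1}$, distribute the outer $\nabla$, and reduce every $u_{\neq}$-factor — through $\mathrm{div}\,u_{\neq}=0$, $\mathrm{div}\,\omega=0$, and Lemmas~\ref{sob_14}--\ref{sob_15} — to quantities of the form $\|\nabla\triangle u_{2,\neq}\|_{L^{2}}$, $\|\nabla(\partial_{x}^{2}+\partial_{z}^{2})u_{3,\neq}\|_{L^{2}}$, $\|\triangle\omega_{2,\neq}\|_{L^{2}}$ and $\|\partial_{x}^{2}u_{j,\neq}\|_{L^{2}}$ controlled by $E_{4}$ and $E_{6}$; the time weights close via $(1+\nu^{1/3}t)^{2}e^{2a\nu^{1/3}t}\le Ce^{(a+b)\nu^{1/3}t}\le Ce^{2b\nu^{1/3}t}$ (using $(\ref{a b})$ and $a\le b$), and obtaining the $\partial_{y}$-component of the gradient that Lemma~\ref{lem 1}(ii) does not cover costs a factor $\nu^{-2/3}$ relative to that lemma, because $\partial_{y}\partial_{x}u_{3,\neq}$ and $\partial_{y}\partial_{x}u_{1,\neq}$ are rerouted through $\omega_{2,\neq}$, whose $E_{4}$-norms carry extra $\nu$-weights; the net outcome is $C\nu^{1/3}E_{1}^{2}E_{4}E_{6}$.

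\textbf{Second and third inequalities.} Since $u_{1,0}$ is $x$-independent, $u_{\neq}\cdot\nabla u_{1,0}=u_{2,\neq}\partial_{y}u_{1,0}+u_{3,\neq}\partial_{z}u_{1,0}$, which has exactly the structure of the $\nabla h_{2,2},\nabla h_{3,2}$ estimates from Lemma~\ref{lem: zero and non-zero mode} with $u_{1,0}$ in the role of $u_{j,0}$: Lemma~\ref{sob_14} bounds $\|\nabla(u_{\neq}\cdot\nabla u_{1,0})\|_{L^{2}}$ by $\|u_{1,0}\|_{H^{2}}\big(\|\nabla\partial_{x}^{2}u_{s,\neq}\|_{L^{2}}+\|\nabla\partial_{x}(\partial_{z}-\kappa\partial_{y})u_{s,\neq}\|_{L^{2}}\big)$ for $s\in\{2,3\}$, and inserting $\|u_{1,0}\|_{H^{2}}^{2}\le C\nu^{4/3}(1+\nu^{1/3}t)^{2}E_{1}^{2}$, exchanging weights as above, and using that the $X_{b}$-norm carries a $\nu^{-1}$ from its top-order term, yields $C\nu^{1/3}E_{1}^{2}E_{7}$. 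For the third, $u_{3,0}$ being $x$-independent gives $\partial_{x}(u_{\neq}\cdot\nabla u_{3,0})=(\partial_{x}u_{2,\neq})\partial_{y}u_{3,0}+(\partial_{x}u_{3,\neq})\partial_{z}u_{3,0}$; on non-zero modes $\|\partial_{x}u_{\neq}\|_{L^{2}}\le\|\partial_{x}^{2}u_{\neq}\|_{L^{2}}\le C\big(\|(\partial_{x}^{2}+\partial_{z}^{2})u_{3,\neq}\|_{L^{2}}+\|\triangle u_{2,\neq}\|_{L^{2}}\big)$ by $(\ref{estimate_u})$ and $\mathrm{div}\,u_{\neq}=0$, so $\|e^{a\nu^{1/3}t}\partial_{x}u_{\neq}\|_{L^{\infty}L^{2}}\le CE_{4}$, which paired by Hölder in space with $\|\nabla u_{3,0}\|_{L^{2}L^{\infty}}\le C\nu^{-1/2}E_{2}$ from Lemma~\ref{zero mode} gives $C\nu^{-1}E_{2}^{2}E_{4}^{2}$.

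\textbf{Main obstacle.} The delicate step is the lift-up group $\nabla(u_{1,0}\partial_{x}u_{\neq})$ in the first inequality. Unlike in Lemma~\ref{lem 1}(ii), the outer gradient forces mixed derivatives such as $\partial_{x}\partial_{y}u_{3,\neq}$ and $\partial_{x}\partial_{y}u_{1,\neq}$ onto the non-zero-mode factor, and these do not appear literally in $E_{4}$ or $E_{6}$; they must be rerouted — using $\mathrm{div}\,u_{\neq}=0$ to trade $\partial_{x}u_{1,\neq}$ for $\partial_{y}u_{2,\neq},\partial_{z}u_{3,\neq}$, then $\mathrm{div}\,\omega=0$ together with $\omega_{1}=\partial_{y}u_{3}-\partial_{z}u_{2}$ and $\omega_{3}=\partial_{x}u_{2}-\partial_{y}u_{1}$ to convert the awkward $\partial_{y}$-derivatives, and the good unknowns $Q=u_{2,\neq}+\kappa u_{3,\neq}$ and $(\partial_{z}-\kappa\partial_{y})u_{j,\neq}$ — so that every surviving factor lands in a quantity controlled by $E_{4}$ or $E_{6}$ without gaining an extra power of $\nu$. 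Simultaneously one must keep the bookkeeping of the two dissipation weights $a<b<2a$ and of the two time scales $\nu t$ versus $\nu^{1/3}t$ exactly tight, so that the product of the $\nu t$-growth in $\|u_{1,0}\|_{H^{2}}$ with the $e^{a\nu^{1/3}t}$ in the target is absorbed into $E_{6}$'s $b$-weighted norm and not into a norm with weight $2b$ or with an uncontrolled power of $\nu$.
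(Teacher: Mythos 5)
Your second and third inequalities are handled essentially as in the paper (Lemma \ref{sob_14} with $\|u_{1,0}\|_{H^{2}}\le C\nu^{2/3}(1+\nu^{1/3}t)E_{1}$ and the weight exchange for the second; an anisotropic product estimate paired with $(\ref{estimate_u})_{1}$ and $E_{2}$ for the third), so those parts are fine. The genuine problems are both in your treatment of the first inequality.

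For the transport group $(u_{2,0}\partial_{y}+u_{3,0}\partial_{z})u_{\neq}$ you route the $j=3$ piece through Lemma \ref{lem: zero and non-zero mode}(ii), whose right-hand side is $C\nu^{-5/3}E_{2}^{2}\bigl(E_{7}+\nu^{4/3}\|\triangle u_{3,\neq}\|_{X_{b}}^{2}\bigr)$, and you assert this is "$O(E_{4})$ after interpolation and the div-free relations." That step is not justified: $E_{7}$ is a $b$-weighted auxiliary norm, $E_{4}$ is $a$-weighted with $a<b$, so no interpolation converts one into the other; $E_{7}$ is only controlled a posteriori (Proposition \ref{E5 E6}) by $\|u_{\rm in}\|_{H^{2}}^{2}+\nu^{-2}E_{4}^{4}+\nu^{-2}E_{5}^{2}$, which is not $CE_{4}^{2}$ and would in any case make the argument circular at this point in the scheme. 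The paper avoids $h_{j,1}$ entirely here: it bounds the whole group by $CE_{2}\,\|\triangle(\partial_{x},\partial_{z})u_{\neq}\|_{L^{2}}$ using $\|u_{2,0}\|_{H^{2}}\|\partial_{y}u_{\neq}\|_{H^{1}}+\|u_{3,0}\|_{H^{1}}\|\partial_{z}u_{\neq}\|_{H^{2}}$, and then applies $(\ref{estimate_u})_{3}$ so that the factor lands on $\|\triangle\omega_{2,\neq}\|_{L^{2}}+\|\nabla\triangle u_{2,\neq}\|_{L^{2}}$; the $\nu^{-5/3}$ in the target comes precisely from the $\triangle\omega_{2,\neq}$ component of $E_{4,2}$.

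For the lift-up group $\nabla(u_{1,0}\partial_{x}u_{\neq})$, the mechanism you describe — rerouting $\partial_{y}\partial_{x}u_{1,\neq}$ and $\partial_{y}\partial_{x}u_{3,\neq}$ through $\omega_{2,\neq}$ — places those factors in the purely $a$-weighted part of $E_{4}$ (namely $E_{4,2}$), leaving no $b$-weighted partner against which to absorb the $(1+\nu^{1/3}t)$ growth of $\|u_{1,0}\|_{H^{2}}$. The resulting bound degrades to $C\nu^{-1/3}E_{1}^{2}E_{4}^{2}$, which is strictly weaker than the stated $C\nu^{1/3}E_{1}^{2}E_{4}E_{6}$ and would break the bootstrap: feeding $\nu^{-1/3}E_{4}^{2}$ into $(\ref{end y omega2})$--$(\ref{end e32})$ yields $E_{4,2}^{2}\lesssim\nu^{-2/3}E_{4}^{2}\sim\varepsilon_{0}^{4}\nu^{4/3}$, which does not close against $E_{4,2}\le\varepsilon_{0}\nu$ for small $\nu$. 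The correct device is $(\ref{estimate_u})_{2}$ of Lemma \ref{lem omega}: after eliminating $u_{1,\neq}$ by $\mathrm{div}\,u_{\neq}=0$ one interpolates $\|\nabla^{k}\partial_{x}u_{\neq}\|_{L^{2}}\lesssim\|\nabla^{k}\triangle u_{2,\neq}\|_{L^{2}}^{1/2}\|\nabla^{k}u_{2,\neq}\|_{L^{2}}^{1/2}+\cdots$, pairing an $X_{a}$ factor (hence $E_{4}$) with an $X_{b}$ factor (hence $E_{6}$), so that the product carries the weight $e^{\frac{a+b}{2}\nu^{1/3}t}$ and $(1+\nu^{1/3}t)e^{a\nu^{1/3}t}\le Ce^{\frac{a+b}{2}\nu^{1/3}t}$ from $(\ref{1-nu t})$ closes the time bookkeeping; this gives $\nu\|e^{\frac{a+b}{2}\nu^{1/3}t}\nabla\partial_{x}u_{\neq}\|_{L^{2}L^{2}}^{2}\le CE_{4}E_{6}$ and hence the claimed $C\nu^{1/3}E_{1}^{2}E_{4}E_{6}$.
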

\begin{proof}
{\underline{\bf{Estimate $ (\ref{u0 neq})_{1}. $}}} As $ u_{0}\cdot\nabla u_{\neq}=u_{1,0}\partial_{x}u_{\neq}+u_{2,0}\partial_{y}u_{\neq}+u_{3,0}\partial_{z}u_{\neq} $ and $ \int_{\mathbb{T}}u_{2,0}dz=0,$ using Lemma \ref{U10}, we get
\begin{equation*}
	\begin{aligned}
		&\|\nabla(u_{0}\cdot\nabla u_{\neq})\|_{L^{2}} \\\leq&C\|u_{1,0}\|_{H^{2}}\|\nabla\partial_{x}u_{\neq}\|_{L^{2}}+C\|u_{2,0}\|_{H^{2}}\|\partial_{y}u_{\neq}\|_{H^{1}}+C\|u_{3,0}\|_{H^{1}}\|\partial_{z}u_{\neq}\|_{H^{2}}\\\leq&CE_{1}\nu^{\frac23}(1+\nu^{\frac13}t)\|\nabla\partial_{x}u_{\neq}\|_{L^{2}}+CE_2\|\triangle(\partial_{x},\partial_{z})u_{\neq}\|_{L^{2}}.
	\end{aligned}
\end{equation*}
Combining it with Lemma \ref{lem omega} and
\begin{equation}\label{1-nu t}
	(1+\nu^{\frac13}t)^{2}\leq Ce^{\frac{b-a}{2}\nu^{\frac13}t},
\end{equation}
there holds
\begin{equation}\label{u0 u_neq}
	\begin{aligned}
		\|e^{a\nu^{\frac13}t}\nabla(u_{0}\cdot\nabla u_{\neq})\|_{L^{2}L^{2}}^{2}
		\leq C\nu^{\frac13}E_{1}^{2}E_{4}E_{6}+C\nu^{-\frac53}E_{2}^{2}E_{4}^{2}.
	\end{aligned}
\end{equation}

{\underline{\bf{Estimate $ (\ref{u0 neq})_{2}. $}}}	Thanks to Lemma \ref{sob_14} and Lemma \ref{U10}, for $ k\in\{2,3\}, $ we have
\begin{equation*}
	\begin{aligned}
		\|\nabla(u_{k,\neq}\partial_{k}u_{1,0})\|_{L^{2}}\leq&C\|u_{1,0}\|_{H^{2}}\left(\|\nabla u_{k,\neq}\|_{L^{2}}+\|\nabla(\partial_{z}-\kappa\partial_{y})u_{k,\neq}\|_{L^{2}} \right)\\\leq&CE_{1}\nu^{\frac23}(1+\nu^{\frac13}t)\left(\|\nabla u_{k,\neq}\|_{L^{2}}+\|\nabla(\partial_{z}-\kappa\partial_{y})u_{k,\neq}\|_{L^{2}} \right).
	\end{aligned}
\end{equation*}
Then by using (\ref{1-nu t}), one deduces
\begin{equation*}
	\begin{aligned}
		&\|e^{a\nu^{\frac13}t}\nabla(u_{\neq}\cdot\nabla u_{1,0})\|_{L^{2}L^{2}}^{2}\leq\sum_{k=2}^{3}\|e^{a\nu^{\frac13}t}\nabla(u_{k,\neq}\partial_{k}u_{1,0})\|_{L^{2}L^{2}}^{2}\\\leq&CE_{1}^{2}\nu^{\frac43}\sum_{k=2}^{3}\left(\|e^{b\nu^{\frac13}t}\nabla\partial_{x}^{2}u_{k,\neq}\|_{L^{2}L^{2}}^{2}+\|e^{b\nu^{\frac13}t}\nabla\partial_{x}(\partial_{z}-\kappa\partial_{y})u_{k,\neq}\|_{L^{2}L^{2}}^{2} \right)\leq C\nu^{\frac13}E_{1}^{2}E_{7}.
	\end{aligned}
\end{equation*}

{\underline{\bf{Estimate $ (\ref{u0 neq})_{3}. $}}} For $ k\in\{2,3\}, $ it follows from Lemma \ref{lem zero nonzero} that
\begin{equation*}
	\|\partial_{x}u_{k,\neq}\partial_{k}u_{3,0}\|_{L^{2}}\leq C\|\partial_{k}u_{3,0}\|_{H^{1}}\left(\|\partial_{x}u_{k,\neq}\|_{L^{2}}+\|\partial_{z}\partial_{x}u_{k,\neq}\|_{L^{2}} \right),
\end{equation*}
which implies that 
\begin{equation*}
	\begin{aligned}
		&\|e^{a\nu^{\frac13}t}\partial_{x}(u_{\neq}\cdot\nabla u_{3,0})\|_{L^{2}L^{2}}^{2}
		\leq \|e^{a\nu^{\frac13}t}\partial_{x}u_{\neq}\cdot\nabla u_{3,0}\|_{L^{2}L^{2}}^{2}
		\\\leq&C\nu^{-1}\left(\|u_{3,0}\|_{Y_{0}}^{2}+\|\nabla u_{3,0}\|_{Y_{0}}^{2} \right)\left(\|\triangle u_{2,\neq}\|_{X_{a}}^{2}+\|(\partial_{x}^{2}+\partial_{z}^{2})u_{3,\neq}\|_{X_{a}}^{2} \right)
		\leq C\nu^{-1}E_{2}^{2}E_{4}^{2}.
	\end{aligned}
\end{equation*}
\end{proof}

%%%%%%%%%%
\section{Energy estimates for zero modes}\label{7}
%We assume that
%\begin{equation}\label{ass E1 E21 E3}
%	E_{1}\leq\varepsilon_{0},\quad E_{2}\leq\varepsilon_{0}\nu,\quad E_{4}\leq\varepsilon_{0}\nu,
%\end{equation}
%where $\varepsilon_{0}\in(0,\delta_{4})$ is a sufficiently small constant independent of $ \nu $ and $ T. $
\subsection{Estimate $E_{1}$}
\begin{proposition}\label{prop:E1}
It holds that
\begin{equation*}
	E_{1}\leq C\left(\nu^{-\frac23}\|u_{\rm in}\|_{H^{2}}+\nu^{-1}E_{2}+\nu^{-2}E_{4}^{2} \right).
\end{equation*}
%	\begin{equation*}
	%		E_{1,1}\leq C\left(\|u_{0}(0)\|_{H^{4}}+\nu^{-1}E_{2} \right),
	%	\end{equation*}
%	\begin{equation*}
	%		E_{1,2}\leq C\left(\|u_{0}(0)\|_{H^{2}}+\nu^{-\frac43}E_{4}^{2} \right).
	%	\end{equation*}
\end{proposition}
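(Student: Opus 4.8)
The plan is to bound $E_1=E_{1,1}+\nu^{-2/3}E_{1,2}$ by running separate energy estimates on the two equations of (\ref{decom u10}): the lift-up source sits in the $\widehat{u_{1,0}}$-equation (forcing $-u_{2,0}$, controlled by $E_2$), while the source from the non-zero modes sits in the $\widetilde{u_{1,0}}$-equation (controlled by $E_4$). Throughout I use that the bootstrap (\ref{ass}) makes $\nu^{-1}E_2\le\varepsilon_0$ and $\nu^{-2}E_2^2\le\varepsilon_0^2$ small, and that $\widehat{u_{1,0}}|_{t=0}=0$, $\widetilde{u_{1,0}}|_{t=0}=(u_{1,{\rm in}})_0$.

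For $E_{1,1}$ I would run an $H^4$ energy estimate on the first equation of (\ref{decom u10}). Since $\widehat{u_{1,0}}$ is $x$-independent and $\partial_y u_{2,0}+\partial_z u_{3,0}=0$, the transport term is in divergence form, $u_{2,0}\partial_y\widehat{u_{1,0}}+u_{3,0}\partial_z\widehat{u_{1,0}}=\partial_y(u_{2,0}\widehat{u_{1,0}})+\partial_z(u_{3,0}\widehat{u_{1,0}})$. Pairing $\partial^\alpha$, $|\alpha|\le4$, with $\partial^\alpha\widehat{u_{1,0}}$: for the linear source $-u_{2,0}$, one integration by parts ensures $\widehat{u_{1,0}}$ always carries at least one derivative, so Young's inequality absorbs its contribution into $\frac{\nu}{2}\|\nabla\widehat{u_{1,0}}\|_{H^4}^2$ and leaves $C\nu^{-1}\|u_{2,0}\|_{H^3}^2$ (for $|\alpha|=0$ one first subtracts the $z$-average of $\widehat{u_{1,0}}$, using that $u_{2,0}$ has zero $z$-mean, and applies Poincar\'e in $z$); integrating in time and using $\|u_{2,0}\|_{L^2H^3}\lesssim\nu^{-1/2}E_2$ — a consequence of $\|\triangle u_{2,0}\|_{Y_0}\lesssim E_2$ together with Poincar\'e in $z$ (recall $\int_{\mathbb T}u_{2,0}\,dz=0$) — one obtains $C\nu^{-2}E_2^2$, a contribution $\lesssim\nu^{-1}E_2$. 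The transport term is handled by moving one derivative onto $\widehat{u_{1,0}}$ via the divergence structure, bounding $u_{2,0},u_{3,0}$ through Lemma \ref{zero mode} and their $L^2$-in-time bounds coming from $E_2$, and using the decay $\|\widehat{u_{1,0}}(t)\|_{H^2}\lesssim E_{1,1}\min(\nu t,1)$ of Lemma \ref{U10} to compensate the near-$t=0$ loss in the top Sobolev norms of $u_{3,0}$; the resulting terms are $\lesssim\varepsilon_0^2E_{1,1}^2$ and are absorbed. Finally, $\|\partial_t\widehat{u_{1,0}}\|_{H^2}\le\nu\|\widehat{u_{1,0}}\|_{H^4}+\|u_{2,0}\|_{H^2}+C\|u_{2,0}\|_{H^2}\|\widehat{u_{1,0}}\|_{H^3}+C\|u_{3,0}\partial_z\widehat{u_{1,0}}\|_{H^2}$ is read off from the equation; dividing by $\nu$ and using $\|\widehat{u_{1,0}}\|_{H^4}\lesssim E_{1,1}$, Lemma \ref{zero mode}, and $\|\widehat{u_{1,0}}\|_{H^3}\lesssim E_{1,1}\min(\nu t,1)^{1/2}$ against $\|u_{3,0}\|_{H^2}\lesssim\min(\nu^{2/3}+\nu t,1)^{-1/2}E_2$ (Lemma \ref{U10}), one gets $\nu^{-1}\|\partial_t\widehat{u_{1,0}}\|_{L^\infty H^2}\lesssim\nu^{-1}E_2+\varepsilon_0E_{1,1}$. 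Collecting, $E_{1,1}\lesssim\nu^{-1}E_2+\varepsilon_0E_{1,1}$, hence $E_{1,1}\lesssim\nu^{-1}E_2$.

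For $E_{1,2}$ I would run an $H^2$ energy estimate on the second equation of (\ref{decom u10}). The datum gives $\|(u_{1,{\rm in}})_0\|_{H^2}\le\|u_{\rm in}\|_{H^2}$; the transport term $\partial_y(u_{2,0}\widetilde{u_{1,0}})+\partial_z(u_{3,0}\widetilde{u_{1,0}})$ (again divergence form) is absorbed by the smallness $\nu^{-2}E_2^2\le\varepsilon_0^2$; and the nonlinear source is $(u_{\neq}\cdot\nabla u_{1,\neq})_0=\partial_y(u_{2,\neq}u_{1,\neq})_0+\partial_z(u_{3,\neq}u_{1,\neq})_0$ (the $\partial_x$-part has vanishing zero mode), so the pairing in $H^2$ is controlled by $\|(u_{\neq}\cdot\nabla u_{1,\neq})_0\|_{L^1H^2}^2+\frac14\|\widetilde{u_{1,0}}\|_{L^\infty H^2}^2$, the last term absorbed. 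Since $u_{\neq}$ carries the enhanced-dissipation weight $e^{-a\nu^{1/3}t}$, combining the bounds of Lemma \ref{lem 2} (in their $H^2$ form, via Lemma \ref{lem omega}) with Cauchy--Schwarz in time against $e^{-2a\nu^{1/3}t}\in L^2(0,\infty)$ (which costs a factor $\nu^{-1/6}$) gives $\|(u_{\neq}\cdot\nabla u_{1,\neq})_0\|_{L^1H^2}\lesssim\nu^{-4/3}E_4^2$. Hence $E_{1,2}\lesssim\|u_{\rm in}\|_{H^2}+\nu^{-4/3}E_4^2$, and therefore $\nu^{-2/3}E_{1,2}\lesssim\nu^{-2/3}\|u_{\rm in}\|_{H^2}+\nu^{-2}E_4^2$.

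Adding the two estimates gives the proposition. The main difficulty is the bookkeeping of powers of $\nu$: the lift-up source $u_{2,0}$ decays only on the slow timescale $\nu^{-1}$, so each pairing must be arranged to produce \emph{exactly one} factor $\nu^{-1}$, which forces the integration-by-parts manipulations above and the use of the sharp bound $\|u_{2,0}\|_{L^2H^3}\lesssim\nu^{-1/2}E_2$ rather than any $L^1$-in-time bound; and in the transport terms, derivatives falling on $u_{3,0}$ exceed what $E_2$ controls uniformly in $t$ and must be balanced against the $\min(\nu t,1)$-decay of $\widehat{u_{1,0}}$ from Lemma \ref{U10}, the cancellation of the resulting $\nu^{-1/3}$ losses being the delicate point. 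By comparison the $E_{1,2}$ part is routine once one exploits its divergence structure and the enhanced-dissipation weight.
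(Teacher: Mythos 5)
Your treatment of $E_{1,1}$ is essentially the paper's: the $L^2$ and top-order energy estimates on the first equation of (\ref{decom u10}), the use of $\int_{\mathbb{T}}u_{2,0}\,dz=0$ to gain a derivative on the lift-up source $-u_{2,0}$, the compensation of the time-weighted control of $\triangle u_{3,0}$ by the decay $\|\widehat{u_{1,0}}\|_{H^2}\lesssim E_{1,1}\min(\nu t,1)$ (this is exactly the content of Lemma \ref{lem u20 u30}), and the absorption of the $E_{1,1}E_{2}$ terms via $E_{2}\le\varepsilon_{0}\nu$ all coincide with Step I of the paper's proof. One small wobble: when you divide the $\partial_{t}\widehat{u_{1,0}}$ bound by $\nu$ you must insert the already-derived bound $\|\widehat{u_{1,0}}\|_{L^{\infty}H^{4}}\lesssim\nu^{-1}(E_{2}+E_{1,1}E_{2})$ rather than the tautological $\|\widehat{u_{1,0}}\|_{H^{4}}\le E_{1,1}$, otherwise that term cannot be absorbed; since your stated conclusion is the correct one, I read this as a slip of phrasing rather than an error.

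The genuine gap is in the $E_{1,2}$ part, in the treatment of the source $(u_{\neq}\cdot\nabla u_{1,\neq})_{0}$. You propose the duality $\int_{0}^{t}\langle F,\widetilde{u_{1,0}}\rangle_{H^{2}}\,ds\le\|F\|_{L^{1}H^{2}}\|\widetilde{u_{1,0}}\|_{L^{\infty}H^{2}}$ and claim $\|F\|_{L^{1}H^{2}}\lesssim\nu^{-\frac43}E_{4}^{2}$ from ``Lemma \ref{lem 2} in its $H^{2}$ form.'' But Lemma \ref{lem 2} controls only $\|\nabla(u_{\neq}\cdot\nabla u_{\neq})\|_{L^{2}L^{2}}$, i.e.\ the $H^{1}$ level; the $H^{2}$ norm of $F=u_{\neq}\cdot\nabla u_{1,\neq}$ contains the term $u_{2,\neq}\,\partial_{y}^{3}u_{1,\neq}$, and $\|\partial_{y}^{3}u_{1,\neq}\|_{L^{2}}$ is not controlled by the energy functional: $E_{4}$ reaches only third-order derivatives of $u_{1,\neq}$ carrying at least one $\partial_{x}$ or $\partial_{z}$ (through $\triangle\omega_{2,\neq}$, $\nabla\triangle u_{2,\neq}$ and $(\ref{estimate_u})_{3}$), and on the Fourier side $|k_{2}|^{3}$ is not dominated by $|k|^{2}(|k_{1}|+|k_{3}|)$. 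The divergence form $F=\partial_{y}(u_{2,\neq}u_{1,\neq})_{0}+\partial_{z}(u_{3,\neq}u_{1,\neq})_{0}$, which you record but do not exploit in the duality, does not rescue the $L^{1}$--$L^{\infty}$ pairing either: at the $|\alpha|=2$ level the transferred derivative lands on $\widetilde{u_{1,0}}$ in $H^{3}$, which is not an $L^{\infty}_{t}$ quantity in $E_{1,2}$. The correct (and the paper's) choice is the $L^{2}_{t}$--$L^{2}_{t}$ duality against the dissipation: test $\triangle$ of the equation with $\triangle\widetilde{u_{1,0}}$, integrate by parts once, and pay $\nu^{-1}\|\nabla(u_{\neq}\cdot\nabla u_{1,\neq})_{0}\|_{L^{2}L^{2}}^{2}\le C\nu^{-1}\cdot\nu^{-\frac53}E_{4}^{4}$ using part (ii) of Lemma \ref{lem 2}; this yields $\|\triangle\widetilde{u_{1,0}}\|_{Y_{0}}^{2}\lesssim\|u_{\rm in}\|_{H^{2}}^{2}+\nu^{-2}E_{2}^{2}E_{1,2}^{2}+\nu^{-\frac83}E_{4}^{4}$ and hence $E_{1,2}\lesssim\|u_{\rm in}\|_{H^{2}}+\nu^{-\frac43}E_{4}^{2}$ after absorbing the middle term. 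Only one derivative of the nonlinearity, measured in $L^{2}_{t}$, is ever needed.
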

\begin{proof}
{\underline{\textbf{Step I. Estimate $ E_{1,1}. $}}}
Multiplying $ (\ref{decom u10})_{1} $ by $ 2\widehat{u_{1,0}} $ and integrating with $ (y,z) $ over $ \mathbb{R}\times\mathbb{T}, $ and noting that $ \partial_{y}u_{2,0}+\partial_{z}u_{3,0}=0 $ and $ \frac{1}{|\mathbb{T}|}\int_{\mathbb{T}}u_{2,0}dz=0, $ we obtain
\begin{equation*}
	\begin{aligned}
		&\partial_{t}\|\widehat{u_{1,0}}\|_{L^{2}}^{2}+2\nu\|\nabla\widehat{u_{1,0}}\|_{L^{2}}^{2}\\=&-2\int_{\mathbb{R}\times\mathbb{T}}u_{2,0}\widehat{u_{1,0}}dydz\leq C\|\partial_{z}u_{2,0}\|_{L^{2}}\|\partial_{z}\widehat{u_{1,0}}\|_{L^{2}}\leq C\|\nabla\triangle u_{2,0}\|_{L^{2}}\|\nabla\widehat{u_{1,0}}\|_{L^{2}},
	\end{aligned}
\end{equation*}
which follows that
\begin{equation}\label{widehat u10}
	\|\widehat{u_{1,0}}\|_{Y_{0}}^{2}\leq C\nu^{-1}\|\nabla\triangle u_{2,0}\|_{L^{2}L^{2}}^{2}\leq C\nu^{-2}E_{2}^{2}. 
\end{equation}
Taking $ \triangle^{2} $ for $ (\ref{decom u10})_{1}, $ the energy estimate yields that
\begin{equation*}
	\begin{aligned}
		&\partial_{t}\|\triangle^{2}\widehat{u_{1,0}}\|_{L^{2}}^{2}+2\nu\|\nabla\triangle^{2}\widehat{u_{1,0}}\|_{L^{2}}^{2}\\\leq&C\left(\|\nabla\triangle u_{2,0}\|_{L^{2}}+\|\nabla\triangle\left(u_{2,0}\partial_{y}\widehat{u_{1,0}}+u_{3,0}\partial_{z}\widehat{u_{1,0}} \right)\|_{L^{2}} \right)\|\nabla\triangle^{2}\widehat{u_{1,0}}\|_{L^{2}}.
	\end{aligned}
\end{equation*}
From this, along with Lemma \ref{lem u20 u30}, we get
\begin{equation}\label{widehat u10''''}
	\begin{aligned}
		\|\triangle^{2}\widehat{u_{1,0}}\|_{Y_{0}}^{2}
		%			\\\leq&C\nu^{-1}\left(\|\nabla\triangle u_{2,0}\|_{L^{2}L^{2}}^{2}+\|\nabla\triangle\left(u_{2,0}\partial_{y}\widehat{u_{1,0}}+u_{3,0}\partial_{z}\widehat{u_{1,0}} \right)\|_{L^{2}L^{2}}^{2} \right)
		\leq&C\nu^{-2}\left(\|\triangle u_{2,0}\|_{Y_{0}}^{2}+\|\triangle\left(u_{2,0}\partial_{y}\widehat{u_{1,0}}+u_{3,0}\partial_{z}\widehat{u_{1,0}} \right)\|_{Y_{0}}^{2} \right)
		\\\leq&C\left(\nu^{-2}E_{2}^{2}+\nu^{-2}E_{1,1}^{2}E_{2}^{2} \right).
	\end{aligned}
\end{equation}

Combining (\ref{widehat u10}) with (\ref{widehat u10''''}), there holds
\begin{equation}\label{hat u10}
	\begin{aligned}
		\|\widehat{u_{1,0}}\|_{L^{\infty}H^{4}}
		+\nu^{\frac12}\|\nabla\widehat{u_{1,0}}\|_{L^{2}H^{4}}
		\leq&C\left(\|\widehat{u_{1,0}}\|_{Y_{0}}
		+\|\triangle^{2}\widehat{u_{1,0}}\|_{Y_{0}} \right)
		\leq C\nu^{-1}\left(E_{2}+E_{1,1}E_{2} \right).
	\end{aligned}
\end{equation}
We also need to estimate $ \|\partial_{t}\widehat{u_{1,0}}\|_{L^{\infty}H^{2}}. $ It follows from $ (\ref{decom u10}) $,  Lemma \ref{zero mode} and Lemma \ref{lem u20 u30} that
\begin{equation}\label{partial t u10}
	\begin{aligned}
		\|\partial_{t}\widehat{u_{1,0}}\|_{H^{2}}\leq&\nu\|\triangle\widehat{u_{1,0}}\|_{H^{2}}+\|u_{2,0}\|_{H^{2}}+\|u_{2,0}\partial_{y}\widehat{u_{1,0}}\|_{H^{2}}+\|u_{3,0}\partial_{z}\widehat{u_{1,0}}\|_{H^{2}}\\\leq&\nu\|\widehat{u_{1,0}}\|_{L^{\infty}H^{4}}+CE_{2}+CE_{1,1}E_{2}.
	\end{aligned}
\end{equation}
From (\ref{hat u10}) and (\ref{partial t u10}), we deduce
\begin{equation*}
	\begin{aligned}
		E_{1,1}=&\|\widehat{u_{1,0}}\|_{L^{\infty}H^{4}}+\nu^{\frac12}\|\nabla\widehat{u_{1,0}}\|_{L^{2}H^{4}}+\nu^{-1}\|\partial_{t}\widehat{u_{1,0}}\|_{L^{\infty}H^{2}}\\\leq&C\left(\nu^{-1}E_{2}+\nu^{-1}E_{1,1}E_{2} \right).
	\end{aligned}
\end{equation*}
As $ E_{2}\leq \varepsilon_{0}\nu, $ taking $ \varepsilon_{0} $ small enough such that $ C\varepsilon_{0}<\frac12, $ one obtains
\begin{equation}\label{E11 end}
	E_{1,1}\leq C\nu^{-1}E_{2}.
\end{equation}

{\underline{\textbf{Step II. Estimate $ E_{1,2}. $}}}
As $ \widetilde{u_{1,0}} $ satisfies
\begin{equation}\label{tilde u10}
	\left\{
	\begin{array}{lr}
		\partial_{t}\widetilde{u_{1,0}}-\nu\triangle\widetilde{u_{1,0}}+u_{2,0}\partial_{y}\widetilde{u_{1,0}}+u_{3,0}\partial_{z}\widetilde{u_{1,0}}+(u_{\neq}\cdot\nabla u_{1,\neq})_{0}=0, \\
		\widetilde{u_{1,0}}|_{t=0}=(u_{\rm in})_0, 
	\end{array}
	\right.
\end{equation}
and $ {\rm div}~u=0, $ then $L^{2}$ energy estimate gives
\begin{equation*}
	\begin{aligned}
		\partial_{t}\|\widetilde{u_{1,0}}\|_{L^{2}}^{2}+2\nu\|\nabla\widetilde{u_{1,0}}\|_{L^{2}}^{2}\leq C\||u_{\neq}|^{2}\|_{L^{2}}\|\nabla\widetilde{u_{1,0}}\|_{L^{2}},
	\end{aligned}
\end{equation*}
which along with Lemma \ref{lem 2} indicate that
\begin{equation}\label{widetilde u10}
	\begin{aligned}
		\|\widetilde{u_{1,0}}\|_{Y_{0}}^{2}\leq C\left(\|(u_{\rm in})_0\|_{L^{2}}^{2}+\nu^{-1}\||u_{\neq}|^{2}\|_{L^{2}L^{2}}^{2} \right)\leq C\left(\|(u_{\rm in})_0\|_{L^{2}}^{2}+\nu^{-2}E_{4}^{4} \right).
	\end{aligned}
\end{equation}

Taking $ \triangle $ for $ (\ref{tilde u10})_{1},$  energy estimate indicates
\begin{equation*}
	\begin{aligned}
		&\partial_{t}\|\triangle\widetilde{u_{1,0}}\|_{L^{2}}^{2}+2\nu\|\nabla\triangle\widetilde{u_{1,0}}\|_{L^{2}}^{2}\\=&2\int_{\mathbb{R}\times\mathbb{T}}\nabla\left(u_{2,0}\partial_{y}\widetilde{u_{1,0}}+u_{3,0}\partial_{z}\widetilde{u_{1,0}}+(u_{\neq}\cdot\nabla u_{1,\neq})_{0} \right)\cdot\nabla\triangle\widetilde{u_{1,0}}dydz\\\leq&C\left(\|\nabla\left(u_{2,0}\partial_{y}\widetilde{u_{1,0}}+u_{3,0}\partial_{z}\widetilde{u_{1,0}} \right)\|_{L^{2}}+\|\nabla(u_{\neq}\cdot\nabla u_{1,\neq})_{0}\|_{L^{2}} \right)\|\nabla\triangle\widetilde{u_{1,0}}\|_{L^{2}},
	\end{aligned}
\end{equation*}
then we get
\begin{equation}\label{tilde u10''}
	\begin{aligned}
		\|\triangle\widetilde{u_{1,0}}\|_{Y_{0}}^{2}
		\leq& C\big(\|(u_{\rm in})_0\|_{H^{2}}^{2}+\nu^{-1}\|\nabla(u_{2,0}\partial_{y}\widetilde{u_{1,0}}
		+u_{3,0}\partial_{z}\widetilde{u_{1,0}})\|_{L^{2}L^{2}}^{2}
		\\&+\nu^{-1}\|\nabla(u_{\neq}\cdot\nabla u_{1,\neq})_{0}\|_{L^{2}L^{2}}^{2} \big).
	\end{aligned}
\end{equation}
Due to $ \frac{1}{|\mathbb{T}|}\int_{\mathbb{T}}u_{2,0}dz=0, $ there holds
\begin{equation*}
	\|\nabla(u_{2,0}\partial_{y}\widetilde{u_{1,0}})\|_{L^{2}}\leq C\|u_{2,0}\|_{H^{1}}\|\partial_{y}\widetilde{u_{1,0}}\|_{H^{2}}\leq C\|\triangle u_{2,0}\|_{L^{2}}\|\nabla\widetilde{u_{1,0}}\|_{H^{2}}.
\end{equation*}
Similarly, we have
\begin{equation*}
	\|\nabla(u_{3,0}\partial_{z}\widetilde{u_{1,0}})\|_{L^{2}}\leq C\|u_{3,0}\|_{H^{1}}\|\partial_{z}\widetilde{u_{1,0}}\|_{H^{2}}\leq C\left(\|u_{3,0}\|_{L^{2}}+\|\nabla u_{3,0}\|_{L^{2}} \right)\|\nabla\widetilde{u_{1,0}}\|_{H^{2}}.
\end{equation*}
Using the above estimates and Lemma \ref{lem 2}, we get by (\ref{tilde u10''}) that
\begin{equation}\label{widetilde u10''}
	\begin{aligned}
		\|\triangle\widetilde{u_{1,0}}\|_{Y_{0}}^{2}\leq C\big(\|u_{\rm in}\|_{H^{2}}^{2}
		+\nu^{-2}E_{2}^{2}E_{1,2}^{2}+\nu^{-\frac83}E_{4}^{4} \big).
	\end{aligned}
\end{equation}
It follows from (\ref{widetilde u10}) and (\ref{widetilde u10''}) that
\begin{equation*}
	\begin{aligned}
		E_{1,2}=&\|\widetilde{u_{1,0}}\|_{L^{\infty}H^{2}}
		+\nu^{\frac12}\|\nabla\widetilde{u_{1,0}}\|_{L^{2}H^{2}}
		\leq C\left(\|\widetilde{u_{1,0}}\|_{Y_{0}}
		+\|\triangle\widetilde{u_{1,0}}\|_{Y_{0}} \right)\\\leq&C\left(\|u_{\rm in}\|_{H^{2}}+\nu^{-1}E_{2}E_{1,2}+\nu^{-\frac43}E_{4}^{2} \right).
	\end{aligned}
\end{equation*}
Due to $ E_{2}\leq \varepsilon_{0}\nu, $ taking $ \varepsilon_{0} $ small enough such that $ C\varepsilon_{0}<\frac12, $ there holds
\begin{equation}\label{e12 end}
	E_{1,2}\leq C\left(\|u_{\rm in}\|_{H^{2}}+\nu^{-\frac43}E_{4}^{2} \right).
\end{equation}

Since $ E_{1}=E_{1,1}+\nu^{-\frac23}E_{1,2},$ combining (\ref{E11 end}) with (\ref{e12 end}), the proof is complete.
\end{proof}

\subsection{Estimate $ E_{2} $}
\begin{proposition}\label{prop:E21}
It holds that
\begin{equation*}
	E_{2}\leq C\left(\|u_{\rm in}\|_{H^{2}}+\nu^{-1}E_{4}^{2}+\nu^{-1}E_{3} \right).
\end{equation*}
\end{proposition}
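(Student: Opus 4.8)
The plan is to derive the evolution equations for the zero modes $u_{2,0}$ and $u_{3,0}$ from \eqref{ini1} and to run $L^{2}$ and $H^{2}$ energy estimates on them. First I would record the structure of these equations. Since $\partial_{x}$ annihilates zero modes, the terms $y\partial_{x}u$ and $u_{1,0}\partial_{x}u_{j,0}$ disappear, so the only nonlinearities are the planar transport $u_{2,0}\partial_{y}u_{j,0}+u_{3,0}\partial_{z}u_{j,0}$ and the non-zero mode self-interaction $(u_{\neq}\cdot\nabla u_{j,\neq})_{0}=\partial_{i}(u_{i,\neq}u_{j,\neq})_{0}$, which is in divergence form because ${\rm div}\,u_{\neq}=0$. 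The zero mode of the pressure reduces to $P^{2}+(P^{4})_{0}+(P^{N_{3}})_{0}$ (the pieces $P^{0},P^{1},P^{3}$ and $P^{N_{1}}$ have vanishing zero mode), and combining the buoyancy $\Theta_{0}e_{2}$ with $-\nabla(P^{N_{3}})_{0}=-\nabla\triangle^{-1}\partial_{y}\Theta_{0}$ produces the Leray projection of $\Theta_{0}e_{2}$, whose second and third components are $\triangle^{-1}\partial_{z}^{2}\Theta_{0}$ and $-\triangle^{-1}\partial_{y}\partial_{z}\Theta_{0}$. I would also use from the start that $\partial_{y}u_{2,0}+\partial_{z}u_{3,0}=0$ together with $\frac{1}{|\mathbb{T}|}\int_{\mathbb{T}}u_{2,0}\,dz=0$, so $u_{2,0}$ enjoys a Poincar\'e inequality in $z$ (hence $\|u_{2,0}\|_{Y_{0}}\le C\|\triangle u_{2,0}\|_{Y_{0}}$) while $u_{3,0}$ does not; this is precisely why $E_{2}$ carries $\|\triangle u_{2,0}\|_{Y_{0}}$ but only $\|u_{3,0}\|_{Y_{0}}+\|\nabla u_{3,0}\|_{Y_{0}}$, the missing second-order control of $u_{3,0}$ being supplied, with the degenerate time weight $\min(\nu^{2/3}+\nu t,1)^{1/2}$, in Proposition \ref{prop:E22}.

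The estimates then proceed in three steps. (i) Test the vector $(u_{2,0},u_{3,0})$-system against $(u_{2,0},u_{3,0})$: the pressure gradient and the planar transport integrate to zero by incompressibility; the divergence-form forcing is bounded, after moving one derivative, by $C\nu^{-1}\||u_{\neq}|^{2}\|_{L^{2}L^{2}}^{2}\le C\nu^{-2}E_{4}^{4}$ using Lemma \ref{lem 2}; and the buoyancy work collapses to $\int\Theta_{0}u_{2,0}$, which I would integrate by parts in $z$ (legitimate since $u_{2,0}$ has zero $z$-average) to make $\|\partial_{z}\Theta_{0}\|$ appear, and then bound by $C\nu^{-1}E_{3}\,\|u_{2,0}\|_{Y_{0}}$ using the $z$-Poincar\'e inequality and the dissipative part of $\|u_{2,0}\|_{Y_{0}}$ to control $\|u_{2,0}\|_{L^{2}L^{2}}$. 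Absorbing the last factor gives $\|u_{2,0}\|_{Y_{0}}+\|u_{3,0}\|_{Y_{0}}\le C(\|u_{\rm in}\|_{H^{2}}+\nu^{-1}E_{4}^{2}+\nu^{-1}E_{3})$. (ii) Apply $\triangle$ to the $u_{2,0}$-equation and test against $\triangle u_{2,0}$: the transport term, after two integrations by parts, leaves lower-order products handled with the $L^{\infty}$-embeddings of Lemma \ref{zero mode} and a bootstrap absorption exploiting $\nu^{-1}E_{2}\le\varepsilon_{0}$ from \eqref{ass}; the non-zero forcing and the pressures $\triangle P^{2}$, $\triangle(P^{4})_{0}$ are controlled by Lemma \ref{lem 2} and Lemma \ref{lem: u0 u_neq} (with a $2$D Gagliardo--Nirenberg inequality for $\triangle P^{2}$, whose contribution is absorbable into $E_{2}^{2}$); and the buoyancy contributes $\int\partial_{z}^{2}\Theta_{0}\,\triangle u_{2,0}$, which after one integration by parts in $z$ and Young's inequality yields $C\nu^{-2}E_{3}^{2}$, i.e. a $C\nu^{-1}E_{3}$ contribution to $\|\triangle u_{2,0}\|_{Y_{0}}$. (iii) For $\|\nabla u_{3,0}\|_{Y_{0}}$ I would use $\partial_{z}u_{3,0}=-\partial_{y}u_{2,0}$, already controlled in (ii), and estimate $\partial_{y}u_{3,0}$ through the $z$-average $\bar{u}_{3,0}(y)=\frac{1}{|\mathbb{T}|}\int_{\mathbb{T}}u_{3,0}\,dz$, which solves a one-dimensional heat equation in $y$ with divergence-form forcing (the $\partial_{z}$-pressure and the projected buoyancy have zero $z$-average and drop out), closed by a direct energy estimate and $2$D Gagliardo--Nirenberg for the flux. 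Combining (i)--(iii) with Proposition \ref{prop:E22} gives the stated bound.

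I expect the main obstacle to be the buoyancy work $\int\Theta_{0}u_{2,0}$ in step (i): because $\Theta_{0}$ need not have vanishing $z$-average, only $\|\Theta_{0}\|_{L^{2}}$---not a gradient norm---is directly available, whereas $E_{3}$ provides $\|\nabla\Theta_{0}\|_{Y_{0}}$; one therefore has to offload a $z$-derivative onto $\Theta_{0}$ using the zero $z$-average of $u_{2,0}$ and, at the same time, recover $\|u_{2,0}\|_{L^{2}L^{2}}$ from the dissipation rather than from an $L^{\infty}_{t}$ bound, so that the time integration remains global and exactly the coefficient $\nu^{-1}E_{3}$ (and not something larger) is produced. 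A secondary difficulty is the transport term in the top-order estimates, where $\|\nabla u_{0}\|_{L^{\infty}}$ is borderline in the planar $(y,z)$ geometry and must be handled with the $\nu^{1/2}$-weighted $L^{2}_{t}L^{\infty}$ bound of Lemma \ref{zero mode} and a bootstrap absorption; once the nonlinear inputs from Lemmas \ref{lem 2}, \ref{lem: u0 u_neq} and \ref{lem: zero and non-zero mode} are in hand, the remaining steps are routine.
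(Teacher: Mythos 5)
Your Step (i) coincides with the paper's argument: the $L^2$ estimate on $(u_{2,0},u_{3,0})$, the cancellation of the pressure work via $\partial_y u_{2,0}+\partial_z u_{3,0}=0$, and the integration by parts in $z$ of $\int\Theta_0 u_{2,0}$ using $\int_{\mathbb{T}}u_{2,0}\,dz=0$ are exactly what is done to obtain \eqref{U20 U30 end}. For the higher-order control your route diverges from the paper's: the paper passes to the planar vorticity $\omega_{1,0}=\partial_y u_{3,0}-\partial_z u_{2,0}$, for which \emph{all} pressure contributions cancel identically ($\partial_y\partial_z P_0-\partial_z\partial_y P_0=0$) and the transport term is skew, and then recovers $\triangle u_{2,0}=-\partial_z\omega_{1,0}$, $\nabla u_{3,0}$ and $\triangle u_{3,0}=\partial_y\omega_{1,0}$ from $\omega_{1,0}$. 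You instead apply $\triangle$ directly to the velocity equation and estimate $\triangle P^2$, $\triangle(P^4)_0$ explicitly, and control $\partial_y u_{3,0}$ through a $z$-average decomposition and a one-dimensional heat equation for $\bar u_{3,0}$. This is workable (the pressure terms you must face are indeed bounded by Lemma \ref{lem 2} and a bootstrap absorption of $\nu^{-2}E_2^4$), but it is strictly more laborious than the vorticity formulation, which avoids the pressure entirely.

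There is, however, a genuine gap. The energy $E_2$ contains a fourth component, $\|\min(\nu^{\frac23}+\nu t,1)^{\frac12}\triangle u_{3,0}\|_{Y_0}$, and Proposition \ref{prop:E21} is the place where it must be established: you defer it to Proposition \ref{prop:E22}, but that proposition estimates $E_3$ (the zero modes of the temperature) and contains nothing about $\triangle u_{3,0}$. This component cannot be omitted, and it is in fact the delicate part of the proof. The obstruction is that the equation for $\triangle u_{3,0}=\partial_y\omega_{1,0}$ involves $\partial_y^2(h_{3,3})_0$ with $h_{3,3}=u_{\neq}\cdot\nabla u_{3,\neq}$, and the full gradient $\nabla h_{3,3}$ only satisfies the weaker bound $\|e^{2a\nu^{1/3}t}\nabla h_{3,3}\|_{L^2L^2}^2\le C\nu^{-5/3}E_4^4$ of Lemma \ref{lem 2}(ii) (the $y$-derivative of $u_{\neq}$ is not a ``good'' derivative), so an unweighted estimate would produce $\nu^{-8/3}E_4^4$ instead of the required $\nu^{-2}E_4^4$. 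The paper compensates by running the energy estimate against the degenerate weight $\min(\nu^{2/3}+\nu t,1)$, paying the price $\nu\|\partial_y\omega_{1,0}\|_{L^2}^2$ from differentiating the weight (absorbed by the unweighted $\|\omega_{1,0}\|_{Y_0}$ bound) and using $\min(\nu^{2/3}+\nu t,1)\le\nu^{2/3}(1+\nu^{1/3}t)$ together with the exponential weights to convert $\nu^{-5/3}$ into $\nu^{-2}$; see \eqref{u30'' end}. Without this step your argument only bounds the first three components of $E_2$ and the proposition is not proved. (A minor related point: your final sentence invokes Proposition \ref{prop:E22} to close the estimate, but $E_3$ already appears on the right-hand side of the claimed inequality, so no input from that proposition is needed or appropriate here.)
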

\begin{proof}
Recall (\ref{23})-(\ref{hj}), the zero modes for $ u_{2} $ and $ u_{3} $ satisfy
\begin{equation}\label{eq:u20 u30 theta}
	\left\{
	\begin{array}{lr}
		\partial_{t}u_{2,0}-\nu\triangle u_{2,0}+\partial_{y}P^{5}+h_{2,0}=\Theta_{0}-\partial_{y}P^{N_{3}}_{0},\\
		\partial_{t}u_{3,0}-\nu\triangle u_{3,0}+\partial_{z}P^{5}+h_{3,0}=-\partial_{z}P^{N_{3}}_{0}.
	\end{array}
	\right.
\end{equation}
Thanks to $ \partial_{y}u_{2,0}+\partial_{z}u_{3,0}=0,$  energy estimates yield that
\begin{equation}\label{U20 U30 energy}
	\begin{aligned}
		\partial_{t}\left(\|u_{2,0}\|_{L^{2}}^{2}+\|u_{3,0}\|_{L^{2}}^{2} \right)+2\nu\left(\|\nabla u_{2,0}\|_{L^{2}}^{2}+\|\nabla u_{3,0}\|_{L^{2}}^{2} \right)=2<\Theta_{0},u_{2,0}>.
	\end{aligned}
\end{equation}
By $\int_{\mathbb{T}}u_{2,0}dz=0 $, we get
\begin{equation*}
	\begin{aligned}
		<\Theta_{0},u_{2,0}>\leq \|\partial_{z}u_{2,0}\|_{L^{2}}\|\partial_{z}\Theta_{0}\|_{L^{2}}\leq \|\nabla u_{2,0}\|_{L^{2}}\|\nabla\Theta_{0}\|_{L^{2}}.
	\end{aligned}
\end{equation*}
Collecting the estimates of $ J_{1} $ and $ J_{2}, $  then (\ref{U20 U30 energy}) yields that
\begin{equation*}
	\begin{aligned}
		\partial_{t}\left(\|u_{2,0}\|_{L^{2}}^{2}+\|u_{3,0}\|_{L^{2}}^{2} \right)+\nu\left(\|\nabla u_{2,0}\|_{L^{2}}^{2}+\|\nabla u_{3,0}\|_{L^{2}}^{2} \right)
		\leq C\nu^{-1}\left(\||u_{\neq}|^{2}\|_{L^{2}}^{2}+\|\nabla\Theta_{0}\|_{L^{2}}^{2} \right),
	\end{aligned}
\end{equation*}
which along with {Lemma \ref{lem 2}} imply that
\begin{equation}\label{U20 U30 end}
	\begin{aligned}
		\|u_{2,0}\|_{Y_{0}}^{2}+\|u_{3,0}\|_{Y_{0}}^{2}
		&\leq C\left(\|(u_{\rm in})_0\|_{L^{2}}^{2}+\nu^{-2}E_{4}^{4}+
		\nu^{-2}E_{3}^{2}\right).
	\end{aligned}
\end{equation}
For $ \omega_{1,0}=\partial_{y}u_{3,0}-\partial_{z}u_{2,0}, $ from $ (\ref{eq:u20 u30 theta}), $ it is easy to know that $ \omega_{1,0} $ satisfies
\begin{equation}\label{omega 10}
	\partial_{t}\omega_{1,0}-\nu\triangle\omega_{1,0}+\partial_{y}h_{3,0}-\partial_{z}h_{2,0}=-\partial_{z}\Theta_{0}.
\end{equation}
Recall that $ h_{j}=\sum_{k=1}^{7}h_{j,k}. $ Obviously, $ (h_{j,2})_{0}=(u_{\neq}\cdot\nabla u_{j,0})_{0}=0. $ As $ \partial_{y}u_{2,0}+\partial_{z}u_{3,0}=0 $ and $ (h_{j,1})_{0}=(u_{2,0}\partial_{y}+u_{3,0}\partial_{z})u_{j,0}, $ we get
\begin{equation*}
	\partial_{y}(h_{3,1})_{0}-\partial_{z}(h_{2,1})_{0}=\left(u_{2,0}\partial_{y}+u_{3,0}\partial_{z} \right)\omega_{1,0}:=g_{1}.
\end{equation*}
Thanks to $ h_{j,k+2}=\partial_{j}P^{k} $ for $ k=2,3,4, $  and $ h_{j,7}=\partial_{j}P^{0}, $ we have
\begin{equation*}
	\begin{aligned}
		&\partial_{y}(h_{3,k+2})_{0}-\partial_{z}(h_{2,k+2})_{0}=\partial_{y}\partial_{z}P_{0}^{k}-\partial_{z}\partial_{y}P_{0}^{k}=0,
		&\partial_{y}(h_{3,7})_{0}-\partial_{z}(h_{2,7})_{0}=\partial_{y}\partial_{z}P_{0}^{0}-\partial_{z}\partial_{y}P_{0}^{0}=0.
	\end{aligned}
\end{equation*}
Then (\ref{omega 10}) can be rewritten into
\begin{equation*}
	\partial_{t}\omega_{1,0}-\nu\triangle\omega_{1,0}+g_{1}+\partial_{y}(h_{3,3})_{0}-\partial_{z}(h_{2,3})_{0}=-\partial_{z}\Theta_{0}.
\end{equation*}
The energy estimate shows that 
\begin{equation}\label{omega10 energy}
	\begin{aligned}
		&\quad\partial_{t}\|\omega_{1,0}\|_{L^{2}}^{2}+2\nu\|\nabla\omega_{1,0}\|_{L^{2}}^{2}=-2\int_{\mathbb{R}\times\mathbb{T}}g_{1}\omega_{1,0}dydz
		\\&-2\int_{\mathbb{R}\times\mathbb{T}}\left[\partial_{y}(h_{3,3})_{0}-\partial_{z}(h_{2,3})_{0} \right]\omega_{1,0}dydz-2\int_{\mathbb{R}\times\mathbb{T}}\partial_{z}\Theta_{0}\omega_{1,0}dydz.
	\end{aligned}
\end{equation}
Noting that $g_{1}=(u_{2,0}\partial_{y}+u_{3,0}\partial_{z})\omega_{1,0}$ and $ \partial_{y}u_{2,0}+\partial_{z}u_{3,0}=0,$ by integration by parts, we get $-2\int_{\mathbb{R}\times\mathbb{T}}g_{1}\omega_{1,0}dydz=0.$ Besides, as $ \int_{\mathbb{T}}\partial_{z}\Theta_{0}dz=0, $ then (\ref{omega10 energy}) follows that
\begin{equation*}
	\begin{aligned}
		\partial_{t}\|\omega_{1,0}\|_{L^{2}}^{2}+2\nu\|\nabla\omega_{1,0}\|_{L^{2}}^{2}\leq& 2\left(\|(h_{3,3})_{0}\|_{L^{2}}+\|(h_{2,3})_{0}\|_{L^{2}} \right)\|\nabla\omega_{1,0}\|_{L^{2}}\\&+C\|\partial_{z}\Theta_{0}\|_{L^{2}}\|\partial_{z}\omega_{1,0}\|_{L^{2}}.
	\end{aligned}
\end{equation*}
Due to $ (h_{j,3})_{0}=(u_{\neq}\cdot\nabla u_{j,\neq})_{0} $,  by {Lemma \ref{lem 2}}, there holds
\begin{equation}\label{omega10}
	\begin{aligned}
		\|\omega_{1,0}\|_{Y_{0}}^{2}\leq&C\left(\|(u_{\rm in})_0\|_{H^{1}}^{2}+\nu^{-1}\|u_{\neq}\cdot\nabla u_{\neq}\|_{L^{2}L^{2}}^{2}+
		\nu^{-1}\|\nabla\Theta_{0}\|_{L^{2}L^{2}}^{2} \right)
		\\\leq&C\left(\|(u_{\rm in})_0\|_{H^{1}}^{2}+\nu^{-2}E_{4}^{4}+\nu^{-2}E_{3}^{2}\right).
	\end{aligned}
\end{equation}
Since $ \partial_{y}u_{2,0}+\partial_{z}u_{3,0}=0 $, we have
\begin{equation*}
	\begin{aligned}
		\|\omega_{1,0}\|_{L^{2}}^{2}=\|\partial_{y}u_{2,0}+\partial_{z}u_{3,0}\|_{L^{2}}^{2}+\|\partial_{y}u_{3,0}-\partial_{z}u_{2,0}\|_{L^{2}}^{2}=\|\nabla u_{2,0}\|_{L^{2}}^{2}+\|\nabla u_{3,0}\|_{L^{2}}^{2},
	\end{aligned}
\end{equation*}
along this with (\ref{omega10}), one deduces
\begin{equation}\label{u0'' end}
	\begin{aligned}
		\|\nabla u_{3,0}\|_{Y_{0}}^{2}\leq \|\omega_{1,0}\|_{Y_{0}}^{2}\leq C\left(\|(u_{\rm in})_0\|_{H^{1}}^{2}+\nu^{-2}E_{4}^{4}+\nu^{-2}E_{3}^{2} \right).
	\end{aligned}
\end{equation}
Due to $ \triangle u_{2,0}=-\partial_{z}\omega_{1,0}, $ then $ \triangle u_{2,0} $ satisfies
\begin{equation*}
	\partial_{t}\triangle u_{2,0}-\nu\triangle^{2}u_{2,0}-\partial_{z}g_{1}-\partial_{y}\partial_{z}(h_{3,3})_{0}+\partial_{z}^{2}(h_{2,3})_{0}=\partial_{z}^{2}\Theta_{0}.
\end{equation*}
The energy estimate indicates
\begin{equation}\label{u20''}
	\begin{aligned}
		\|\triangle u_{2,0}\|_{Y_{0}}^{2}\leq& C\big(\|(u_{\rm in})_0\|_{H^{2}}^{2}+\nu^{-1}\|g_{1}\|_{L^{2}L^{2}}^{2}+\nu^{-1}\|\partial_{z}\Theta_{0}\|_{L^{2}L^{2}}^{2}
		\\&+\nu^{-1}\|\partial_{z}h_{3,3}\|_{L^{2}L^{2}}^{2}+\nu^{-1}\|\partial_{z}h_{2,3}\|_{L^{2}L^{2}}^{2} \big).
	\end{aligned}
\end{equation}
Recall $ g_{1}=(u_{2,0}\partial_{y}+u_{3,0}\partial_{z})\omega_{1,0}, $ using Lemma \ref{zero mode}, we have
\begin{equation}\label{g1 22}
	\begin{aligned}
		\|g_{1}\|_{L^{2}L^{2}}^{2}\leq& \left(\|u_{2,0}\|_{L^{\infty}L^{\infty}}^{2}+\|u_{3,0}\|_{L^{\infty}L^{\infty}}^{2} \right)\|\nabla\omega_{1,0}\|_{L^{2}L^{2}}^{2}\leq C\nu^{-1}E_{2}^{2}\|\omega_{1,0}\|_{Y_{0}}^{2}.
	\end{aligned}
\end{equation}
By Lemma \ref{lem 2}, there holds
\begin{equation*}
	\|\partial_{z}h_{3,3}\|_{L^{2}L^{2}}^{2}+\|\partial_{z}h_{2,3}\|_{L^{2}L^{2}}^{2}\leq C\nu^{-1}E_{4}^{4}.
\end{equation*}
Based on the above estimates, we infer from (\ref{u20''}) that
\begin{equation}\label{u20'' end}
	\|\triangle u_{2,0}\|_{Y_{0}}^{2}\leq C\left(\|(u_{\rm in})_{0}\|_{H^{2}}^{2}+\nu^{-2}E_{2}^{2}\|\omega_{1,0}\|_{Y_{0}}^{2}+\nu^{-2}E_{4}^{4}+\nu^{-2}E_{3}^{2}  \right).
\end{equation}

As $ \partial_{y}\omega_{1,0} $ satisfies
\begin{equation*}
	\partial_{t}\partial_{y}\omega_{1,0}-\nu\triangle\partial_{y}\omega_{1,0}+\partial_{y}g_{1}+\partial_{y}^{2}(h_{3,3})_{0}-\partial_{z}\partial_{y}(h_{2,3})_{0}=-\partial_{y}\partial_{z}\Theta_{0},
\end{equation*}
 energy estimate gives
\begin{equation*}
	\begin{aligned}
		&\partial_{t}\|\partial_{y}\omega_{1,0}\|_{L^{2}}^{2}+\nu\|\nabla\partial_{y}\omega_{1,0}\|_{L^{2}}^{2}\\\leq&C\nu^{-1}\left(\|g_{1}\|_{L^{2}}^{2}+\|\partial_{y}h_{3,3}\|_{L^{2}}^{2}+\|\partial_{z}h_{2,3}\|_{L^{2}}^{2}+\|\partial_{y}\Theta_{0}\|_{L^{2}}^{2} \right),
	\end{aligned}
\end{equation*}
which follows that
\begin{equation*}
	\begin{aligned}
		&\partial_{t}\big(\min(\nu^{\frac23}+\nu t, 1)\|\partial_{y}\omega_{1,0}\|_{L^{2}}^{2} \big)+\nu\min(\nu^{\frac23}+\nu t, 1 )\|\nabla\partial_{y}\omega_{1,0}\|_{L^{2}}^{2}\\\leq&C\nu^{-1}\min(\nu^{\frac23}+\nu t, 1 )\left(\|g_{1}\|_{L^{2}}^{2}+\|\partial_{y}h_{3,3}\|_{L^{2}}^{2}+\|\partial_{z}h_{2,3}\|_{L^{2}}^{2}+\|\partial_{y}\Theta_{0}\|_{L^{2}}^{2} \right)+\nu\|\partial_{y}\omega_{1,0}\|_{L^{2}}^{2}\\\leq&C\nu^{-1}\left(\|g_{1}\|_{L^{2}}^{2}+\|\nabla h_{2,3}\|_{L^{2}}^{2}+\|\partial_{y}\Theta_{0}\|_{L^{2}}^{2} \right)+C\nu^{-\frac13}(1+\nu^{\frac13}t )\|\nabla h_{3,3}\|_{L^{2}}^{2}+\nu\|\nabla\omega_{1,0}\|_{L^{2}}^{2}.
	\end{aligned}
\end{equation*}
Recall that $ \triangle u_{3,0}=\partial_{y}\omega_{1,0} $, by using (\ref{g1 22}) and Lemma \ref{lem 2}, and we get
\begin{equation}\label{u30'' end}
	\begin{aligned}
		\|\min(\nu^{\frac23}+\nu t, 1 )^{\frac12}\triangle u_{3,0}\|_{Y_{0}}^{2}
		\leq C\big(\|u_{\rm in}\|_{H^{2}}^{2}
		+\nu^{-2}E_{4}^{4}+\nu^{-2}E_{3}^{2}+\|\omega_{1,0}\|_{Y_{0}}^{2} \big).
	\end{aligned}
\end{equation}
It follows from (\ref{U20 U30 end}), (\ref{u0'' end}), (\ref{u20'' end}) and (\ref{u30'' end}) that
\begin{equation*}
	\begin{aligned}\label{E12 end}
		E_{2}=&\|\triangle u_{2,0}\|_{Y_{0}}+\|u_{3,0}\|_{Y_{0}}+\|\nabla u_{3,0}\|_{Y_{0}}+\|\min(\nu^{\frac23}+\nu t, 1)^{\frac12}\triangle u_{3,0}\|_{Y_{0}}\\\leq&C\left(\|u_{\rm in}\|_{H^{2}}
		+\nu^{-1}E_{4}^{2}+\nu^{-1}E_{3} \right).
	\end{aligned}
\end{equation*}
\end{proof}

\subsection{Estimate $E_{3}$}
\begin{proposition}\label{prop:E22}
It holds that
\begin{equation*}
	E_{3}\leq C\left(\|\Theta_{\rm in}\|_{H^{2}}+\nu^{-1}E_{2}E_{3}+\nu^{-1}E_{4}E_{5} \right).
\end{equation*}
\end{proposition}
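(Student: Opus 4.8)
The plan is to run energy estimates on the zero mode of the temperature equation $(\ref{ini1})_{2}$. Taking its zero mode and using $(y\partial_{x}\Theta)_{0}=0$ together with $u_{1,0}\partial_{x}\Theta_{0}=0$, one gets
\[
\partial_{t}\Theta_{0}-\nu\triangle\Theta_{0}+u_{2,0}\partial_{y}\Theta_{0}+u_{3,0}\partial_{z}\Theta_{0}+(u_{\neq}\cdot\nabla\Theta_{\neq})_{0}=0 ,
\]
and I would estimate $\Theta_{0}$ successively at the $L^{2}$, $\nabla$, and $\nabla\partial_{z}$ levels, matching the three pieces of $E_{3}$. \textbf{Step I ($L^{2}$).} Multiplying by $2\Theta_{0}$ and integrating over $\mathbb{R}\times\mathbb{T}$, the transport term vanishes because $\partial_{y}u_{2,0}+\partial_{z}u_{3,0}=0$ gives $\langle u_{2,0}\partial_{y}\Theta_{0}+u_{3,0}\partial_{z}\Theta_{0},\Theta_{0}\rangle=0$; the only contribution is $\langle(u_{\neq}\cdot\nabla\Theta_{\neq})_{0},\Theta_{0}\rangle$, which (using the exponential weight in Lemma \ref{lem u theta} to carry out the time integration, or after an integration by parts) is absorbed by the dissipation up to $C\nu^{-1}\|u_{\neq}\cdot\nabla\Theta_{\neq}\|_{L^{2}L^{2}}^{2}\le C\nu^{-2}E_{4}^{2}E_{5}^{2}$. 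Hence $\|\Theta_{0}\|_{Y_{0}}\le C(\|\Theta_{\rm in}\|_{L^{2}}+\nu^{-1}E_{4}E_{5})$.

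\textbf{Step II ($\nabla$ and $\nabla\partial_{z}$).} Applying $\nabla$, resp.\ $\nabla\partial_{z}$, and testing against $\nabla\Theta_{0}$, resp.\ $\nabla\partial_{z}\Theta_{0}$, the essential move is to integrate by parts once so that the transport term is paired with $\triangle\Theta_{0}$, resp.\ $\triangle\partial_{z}\Theta_{0}$, and absorbed by the parabolic dissipation (recall $\|\triangle f\|_{L^{2}}=\|\nabla\nabla f\|_{L^{2}}$ for zero modes). At the $\nabla$ level, $-2\langle\nabla(u_{j,0}\partial_{j}\Theta_{0}),\nabla\Theta_{0}\rangle=2\langle u_{j,0}\partial_{j}\Theta_{0},\triangle\Theta_{0}\rangle$, and after Young's inequality and time integration what remains is $C\nu^{-1}\|u_{0}\|_{L^{\infty}L^{\infty}}^{2}\|\nabla\Theta_{0}\|_{L^{2}L^{2}}^{2}\le C\nu^{-2}E_{2}^{2}E_{3}^{2}$ by Lemma \ref{zero mode} and $\nu\|\nabla\Theta_{0}\|_{L^{2}L^{2}}^{2}\le\|\Theta_{0}\|_{Y_{0}}^{2}\le E_{3}^{2}$; the nonzero-mode part becomes $2\langle(u_{\neq}\cdot\nabla\Theta_{\neq})_{0},\triangle\Theta_{0}\rangle$, giving $C\nu^{-1}\|u_{\neq}\cdot\nabla\Theta_{\neq}\|_{L^{2}L^{2}}^{2}\le C\nu^{-2}E_{4}^{2}E_{5}^{2}$ by Lemma \ref{lem u theta}. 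At the $\nabla\partial_{z}$ level one proceeds identically: after integration by parts the transport contribution is $2\langle\partial_{z}(u_{j,0}\partial_{j}\Theta_{0}),\triangle\partial_{z}\Theta_{0}\rangle$, whose piece $u_{j,0}\partial_{j}\partial_{z}\Theta_{0}$ is handled via $\|u_{0}\|_{L^{\infty}L^{\infty}}\le CE_{2}$ and $\nu\|\nabla\partial_{z}\Theta_{0}\|_{L^{2}L^{2}}^{2}\le E_{3}^{2}$, and whose commutator piece $(\partial_{z}u_{j,0})\partial_{j}\Theta_{0}$ is handled via $\|\nabla u_{0}\|_{L^{2}L^{\infty}}\le C\nu^{-1/2}E_{2}$ (Lemma \ref{zero mode}) paired with $\|\nabla\Theta_{0}\|_{L^{\infty}L^{2}}\le E_{3}$, both producing $C\nu^{-2}E_{2}^{2}E_{3}^{2}$; the nonzero-mode part uses $\|\partial_{z}(u_{\neq}\cdot\nabla\Theta_{\neq})\|_{L^{2}L^{2}}^{2}\le C\nu^{-1}E_{4}^{2}E_{5}^{2}$ from Lemma \ref{lem u theta}. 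Collecting these bounds with Step I yields $E_{3}\le C(\|\Theta_{\rm in}\|_{H^{2}}+\nu^{-1}E_{2}E_{3}+\nu^{-1}E_{4}E_{5})$.

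\textbf{The main difficulty} is the $\nu$-accounting of the self-interaction $u_{0}\cdot\nabla\Theta_{0}$, and it is precisely for this term that the integration by parts in Step II is essential. A direct estimate of $\nabla\partial_{z}(u_{j,0}\partial_{j}\Theta_{0})$ in $L^{2}$ would require $\|\nabla^{2}\Theta_{0}\|_{L^{\infty}_{t}L^{2}}$, which $E_{3}$ does not control (only $\|\nabla\Theta_{0}\|_{Y_{0}}$ and $\|\partial_{z}\nabla\Theta_{0}\|_{Y_{0}}$ are available), and would moreover force $\nabla u_{0}$ into $L^{\infty}$ with the loss $\|\nabla u_{0}\|_{L^{2}L^{\infty}}\sim\nu^{-1/2}E_{2}$, producing a squared error of order $\nu^{-1}E_{2}E_{3}^{2}$ that — since $\nu^{-1}E_{2}\lesssim\varepsilon_{0}<1$ under $(\ref{ass})$ — is \emph{larger} than the target $\nu^{-2}E_{2}^{2}E_{3}^{2}$ and hence not absorbable. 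Transferring the extra derivative onto $\Theta_{0}$, so that it is consumed by the dissipation, leaves $u_{0}$ only through $\|u_{0}\|_{L^{\infty}_{t}L^{\infty}}\le CE_{2}$ (no loss of $\nu$) together with the single stray commutator $(\partial_{z}u_{j,0})\partial_{j}\Theta_{0}$, where the $\nu^{-1/2}$-loss from $\|\nabla u_{0}\|_{L^{2}L^{\infty}}$ is exactly compensated by measuring $\|\nabla\Theta_{0}\|$ in $L^{\infty}_{t}$; this is what produces the clean, absorbable error $\nu^{-1}E_{2}E_{3}$ (recall $E_{2}\le\varepsilon_{0}\nu$). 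All remaining pieces are routine product estimates of the type already established in Section 4.
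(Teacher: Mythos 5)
Your proposal is correct and follows essentially the same route as the paper: the same three energy levels ($L^{2}$, $\nabla$, $\partial_{z}\nabla$) for $\Theta_{0}$, the same transfer of one derivative onto $\Theta_{0}$ so that the nonlinearity enters only through $\nu^{-1}\|\cdot\|_{L^{2}L^{2}}^{2}$, Lemma \ref{lem u theta} for the $u_{\neq}\cdot\nabla\Theta_{\neq}$ source, and Lemma \ref{zero mode} (via $\|u_{0}\|_{L^{\infty}L^{\infty}}\lesssim E_{2}$ and $\nu^{\frac12}\|\nabla u_{0}\|_{L^{2}L^{\infty}}\lesssim E_{2}$) for the self-interaction $u_{0}\cdot\nabla\Theta_{0}$, with the $\nu$-bookkeeping matching the paper's bound $\nu^{-1}E_{2}E_{3}+\nu^{-1}E_{4}E_{5}$. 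Your closing remark on why a direct $L^{2}$ estimate of $\nabla\partial_{z}(u_{j,0}\partial_{j}\Theta_{0})$ would fail is a correct reading of the same mechanism the paper exploits implicitly.
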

\begin{proof}
For
\begin{equation}\label{eq:Theta_0}
	\partial_{t}\Theta_{0}-\nu\triangle\Theta_{0}+(u\cdot\nabla\Theta)_{0}=0,
\end{equation} 
due to $<u_{0}\cdot\nabla\Theta_{0},\Theta_{0}>=0,$ we have
\begin{equation*}
	\begin{aligned}
		&\partial_{t}\|\Theta_{0}\|_{L^{2}}^{2}+2\nu\|\nabla\Theta_{0}\|_{L^{2}}^{2}
		=-2\int_{\mathbb{R}\times\mathbb{T}}\nabla\cdot(u_{\neq}\Theta_{\neq})_{0}\Theta_{0}dydz
		\leq 2\|u_{\neq}\Theta_{\neq}\|_{L^{2}}\|\nabla\Theta_{0}\|_{L^{2}}.
	\end{aligned}
\end{equation*}
Then there holds
\begin{equation*}\label{theta0 energy}
	\|\Theta_{0}\|_{Y_{0}}^{2}\leq C\left(\|\Theta_{\rm in}\|_{L^{2}}^{2}+\nu^{-1}\|u_{\neq}\Theta_{\neq}\|_{L^{2}L^{2}}^{2} \right),
\end{equation*}
which along with Lemma \ref{lem u theta} and 
$	\|u_{\neq}\Theta_{\neq}\|_{L^{2}L^{2}}^{2}\leq C\nu^{-1}E_{4}^{2}E_{5}^{2}$
yield that
\begin{equation}\label{Theta 0}
	\|\Theta_{0}\|_{Y_{0}}^{2}\leq C\left(\|\Theta_{\rm in}\|_{L^{2}}^{2}+\nu^{-2}E_{4}^{2}E_{5}^{2} \right).
\end{equation}

Multiplying $\triangle\Theta_0$ on both sides of \eqref{eq:Theta_0}, the energy estimate shows that 
\begin{equation*}
	\begin{aligned}
		\|\nabla\Theta_{0}\|_{Y_{0}}^{2}\leq C\left(\|\Theta_{\rm in}\|_{H^{1}}^{2}+\nu^{-1}\|u_0\cdot\nabla\Theta_{0}\|_{L^{2}L^{2}}^{2}
		+\nu^{-1}\|u_{\neq}\cdot\nabla\Theta_{\neq}\|_{L^{2}L^{2}}^{2} \right).
	\end{aligned}
\end{equation*}
By Lemma  \ref{zero mode} and Lemma \ref{lem u theta}, we obtain that 
\begin{equation*}
	\begin{aligned}
		&\|u_0\cdot\nabla\Theta_{0}\|_{L^{2}L^{2}}^{2}\leq C\nu^{-1}E_2^2E_3^2,\\
		&\|u_{\neq}\cdot\nabla\Theta_{\neq}\|_{L^{2}L^{2}}^{2}\leq C\nu^{-1}E_{4}^{2}E_{5}^{2},
	\end{aligned}
\end{equation*}
which implies that 
\begin{equation}\label{energy theta0}
	\|\nabla\Theta_{0}\|_{Y_{0}}^{2}\leq C\left(\|\Theta_{\rm in}\|_{H^{1}}^{2}+\nu^{-2}E_{2}^{2}E_{3}^{2}+\nu^{-2}E_{4}^{2}E_{5}^{2} \right).
\end{equation}

Taking $ \partial_{z}\nabla $ for (\ref{eq:Theta_0}), energy estimate shows that
\begin{equation*}
	\begin{aligned}
		\|\partial_{z}\nabla\Theta_{0}\|_{Y_{0}}^{2}\leq C\left(\|\Theta_{\rm in}\|_{H^{2}}^{2}+\nu^{-1}\|\partial_{z}(u\cdot\nabla\Theta)_{0}\|_{L^{2}L^{2}}^{2} \right).
	\end{aligned}
\end{equation*}
Using  Lemma \ref{lem u theta} and 
\begin{equation*}
	\begin{aligned}
		&\|\partial_{z}(u_{0}\cdot\nabla\Theta_{0})\|_{L^{2}L^{2}}^{2}
		\leq\|\partial_{z}(u_{2,0}\partial_{y}\Theta_{0})\|_{L^{2}L^{2}}^{2}+\|\partial_{z}(u_{3,0}\partial_{z}\Theta_{0})\|_{L^{2}L^{2}}^{2}
		\\ \leq&C\nu^{-1}\left(\|\Theta_{0}\|_{Y_{0}}^{2}+\|\partial_{z}\nabla\Theta_{0}\|_{Y_{0}}^{2} \right)
		\left(\|\triangle u_{2,0}\|_{Y_{0}}^{2}+\|\nabla u_{3,0}\|_{Y_{0}}^{2}+\|u_{3,0}\|_{Y_{0}}^{2} \right)
		\\ \leq& C\nu^{-1}E_{2}^{2}E_{3}^{2},
	\end{aligned}
\end{equation*}
one obtains
\begin{equation*}
	\begin{aligned}
		\|\partial_{z}(u\cdot\nabla\Theta)_{0}\|_{L^{2}L^{2}}^{2}\leq C\nu^{-1}E_{2}^{2}E_{3}^{2}+C\nu^{-1}E_{4}^{2}E_{5}^{2}.
	\end{aligned}
\end{equation*}
Therefore, we have
\begin{equation}\label{energy theta0'}
	\|\partial_{z}\nabla\Theta_{0}\|_{Y_{0}}^{2}\leq C\left(\|\Theta_{\rm in}\|_{H^{2}}^{2}+\nu^{-2}E_{2}^{2}E_{3}^{2}+\nu^{-2}E_{4}^{2}E_{5}^{2} \right).
\end{equation}
Combining (\ref{Theta 0}), (\ref{energy theta0}) and (\ref{energy theta0'}), there holds
\begin{equation*}
	\begin{aligned}
		E_{3}\leq C\left(\|\Theta_{\rm in}\|_{H^{2}}+\nu^{-1}E_{2}E_{3}+\nu^{-1}E_{4}E_{5} \right).
	\end{aligned}
\end{equation*}
\end{proof}

\section{Energy estimates for the non-zero modes}
\subsection{Estimate $ E_{4} $}
\begin{proposition}\label{prop:E3}
There holds
\begin{equation*}
	E_{4}^{2}\leq C\left(\|u_{\rm in}\|_{H^{2}}^{2}+E_{6}^{2}+\nu^{-2}E_{5}^{2}+E_{7}\right).
\end{equation*}
\end{proposition}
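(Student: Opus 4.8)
The plan is to carry out weighted energy estimates on the ``good'' non-zero-mode quantities that make up $E_4$: the unknowns $\triangle u_{2,\neq}$ and $(\partial_x^2+\partial_z^2)u_{3,\neq}$ (which form $E_{4,1}$) together with the vorticity derivatives $\nabla\omega_{2,\neq}$, $\triangle\omega_{2,\neq}$ (which form $E_{4,2}$), using the enhanced-dissipation / inviscid-damping space--time estimates for the quasilinearized operator $\mathcal{L}_V=\partial_t-\nu\triangle+V\partial_x$. The bootstrap hypothesis (\ref{ass}) and Proposition~\ref{prop:E1} give $\|\widehat{u_{1,0}}\|_{H^4}+\nu^{-1}\|\partial_t\widehat{u_{1,0}}\|_{H^2}\lesssim\varepsilon_0$, so $\mathcal{L}_V$ is a small perturbation of $\mathcal{L}=\partial_t-\nu\triangle+y\partial_x$ (note that the transport term is conservative in $L^2$ since $\partial_xV=0$), and the corresponding linear estimates bound, for $\mathcal{L}_V\phi=\nabla\cdot G$, the norm $\|\phi\|_{X_a}^2$ by $\|\phi|_{t=0}\|_{L^2}^2+\nu^{-1}\|e^{a\nu^{1/3}t}G\|_{L^2L^2}^2$ up to remainders absorbed by the $H^4$-smallness of $\widehat{u_{1,0}}$. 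It then remains to identify the forcing in each equation and invoke the nonlinear estimates of Section~4.

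For $E_{4,1}$, apply $\triangle$ to the $u_2$-equation of (\ref{U_neq}). Using $\triangle P^{N_3}=\partial_y\Theta$ the temperature right-hand side $\Theta_{\neq}-\partial_yP^{N_3}_{\neq}$ collapses to the good form $(\partial_x^2+\partial_z^2)\Theta_{\neq}$, and using (\ref{p5}) the Couette commutator $[\triangle,y\partial_x]u_{2,\neq}=2\partial_x\partial_yu_{2,\neq}$ exactly cancels the principal part of $\triangle\partial_yP^{N_1}$, so that
\[
\mathcal{L}_V(\triangle u_{2,\neq})=(\partial_x^2+\partial_z^2)\Theta_{\neq}-\partial_y\triangle P^1-\triangle h_{2,\neq}-\triangle(\widetilde{u_{1,0}}\partial_xu_{2,\neq})+\mathcal{R}_1,
\]
where $\mathcal{R}_1=-[\triangle,\widehat{u_{1,0}}\partial_x]u_{2,\neq}$ carries an explicit factor of $\widehat{u_{1,0}}$ and is absorbed by the perturbative part of the linear estimate. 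The temperature source contributes $\lesssim\nu^{-2}E_5^2$; the terms $\partial_y\triangle P^1$ and $\triangle(\widetilde{u_{1,0}}\partial_xu_{2,\neq})$ are controlled by Lemma~\ref{lem 1}(ii) (splitting $u_{1,0}=\widehat{u_{1,0}}+\widetilde{u_{1,0}}$ and using $E_{1,2}\le\nu^{2/3}E_1$), contributing $\lesssim E_1^2E_4E_6$; and $\triangle h_{2,\neq}$, put in divergence form, is estimated termwise through Lemma~\ref{lem 2}(i), Lemma~\ref{lem 1}(i), Lemma~\ref{lem non zero mode 1}, Lemma~\ref{lem: zero and non-zero mode} and Lemma~\ref{lem: u0 u_neq}, contributing $\lesssim\nu^{-2}E_4^4+\nu^{-2}E_2^2E_7+\nu^{-2}E_2^2E_4^2+E_1^2E_4E_6$. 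For $(\partial_x^2+\partial_z^2)u_{3,\neq}$ one uses the identity $(\partial_x^2+\partial_z^2)u_{3,\neq}=-\partial_x\omega_{2,\neq}-\partial_y\partial_zu_{2,\neq}$, the last term controlled by $\|\triangle u_{2,\neq}\|_{X_a}$ already bounded, together with the Squire-type equation for $\omega_{2,\neq}=\partial_zu_{1,\neq}-\partial_xu_{3,\neq}$,
\[
\mathcal{L}_V\omega_{2,\neq}=-\partial_zu_{2,\neq}+\widehat{u_{1,0}}\partial_x\omega_{2,\neq}-\bigl[\partial_z(u\cdot\nabla u_1)-\partial_x(u\cdot\nabla u_3)\bigr]_{\neq},
\]
in which the lift-up-type coupling $\partial_zu_{2,\neq}$ is treated as forcing, already controlled (with a favorable $\nu$-power thanks to the inviscid damping of $u_{2,\neq}$) by $\|\triangle u_{2,\neq}\|_{X_a}$, and the remaining terms by the same lemmas. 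Altogether $E_{4,1}^2\lesssim\|u_{\rm in}\|_{H^2}^2+\nu^{-2}E_5^2+E_7+E_1^2E_4E_6+\nu^{-2}(E_4^4+E_2^2E_7+E_2^2E_4^2)$.

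For $E_{4,2}$, apply $\nabla$ and $\triangle$ to the equation for $\omega_{2,\neq}$ and run the corresponding higher-regularity estimate at the $L^\infty L^2$, $\nu^{1/2}L^2L^2$ level with weight $e^{a\nu^{1/3}t}$, using Lemma~\ref{lem 2}(ii) (which controls $\nabla(u_{\neq}\cdot\nabla u_{\neq})$ via $\triangle\omega_{2,\neq}$ by $\nu^{-5/3}E_4^4$), Lemma~\ref{lem omega}, Lemma~\ref{lem: zero and non-zero mode}(ii) and Lemma~\ref{lem non zero mode 1}; the $\nu^{1/3}$, $\nu^{1/2}$ prefactors turn the worst $\nu^{-5/3}$ loss into $\nu^{-2}E_4^4$, so $E_{4,2}^2$ obeys the same kind of bound. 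Adding the two and using $E_1\le\varepsilon_0$, $E_2\le\varepsilon_0\nu$, $E_4\le\varepsilon_0\nu$ from (\ref{ass}), one has $\nu^{-2}E_2^2E_7\le\varepsilon_0^2E_7$, $\nu^{-2}E_4^4+\nu^{-2}E_2^2E_4^2\le\varepsilon_0^2E_4^2$ and $E_1^2E_4E_6\le\tfrac14E_4^2+CE_6^2$ by Young's inequality; absorbing the resulting $(\tfrac14+\varepsilon_0^2)E_4^2$ into the left-hand side yields $E_4^2\lesssim\|u_{\rm in}\|_{H^2}^2+E_6^2+\nu^{-2}E_5^2+E_7$.

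The main obstacle is twofold. First is the structural algebra: after substituting the pressure relations (\ref{pressure_1}), (\ref{P_N2 fenjie}), (\ref{p5}), one must verify that the equations for $\triangle u_{2,\neq}$ and $\omega_{2,\neq}$ contain only (a) the temperature source in the good form $(\partial_x^2+\partial_z^2)\Theta_{\neq}$, (b) terms with an explicit factor of $\widehat{u_{1,0}}$ or $\widetilde{u_{1,0}}$, hence of size $O(E_1)$ --- this is how the 3D lift-up is tamed in the $u_2$-variable --- and (c) genuinely quadratic nonlinearities already handled in Section~4, and, in the Squire equation, that the coupling $\partial_zu_{2,\neq}$ can be absorbed using the already-established control of $u_{2,\neq}$. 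Second is the time-weight bookkeeping: each lift-up source ($\triangle P^1$, $\widetilde{u_{1,0}}\partial_xu_{\neq}$, and the $u_{1,0}$-terms entering Lemma~\ref{lem 1} and Lemma~\ref{lem: u0 u_neq}) is controlled only after inserting $\|u_{1,0}\|_{H^2}\lesssim E_1\min(\nu t+\nu^{2/3},1)$, which grows linearly in $\nu t$; one must therefore use $(1+\nu^{1/3}t)^2\lesssim e^{(b-a)\nu^{1/3}t}$ (cf.\ (\ref{a b}), (\ref{1-nu t})) to trade this growth against the gap $b-a$ between the weights of $E_6,E_7$ (weight $b$) and $E_4$ (weight $a$), and then check that the $\nu$-powers produced by Lemmas~\ref{lem 1}--\ref{lem: u0 u_neq} assemble precisely into the claimed $E_6^2+\nu^{-2}E_5^2+E_7$ with the residual $E_4$-dependence small enough to absorb.
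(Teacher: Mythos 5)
Most of your outline tracks the paper: the cancellation of $\partial_y\triangle P^{N_1}$ against the commutator $[\triangle,y\partial_x]u_{2,\neq}$, the reduction of $\Theta_{\neq}-\partial_yP^{N_3}_{\neq}$ to $\triangle^{-1}(\partial_x^2+\partial_z^2)\Theta_{\neq}$, the treatment of the lift-up sources via Lemma \ref{lem 1} and the weight trade $(1+\nu^{\frac13}t)^2\lesssim e^{(b-a)\nu^{\frac13}t}$, the $E_{4,2}$ estimate for $\nabla\omega_{2,\neq}$ whose $\nu^{\frac13}$ prefactor absorbs the loss, and the final absorption using (\ref{ass}). But there is a genuine gap in your treatment of $\|(\partial_x^2+\partial_z^2)u_{3,\neq}\|_{X_a}$, which is half of $E_{4,1}$. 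You propose to write $(\partial_x^2+\partial_z^2)u_{3,\neq}=-\partial_x\omega_{2,\neq}-\partial_y\partial_zu_{2,\neq}$ and then estimate $\partial_x\omega_{2,\neq}$ from the Squire equation $\mathcal{L}\omega_{2,\neq}=-\partial_zu_{2,\neq}+\cdots$, treating the coupling $-\partial_zu_{2,\neq}$ as a forcing ``already controlled by $\|\triangle u_{2,\neq}\|_{X_a}$ with a favorable $\nu$-power''. This does not close. Whatever slot of Proposition \ref{prop Lf} you place $-\partial_x\partial_zu_{2,\neq}$ in, you lose a factor $\nu^{-\frac13}$: the $f_2$-slot costs $\nu^{-\frac13}\|e^{a\nu^{1/3}t}\partial_x\partial_zu_{2,\neq}\|_{L^2L^2}^2$, and even though the inviscid-damping component of $\|\triangle u_{2,\neq}\|_{X_a}$ gives $\|e^{a\nu^{1/3}t}\nabla\partial_xu_{2,\neq}\|_{L^2L^2}^2\le\|\triangle u_{2,\neq}\|_{X_a}^2$ with no loss, the prefactor $\nu^{-\frac13}$ remains; the $\partial_xf_1$-slot requires $\|e^{a\nu^{1/3}t}\nabla\partial_zu_{2,\neq}\|_{L^2L^2}^2$, which is \emph{not} an inviscid-damping quantity (no $\partial_x$) and again costs $\nu^{-\frac13}\|\triangle u_{2,\neq}\|_{X_a}^2$. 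A bound $\|(\partial_x^2+\partial_z^2)u_{3,\neq}\|_{X_a}^2\lesssim\nu^{-\frac13}E_{4,1}^2+\cdots$ cannot be absorbed, since $E_{4,1}$ carries no $\nu$ prefactor; this $\nu^{-\frac13}$ is exactly the transient streak growth caused by lift-up, and it is why the paper goes in the \emph{opposite} direction in (\ref{end x omega2}), bounding $\partial_x\omega_{2,\neq}$ \emph{by} $E_{4,1}$ rather than deducing $E_{4,1}$ from it.

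The ingredient you are missing is the coupled space--time estimate, Proposition \ref{prop Lf 1}. The paper keeps the $u_{3,\neq}$ equation in the form $\mathcal{L}u_{3,\neq}-2\partial_x\partial_z\triangle^{-2}(\triangle u_{2,\neq})=\cdots$, where the linear coupling to $u_{2,\neq}$ enters through the pressure $\partial_zP^{N_1}$ and carries the structure $\partial_x\partial_z\triangle^{-2}$ acting on $\triangle u_{2,\neq}$; Proposition \ref{prop Lf 1} (Wei--Zhang's Proposition 4.3) is precisely the statement that this particular coupling can be handled \emph{without} any negative power of $\nu$, yielding $\|(\partial_x^2+\partial_z^2)u_{3,\neq}\|_{X_a}^2$ directly in terms of the data and the genuinely nonlinear/perturbative forcings. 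Replacing this coupled estimate by the scalar Squire equation discards the extra $\partial_x\triangle^{-1}$ smoothing in the coupling and reintroduces the lift-up loss. Everything else in your proposal can be salvaged once this step is replaced by an application of Proposition \ref{prop Lf 1} to the pair $(\triangle u_{2,\neq},u_{3,\neq})$.
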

\begin{proof}
{\underline{\textbf{Step I. Estimate $ E_{4,1}. $}}}
Due to $ \triangle P^{N_{1}}=-2\partial_{x}u_{2} $ and $ \triangle P^{N_{3}}=\partial_{y}\Theta, $ we have
\begin{equation*}
	\begin{aligned}
		&\partial_{z}P^{N_{1}}=-2\partial_{x}\partial_{z}\triangle^{-1}u_{2,\neq}=-2\partial_{x}\partial_{z}\triangle^{-2}(\triangle u_{2,\neq}),\\
		&\Theta_{\neq}-\partial_{y}P^{N_{3}}_{\neq}
		=\triangle^{-1}(\partial_{x}^{2}\Theta_{\neq}+\partial_{z}^{2}\Theta_{\neq}).			
	\end{aligned}
\end{equation*}
Therefore, it follows from (\ref{23}) that 
\begin{equation*}
	\left\{
	\begin{aligned}
		&\mathcal{L}(\triangle u_{2,\neq})=-\triangle\left(\partial_{y}P^{1}+u_{1,0}\partial_{x}u_{2,\neq}+h_{2,\neq} \right)+\partial_{x}^{2}\Theta_{\neq}+\partial_{z}^{2}\Theta_{\neq},\\
		&\mathcal{L}u_{3,\neq}-2\partial_{x}\partial_{z}\triangle^{-2}(\triangle u_{2,\neq})+\partial_{z}P^{1}+u_{1,0}\partial_{x}u_{3,\neq}+h_{3,\neq}+\triangle^{-1}\partial_{y}\partial_{z}\Theta_{\neq}=0.
	\end{aligned}
	\right.
\end{equation*}
Applying Proposition \ref{prop Lf 1} to the above equations, we arrive at 
%	\begin{equation}\label{U2'' U3''}
	%		\begin{aligned}
		%			&\|\triangle u_{2,\neq}\|_{X_{a}}^{2}+\|(\partial_{x}^{2}+\partial_{z}^{2})u_{3,\neq}\|_{X_{a}}^{2}\\\leq&C\big(\|u_{\rm in}\|_{H^{2}}^{2}
		%			+\nu^{-1}\|e^{a\nu^{\frac13}t}\nabla\left(\partial_{y}P^{1}+u_{1,0}\partial_{x}u_{2,\neq}+h_{2,\neq} \right)\|_{L^{2}L^{2}}^{2}\\&+\nu^{-\frac13}\|e^{a\nu^{\frac13}t}
		%			(\partial_{x}^{2},\partial_{z}^{2})\Theta_{\neq}\|_{L^{2}L^{2}}^{2}
		%			+\nu^{-1}\|e^{a\nu^{\frac13}t}(\partial_{x},\partial_{z})\triangle^{-1}\partial_{y}\partial_{z}\Theta_{\neq}\|_{L^{2}L^{2}}^{2}
		%			\\&+\nu^{-1}\|e^{a\nu^{\frac13}t}(\partial_{x},\partial_{z})\left(\partial_{z}P^{1}+u_{1,0}\partial_{x}u_{3,\neq}+h_{3,\neq} \right)\|_{L^{2}L^{2}}^{2}\big)\\\leq&C\big(\|u_{\rm in}\|_{H^{2}}^{2}+\nu^{-1}\|e^{a\nu^{\frac13}t}\triangle P^{1}\|_{L^{2}L^{2}}^{2}+\nu^{-1}\|e^{a\nu^{\frac13}t}\nabla\left(u_{1,0}\partial_{x}u_{2,\neq}+h_{2,\neq} \right)\|_{L^{2}L^{2}}^{2}\\&+\nu^{-1}\|e^{a\nu^{\frac13}t}(\partial_{x},\partial_{z})(u_{1,0}\partial_{x}u_{3,\neq}+h_{3,\neq})\|_{L^{2}L^{2}}^{2}
		%			+\nu^{-\frac43}\|(\partial_{x}^{2},\partial_{z}^{2})\Theta_{\neq}\|_{X_{b}}^{2}\big).
		%		\end{aligned}
	%	\end{equation}
\begin{equation}\label{U2'' U3''}
	\begin{aligned}
		&\|\triangle u_{2,\neq}\|_{X_{a}}^{2}+\|(\partial_{x}^{2}+\partial_{z}^{2})u_{3,\neq}\|_{X_{a}}^{2}\\\leq&C\big(\|u_{\rm in}\|_{H^{2}}^{2}+\nu^{-1}\|e^{a\nu^{\frac13}t}\triangle P^{1}\|_{L^{2}L^{2}}^{2}+\nu^{-1}\|e^{a\nu^{\frac13}t}\nabla\left(u_{1,0}\partial_{x}u_{2,\neq}+h_{2,\neq} \right)\|_{L^{2}L^{2}}^{2}\\&+\nu^{-1}\|e^{a\nu^{\frac13}t}(\partial_{x},\partial_{z})(u_{1,0}\partial_{x}u_{3,\neq}+h_{3,\neq})\|_{L^{2}L^{2}}^{2}
		+\nu^{-\frac43}\|(\partial_{x}^{2},\partial_{z}^{2})\Theta_{\neq}\|_{X_{b}}^{2}\big).
	\end{aligned}
\end{equation}
Noting that $ h_{j}=\sum_{k=1}^{7}h_{j,k}$, by Lemma \ref{lem 2}, Lemma \ref{lem 1} and Lemma \ref{lem: zero and non-zero mode}, we get
\begin{equation*}
	\begin{aligned}
		&\|e^{a\nu^{\frac13}t}\nabla h_{2,\neq}\|_{L^{2}L^{2}}^{2}+\|e^{a\nu^{\frac13}t}(\partial_{x},\partial_{z})h_{3,\neq}\|_{L^{2}L^{2}}^{2}\\\leq&C\left(\nu^{-1}E_{2}^{2}E_{4}^{2}+\nu^{-1}E_{2}^{2}E_{7}+\nu^{-1}E_{4}^{4}+\nu E_{1}^{2}E_{7} \right).
	\end{aligned}
\end{equation*}
Moreover, using Lemma \ref{lem 1} and (\ref{U2'' U3''}), there is
\begin{equation}\label{e31 end}
	\begin{aligned}
		E_{4,1}^{2}\leq& C\left(\|\triangle u_{2,\neq}\|_{X_{a}}^{2}+\|(\partial_{x}^{2}+\partial_{z}^{2})u_{3,\neq}\|_{X_{a}}^{2} \right)
		\\\leq&C\big(\|u_{\rm in}\|_{H^{2}}^{2}+\nu^{-2}E_{2}^{2}E_{4}^{2}+E_{1}^{2}E_{4}E_{6}+\nu^{-2}E_{2}^{2}E_{7}
		+\nu^{-2}E_{4}^{4}+E_{1}^{2}E_{7}+\nu^{-\frac43}E_{5}^{2} \big).
	\end{aligned}
\end{equation}

{\underline{\textbf{Step II. Estimate $ E_{4,2}. $}}} 
Due to $ \partial_{x}\omega_{2,\neq}=-\partial_{z}\partial_{y}u_{2,\neq}-(\partial_{x}^{2}+\partial_{z}^{2})u_{3,\neq} $ and (\ref{e31 end}), there holds
\begin{equation}\label{end x omega2}
	\begin{aligned}
		&\|\partial_{x}\omega_{2,\neq}\|_{X_{a}}^{2}\leq C\left(\|\partial_{z}\partial_{y}u_{2,\neq}\|_{X_{a}}^{2}+\|(\partial_{x}^{2}+\partial_{z}^{2})u_{3,\neq}\|_{X_{a}}^{2} \right)\leq CE_{4,1}^{2}\\\leq& C\big(\|u_{\rm in}\|_{H^{2}}^{2}
		+\nu^{-2}E_{2}^{2}E_{4}^{2}+E_{1}^{2}E_{4}E_{6}+\nu^{-2}E_{2}^{2}E_{7}+\nu^{-2}E_{4}^{4}+E_{1}^{2}E_{7}+\nu^{-\frac43}E_{5}^{2} \big).
	\end{aligned}
\end{equation}
From $ (\ref{ini1})_{1}, $  we know $ \omega_{2,\neq}=\partial_{z}u_{1,\neq}-\partial_{x}u_{3,\neq} $  satisfies
\begin{equation}\label{eq:omega2}
	\mathcal{L}\omega_{2,\neq}=-\partial_{z}u_{2,\neq}+\partial_{x}(u\cdot\nabla u_{3})_{\neq}-\partial_{z}(u\cdot\nabla u_{1})_{\neq}.
\end{equation}
Taking $ \partial_{y} $ to (\ref{eq:omega2}) and noting that $ -\partial_{y}u_{2,\neq}=\partial_{x}u_{1,\neq}+\partial_{z}u_{3,\neq} $, we arrive
\begin{equation*}
	\begin{aligned}
		\mathcal{L}\partial_{y}\omega_{2,\neq}=(\partial_{x}^{2}+\partial_{z}^{2})u_{3,\neq}+\partial_{y}\partial_{x}(u\cdot\nabla u_{3})_{\neq}-\partial_{y}\partial_{z}(u\cdot\nabla u_{1})_{\neq}.
	\end{aligned}
\end{equation*}
Applying Proposition \ref{prop Lf}, there holds
\begin{equation}\label{energy y omega2}
	\begin{aligned}
		\|\partial_{y}\omega_{2,\neq}\|_{X_{a}}^{2}\leq&C\big(\|u_{\rm in}\|_{H^{2}}^{2}+\nu^{-\frac13}\|e^{a\nu^{\frac13}t}
		(\partial_{x}^{2}+\partial_{z}^{2})u_{3,\neq}\|_{L^{2}L^{2}}^{2}
		\\&+\nu^{-1}\|e^{a\nu^{\frac13}t}\partial_{x}(u\cdot\nabla u_{3})_{\neq}\|_{L^{2}L^{2}}^{2}
		+\nu^{-1}\|e^{a\nu^{\frac13}t}\partial_{z}(u\cdot\nabla u_{1})_{\neq}\|_{L^{2}L^{2}}^{2}
		\big).
	\end{aligned}
\end{equation}
Using (ii) of Lemma \ref{lem 2} and Lemma \ref{lem: u0 u_neq}, we get
\begin{equation}\label{t1}
	\begin{aligned}
		&\|e^{a\nu^{\frac13}t}\partial_{x}(u\cdot\nabla u_{3})_{\neq}\|_{L^{2}L^{2}}^{2}\\\leq&C\big(\|e^{a\nu^{\frac13}t}\partial_{x}(u_{0}\cdot\nabla u_{3,\neq})\|_{L^{2}L^{2}}^{2}+\|e^{a\nu^{\frac13}t}\partial_{x}(u_{\neq}\cdot\nabla u_{3,0})\|_{L^{2}L^{2}}^{2}+\|e^{a\nu^{\frac13}t}\partial_{x}(u_{\neq}\cdot\nabla u_{3,\neq})_{\neq}\|_{L^{2}L^{2}}^{2} \big)\\\leq&C\nu^{\frac13}E_{1}^{2}E_{4}E_{6}+C\nu^{-\frac53}E_{2}^{2}E_{4}^{2}+C\nu^{-\frac53}E_{4}^{4}
	\end{aligned}
\end{equation}
and
\begin{equation}\label{t2}
	\begin{aligned}
		&	\|e^{a\nu^{\frac13}t}\nabla(u\cdot\nabla u_{1})_{\neq}\|_{L^{2}L^{2}}^{2}\\
		\leq&C\big(\|e^{a\nu^{\frac13}t}\nabla(u_{0}\cdot\nabla u_{1,\neq})\|_{L^{2}L^{2}}^{2}+\|e^{a\nu^{\frac13}t}\nabla(u_{\neq}\cdot\nabla u_{1,0})\|_{L^{2}L^{2}}^{2}+\|e^{a\nu^{\frac13}t}\nabla(u_{\neq}\cdot\nabla u_{1,\neq})\|_{L^{2}L^{2}}^{2} \big)\\\leq&C\nu^{\frac13}E_{1}^{2}E_{4}E_{6}+C\nu^{-\frac53}E_{2}^{2}E_{4}^{2}+C\nu^{\frac13}E_{1}^{2}E_{7}+C\nu^{-\frac53}E_{4}^{4}.
	\end{aligned}
\end{equation}
Using (\ref{t1}) and (\ref{t2}), then (\ref{energy y omega2}) yields that
\begin{equation}\label{end y omega2}
	\begin{aligned}
		\|\partial_{y}\omega_{2,\neq}\|_{X_{a}}^{2}\leq C\big(&\|u_{\rm in}\|_{H^{2}}^{2}+\nu^{-\frac23}E_{1}^{2}E_{4}E_{6}
		+\nu^{-\frac83}E_{2}^{2}E_{4}^{2}+\nu^{-\frac83}E_{4}^{4}+\nu^{-\frac23}E_{1}^{2}E_{7}\\&+\nu^{-\frac83}E_{2}^{2}E_{7}+\nu^{-2}E_{5}^{2}\big).
	\end{aligned}
\end{equation}

Taking $ \partial_{z} $ for (\ref{eq:omega2}), we get
\begin{equation*}
	\begin{aligned}
		\mathcal{L}\partial_{z}\omega_{2,\neq}=-\partial_{z}^{2}u_{2,\neq}+\partial_{z}\partial_{x}(u\cdot\nabla u_{3})_{\neq}-\partial_{z}^{2}(u\cdot\nabla u_{1})_{\neq}.
	\end{aligned}
\end{equation*}
Applying Proposition \ref{prop Lf} to it and using (\ref{t1})-(\ref{t2}), one deduces
\begin{equation}\label{end z omega2}
	\begin{aligned}
		\|\partial_{z}\omega_{2,\neq}\|_{X_{a}}^{2}
		\leq&\big(\|u_{\rm in}\|_{H^{2}}^{2}+\nu^{-\frac23}\|\triangle u_{2,\neq}\|_{X_{a}}^{2}
		+\nu^{-1}\|e^{a\nu^{\frac13}t}\partial_{x}(u\cdot\nabla u_{3})_{\neq}\|_{L^{2}L^{2}}^{2}
		\\&+\nu^{-1}\|e^{a\nu^{\frac13}t}\partial_{z}(u\cdot\nabla u_{1})_{\neq}\|_{L^{2}L^{2}}^{2}\big)
		\\\leq&C\big(\|u_{\rm in}\|_{H^{2}}^{2}+\nu^{-\frac23}E_{1}^{2}E_{4}E_{6}+\nu^{-\frac83}E_{2}^{2}E_{4}^{2}
		+\nu^{-\frac83}E_{4}^{4}+\nu^{-\frac23}E_{1}^{2}E_{7}
		\big).
	\end{aligned}
\end{equation}
Collecting (\ref{end x omega2}), (\ref{end y omega2}) and (\ref{end z omega2}), we get
\begin{equation}\label{end e32}
	\begin{aligned}
		&E_{4,2}^{2}\leq C\nu^{\frac23}\|\nabla\omega_{2,\neq}\|_{X_{a}}^{2}\\
		\leq&C\big(\|u_{\rm in
		}\|_{H^{2}}^{2}+E_{1}^{2}E_{4}E_{6}+\nu^{-2}E_{2}^{2}E_{4}^{2}
		+\nu^{-2}E_{4}^{4}
		+E_{1}^{2}E_{7}+{\nu^{-2}}E_{2}^{2}E_{7}+{\nu^{-\frac43}}E_{5}^{2} \big).
	\end{aligned}
\end{equation}

Combining (\ref{e31 end}) with (\ref{end e32}), and using Proposition \ref{E5 E6}, one deduces
\begin{equation*}
	\begin{aligned}
		E_{4}^{2}\leq&2\left(E_{4,1}^{2}+E_{4,2}^{2} \right)\\\leq&C\big(\|u_{\rm in}\|_{H^{2}}^{2}
		+\nu^{-2}E_{2}^{2}E_{4}^{2}+E_{1}^{2}E_{4}E_{6}+\nu^{-2}E_{2}^{2}E_{7}+\nu^{-2}E_{4}^{4}+E_{1}^{2}E_{7}+\nu^{-\frac43}E_{5}^{2}\big).
	\end{aligned}
\end{equation*}
Taking $ \varepsilon_{0} $ small enough, then we arrive
\begin{equation*}\label{end e3}
	\begin{aligned}
		E_{4}^{2}\leq& C\left(\|u_{\rm in}\|_{H^{2}}^{2}+E_{6}^{2}+\nu^{-2}E_{5}^{2}+E_{7} \right).
	\end{aligned}
\end{equation*}
\end{proof}
\subsection{Estimate $ E_{5} $}
\begin{proposition}\label{propE4}
There holds 
\begin{equation*}
	E_{5}\leq C\big(\|\Theta_{\rm in}\|_{H^{2}}+\nu^{-\frac23}E_{3}E_{6}+\nu^{-1}E_{3}E_{4} \big).
\end{equation*}
\end{proposition}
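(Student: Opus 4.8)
The plan is to write the non-zero temperature mode in the perturbed-shear frame, differentiate the equation in $x$ and in $z$, and apply the linear enhanced-dissipation estimate, reducing the whole bound to the nonlinear estimates established above. From $(\ref{ini1})_2$, and using that $\Theta_0$ is $x$-independent (so $\partial_x\Theta_0=0$ and $u_{1,\neq}\partial_x\Theta_0=0$), $\Theta_{\neq}$ obeys $\mathcal{L}_V\Theta_{\neq}=F_\Theta$, where
\[
F_\Theta=-\widetilde{u_{1,0}}\partial_x\Theta_{\neq}-(u_{2,0}\partial_y+u_{3,0}\partial_z)\Theta_{\neq}-u_{2,\neq}\partial_y\Theta_0-u_{3,\neq}\partial_z\Theta_0-(u_{\neq}\cdot\nabla\Theta_{\neq})_{\neq},
\]
and $\mathcal{L}_V=\partial_t-\nu\triangle+V\partial_x$, $V=y+\widehat{u_{1,0}}$. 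By the bootstrap assumption $E_1\le\varepsilon_0$ and Proposition \ref{prop:E1}, the perturbation condition $\|\widehat{u_{1,0}}\|_{H^4}+\nu^{-1}\|\partial_t\widehat{u_{1,0}}\|_{H^2}<\delta$ holds, so the linear propagator estimate (Proposition \ref{prop Lf}) applies to $\mathcal{L}_V$.

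For the $X_b$-bound on $\partial_x^2\Theta_{\neq}$, since $\mathcal{L}_V$ commutes with $\partial_x$ the function $\partial_x^2\Theta_{\neq}$ solves $\mathcal{L}_V(\partial_x^2\Theta_{\neq})=\partial_x^2F_\Theta$ with datum $\partial_x^2\Theta_{\rm in}$, so Proposition \ref{prop Lf} at time-weight $b$ bounds $\|\partial_x^2\Theta_{\neq}\|_{X_b}$ by $\|\Theta_{\rm in}\|_{H^2}$ plus suitable $e^{b\nu^{\frac13}t}L^2L^2$-norms of (derivatives of) $\partial_x^2F_\Theta$. The pieces $\partial_x^2(\widetilde{u_{1,0}}\partial_x\Theta_{\neq})$ and $\partial_x^2\big((u_{2,0}\partial_y+u_{3,0}\partial_z)\Theta_{\neq}\big)$ are transport terms with $x$-independent coefficients that are $(y,z)$-divergence-free in the $(u_{2,0},u_{3,0})$-block, so they cost no derivative in the energy estimate and are handled via $\|\widetilde{u_{1,0}}\|_{H^2}\lesssim\nu^{\frac23}E_1$ and the embeddings of Lemma \ref{zero mode}, contributing $\lesssim E_1^2E_5^2$ and $\lesssim\nu^{-2}E_2^2E_5^2$ respectively, both absorbed since $E_1\le\varepsilon_0$ and $\nu^{-1}E_2\le\varepsilon_0$; the non-zero/non-zero term $(u_{\neq}\cdot\nabla\Theta_{\neq})_{\neq}$ is controlled by Lemma \ref{lem u theta}, contributing $\lesssim\nu^{-2}E_4^2E_5^2\lesssim\varepsilon_0^2E_5^2$, also absorbed. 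The only surviving contribution is $\partial_x^2(u_{2,\neq}\partial_y\Theta_0+u_{3,\neq}\partial_z\Theta_0)$, which is in divergence-in-$x$ form; bounding the $u_{j,\neq}$-factors by $E_6$ — it is crucial here that $E_6$ carries the \emph{same} time-weight $b$ as $\partial_x^2\Theta_{\neq}$ — and $\nabla\Theta_0$ by $E_3$, via anisotropic Sobolev inequalities, gives the contribution $\lesssim(\nu^{-\frac23}E_3E_6)^2$.

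For the $X_a$-bound on $\partial_z^2\Theta_{\neq}$, $\partial_z$ does not commute with $V\partial_x$ because $V$ depends on $z$, so $\partial_z^2\Theta_{\neq}$ solves $\mathcal{L}_V(\partial_z^2\Theta_{\neq})=\partial_z^2F_\Theta-2(\partial_z\widehat{u_{1,0}})\partial_x\partial_z\Theta_{\neq}-(\partial_z^2\widehat{u_{1,0}})\partial_x\Theta_{\neq}$. Applying Proposition \ref{prop Lf} at time-weight $a$: the commutator is exactly the quantity estimated in Lemma \ref{lem non zero mode 1}(3), and this is where the $b$-weight carried by $\partial_x^2\Theta_{\neq}$ is used — via $(1+\nu^{\frac13}t)^2\lesssim e^{(b-a)\nu^{\frac13}t}$ (\eqref{a b}) together with $\|\widehat{u_{1,0}}\|_{H^2}\lesssim E_1\nu t$ (\eqref{widehat u10 H2}) and $\|\partial_z\widehat{u_{1,0}}\|_{H^2}\lesssim E_1(\nu t)^{\frac12}$ (\eqref{partial_z U0}) — to convert the $\nu t$ lift-up growth of $\widehat{u_{1,0}}$ into a favourable power of $\nu$, so that these terms contribute only $\lesssim E_1^2E_5^2\lesssim\varepsilon_0^2E_5^2$. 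The remaining pieces of $\partial_z^2F_\Theta$ are handled by: Lemma \ref{lem non zero mode 1}(2) for $\partial_z(u_{j,\neq}\partial_j\Theta_0)$, contributing $\lesssim\nu^{-2}E_3^2E_4^2=(\nu^{-1}E_3E_4)^2$; Lemma \ref{lem non zero mode 1}(1) for $\partial_z\big((u_{2,0}\partial_y+u_{3,0}\partial_z)\Theta_{\neq}\big)$ (together with the $\widetilde{u_{1,0}}\partial_x$-piece), contributing $\lesssim\nu^{-2}E_2^2E_5^2\lesssim\varepsilon_0^2E_5^2$; and Lemma \ref{lem u theta} for $\partial_z(u_{\neq}\cdot\nabla\Theta_{\neq})_{\neq}$, contributing $\lesssim\nu^{-2}E_4^2E_5^2\lesssim\varepsilon_0^2E_5^2$.

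Adding the two steps gives $E_5^2\le C\big(\|\Theta_{\rm in}\|_{H^2}^2+(\nu^{-\frac23}E_3E_6)^2+(\nu^{-1}E_3E_4)^2\big)+C\varepsilon_0^2E_5^2$; choosing $\varepsilon_0$ small absorbs the last term into the left-hand side, and a square root yields the statement. The hard part will be the $\partial_z^2$-commutator in the second step: because the perturbed shear $V=y+\widehat{u_{1,0}}$ genuinely depends on $z$ and $\widehat{u_{1,0}}$ carries the lift-up growth, this term is not a lower-order perturbation of the Couette transport in the plain $X_a$-scheme, and closing it is precisely what forces the two distinct time-weights $a<b<2a$, with the larger one placed on $\partial_x^2\Theta_{\neq}$. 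A secondary subtlety is that all the zero-mode transport terms $u_{j,0}\partial_j\Theta_{\neq}$ must be absorbed into the left rather than retained on the right, which is possible only thanks to the bootstrap smallness $E_1,\ \nu^{-1}E_2,\ \nu^{-1}E_4\le\varepsilon_0$.
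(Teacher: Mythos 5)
Your proposal is correct and follows essentially the same route as the paper: split $E_5$ into the $X_b$-estimate for $\partial_x^2\Theta_{\neq}$ and the $X_a$-estimate for $\partial_z^2\Theta_{\neq}$, apply the linear space-time estimate for $\mathcal{L}_V$ (the paper invokes Proposition \ref{Lvf 0} rather than Proposition \ref{prop Lf}, but under the same smallness of $\widehat{u_{1,0}}$), control the nonlinear terms via Lemmas \ref{lem u theta} and \ref{lem non zero mode 1} — in particular the commutator $\partial_z\widehat{u_{1,0}}\,\partial_x\partial_z\Theta_{\neq}$ handled exactly through the $a<b$ weight transfer and \eqref{a b} — and absorb the $E_5$-self-terms using the bootstrap smallness of $E_1$, $\nu^{-1}E_2$, $\nu^{-1}E_4$. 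The surviving contributions $\nu^{-\frac23}E_3E_6$ and $\nu^{-1}E_3E_4$ you identify match (\ref{theta xx xb}) and (\ref{energy theta-neq zz}) in the paper.
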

\begin{proof}
According to (\ref{ini1}), we know $\partial_x^2\Theta_{\neq} $ satisfies 
\begin{equation}\label{theta xx}
	\begin{aligned}
		\mathcal{L}_{V}\partial_{x}^{2}\Theta_{\neq}+\partial_{x}^{2}(u_{\neq}\cdot\nabla\Theta_{\neq})_{\neq}
		+u_{2,0}\partial_{x}^{2}\partial_{y}\Theta_{\neq}+u_{3,0}\partial_{x}^{2}\partial_{z}\Theta_{\neq}
		+\nabla\cdot(\partial_{x}^{2}u_{\neq}\Theta_{0}).
	\end{aligned}
\end{equation}
By Lemma \ref{lem zero nonzero}, we have
$\|\Theta_{0}\|_{L^{\infty}L^{\infty}}^{2}\leq C\|(\partial_{z}\Theta_{0},\Theta_{0})\|_{L^{\infty}H^{1}}^{2}\leq C\left(\|\Theta_{0}\|_{Y_{0}}^{2}+\|\partial_{z}\nabla\Theta_{0}\|_{Y_{0}}^{2} \right),$
then applying Proposition \ref{Lvf 0} to (\ref{theta xx}), we get
\begin{equation}\label{xx}
	\begin{aligned}
		\|\partial_{x}^{2}\Theta_{\neq}\|_{X_{b}}^{2}
		\leq&C\Big( \|\Theta_{\rm in}\|_{H^{2}}^{2}
		+\nu^{-1}\|e^{b\nu^{\frac13}t}\partial_{x}^{2}(u_{\neq}\Theta_{\neq})\|_{L^{2}L^{2}}^{2}
		+\nu^{-\frac43}\|(u_{2,0},u_{3,0})\|_{L^{\infty}L^{\infty}}^{2}
		\|\partial_{x}^{2}\Theta_{\neq}\|_{X_{b}}^{2}\\&+\nu^{-\frac43}
		\left(\|\Theta_{0}\|_{Y_{0}}^{2}+\|\partial_{z}\nabla\Theta_{0}\|_{Y_{0}}^{2} \right)
		\left(\|\partial_{x}^{2}u_{2,\neq}\|_{X_{b}}^{2}+\|\partial_{x}^{2}u_{3,\neq}\|_{X_{b}}^{2} \right)
		\Big).
	\end{aligned}
\end{equation}
Using Lemma \ref{zero mode} and  Lemma \ref{lem u theta}, we get by (\ref{xx}) that
\begin{equation}\label{theta xx xb}
	\begin{aligned}
		\|\partial_{x}^{2}\Theta_{\neq}\|_{X_{b}}^{2}\leq& 
		C\left(\|\Theta_{\rm in}\|_{H^{2}}^{2}+\nu^{-2}E_{4}^{2}E_{5}^{2}
		+\nu^{-\frac43}E_{2}^{2}E_{5}^{2}+\nu^{-\frac43}E_{3}^{2}E_{6}^{2}
		\right).
	\end{aligned}
\end{equation}
Similarly, $\partial_{z}^{2}\Theta$ satisfies
\begin{equation}\label{theta zz}
	\begin{aligned}
		&\mathcal{L}_{V}\partial_{z}^{2}\Theta_{\neq}+\partial_{z}\widehat{u_{1,0}}\partial_{x}\partial_{z}\Theta_{\neq}+\partial_{z}(\partial_{z}\widehat{u_{1,0}}\partial_{x}\Theta_{\neq})+\partial_{z}^{2}(u_{\neq}\cdot\nabla\Theta_{\neq})_{\neq}
		+\partial_{z}^{2}\left(\widetilde{u_{1,0}}\partial_{x}\Theta_{\neq}\right)
		\\&+\partial_{z}^{2}(u_{2,0}\partial_{y}\Theta_{\neq})+\partial_{z}^{2}(u_{3,0}\partial_{z}\Theta_{\neq})+\partial_{z}^{2}(u_{2,\neq}\partial_{y}\Theta_{0})
		+\partial_{z}^{2}(u_{3,\neq}\partial_{z}\Theta_{0})=0.
	\end{aligned}
\end{equation}
Applying Proposition \ref{Lvf 0} to (\ref{theta zz}), one obtains
\begin{equation}\label{theta_neq zz}
	\begin{aligned}
		\|\partial_{z}^{2}\Theta_{\neq}\|_{X_{a}}^{2}\leq&C\Big(\|\Theta_{\rm in}\|_{H^{2}}^{2}
		+\nu^{-\frac13}\|e^{a\nu^{\frac13}t}\partial_{z}\widehat{u_{1,0}}\partial_{x}\partial_{z}\Theta_{\neq}\|_{L^{2}L^{2}}^{2}
		\\&+\nu^{-1}\|e^{a\nu^{\frac13}t}\partial_{z}\widehat{u_{1,0}}\partial_{x}\Theta_{\neq}\|_{L^{2}L^{2}}^{2}
		+\nu^{-1}\|e^{a\nu^{\frac13}t}\partial_{z}(u_{\neq}\cdot\nabla\Theta_{\neq})_{\neq}\|_{L^{2}L^{2}}^{2}
		\\&+\nu^{-1}\|e^{a\nu^{\frac13}t}\partial_{z}(u_{2,0}\partial_{y}\Theta_{\neq})\|_{L^{2}L^{2}}^{2}
		+\nu^{-1}\|e^{a\nu^{\frac13}t}\partial_{z}(u_{3,0}\partial_{z}\Theta_{\neq})\|_{L^{2}L^{2}}^{2}
		\\&+\nu^{-1}\|e^{a\nu^{\frac13}t}\partial_{z}(u_{2}\cdot\nabla\Theta_{0})\|_{L^{2}L^{2}}^{2}
		+\nu^{-1}\|e^{a\nu^{\frac13}t}\partial_{z}\left(\widetilde{u_{1,0}}\partial_{x}\Theta_{\neq} \right)\|_{L^{2}L^{2}}^{2}
		\Big).
	\end{aligned}
\end{equation}
Using Lemma \ref{lem u theta}, Lemma \ref{lem non zero mode 1} and Lemma \ref{sob_15}, we get by (\ref{theta_neq zz}) that
\begin{equation}\label{energy theta-neq zz}
	\begin{aligned}
		\|\partial_{z}^{2}\Theta_{\neq}\|_{X_{a}}^{2}\leq&C\left(\|\Theta_{\rm in}\|_{H^{2}}^{2}+E_{1}^{2}E_{5}^{2}+\nu^{-2}E_{2}^{2}E_{5}^{2}+\nu^{-2}E_{3}^{2}E_{4}^{2} \right).
	\end{aligned}
\end{equation}

Combining (\ref{theta xx xb}) with (\ref{energy theta-neq zz}), we have
\begin{equation*}
	\begin{aligned}
		E_{5}\leq C\left(\|\Theta_{\rm in}\|_{H^{2}}
		+\nu^{-1}E_{4}E_{5}+\nu^{-1}E_{2}E_{5}+\nu                                                ^{-\frac23}E_{3}E_{6}+E_{1}E_{5}+\nu^{-1}E_{3}E_{4} \right).
	\end{aligned}
\end{equation*}
As $E_{1}\leq\varepsilon_{0}, E_{2}\leq\varepsilon_{0}\nu $ and $ E_{4}\leq \varepsilon_{0}\nu, $ taking $ \varepsilon_{0} $ small enough such that $ C\varepsilon_{0}<\frac12, $ one deduces
\begin{equation}\label{e32 end}
	\begin{aligned}
		E_{5}\leq& C\big(\|\Theta_{\rm in}\|_{H^{2}}+\nu^{-\frac23}E_{3}E_{6}+\nu^{-1}E_{3}E_{4} \big).
	\end{aligned}
\end{equation}
\end{proof}

\subsection{Estimate $ E_{6} $} 
Motivated by \cite{wei2}, in order to deal $ E_{6}, $ a key idea is to introduce a new quantity $Q$  defined by
\begin{equation}\label{define kappa}
	Q=u_{2,\neq}+\kappa u_{3,\neq},\quad \kappa(t,y,z)=\frac{\partial_{z}V}{\partial_{y}V}=\frac{\partial_{z}\widehat{u_{1,0}}}{1+\partial_{y}\widehat{u_{1,0}}}.
\end{equation}
Then  $ Q $ satisfies 
\begin{equation*}
	\begin{aligned}
		\widetilde{\mathcal{L}_{V}}Q+H_{2}=(\partial_{t}\kappa-\nu\triangle\kappa)u_{3,\neq}-2\nu\nabla\kappa\cdot\nabla u_{3,\neq}+\Theta_{\neq}-(\partial_{y}+\kappa\partial_{z})\triangle^{-1}\partial_{y}\Theta_{\neq},
	\end{aligned}
\end{equation*}
where $ H_{2}=(h_{2}+\kappa h_{3})_{\neq} $ and
\begin{equation*}\label{widetilde Lv}
	\widetilde{\mathcal{L}_{V}}f=\mathcal{L}_{V}f-2(\partial_{y}+\kappa\partial_{z})\triangle^{-1}(\partial_{y}V\partial_{x}f).
\end{equation*}
%Notice that 
%\begin{equation*}
%\triangle\widetilde{\mathcal{L}_{V}}Q=\mathcal{L}_{V}\triangle Q+{\rm good~~terms},
%\end{equation*}
In addition, $ \triangle Q $ satisfies
\begin{equation}\label{eq Q''}
	\mathcal{L}_{V}\triangle Q=\triangle(-2\nu\nabla\kappa\cdot\nabla u_{3,\neq})+\triangle\left(\Theta_{\neq}-(\partial_{y}+\kappa\partial_{z})\triangle^{-1}\partial_{y}\Theta_{\neq} \right)+{\rm good~~terms}.
\end{equation}
To deal the singular term $ \triangle(-2\nu\nabla\kappa\cdot\nabla u_{3,\neq})$ and obtain a sharp result,  the following
decomposition introduced by \cite{wei2} is important 
\begin{equation}\label{kappa u3}
	\nabla\kappa\cdot\nabla u_{3,\neq}=\rho_{1}\nabla V\cdot\nabla u_{3,\neq}+\rho_{2}(\partial_{z}-\kappa\partial_{y})u_{3,\neq},
\end{equation}
where
\begin{equation}\label{rho1 rho2}
	\rho_{1}=\frac{\partial_{y}\kappa+\kappa\partial_{z}\kappa}{\partial_{y}V(1+\kappa^{2})},\quad\rho_{2}=\frac{\partial_{z}\kappa-\kappa\partial_{y}\kappa}{1+\kappa^{2}}.
\end{equation}
Since $\|\kappa\|_{H^3}$ can be enough small, 
$ (\partial_{z}-\kappa\partial_{y}) $ can be regard as a good derivative,
and the second term in (\ref{kappa u3}) is good.

In order to obtain a sharp threshold of velocity, 
we use the following quasi-linear
decomposition
\begin{equation*}
	Q=Q_{1}+\nu Q_{2}+Q_{3},
\end{equation*}
where $Q_1, Q_{2}$ and $Q_{3} $ satisfying
\begin{equation}\label{Q}
	\left\{
	\begin{aligned}
		&\mathcal{L}_{V}\triangle Q_{1}={\rm good~~terms},\\
		&\mathcal{L}_{V}Q_{2}=-\rho_{1}\nabla V\cdot\nabla u_{3,\neq},\\ 
		&\mathcal{L}_{V}Q_{3}=\Theta_{\neq}-(\partial_{y}+\kappa\partial_{z})\triangle^{-1}\partial_{y}\Theta_{\neq},
		\\&Q_{1}(0)=Q(0),~~~Q_{2}(0)=Q_{3}(0)=0.
	\end{aligned}
	\right.
\end{equation}
Furthermore, an additional quantity $Q_4$ is also needed satisfying
\begin{equation*}
	\mathcal{L}_{V}Q_{4}=-\nabla V\cdot\nabla u_{3,\neq}, ~~~ Q_{4}(0)=0.
\end{equation*}
Based on this decomposition, we will prove that 
\begin{equation*}
	\begin{aligned}
		E_{6}^2\leq C(E_{7}+\nu^{\frac43}\|\triangle u_{3,\neq}\|_{X_{b}}^{2})\leq C\left(\|u_{\rm in}\|_{H^{2}}^{2}+\nu^{-2}E_{4}^{4}+\nu^{-2}E_{5}^{2} \right),
	\end{aligned}
\end{equation*}
where $ E_{7} $ is the auxiliary norm  defined by
\begin{equation*}
	E_{7}=\sum_{j=2}^{3}\left(\|\partial_{x}^{2}u_{j,\neq}\|_{X_{b}}^{2}+\|\partial_{x}(\partial_{z}-\kappa\partial_{y})u_{j,\neq}\|_{X_{b}}^{2} \right)+\|\partial_{x}\nabla Q\|_{X_{b}}^{2}.
\end{equation*}

The following lemma gives the estimates of $ Q_{2}, Q_{3} $ and $ Q_{4} $ associated with good derivatives.
\begin{lemma}\label{lem W}
It holds that
\begin{itemize}
	\item[(i)] $\|\partial_{x}^{2}Q_{4}\|_{X_{b}}^{2}+\|\partial_{x}(\partial_{z}-\kappa\partial_{y})Q_{4}\|_{X_{b}}^{2}\leq C\nu^{-\frac43}E_{7},$
	\item[(ii)] 
	$\|\partial_{x}^{2}Q_{2}\|_{X_{b}}^{2}\leq C\varepsilon_{0}^{2}\nu^{-\frac43}E_{7}, \quad \|\partial_{x}\nabla Q_{2}\|_{X_{b}}^{2}\leq C\varepsilon_{0}^{2}\nu^{-2}E_{7},$
	\item[(iii)] 
	$\|\partial_{x}(Q_{2}-\rho_{1}Q_{4})\|_{X_{b}}^{2}\leq C\nu^{-\frac23}\varepsilon_{0}^{2}E_{7},$
	\item[(iv)] 
	$\|\partial_{x}^{2}Q_{3}\|_{X_{b}}^{2}\leq C\nu^{-\frac23}E_{5}^{2},\quad \|\partial_{x}\nabla Q_{3}\|_{X_{b}}^{2}\leq C\nu^{-\frac43}E_{5}^{2}.$
\end{itemize}
\end{lemma}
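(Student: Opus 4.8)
The plan is to handle (i)--(iv) by one and the same mechanism. Each of $Q_2,Q_3,Q_4$ solves a forced transport--diffusion equation $\mathcal{L}_{V}f=S_f$ with $f(0)=0$, so after commuting with $\partial_x^2$, with $\partial_x\nabla$, or with the good derivative $\partial_x(\partial_z-\kappa\partial_y)$, one invokes the linear estimates for $\mathcal{L}_{V}$ (Proposition~\ref{prop Lf}, Proposition~\ref{prop Lf 1}, Proposition~\ref{Lvf 0}); the whole matter then reduces to bounding a weighted $L^2_tL^2$ norm of the (differentiated) source. Commuting is cheap: the transport coefficient $V=V(t,y,z)$ is $x$--independent, so it commutes with $\partial_x$, and commuting $\partial_z-\kappa\partial_y$ past $\mathcal{L}_{V}$ produces only terms carrying a factor $\nabla\kappa$ or $\partial_t\kappa$, which are lower order because $\|\kappa\|_{H^3}\lesssim\|\widehat{u_{1,0}}\|_{H^4}\lesssim E_1\leq\varepsilon_0$ (Lemma~\ref{kappa}, Lemma~\ref{U10}) and $\partial_t\kappa$ inherits a factor $\nu$ from the control of $\nu^{-1}\|\partial_t\widehat{u_{1,0}}\|_{H^2}$ in $E_{1,1}$. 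I will use repeatedly that these linear propositions absorb one spatial derivative of a source written in divergence form at the cost of $\nu^{-1}$, and that the $X_b$ norm carries the enhanced--dissipation term $\nu^{1/3}\|e^{b\nu^{1/3}t}f\|_{L^2L^2}^2$, worth an extra $\nu^{-1/3}$; it is these two facts that turn losses in the source into the stated $\nu$--powers.

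For (i), I rewrite the forcing of the $Q_4$ equation in divergence form. Since $\partial_y V=1+\partial_y\widehat{u_{1,0}}$ and $\partial_z V=\partial_z\widehat{u_{1,0}}$,
\[\nabla V\cdot\nabla u_{3,\neq}=\nabla\cdot(u_{3,\neq}\nabla V)-u_{3,\neq}\triangle\widehat{u_{1,0}},\]
where the second term is lower order (it carries a factor $E_1\leq\varepsilon_0$ by Lemma~\ref{U10}) and, as $\partial_y V$ differs from $1$ only by a term of size $E_1$, the leading part of $u_{3,\neq}\nabla V$ is simply $u_{3,\neq}$. Applying the linear estimate to $\partial_x^2 Q_4$ gives $\|\partial_x^2 Q_4\|_{X_b}^2\lesssim\nu^{-1}\|e^{b\nu^{1/3}t}\partial_x^2(u_{3,\neq}\nabla V)\|_{L^2L^2}^2$ up to lower order terms, hence $\lesssim\nu^{-1}\cdot\nu^{-1/3}\|\partial_x^2 u_{3,\neq}\|_{X_b}^2\leq\nu^{-4/3}E_7$ by the enhanced--dissipation term of the $X_b$ norm, and the estimate for the good derivative is the same. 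For (ii) the forcing is $-\rho_1\nabla V\cdot\nabla u_{3,\neq}=-\nabla\cdot(\rho_1 u_{3,\neq}\nabla V)+\dots$ with $\rho_1$ as in \eqref{rho1 rho2}, and $\|\rho_1\|_{H^2}\lesssim\|\kappa\|_{H^3}\lesssim E_1\leq\varepsilon_0$ produces the extra $\varepsilon_0^2$ for $\|\partial_x^2 Q_2\|_{X_b}^2$. The bound for $\partial_x\nabla Q_2$ calls for one more derivative than $E_7$ returns directly; I recover it by interpolating the missing normal derivative against $\triangle u_{3,\neq}$ --- this is why the combination $E_7+\nu^{4/3}\|\triangle u_{3,\neq}\|_{X_b}^2$ recurs in the subsequent estimates --- at a further cost of $\nu^{-2/3}$, giving $\varepsilon_0^2\nu^{-2}E_7$.

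Item (iii) rests on an exact cancellation. Because $\rho_1=\rho_1(t,y,z)$ is $x$--independent,
\[\mathcal{L}_{V}(\rho_1 Q_4)=\rho_1\mathcal{L}_{V}Q_4+(\partial_t\rho_1)Q_4-\nu(\triangle\rho_1)Q_4-2\nu\nabla\rho_1\cdot\nabla Q_4,\]
and since $\mathcal{L}_{V}Q_4=-\nabla V\cdot\nabla u_{3,\neq}$ the leading forcing of $Q_2$ is annihilated, leaving
\[\mathcal{L}_{V}(Q_2-\rho_1 Q_4)=-(\partial_t\rho_1)Q_4+\nu(\triangle\rho_1)Q_4+2\nu\nabla\rho_1\cdot\nabla Q_4,\qquad (Q_2-\rho_1 Q_4)(0)=0.\]
The remaining source is genuinely lower order: $\partial_t\rho_1$ inherits $\|\partial_t\widehat{u_{1,0}}\|_{H^2}\lesssim\nu E_1$, the explicit $\nu$ compensates the worse scaling of the last two terms, and each term still retains an $\varepsilon_0$ through $\|\kappa\|_{H^3}$. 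Applying the linear estimate to $\partial_x(Q_2-\rho_1 Q_4)$ and using part (i) to control $\|\partial_x\nabla Q_4\|$ then yields the improved $\nu^{-2/3}\varepsilon_0^2 E_7$.

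For (iv), I use the algebraic identity
\[\Theta_\neq-(\partial_y+\kappa\partial_z)\triangle^{-1}\partial_y\Theta_\neq=\triangle^{-1}(\partial_x^2+\partial_z^2)\Theta_\neq-\kappa\,\partial_z\triangle^{-1}\partial_y\Theta_\neq,\]
so that, modulo the smoothing operator $\triangle^{-1}$ and the small factor $\kappa$, the forcing of $Q_3$ is a combination of $\partial_x^2\Theta_\neq$ and $\partial_z^2\Theta_\neq$, both controlled by $E_5$; commuting $\partial_x^2$ (resp. $\partial_x\nabla$) and applying Proposition~\ref{Lvf 0} gives $\nu^{-2/3}E_5^2$ (resp. $\nu^{-4/3}E_5^2$, the extra $\nu^{-1}$ being the price of the one further gradient taken from the $X_b$ norm). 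I expect the delicate point throughout to be the reappearance of the ``bad'' normal derivative $\partial_y u_{3,\neq}$ of the ``bad'' component inside $\nabla V\cdot\nabla u_{3,\neq}$: it must be matched with the second--order part of the $X_b$ norm in $E_7$ --- after rewriting in divergence form so the linear estimate can absorb a derivative --- or, when a further derivative is needed, with $\triangle u_{3,\neq}$, and determining exactly how much $\nu$ this costs is what fixes the exponents in (i)--(iii) and, through $E_7$ and Proposition~\ref{prop:E3}, the stability threshold in Theorem~\ref{main result}.
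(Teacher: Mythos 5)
Your overall architecture coincides with the paper's: each of $Q_2,Q_3,Q_4$ solves $\mathcal{L}_VQ_j=S_j$ with zero initial data, one commutes with $\partial_x^2$, $\partial_x\nabla$ or $\partial_x(\partial_z-\kappa\partial_y)$ and invokes Propositions \ref{Lvf 0} and \ref{Lvf}, and for (iii) one exploits the exact cancellation $\mathcal{L}_VQ_2=\rho_1\mathcal{L}_VQ_4$. Parts (i) and (iv) go through essentially as you describe. However, two of your steps contain genuine gaps. First, in the second half of (ii) you propose to ``interpolate the missing normal derivative against $\triangle u_{3,\neq}$.'' This cannot yield the stated bound $C\varepsilon_0^2\nu^{-2}E_7$: the quantity $\|\triangle u_{3,\neq}\|_{X_b}^2$ is \emph{not} controlled by $E_7$, and its control (only in the combination $E_7+\nu^{\frac43}\|\triangle u_{3,\neq}\|_{X_b}^2$) is established in Proposition \ref{E5 E6}, whose proof uses the present lemma — so invoking it here would be circular. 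The interpolation is also unnecessary: writing $\mathcal{L}_V\partial_x\partial_jQ_2=-\partial_j\left(\rho_1\nabla V\cdot\nabla\partial_xu_{3,\neq}\right)-\partial_jV\,\partial_x^2Q_2$ and placing the first term in the divergence slot of Proposition \ref{Lvf 0}, one needs only $\nu^{-1}\|e^{b\nu^{1/3}t}\nabla\partial_xu_{3,\neq}\|_{L^2L^2}^2\le\nu^{-1}\|e^{b\nu^{1/3}t}\nabla\partial_x^2u_{3,\neq}\|_{L^2L^2}^2\le\nu^{-2}\|\partial_x^2u_{3,\neq}\|_{X_b}^2\le\nu^{-2}E_7$, because the $X_b$ norm already carries $\nu\|e^{b\nu^{1/3}t}\nabla\cdot\|_{L^2L^2}^2$; the commutator term $\partial_jV\,\partial_x^2Q_2$ is then absorbed by the first half of (ii).

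Second, in (iii) you close by ``using part (i) to control $\|\partial_x\nabla Q_4\|$.'' Part (i) controls only $\partial_x^2Q_4$ and the good derivative $\partial_x(\partial_z-\kappa\partial_y)Q_4$; it says nothing about $\partial_x\partial_yQ_4$, which appears if the source is left as $2\nu\nabla\rho_1\cdot\nabla Q_4$. The term must be kept in divergence form, $(\partial_t\rho_1-\nu\triangle\rho_1)Q_4-2\nu\nabla\rho_1\cdot\nabla Q_4=(\partial_t\rho_1+\nu\triangle\rho_1)Q_4-2\nu\nabla\cdot(Q_4\nabla\rho_1)$, so that after applying $\partial_x$ only the product $\partial_xQ_4\,\nabla\rho_1$ must be measured in $L^2$, and then Lemma \ref{sob_14} (with $\nabla\rho_1$ independent of $x$) converts this into precisely the good quantities of part (i). The same device is needed for $(\partial_t\rho_1+\nu\triangle\rho_1)\partial_xQ_4$, since $\partial_t\rho_1$ is controlled only in $L^2$ and not pointwise: the paper writes this term as $\triangle W$ and bounds $\nabla W=\nabla\triangle^{-1}\left[(\partial_t\rho_1+\nu\triangle\rho_1)\partial_xQ_4\right]$ via the second inequality of Lemma \ref{sob_14}. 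With these two repairs your exponent counts are correct and the argument closes.
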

\begin{proof}
{\textbf{\underline{Estimate (i).}}}
Since $ \mathcal{L}_{V}Q_{4}=-\nabla V\cdot\nabla u_{3,\neq}, $  by Proposition \ref{Lvf}, then we obtain
\begin{equation}\label{W23}
	\begin{aligned}
		&\|\partial_{x}^{2}Q_{4}\|_{X_{b}}^{2}+\|\partial_{x}(\partial_{z}-\kappa\partial_{y})Q_{4}\|_{X_{b}}^{2}
		\leq C\nu^{-\frac13}\big(\|e^{b\nu^{\frac13}t}\partial_{x}^{2}(\nabla V\cdot\nabla u_{3,\neq})\|_{L^{2}L^{2}}^{2}\\
		&+\|e^{b\nu^{\frac13}t}\partial_{x}(\partial_{z}-\kappa\partial_{y})
		(\nabla V\cdot\nabla u_{3,\neq})\|_{L^{2}L^{2}}^{2} \big):=C\nu^{-\frac13}\left(I_{1}+I_{2} \right).
	\end{aligned}
\end{equation}
Recall that $ V=y+\widehat{u_{1,0}}(t,y,z),$ by Lemma \ref{kappa},
and we have
\begin{align}
	&\|\nabla V\|_{L^{\infty}}\leq 1+\|\nabla \widehat{u_{1,0}}\|_{L^{\infty}} \leq C\left(1+\|\widehat{u_{1,0}}\|_{H^{4}} \right)\leq C,\label{V1' infty}\\
	&\|(\partial_{z}-\kappa\partial_{y})\nabla V\|_{L^{\infty}}\leq C\left(1+\|\kappa\|_{H^{3}} \right)\|\widehat{u_{1,0}}\|_{H^{4}}\leq C.\label{V1'' infty}
\end{align}
Then for $ I_{1}, $ using (\ref{V1' infty}), we have
\begin{equation*}
	\begin{aligned}
		I_{1}\leq C\|\nabla V\|_{L^{\infty}L^{\infty}}\|e^{b\nu^{\frac13}t}\nabla\partial_{x}^{2}u_{3,\neq}\|_{L^{2}L^{2}}^{2}
		\leq C\nu^{-1}\|\partial_{x}^{2}u_{3,\neq}\|_{X_{b}}^{2}\leq C\nu^{-1}E_{7}.
	\end{aligned}
\end{equation*}
For $ I_{2}, $  using (\ref{V1' infty}) and (\ref{V1'' infty}), we deduce that
\begin{equation*}
	\begin{aligned}
		&\|\partial_{x}(\partial_{z}-\kappa\partial_{y})\left(\nabla V\cdot\nabla u_{3,\neq} \right)\|_{L^{2}}
		\leq C\left(\|\nabla\partial_{x}(\partial_{z}-\kappa\partial_{y})u_{3,\neq}\|_{L^{2}}+\|\nabla\partial_{x}^{2}u_{3,\neq}\|_{L^{2}} \right),
	\end{aligned}
\end{equation*}
which implies that 
\begin{equation*}
	\begin{aligned}
		I_{2}\leq&C\nu^{-1}\left(\|\partial_{x}(\partial_{z}-\kappa\partial_{y})u_{3,\neq}\|_{X_{b}}^{2}+\|\partial_{x}^{2}u_{3,\neq}\|_{X_{b}}^{2} \right)\leq C\nu^{-1}E_{7}.
	\end{aligned}
\end{equation*}
Combining the estimates of $ I_{1} $ and $ I_{2}, $ (\ref{W23}) gives the result of (i).

{\underline{\textbf{Estimate (ii).}}} Notice that
\begin{equation*}
	\mathcal{L}_{V}\partial_{x}^{2}Q_{2}=\partial_{x}^{2}\mathcal{L}_{V}Q_{2}=-\rho_{1}\nabla V\cdot\nabla\partial_{x}^{2}u_{3,\neq},
\end{equation*}	
and $ \partial_{x}^{2}Q_{2}(0)=0, $ 
applying Proposition \ref{Lvf 0}, then we get
\begin{equation*}
	\|\partial_{x}^{2}Q_{2}\|_{X_{b}}^{2}\leq C\nu^{-\frac13}\|e^{b\nu^{\frac13}t}\rho_{1}\nabla V\cdot\nabla\partial_{x}^{2}u_{3,\neq}\|_{L^{2}L^{2}}^{2}.
\end{equation*}
It follows from Lemma \ref{kappa} that 
\begin{equation}\label{rho1 infty}
	\|\rho_{1}\|_{L^{\infty}}\leq C\|\rho_{1}\|_{H^{2}}\leq C\|\widehat{u_{1,0}}\|_{H^{4}}\leq C\varepsilon_{0}.
\end{equation}
By using (\ref{V1' infty}), we obtain
\begin{equation*}
	\|\rho_{1}\nabla V\cdot\nabla\partial_{x}^{2}u_{3,\neq}\|_{L^{2}}\leq\|\rho_{1}\|_{L^{\infty}}\|\nabla V\|_{L^{\infty}}\|\nabla\partial_{x}^{2}u_{3,\neq}\|_{L^{2}}\leq C\varepsilon_{0}\|\nabla\partial_{x}^{2}u_{3,\neq}\|_{L^{2}},
\end{equation*}
which indicates that
\begin{equation*}
	\begin{aligned}
		\|\partial_{x}^{2}Q_{2}\|_{X_{b}}^{2}\leq&C\varepsilon_{0}^{2}\nu^{-\frac13}\|e^{b\nu^{\frac13}t}\nabla\partial_{x}^{2}u_{3,\neq}\|_{L^{2}L^{2}}^{2}\leq C\varepsilon_{0}^{2}\nu^{-\frac43}\|\partial_{x}^{2}u_{3,\neq}\|_{X_{b}}^{2}\leq C\varepsilon_{0}^{2}\nu^{-\frac43}E_{7}.
	\end{aligned}
\end{equation*}

For $ j\in\{1,2,3\}, $ there is
\begin{equation*}
	\begin{aligned}
		&\mathcal{L}_{V}\partial_{x}\partial_{j}Q_{2}=\partial_{x}\partial_{j}\mathcal{L}_{V}Q_{2}-\partial_{j}V\partial_{x}^{2}Q_{2}=-\partial_{j}\left(\rho_{1}\nabla V\cdot\nabla\partial_{x}u_{3,\neq} \right)-\partial_{j}V\partial_{x}^{2}Q_{2},\\
		&\partial_{x}\partial_{j}Q_{2}(0)=0.
	\end{aligned}
\end{equation*}
Applying Proposition \ref{Lvf 0} to the above equation, we have
\begin{equation}\label{tilde Q}
	\begin{aligned}
		\|\partial_{x}\partial_{j}Q_{2}\|_{X_{b}}^{2}\leq C\nu^{-\frac13}\|e^{b\nu^{\frac13}t}\partial_{j}V\partial_{x}^{2}Q_{2}\|_{L^{2}L^{2}}^{2}+C\nu^{-1}\|e^{b\nu^{\frac13}t}\rho_{1}\nabla V\cdot\nabla\partial_{x}u_{3,\neq}\|_{L^{2}L^{2}}^{2}.
	\end{aligned}
\end{equation}
Due to (\ref{V1' infty}) and (\ref{rho1 infty}), there holds
\begin{equation*}
	\begin{aligned}
		&\|\partial_{j}V\partial_{x}^{2}Q_{2}\|_{L^{2}}\leq \|\partial_{j}V\|_{L^{\infty}}\|\partial_{x}^{2}Q_{2}\|_{L^{2}}\leq C\|\partial_{x}^{2}Q_{2}\|_{L^{2}},\\
		&\|\rho_{1}\nabla V\cdot\nabla\partial_{x}u_{3,\neq}\|_{L^{2}}\leq \|\rho_{1}\|_{L^{\infty}}\|\nabla V\|_{L^{\infty}}\|\nabla\partial_{x}u_{3,\neq}\|_{L^{2}}\leq C\varepsilon_{0}\|\nabla\partial_{x}^{2}u_{3,\neq}\|_{L^{2}},
	\end{aligned}
\end{equation*}
which follow that
\begin{equation*}
	\begin{aligned}
		&\|e^{b\nu^{\frac13}t}\partial_{j}V\partial_{x}^{2}Q_{2}\|_{L^{2}L^{2}}^{2}\leq C\|e^{b\nu^{\frac13}t}\partial_{x}^{2}Q_{2}\|_{L^{2}L^{2}}^{2}\leq C\nu^{-\frac13}\|\partial_{x}^{2}Q_{2}\|_{X_{b}}^{2},\\
		&\|e^{b\nu^{\frac13}t}\rho_{1}\nabla V\cdot\nabla\partial_{x}u_{3,\neq}\|_{L^{2}L^{2}}^{2}\leq C\varepsilon_{0}^{2}\|e^{b\nu^{\frac13}t}\nabla\partial_{x}^{2}u_{3,\neq}\|_{L^{2}L^{2}}^{2}\leq C\varepsilon_{0}^{2}\nu^{-1}\|\partial_{x}^{2}u_{3,\neq}\|_{X_{b}}^{2}.
	\end{aligned}
\end{equation*}
Substituting the above estimations into (\ref{tilde Q}), we arrive at
\begin{equation*}
	\|\partial_{x}\nabla Q_{2}\|_{X_{b}}^{2}\leq C\nu^{-\frac23}\|\partial_{x}^{2}Q_{2}\|_{X_{b}}^{2}+C\varepsilon_{0}^{2}\nu^{-2}\|\partial_{x}^{2}u_{3,\neq}\|_{X_{b}}^{2}\leq C\varepsilon_{0}^{2}\nu^{-2}E_{7},
\end{equation*}
which gives the second inequality of (ii).

{\underline{\textbf{Estimate (iii).}}} 
Due to 
\begin{equation*}
	\begin{aligned}
		\mathcal{L}_{V}(\rho_{1}f)-\rho_{1}\mathcal{L}_{V}f=&(\partial_{t}\rho_{1}-\nu\triangle\rho_{1})f-2\nu\nabla\rho_{1}\cdot\nabla f\\=&(\partial_{t}\rho_{1}+\nu\triangle\rho_{1})f-2\nu\nabla\cdot(f\nabla\rho_{1}),
	\end{aligned}
\end{equation*}
and $ \mathcal{L}_{V}Q_{2}=\rho_{1}\mathcal{L}_{V}Q_{4}, $ there holds
\begin{equation*}
	\begin{aligned}
		\mathcal{L}_{V}\partial_{x}\left(Q_{2}-\rho_{1}Q_{4} \right)=&\partial_{x}\mathcal{L}_{V}\left(Q_{2}-\rho_{1}Q_{4} \right)=\partial_{x}\left(\rho_{1}\mathcal{L}_{V}Q_{4}-\mathcal{L}_{V}(\rho_{1}Q_{4}) \right)\\=&-\partial_{x}\left((\partial_{t}\rho_{1}+\nu\triangle\rho_{1}) Q_{4}-2\nu\nabla\cdot(Q_{4}\nabla\rho_{1})\right)\\=&-\triangle W+2\nu\nabla\cdot\left(\partial_{x}Q_{4}\nabla\rho_{1} \right),
	\end{aligned}
\end{equation*}	
where $ \triangle W=\left(\partial_{t}\rho_{1}+\nu\triangle\rho_{1} \right)\partial_{x}Q_{4}. $
Applying {Proposition \ref{Lvf 0}}, we get
\begin{equation}\label{Lv Q}
	\begin{aligned}
		&\|\partial_{x}(Q_{2}-\rho_{1}Q_{4} )\|_{X_{b}}^{2}\leq C\left(\nu^{-1}\|e^{b\nu^{\frac13}t}\nabla W\|_{L^{2}L^{2}}^{2}+\nu\|e^{b\nu^{\frac13}t}\partial_{x}Q_{4}\nabla\rho_{1}\|_{L^{2}L^{2}}^{2} \right).
	\end{aligned}
\end{equation}
It follows from Lemma \ref{kappa} that $ \|\nabla\rho_{1}\|_{H^{1}}\leq\|\rho_{1}\|_{H^{2}}\leq C\|\widehat{u_{1,0}}\|_{H^{4}}\leq C\varepsilon_{0} $ and
\begin{equation*}
	\begin{aligned}
		\|\partial_{t}\rho_{1}+\nu\triangle\rho_{1}\|_{L^{2}}\leq&\|\partial_{t}\rho_{1}\|_{L^{2}}+\nu\|\rho_{1}\|_{H^{2}}\\\leq&C\left(\|\partial_{t}\widehat{u_{1,0}}\|_{H^{2}}+\nu\|\widehat{u_{1,0}}\|_{H^{4}} \right)\leq C\nu\varepsilon_{0}.
	\end{aligned}
\end{equation*}
Combining it with Lemma \ref{sob_14}, one obtains
\begin{equation}\label{a}
	\begin{aligned}
		\|\nabla W\|_{L^{2}}=&\|\nabla\triangle^{-1}\triangle W\|_{L^{2}}=\|\nabla\triangle^{-1}[\left(\partial_{t}\rho_{1}+\nu\triangle\rho_{1} \right)\partial_{x}Q_{4} ]\|_{L^{2}}\\\leq&C\|\partial_{t}\rho_{1}+\nu\triangle\rho_{1}\|_{L^{2}}\left(\|\partial_{x}Q_{4}\|_{L^{2}}+\|\partial_{x}\left(\partial_{z}-\kappa\partial_{y} \right)Q_{4}\|_{L^{2}}\right)\\\leq&C\nu\varepsilon_{0}\left(\|\partial_{x}^{2}Q_{4}\|_{L^{2}}+\|\partial_{x}(\partial_{z}-\kappa\partial_{y})Q_{4}\|_{L^{2}}\right)
	\end{aligned}
\end{equation}
and
\begin{equation}\label{b}
	\begin{aligned}
		\|\partial_{x}Q_{4}\nabla\rho_{1}\|_{L^{2}}\leq&C\|\nabla\rho_{1}\|_{H^{1}}\left(\|\partial_{x}Q_{4}\|_{L^{2}}+\|\partial_{x}(\partial_{z}-\kappa\partial_{y}) Q_{4}\|_{L^{2}}\right)\\\leq&C\varepsilon_{0}\left(\|\partial_{x}^{2}Q_{4}\|_{L^{2}}+\|\partial_{x}(\partial_{z}-\kappa\partial_{y})Q_{4}\|_{L^{2}} \right).
	\end{aligned}
\end{equation}
Substituting (\ref{a}) and (\ref{b}) into (\ref{Lv Q}), and using the result of (i), we obtain
\begin{equation*}
	\begin{aligned}
		\|\partial_{x}(Q_{2}-\rho_{1}Q_{4})\|_{X_{b}}^{2}\leq&C\nu\varepsilon_{0}^{2}\left(\|e^{b\nu^{\frac13}t}\partial_{x}^{2}Q_{4}\|_{L^{2}L^{2}}^{2}+\|e^{b\nu^{\frac13}t}\partial_{x}(\partial_{z}-\kappa\partial_{y})Q_{4}\|_{L^{2}L^{2}}^{2} \right)\\\leq&C\nu\varepsilon_{0}^{2}\nu^{-\frac13}\left(\|\partial_{x}^{2}Q_{4}\|_{X_{b}}^{2}+\|\partial_{x}(\partial_{z}-\kappa\partial_{y})Q_{4}\|_{X_{b}}^{2} \right)\leq C\nu^{-\frac23}\varepsilon_{0}^{2}E_{7}.
	\end{aligned}
\end{equation*}

{\textbf{\underline{Estimate (iv).}}} Due to $ \mathcal{L}_{V}Q_{3}=\Theta_{\neq}-(\partial_{y}+\kappa\partial_{z})\triangle^{-1}\partial_{y}\Theta_{\neq}, $ there holds
\begin{equation}\label{Q3 xx}
	\mathcal{L}_{V}\partial_{x}^{2}Q_{3}=\partial_{x}^{2}\Theta_{\neq}-(\partial_{y}+\kappa\partial_{z})\triangle^{-1}\partial_{y}\partial_{x}^{2}\Theta_{\neq}.
\end{equation}
As $ \|\kappa\|_{H^{3}}\leq C $, then applying Proposition \ref{Lvf 0} to (\ref{Q3 xx}), we obtain
\begin{equation*}\label{Q3 Xb}
	\begin{aligned}
		\|\partial_{x}^{2}Q_{3}\|_{X_{b}}^{2}\leq&C\big(\nu^{-\frac13}\|e^{b\nu^{\frac13}t}\partial_{x}^{2}\Theta_{\neq}\|_{L^{2}L^{2}}^{2}+\nu^{-\frac13}\|e^{b\nu^{\frac13}t}(\partial_{y}+\kappa\partial_{z})\triangle^{-1}\partial_{y}\partial_{x}^{2}\Theta_{\neq}\|_{L^{2}L^{2}}^{2} \big)\\\leq&C\nu^{-\frac23}\|\partial_{x}^{2}\Theta_{\neq}\|_{X_{b}}^{2} \leq C\nu^{-\frac23}E_{5}^{2},
	\end{aligned}
\end{equation*}
which gives the first inequality of (iv).

Since
$\mathcal{L}_{V}\nabla\partial_{x}Q_{3}=-\nabla V\partial_{x}^{2}Q_{3}+\partial_{x}\nabla\Theta_{\neq}-\nabla(\partial_{y}+\kappa\partial_{z})\triangle^{-1}\partial_{y}\partial_{x}\Theta_{\neq},$
 applying {Proposition \ref{Lvf 0}} and  Lemma \ref{kappa},  there holds
\begin{equation*}
	\begin{aligned}
		\|\nabla\partial_{x}Q_{3}\|_{X_{b}}^{2}\leq&C\big(\nu^{-\frac13}\|e^{b\nu^{\frac13}t}\nabla V\partial_{x}^{2}Q_{3}\|_{L^{2}L^{2}}^{2}+\nu^{-1}\|e^{b\nu^{\frac13}t}\partial_{x}\Theta_{\neq}\|_{L^{2}L^{2}}^{2}\\&+\nu^{-1}\|e^{b\nu^{\frac13}t}(\partial_{y}+\kappa\partial_{z})\triangle^{-1}\partial_{y}\partial_{x}\Theta_{\neq}\|_{L^{2}L^{2}}^{2} \big)\\\leq&C\big(\nu^{-\frac23}\|\partial_{x}^{2}Q_{3}\|_{X_{b}}^{2}+\nu^{-\frac43}\|\partial_{x}^{2}\Theta_{\neq}\|_{X_{b}}^{2} \big)\leq C\nu^{-\frac43}E_{5}^{2}.
	\end{aligned}
\end{equation*}
\end{proof}

The following lemma provides estimates of regular part.
\begin{proposition}\label{E5 E6}
It holds that
\begin{equation*}
	E_6^2\leq C(E_{7}+\nu^{\frac43}\|\triangle u_{3,\neq}\|_{X_{b}}^{2})\leq C\left(\|u_{\rm in}\|_{H^{2}}^{2}+\nu^{-2}E_{4}^{4}+\nu^{-2}E_{5}^{2} \right),
\end{equation*}
\end{proposition}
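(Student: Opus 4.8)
The plan is to exploit the quasi-linear decomposition $Q=Q_1+\nu Q_2+Q_3$ introduced just before the statement, together with the auxiliary quantity $Q_4$ and the good-derivative estimates collected in Lemma \ref{lem W}. Recall that
\[
E_6^2=\|\partial_x^2 u_{2,\neq}\|_{X_b}^2+\|\partial_x^2 u_{3,\neq}\|_{X_b}^2 .
\]
Since $u_{2,\neq}=Q-\kappa u_{3,\neq}$ and $\|\kappa\|_{H^3}\le C\varepsilon_0$ is small (Lemma \ref{kappa} and Lemma \ref{U10}), controlling $E_6$ reduces to controlling $\|\partial_x^2 Q\|_{X_b}$, $\|\partial_x^2 u_{3,\neq}\|_{X_b}$ and the mixed good-derivative quantities $\|\partial_x(\partial_z-\kappa\partial_y)u_{j,\neq}\|_{X_b}$ that enter $E_7$; indeed, via the elliptic relations ${\rm div}\,u_\neq=0$ and $\triangle Q$ one can bound $\partial_x^2 u_{3,\neq}$ by $\partial_x^2 Q$, $\partial_x\nabla Q$ and $\nu^{2/3}\triangle u_{3,\neq}$, so the whole of $E_6^2$ will be absorbed into $C(E_7+\nu^{4/3}\|\triangle u_{3,\neq}\|_{X_b}^2)$.

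First I would estimate $\|\partial_x^2 Q\|_{X_b}$ by splitting along the decomposition. For $Q_3$ and $Q_2$ the needed bounds are exactly items (ii) and (iv) of Lemma \ref{lem W}: $\|\partial_x^2 Q_2\|_{X_b}^2\le C\varepsilon_0^2\nu^{-4/3}E_7$ and $\|\partial_x^2 Q_3\|_{X_b}^2\le C\nu^{-2/3}E_5^2$, so $\nu^2\|\partial_x^2 Q_2\|_{X_b}^2\le C\varepsilon_0^2\nu^{2/3}E_7$ and these terms are harmless. The genuinely delicate piece is $Q_1$, which solves $\mathcal L_V\triangle Q_1=\text{good terms}$ with $Q_1(0)=Q(0)$; here I would apply the linear propagator estimate (Proposition \ref{Lvf 0}, or \ref{Lvf}) to $\partial_x^2 Q_1$, with the forcing being $\partial_x^2$ of the "good terms" in \eqref{eq Q''}—namely the nonlinearities $H_2$ (and its $H_{2,j}$ pieces), the regular part of the $\kappa$-commutator $\rho_2(\partial_z-\kappa\partial_y)u_{3,\neq}$ from \eqref{kappa u3}, and the lower-order pressure/temperature contributions. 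The nonlinear forcing terms are controlled by Lemma \ref{lem: zero and non-zero mode}, Lemma \ref{lem 2} and Lemma \ref{lem 1}, which give bounds of the form $C\nu^{-1}(E_2^2+E_1^2)E_7+C\nu^{-1}E_4^4+\cdots$; the good-derivative term $\rho_2(\partial_z-\kappa\partial_y)u_{3,\neq}$ is bounded using $\|\rho_2\|_{H^2}\le C\varepsilon_0$ (Lemma \ref{kappa}) and the $E_7$-norm of $\partial_x(\partial_z-\kappa\partial_y)u_{3,\neq}$. Collecting these, $\|\partial_x^2 Q\|_{X_b}^2\le C(E_7+\nu^{4/3}\|\triangle u_{3,\neq}\|_{X_b}^2)$ after absorbing the small $\varepsilon_0^2$-coefficient terms into the left side.

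Next I would bound $\|\partial_x^2 u_{3,\neq}\|_{X_b}$. Writing the equation for $\triangle Q$ in \eqref{eq Q''} and using $\nabla\kappa\cdot\nabla u_{3,\neq}=\rho_1\nabla V\cdot\nabla u_{3,\neq}+\rho_2(\partial_z-\kappa\partial_y)u_{3,\neq}$, the only problematic forcing is the singular term $-2\nu\triangle(\rho_1\nabla V\cdot\nabla u_{3,\neq})$; this is precisely where $Q_2$ and $Q_4$ are designed to help, since $Q_2-\rho_1 Q_4$ is controlled with a gain by item (iii) of Lemma \ref{lem W}, $\|\partial_x(Q_2-\rho_1 Q_4)\|_{X_b}^2\le C\nu^{-2/3}\varepsilon_0^2 E_7$, and $\|\partial_x^2 Q_4\|_{X_b}^2+\|\partial_x(\partial_z-\kappa\partial_y)Q_4\|_{X_b}^2\le C\nu^{-4/3}E_7$ by item (i). Combining the $Q$-estimate with the elliptic relations linking $\partial_x^2 u_{3,\neq}$, $\triangle Q$ and $\partial_x\nabla Q$ (and using $\|\partial_x\nabla Q\|_{X_b}^2\le E_7$ directly), one obtains $\|\partial_x^2 u_{3,\neq}\|_{X_b}^2\le C(E_7+\nu^{4/3}\|\triangle u_{3,\neq}\|_{X_b}^2)$. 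Finally, $u_{2,\neq}=Q-\kappa u_{3,\neq}$ and the smallness of $\|\kappa\|_{H^3}$ give the same bound for $\|\partial_x^2 u_{2,\neq}\|_{X_b}^2$; the $\partial_x(\partial_z-\kappa\partial_y)u_{j,\neq}$ contributions to $E_7$ are included by construction. This proves $E_6^2\le C(E_7+\nu^{4/3}\|\triangle u_{3,\neq}\|_{X_b}^2)$. For the second inequality in the statement I would invoke the $E_4$-estimate (Proposition \ref{prop:E3}), which together with $\|\triangle u_{3,\neq}\|_{X_b}^2\le C\nu^{-4/3}(\|u_{\rm in}\|_{H^2}^2+\nu^{-2}E_4^4+\nu^{-2}E_5^2)$ (from the equation for $\triangle u_{3,\neq}$, Proposition \ref{prop Lf 1}) yields $E_6^2\le C(\|u_{\rm in}\|_{H^2}^2+\nu^{-2}E_4^4+\nu^{-2}E_5^2)$.

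The main obstacle is the $Q_1$-estimate: closing $\|\partial_x^2 Q_1\|_{X_b}$ requires that every nonlinear forcing term in \eqref{eq Q''}, after differentiation by $\partial_x^2$, be controlled in $L^2_tL^2_x$ with a weight $e^{b\nu^{1/3}t}$ and with at most the "budget" $\nu^{-1}E_7$ (or $\nu^{-1}E_4^4$), which is exactly why the time weight on $\partial_x^2 u_{2,\neq},\partial_x^2 u_{3,\neq}$ had to be $b$ rather than $a$ (to absorb factors of $(1+\nu^{1/3}t)^2\le Ce^{(b-a)\nu^{1/3}t}$ coming from $\|u_{1,0}\|_{H^2}\lesssim\nu^{2/3}(1+\nu^{1/3}t)E_1$ in the lift-up-type terms); a careful bookkeeping of which nonlinear lemma supplies which term, and verifying that the $\varepsilon_0$-small coefficients are genuinely absorbable, is the technical heart of the argument.
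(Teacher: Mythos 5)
Your high-level skeleton (the decomposition $Q=Q_1+\nu Q_2+Q_3$, the auxiliary quantity $Q_4$, and Lemma \ref{lem W}) matches the paper, but the execution has two genuine gaps. Note first that $E_6^2\le 2E_7$ is immediate from the definition of $E_7$, so the whole content of the proposition is the bound on $E_7+\nu^{\frac43}\|\triangle u_{3,\neq}\|_{X_b}^2$; most of your effort goes into the first inequality by a route that does not work. Specifically, the claim that $\partial_x^2 u_{3,\neq}$ (and then $\partial_x^2 u_{2,\neq}$) can be recovered from $\partial_x^2 Q$, $\partial_x\nabla Q$ and $\nu^{\frac23}\triangle u_{3,\neq}$ via ``elliptic relations'' is unjustified: $Q=u_{2,\neq}+\kappa u_{3,\neq}$ is a single scalar combination of two components, and ${\rm div}\,u_{\neq}=0$ only expresses $\partial_x u_{1,\neq}$ in terms of the other two, so there is no algebraic inversion producing $u_{3,\neq}$ from $Q$. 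In the paper the role of $Q$ is entirely through the pressure: one writes $P^5=P^{5,1}+P^{5,2}+P^{5,3}$ with $\triangle P^{5,j}$ proportional to $\partial_y V\,\partial_x Q_j$, returns to the individual momentum equations \eqref{U_neq} for $u_{2,\neq}$ and $u_{3,\neq}$, and applies Proposition \ref{Lvf} to each, while $\|\triangle u_{3,\neq}\|_{X_b}$ is obtained from the evolution identity $\mathcal{L}_{V}(\triangle u_{3,\neq}-2\partial_x Q_4)=\triangle\mathcal{L}_{V}u_{3,\neq}-\triangle V\partial_x u_{3,\neq}$, not from an elliptic identity. This momentum-equation step is the heart of the proposition and is absent from your argument.

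Second, your treatment of the right-hand inequality is circular: Proposition \ref{prop:E3} is proved in the paper by invoking Proposition \ref{E5 E6}, so it cannot be used here; and the bound $\|\triangle u_{3,\neq}\|_{X_b}^2\le C\nu^{-\frac43}(\cdots)$ cannot follow from Proposition \ref{prop Lf 1}, because the equation for $\triangle u_{3,\neq}$ contains the singular commutator $-2\nabla V\cdot\nabla\partial_x u_{3,\neq}$ --- precisely the term $Q_4$ was designed to remove. The paper closes this inequality self-containedly: it assembles $E_7+\nu^{\frac43}\|\triangle u_{3,\neq}\|_{X_b}^2\le C\big(\|u_{\rm in}\|_{H^2}^2+\nu^{-1}\|e^{b\nu^{1/3}t}\nabla H_2\|_{L^2L^2}^2+\nu^{-2}E_5^2+\cdots\big)$, carries out the term-by-term bookkeeping $I_1,\dots,I_7$ of the nonlinearities $H_{2,k}$, $h_{3,k}$ via Lemmas \ref{lem 2}, \ref{lem 1} and \ref{lem: zero and non-zero mode} (yielding $C\nu^{-1}E_2^2\big(E_7+\nu^{\frac43}\|\triangle u_{3,\neq}\|_{X_b}^2\big)+C\nu^{-1}E_4^4+C\nu E_1^2E_7$), and then absorbs the $E_7$-proportional pieces using $E_1\le\varepsilon_0$, $E_2\le\varepsilon_0\nu$. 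You acknowledge this bookkeeping as ``the technical heart'' but do not supply it, and without it neither the absorption argument nor the stated final bound is established.
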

\begin{proof}
{\underline{\textbf{Step I. Estimate $ \|\triangle u_{3,\neq}-2\partial_{x}Q_{4}\|_{X_{b}}. $}}}
As
\begin{equation*}
	\begin{aligned}
		\mathcal{L}_{V}\triangle u_{3,\neq}=&\partial_{t}\triangle u_{3,\neq}-\nu\triangle u_{3,\neq}+V\partial_{x}\triangle u_{3,\neq}\\=&\triangle\mathcal{L}_{V}u_{3,\neq}-\triangle V\partial_{x}u_{3,\neq}-2\nabla V\cdot\nabla\partial_{x}u_{3,\neq}\\=&\triangle\mathcal{L}_{V}u_{3,\neq}-\triangle V\partial_{x}u_{3,\neq}+2\mathcal{L}_{V}\partial_{x}Q_{4},
	\end{aligned}
\end{equation*}
which follows that
\begin{equation}\label{lv}
	\mathcal{L}_{V}\left(\triangle u_{3,\neq}-2\partial_{x}Q_{4} \right)=\triangle\mathcal{L}_{V}u_{3,\neq}-\triangle V\partial_{x}u_{3,\neq}.
\end{equation}
Thanks to (\ref{p5}), (\ref{U_neq}) and $ \triangle P^{N_{3}}_{\neq}=\partial_{y}\Theta_{\neq} $, there holds
\begin{equation*}
	\begin{aligned}
		\triangle\mathcal{L}_{V}u_{3,\neq}=&-\triangle\partial_{z}P^{5}-\triangle h_{3,\neq}-\partial_{z}\triangle P^{N_{3}}_{\neq}-\triangle\left(\widetilde{u_{1,0}}\partial_{x}u_{3,\neq}\right)\\=&2\partial_{z}(\partial_{y}V\partial_{x}Q)-\triangle h_{3,\neq}-\partial_{y}\partial_{z}\Theta_{\neq}-\triangle\left(\widetilde{u_{1,0}}\partial_{x}u_{3,\neq}\right),
	\end{aligned}
\end{equation*}
then (\ref{lv}) yields
\begin{equation*}
	\mathcal{L}_{V}\left(\triangle u_{3,\neq}-2\partial_{x}Q_{4} \right)=2\partial_{z}(\partial_{y}V\partial_{x}Q)-\triangle h_{3,\neq}-\partial_{y}\partial_{z}\Theta_{\neq}-\triangle V\partial_{x}u_{3,\neq}-\triangle\left(\widetilde{u_{1,0}}\partial_{x}u_{3,\neq}\right).
\end{equation*}
Applying {Proposition \ref{Lvf 0}} to the above equation, we obtain
\begin{equation}\label{u3 hatQ}
	\begin{aligned}
		\|\triangle u_{3,\neq}-2\partial_{x}Q_{4}\|_{X_{b}}^{2}\leq&C\big(\|(\triangle u_{3,{\rm in}})_{\neq}\|_{L^{2}}^{2}+\nu^{-\frac13}\|e^{b\nu^{\frac13}t}\triangle V\partial_{x}u_{3,\neq}\|_{L^{2}L^{2}}^{2}\\&+\nu^{-\frac13}\|e^{b\nu^{\frac13}t}\partial_{z}(\partial_{y}V\partial_{x}Q)\|_{L^{2}L^{2}}^{2}+\nu^{-1}\|e^{b\nu^{\frac13}t}\nabla h_{3,\neq}\|_{L^{2}L^{2}}^{2}\\&+\nu^{-1}\|e^{b\nu^{\frac13}t}\partial_{z}\Theta_{\neq}\|_{L^{2}L^{2}}^{2}+\nu^{-1}\|e^{b\nu^{\frac13}t}\nabla(\widetilde{u_{1,0}}\partial_{x}u_{3,\neq})\|_{L^{2}L^{2}}^{2} \big),
	\end{aligned}
\end{equation}
where we use $ \partial_{x}Q_{4}(0)=0. $ Due to $ \|\triangle V\|_{L^{\infty}}\leq C\|\widehat{u_{1,0}}\|_{H^{4}}\leq C, $ there holds
\begin{equation}\label{c}
	\begin{aligned}
		\|e^{b\nu^{\frac13}t}\triangle V\partial_{x}u_{3,\neq}\|_{L^{2}L^{2}}^{2}\leq&\|\triangle V\|_{L^{\infty}L^{\infty}}^{2}\|e^{b\nu^{\frac13}t}\partial_{x}u_{3,\neq}\|_{L^{2}L^{2}}^{2}\\\leq&C\|e^{b\nu^{\frac13}t}\partial_{x}^{2}u_{3,\neq}\|_{L^{2}L^{2}}^{2}\leq C\nu^{-\frac13}\|\partial_{x}^{2}u_{3,\neq}\|_{X_{b}}^{2}\leq C\nu^{-\frac13}E_{7}.
	\end{aligned}
\end{equation}
Moreover, for $ \|\partial_{y}V\|_{H^3}\leq C\left(1+\|\widehat{u_{1,0}}\|_{H^{4}} \right)\leq C, $ one deduces
\begin{equation}\label{d}
	\begin{aligned}
		&\|\partial_{z}(\partial_{y}V\partial_{x}Q)\|_{L^{2}}
		\leq\|\partial_{y}V\|_{H^3}\|\partial_{x}Q\|_{H^{1}}\leq C\|\nabla \partial_{x}Q\|_{L^{2}},\\
		&\|e^{b\nu^{\frac13}t}
		\partial_{z}(\partial_{y}V\partial_{x}Q)\|_{L^{2}L^{2}}^{2}
		\leq C\|e^{b\nu^{\frac13}t}\nabla\partial_{x}Q\|_{L^{2}L^{2}}^{2}
		\leq C\nu^{-\frac13}E_{7}.
	\end{aligned}
\end{equation}
Using Lemma \ref{lem zero nonzero} and Lemma \ref{lem nonzreo}, we have
\begin{equation}\label{e}
	\begin{aligned}
		&\|e^{b\nu^{\frac13}t}\nabla\left(\widetilde{u_{1,0}}\partial_{x}u_{3,\neq} \right)\|_{L^{2}L^{2}}^{2}\leq C\|\widetilde{u_{1,0}}\|_{L^{\infty}H^{2}}^{2}\|e^{b\nu^{\frac13}t}\partial_{x}\nabla u_{3,\neq}\|_{L^{2}L^{2}}^{2}\leq C\nu^{\frac43}\varepsilon_{0}^{2}\|\triangle u_{3,\neq}\|_{X_{b}}^{2}.
	\end{aligned}
\end{equation}

Collecting (\ref{u3 hatQ}), (\ref{c}), (\ref{d}) and (\ref{e}), we obtain
\begin{equation}\label{result u3 hatQ}
	\begin{aligned}
		\|\triangle u_{3,\neq}-2\partial_{x}Q_{4}\|_{X_{b}}^{2}\leq C\big(&\|u_{\rm in}\|_{H^{2}}^{2}
		+\nu^{-\frac23}E_{7}+\nu^{-\frac43}E_{5}^{2}+\nu^{-1}\|e^{b\nu^{\frac13}t}\nabla h_{3,\neq}\|_{L^{2}L^{2}}^{2}\\
		&+\nu^{\frac13}\varepsilon_{0}^{2}\|\triangle u_{3,\neq}\|_{X_{b}}^{2} \big).
	\end{aligned}
\end{equation}

{\underline{\textbf{Step II. Estimate $ \|\triangle Q_{1}\|_{X_{b}}. $}}}
Direct calculations indicate that
\begin{equation*}
	\widetilde{\mathcal{L}_{V}}Q+H_{2}=(\partial_{t}\kappa-\nu\triangle\kappa)u_{3,\neq}-2\nu\nabla\kappa\cdot\nabla u_{3,\neq}+\Theta_{\neq}-(\partial_{y}+\kappa\partial_{z})\triangle^{-1}\partial_{y}\Theta_{\neq}.
\end{equation*}	
Using the following decomposition
\begin{equation*}
	\nabla\kappa\cdot\nabla u_{3,\neq}=\rho_{1}\nabla V\cdot\nabla u_{3,\neq}+\rho_{2}(\partial_{z}-\kappa\partial_{y})u_{3,\neq},
\end{equation*}
and $ Q=Q_{1}+\nu Q_{2}+Q_{3}, $ we get
\begin{equation}\label{tilde Lv Q}
	\begin{aligned}
		&\widetilde{\mathcal{L}_{V}}Q_{1}+H_{2}-(\partial_{t}\kappa-\nu\triangle\kappa)u_{3,\neq}\\=&-2\nu\left(\rho_{1}\nabla V\cdot\nabla u_{3,\neq}+\rho_{2}(\partial_{z}-\kappa\partial_{y})u_{3,\neq} \right)+\Theta_{\neq}-(\partial_{y}+\kappa\partial_{z})\triangle^{-1}\partial_{y}\Theta_{\neq}-\nu\widetilde{\mathcal{L}_{V}}Q_{2}-\widetilde{\mathcal{L}_{V}}Q_{3}\\=&-2\nu\rho_{2}(\partial_{z}-\kappa\partial_{y})u_{3,\neq}-\nu J_{11}-K_{11},
	\end{aligned}
\end{equation}
where 
\begin{equation*}
	J_{11}=\widetilde{\mathcal{L}_{V}}Q_{2}+2\rho_{1}\nabla V\cdot\nabla u_{3,\neq},\quad K_{11}=\widetilde{\mathcal{L}_{V}}Q_{3}-\left[\Theta_{\neq}-(\partial_{y}+\kappa\partial_{z})\triangle^{-1}\partial_{y}\Theta_{\neq} \right].
\end{equation*}
Applying {Proposition \ref{tilde Lv}} to (\ref{tilde Lv Q}), there holds
\begin{equation}\label{tildeQ''}
	\begin{aligned}
		\|\triangle Q_{1}\|_{X_{b}}^{2}\leq&C\big( \|\triangle Q_{\rm in}\|_{L^{2}}^{2}+\nu^{-1}\|e^{b\nu^{\frac13}t}\nabla H_{2}\|_{L^{2}L^{2}}^{2}+\nu\|e^{b\nu^{\frac13}t}\nabla J_{11}\|_{L^{2}L^{2}}^{2}\\&+\nu^{-1}\|e^{b\nu^{\frac13}t}\nabla K_{11}\|_{L^{2}L^{2}}^{2}+\nu\|e^{b\nu^{\frac13}t}\nabla\left(\rho_{2}(\partial_{z}-\kappa\partial_{y})u_{3,\neq} \right)\|_{L^{2}L^{2}}^{2}\\&+\nu^{-1}\|e^{b\nu^{\frac13}t}\nabla\left((\partial_{t}\kappa-\nu\triangle\kappa)u_{3,\neq} \right)\|_{L^{2}L^{2}}^{2}
		\big).
	\end{aligned}
\end{equation}
It follows from {Lemma \ref{kappa}} that $ \|\rho_{2}\|_{H^{2}}\leq C\|\widehat{u_{1,0}}\|_{H^{4}}\leq C\varepsilon_{0} $ and
\begin{equation*}
	\|\partial_{t}\kappa-\nu\triangle\kappa\|_{H^{1}}\leq \|\partial_{t}\kappa\|_{H^{1}}+\nu\|\kappa\|_{H^{3}}\leq C\left(\|\partial_{t}\widehat{u_{1,0}}\|_{H^{2}}+\nu\|\widehat{u_{1,0}}\|_{H^{4}} \right)\leq C\nu\varepsilon_{0},
\end{equation*}
which imply that 
\begin{equation}\label{11}
	\begin{aligned}
		\|\nabla\left(\rho_{2}(\partial_{z}-\kappa\partial_{y})u_{3,\neq} \right)\|_{L^{2}}\leq C\|\rho_{2}\|_{H^{2}}\|\nabla(\partial_{z}-\kappa\partial_{y})u_{3,\neq}\|_{L^{2}}\leq C\varepsilon_{0}\|\nabla\partial_{x}(\partial_{z}-\kappa\partial_{y})u_{3,\neq}\|_{L^{2}}.
	\end{aligned}
\end{equation}
Besides, using {Lemma \ref{sob_14}}, one obtains that 
\begin{equation}\label{12}
	\begin{aligned}
		&\|\nabla\left((\partial_{t}\kappa-\nu\triangle\kappa)u_{3,\neq} \right)\|_{L^{2}}
		\leq C\nu\varepsilon_{0}\left(\|\nabla\partial_{x}^{2}u_{3,\neq}\|_{L^{2}}+\|\nabla\partial_{x}(\partial_{z}-\kappa\partial_{y})u_{3,\neq}\|_{L^{2}} \right).
	\end{aligned}
\end{equation}
By Lemma \ref{kappa}, we have 
\begin{equation}\label{con}
	\begin{aligned}
		&\|\rho_{1}\|_{L^{\infty}}\leq C\|\rho_{1}\|_{H^{2}}\leq C\|\widehat{u_{1,0}}\|_{H^{4}}\leq C\varepsilon_{0},\\& \|\kappa\|_{H^{3}}\leq C\|\widehat{u_{1,0}}\|_{H^{4}}\leq C,\quad \|\partial_{y}V\|_{L^{\infty}}\leq C.
	\end{aligned}
\end{equation}
For $ K_{11} $ satisfying
\begin{equation*}
	\begin{aligned}
		K_{11}=&\widetilde{\mathcal{L}_{V}}Q_{3}-\left[\Theta_{\neq}-(\partial_{y}+\kappa\partial_{z})\triangle^{-1}\partial_{y}\Theta_{\neq} \right]\\=&\mathcal{L}_{V}Q_{3}-2(\partial_{y}+\kappa\partial_{z})\triangle^{-1}(\partial_{y}V\partial_{x}Q_{3})-\left[\Theta_{\neq}-(\partial_{y}+\kappa\partial_{z})\triangle^{-1}\partial_{y}\Theta_{\neq} \right]\\=&-2(\partial_{y}+\kappa\partial_{z})\triangle^{-1}(\partial_{y}V\partial_{x}Q_{3}),
	\end{aligned}
\end{equation*}
then using (\ref{con}), we get
%	\begin{equation*}
	%		\begin{aligned}
		%			\|\nabla K_{11}\|_{L^{2}}\leq&2\|(\partial_{y}+\kappa\partial_{z})\triangle^{-1}(\partial_{y}V\partial_{x}Q_{3})\|_{H^{1}}\\\leq&C\left(\|\partial_{y}\triangle^{-1}(\partial_{y}V\partial_{x}Q_{3})\|_{H^{1}}+\|\kappa\|_{H^{3}}\|\partial_{z}\triangle^{-1}(\partial_{y}V\partial_{x}Q_{3})\|_{H^{1}} \right)\\\leq&C\|\triangle^{-1}(\partial_{y}V\partial_{x}Q_{3})\|_{H^{2}}\\\leq&C\|\partial_{y}V\partial_{x}Q_{3}\|_{L^{2}}\leq C\|\partial_{y}V\|_{L^{\infty}}\|\partial_{x}Q_{3}\|_{L^{2}}\leq C\|\partial_{x}Q_{3}\|_{L^{2}}.
		%		\end{aligned}
	%	\end{equation*}
\begin{equation*}
	\begin{aligned}
		\|\nabla K_{11}\|_{L^{2}}\leq&2\|(\partial_{y}+\kappa\partial_{z})\triangle^{-1}(\partial_{y}V\partial_{x}Q_{3})\|_{H^{1}}
		\\\leq&C\|\partial_{y}V\partial_{x}Q_{3}\|_{L^{2}}\leq C\|\partial_{y}V\|_{L^{\infty}}
		\|\partial_{x}Q_{3}\|_{L^{2}}\leq C\|\partial_{x}Q_{3}\|_{L^{2}}.
	\end{aligned}
\end{equation*}
From this, along with (iv) of Lemma \ref{lem W}, one deduces
\begin{equation}\label{13}
	\begin{aligned}
		\nu^{-1}\|e^{b\nu^{\frac13}t}\nabla K_{11}\|_{L^{2}L^{2}}^{2}\leq&C\nu^{-\frac43}\|\partial_{x}^{2}Q_{3}\|_{X_{b}}^{2}\leq C\nu^{-2}E_{5}^{2}.
	\end{aligned}
\end{equation}
Next we estimate $ J_{11}. $ We can rewrite $ J_{11} $ as follows:
\begin{equation}\label{J11}
	\begin{aligned}
		J_{11}=&\mathcal{L}_{V}Q_{2}-2(\partial_{y}+\kappa\partial_{z})\triangle^{-1}(\partial_{y}V\partial_{x}Q_{2})+2\rho_{1}\nabla V\cdot\nabla u_{3,\neq}\\=&-2(\partial_{y}+\kappa\partial_{z})\triangle^{-1}(\partial_{y}V\partial_{x}Q_{2})+\rho_{1}\nabla V\cdot\nabla u_{3,\neq}\\=&-(\partial_{y}+\kappa\partial_{z})\triangle^{-1}(\partial_{y}VJ_{12})-(\partial_{y}+\kappa\partial_{z})\triangle^{-1}(\partial_{y}V\rho_{1}\triangle u_{3,\neq})+\rho_{1}\nabla V\cdot\nabla u_{3,\neq},
	\end{aligned}
\end{equation}
where $$ J_{12}=2\partial_{x}Q_{2}-\rho_{1}\triangle u_{3,\neq}=2\partial_{x}(Q_{2}-\rho_{1}Q_{4})-\rho_{1}\left(\triangle u_{3,\neq}-2\partial_{x}Q_{4} \right). $$
Using (\ref{con}), we arrive
\begin{equation*}
	\begin{aligned}
		J_{12}\leq C\left(\|\partial_{x}(Q_{2}-\rho_{1}Q_{4})\|_{L^{2}}+\varepsilon_{0}\|\triangle u_{3,\neq}-2\partial_{x}Q_{4}\|_{L^{2}} \right)
	\end{aligned}
\end{equation*}
and
\begin{equation}\label{J12 0}
	\begin{aligned}
		&\|(\partial_{y}+\kappa\partial_{z})\triangle^{-1}(\partial_{y}VJ_{12})\|_{H^{1}}\\\leq&C\left(\|\partial_{y}\triangle^{-1}(\partial_{y}VJ_{12})\|_{H^{1}}+\|\kappa\|_{H^{3}}\|\partial_{z}\triangle^{-1}(\partial_{y}VJ_{12})\|_{H^{1}} \right)\\\leq&C\|J_{12}\|_{L^{2}}\leq C\left(\|\partial_{x}(Q_{2}-\rho_{1}Q_{4})\|_{L^{2}}+\varepsilon_{0}\|\triangle u_{3,\neq}-2\partial_{x}Q_{4}\|_{L^{2}} \right).
	\end{aligned}
\end{equation}	
Thanks to $ \nabla V\cdot\nabla u_{3,\neq}=\partial_{y}V(\partial_{y}+\kappa\partial_{z})u_{3,\neq}, $ we write
\begin{equation*}
	\begin{aligned}
		&-(\partial_{y}+\kappa\partial_{z})\triangle^{-1}(\partial_{y}V\rho_{1}\triangle u_{3,\neq})+\rho_{1}\nabla V\cdot\nabla u_{3,\neq}\\=&-\left[(\partial_{y}+\kappa\partial_{z})\triangle^{-1}, \partial_{y}V\rho_{1} \right]\triangle u_{3,\neq}\\=&-\left[\partial_{y}\triangle^{-1}, \partial_{y}V\rho_{1} \right]\triangle u_{3,\neq}-\kappa\left[\partial_{z}\triangle^{-1},\partial_{y}V\rho_{1} \right]\triangle u_{3,\neq}.
	\end{aligned}
\end{equation*}
Note that $ \|\kappa\|_{H^{3}}\leq C $ and $ \|\partial_{y}V\rho_{1}\|_{H^{2}}\leq \|\partial_{y}V\|_{L^{\infty}}\|\rho_{1}\|_{H^{2}}\leq C\varepsilon_{0}, $ by Lemma \ref{sob_14}, 
and we get
\begin{equation}\label{J12 1}
	\begin{aligned}
		&\|-(\partial_{y}+\kappa\partial_{z})\triangle^{-1}\left(\partial_{y}V\rho_{1}\triangle u_{3,\neq} \right)+\rho_{1}\nabla V\cdot\nabla u_{3,\neq}\|_{H^{1}}\\\leq&\|\left[\partial_{y}\triangle^{-1},\partial_{y}V\rho_{1} \right]\triangle u_{3,\neq}\|_{H^{1}}+\|\kappa\|_{L^{\infty}}\|\left[\partial_{z}\triangle^{-1},\partial_{y}V\rho_{1} \right]\triangle u_{3,\neq}\|_{H^{1}}\\\leq&C\varepsilon_{0}\left(\|\nabla\partial_{x}^{2}u_{3,\neq}\|_{L^{2}}+\|\partial_{x}(\partial_{z}-\kappa\partial_{y})\nabla u_{3,\neq}\|_{L^{2}} \right).
	\end{aligned}
\end{equation}	
It follows from (\ref{J11}), (\ref{J12 0}) and (\ref{J12 1}) that
\begin{equation}\label{J11 H1}
	\begin{aligned}
		\|\nabla J_{11}\|_{L^{2}}&\leq C\left(\|\partial_{x}(Q_{2}-\rho_{1}Q_{4})\|_{L^{2}}+\varepsilon_{0}\|\triangle u_{3,\neq}-2\partial_{x}Q_{4}\|_{L^{2}} \right)\\&+C\varepsilon_{0}\left(\|\nabla\partial_{x}^{2}u_{3,\neq}\|_{L^{2}}+\|\nabla\partial_{x}(\partial_{z}-\kappa\partial_{y})u_{3,\neq}\|_{L^{2}} \right).
	\end{aligned}
\end{equation}

Collecting (\ref{result u3 hatQ}), (\ref{11}), (\ref{12}), (\ref{13}), (\ref{J11 H1}) and (iii) of {Lemma \ref{lem W}}, we get
\begin{equation*}
	\begin{aligned}
		&\nu\|e^{b\nu^{\frac13}t}\nabla J_{11}\|_{L^{2}L^{2}}^{2}+\nu^{-1}\|e^{b\nu^{\frac13}t}\nabla K_{11}\|_{L^{2}L^{2}}^{2}\\&+\nu\|e^{b\nu^{\frac13}t}\nabla\left(\rho_{2}(\partial_{z}-\kappa\partial_{y})u_{3,\neq} \right)\|_{L^{2}L^{2}}^{2}+\nu^{-1}\|e^{b\nu^{\frac13}t}\nabla\left((\partial_{t}\kappa-\nu\triangle\kappa)u_{3,\neq} \right)\|_{L^{2}L^{2}}^{2}\\\leq&C\nu^{\frac23}\left(\|\partial_{x}\left(Q_{2}-\rho_{1}Q_{4} \right)\|_{X_{b}}^{2}+\varepsilon_{0}^{2}\|\triangle u_{3,\neq}-2\partial_{x}Q_{4}\|_{X_{b}}^{2} \right)\\&+C\varepsilon_{0}^{2}\left(\|\partial_{x}^{2}u_{3,\neq}\|_{X_{b}}^{2}+\|\partial_{x}(\partial_{z}-\kappa\partial_{y})u_{3,\neq}\|_{X_{b}}^{2} \right)+C\nu^{-2}E_{5}^{2}\\\leq&C\varepsilon_{0}^{2}\left(E_{7}+\|u_{
		\rm in}\|_{H^{2}}^{2} \right)+C\nu^{-\frac13}\|e^{a\nu^{\frac13}t}\nabla h_{3,\neq}\|_{L^{2}L^{2}}^{2}+C\nu^{-2}E_{5}^{2},
	\end{aligned}
\end{equation*}
substituting it into (\ref{tildeQ''}), one obtains
\begin{equation}\label{bar Q''}
	\begin{aligned}
		\|\triangle Q_{1}\|_{X_{b}}^{2}\leq C&\big(\|u_{\rm in}\|_{H^{2}}^{2}+\nu^{-1}\|e^{b\nu^{\frac13}t}\nabla H_{2}\|_{L^{2}L^{2}}^{2}+\varepsilon_{0}^{2}E_{7}\\&+\nu^{-\frac13}\|e^{b\nu^{\frac13}t}\nabla h_{3,\neq}\|_{L^{2}L^{2}}^{2}+\nu^{-2}E_{5}^{2} \big).
	\end{aligned}
\end{equation}

{\underline{\textbf{Step III. Estimate $ \|\partial_{x}^{2}u_{j,\neq}\|_{X_{b}}+\|\partial_{x}(\partial_{z}-\kappa\partial_{y})u_{j,\neq}\|_{X_{b}}. $}}}	
Let $ P^{5}=P^{5,1}+P^{5,2}+P^{5,3}, $ where
\begin{equation*}
	\triangle P^{5,1}=-2\partial_{y}V\partial_{x}Q_{1},\quad \triangle P^{5,2}=-2\nu\partial_{y}V\partial_{x}Q_{2},\quad \triangle P^{5,3}=-2\partial_{y}V\partial_{x}Q_{3}.
\end{equation*}	
From (\ref{U_neq}), we get
\begin{equation*}
	\left\{
	\begin{array}{lr}
		\mathcal{L}_{V}u_{2,\neq}+h_{2,\neq}=\Theta_{\neq}-\triangle^{-1}\partial_{y}^{2}\Theta_{\neq}-\partial_{y}P^{5,1}-\partial_{y}P^{5,2}-\partial_{y}P^{5,3}-\widetilde{u_{1,0}}\partial_{x}u_{2,\neq},\\
		\mathcal{L}_{V}u_{3,\neq}+h_{3,\neq}=-\triangle^{-1}\partial_{y}\partial_{z}\Theta_{\neq}-\partial_{z}P^{5,1}-\partial_{z}P^{5,2}-\partial_{z}P^{5,3}{-\widetilde{u_{1,0}}\partial_{x}u_{3,\neq}}.
	\end{array}
	\right.
\end{equation*}
For $ j\in\{2,3\},$  $\widetilde{u_{1,0}}\partial_{x}u_{J,\neq}$ can be regarded as a perturbation,
by Proposition \ref{Lvf}, then there holds
\begin{equation}\label{Uj0}
	\begin{aligned}
		&\|\partial_{x}^{2}u_{j,\neq}\|_{X_{b}}^{2}+\|\partial_{x}(\partial_{z}-\kappa\partial_{y})u_{j,\neq}\|_{X_{b}}^{2}\\
		\leq&C\big(\|u_{j,\rm in}\|_{H^{2}}^{2}+\|e^{b\nu^{\frac13}t}\triangle\partial_{j}P^{5,1}
		\|_{L^{2}L^{2}}^{2}+\nu^{-1}\|e^{b\nu^{\frac13}t}\partial_{x}h_{j}\|_{L^{2}L^{2}}^{2}\\&+\nu^{-\frac13}\|e^{b\nu^{\frac13}t}\partial_{x}\nabla\partial_{j}P^{5,2}\|_{L^{2}L^{2}}^{2}+\|e^{b\nu^{\frac13}t}\triangle\partial_{j}P^{5,3}\|_{L^{2}L^{2}}^{2}+\nu^{-\frac43}\|\partial_{x}^{2}\Theta_{\neq}\|_{X_{b}}^{2}
		\big),
	\end{aligned}
\end{equation}	
where we use
\begin{equation*}
	\|\partial_{x}^{2}f\|_{L^{2}}+\|\partial_{x}(\partial_{z}-\kappa\partial_{y})f\|_{L^{2}}\leq C(1+\|\kappa\|_{L^{\infty}})\|\partial_{x}\nabla f\|_{L^{2}}\leq C\|\partial_{x}\nabla f\|_{L^{2}}.
\end{equation*}
Notice that
\begin{equation}\label{Uj 0}
	\begin{aligned}
		\|e^{b\nu^{\frac13}t}\triangle\partial_{j}P^{5,1}\|_{L^{2}L^{2}}^{2}\leq C\|e^{b\nu^{\frac13}t}\nabla\partial_{x}Q_{1}\|_{L^{2}L^{2}}^{2}\leq C\|\triangle Q_{1}\|_{X_{b}}^{2}.
	\end{aligned}
\end{equation}
Moreover, for
\begin{equation*}
	\begin{aligned}
		\|\partial_{x}\nabla\partial_{j}P^{5,2}\|_{L^{2}}\leq\|\partial_{x}\triangle P^{5,2}\|_{L^{2}}\leq 2\nu\|\partial_{y}V\|_{L^{\infty}}\|\partial_{x}^{2}Q_{2}\|_{L^{2}}\leq C\nu\|\partial_{x}^{2}Q_{2}\|_{L^{2}},
	\end{aligned}
\end{equation*}
we also have
\begin{equation}\label{Uj 1}
	\|e^{b\nu^{\frac13}t}\partial_{x}\nabla\partial_{j}P^{5,2}\|_{L^{2}L^{2}}^{2}\leq C\nu^{2}\|e^{b\nu^{\frac13}t}\partial_{x}^{2}Q_{2}\|_{L^{2}L^{2}}^{2}\leq C\nu^{\frac53}\|\partial_{x}^{2}Q_{2}\|_{X_{b}}^{2}.
\end{equation}
Due to 
\begin{equation*}
	\begin{aligned}
		\|\triangle\partial_{j}P^{5,3}\|_{L^{2}}\leq C\|\partial_{y}V\|_{L^{\infty}}\|\nabla\partial_{x}Q_{3}\|_{L^{2}}\leq C\|\nabla\partial_{x}Q_{3}\|_{L^{2}},
	\end{aligned}
\end{equation*}
using (iv) of {Lemma \ref{lem W}}, there holds
\begin{equation}\label{Uj 2}
	\begin{aligned}
		\|e^{b\nu^{\frac13}t}\triangle\partial_{j}P^{5,3}\|_{L^{2}L^{2}}^{2}\leq C\|e^{b\nu^{\frac13}t}\nabla\partial_{x}Q_{3}\|_{L^{2}L^{2}}^{2}\leq C\nu^{-1}\|\partial_{x}^{2}Q_{3}\|_{X_{b}}^{2}\leq C\nu^{-\frac53}E_{5}^{2}.
	\end{aligned}
\end{equation}
Substituting (\ref{Uj 0})-(\ref{Uj 2}) into (\ref{Uj0}), we conclude that 
\begin{equation}\label{result Uj}
	\begin{aligned}
		&\|\partial_{x}^{2}u_{j,\neq}\|_{X_{b}}^{2}+\|\partial_{x}(\partial_{z}-\kappa\partial_{y})u_{j,\neq}\|_{X_{b}}^{2}
		\\\leq&C\big(\|u_{\rm in}\|_{H^{2}}^{2}+\|\triangle Q_{1}\|_{X_{b}}^{2}+\nu^{\frac43}\|\partial_{x}^{2}Q_{2}\|_{X_{b}}^{2}
		+\nu^{-\frac53}E_{5}^{2}+\nu^{-1}\|e^{b\nu^{\frac13}t}\partial_{x}h_{j}\|_{L^{2}L^{2}}^{2} \big).
	\end{aligned}
\end{equation}

{\underline{\textbf{Step IV. Estimate $ E_{7}+\nu^{\frac43}\|\triangle u_{3,\neq}\|_{X_{b}}^{2} $ and $ E_{6}. $}}} 
As $ Q=Q_{1}+\nu Q_{2}+Q_{3}, $ we get
\begin{equation*}
	\begin{aligned}
		\|\partial_{x}\nabla Q\|_{X_{b}}\leq& \nu\|\partial_{x}\nabla Q_{2} \|_{X_{b}}
		+\|\partial_{x}\nabla Q_{1}\|_{X_{b}}+\|\partial_{x}\nabla Q_{3}\|_{X_{b}}
		\\\leq& \nu\|\partial_{x}\nabla Q_{2}\|_{X_{b}}+\|\triangle Q_{1}\|_{X_{b}}
		+\|\partial_{x}\nabla Q_{3}\|_{X_{b}},
	\end{aligned}
\end{equation*}
which along with (\ref{result Uj}) give that 
\begin{equation*}
	\begin{aligned}
		E_{7}\leq&C\big(\|u_{\rm in}\|_{H^{2}}^{2}+\|\triangle Q_{1}\|_{X_{b}}^{2}
		+\nu^{\frac43}\|\partial_{x}^{2}Q_{2}\|_{X_{b}}^{2}+\nu^{-\frac53}E_{5}^{2}
		+\|\partial_{x}\nabla Q_{3}\|_{X_{b}}^{2}\\&+\nu^{2}\|\partial_{x}\nabla Q_{2}\|_{X_{b}}^{2}
		+\nu^{-1}\|e^{b\nu^{\frac13}t}(\partial_{x}h_{2}, \partial_{x}h_{3})\|_{L^{2}L^{2}}^{2}\big).
	\end{aligned}
\end{equation*}
By (\ref{bar Q''}), (ii) of Lemma \ref{lem W} and (iv) of Lemma \ref{lem W}, the above inequality indicates that
\begin{equation}\label{E6 1}
	\begin{aligned}
		E_{7}\leq& C\big(\|u_{\rm in}\|_{H^{2}}^{2}+\nu^{-1}\|e^{b\nu^{\frac13}t}\nabla H_{2}\|_{L^{2}L^{2}}^{2}+\nu^{-2}E_{5}^{2}\\&+\nu^{-\frac13}\|e^{b\nu^{\frac13}t}\nabla h_{3,\neq}\|_{L^{2}L^{2}}^{2}+\nu^{-1}\|e^{b\nu^{\frac13}t}(\partial_{x}h_{2},\partial_{x}h_{3})\|_{L^{2}L^{2}}^{2} \big).
	\end{aligned}
\end{equation}
Using (\ref{result u3 hatQ}) and (i) of Lemma \ref{lem W}, we find
\begin{equation}\label{U3''}
	\begin{aligned}
		\|\triangle u_{3,\neq}\|_{X_{b}}^{2}\leq&C\left(\|\triangle u_{3,\neq}-2\partial_{x}Q_{4}\|_{X_{b}}^{2}+\|\partial_{x}Q_{4}\|_{X_{b}}^{2} \right)
		\\\leq&C\big(\|u_{\rm in}\|_{H^{2}}^{2}+\nu^{-\frac43}E_{7}+\nu^{-2}E_{5}^{2}+\nu^{-1}\|e^{b\nu^{\frac13}t}\nabla h_{3,\neq}\|_{L^{2}L^{2}}^{2} \big),
	\end{aligned}
\end{equation}
Combining (\ref{E6 1}) and (\ref{U3''}), one gets 
\begin{equation*}\label{E6 u3''}
	\begin{aligned}
		E_{7}+\nu^{\frac43}\|\triangle u_{3,\neq}\|_{X_{b}}^{2}\leq&C\big(\|u_{\rm in}\|_{H^{2}}^{2}+\nu^{-\frac23}E_{5}^{2}+E_{7}+\|e^{b\nu^{\frac13}t}\nabla h_{3,\neq}\|_{L^{2}L^{2}}^{2} \big)\\\leq&C\big(\|u_{\rm in}\|_{H^{2}}^{2}+\nu^{-1}\|e^{b\nu^{\frac13}t}\nabla H_{2}\|_{L^{2}L^{2}}^{2}+\nu^{-2}E_{5}^{2}\\&+\nu^{-\frac13}\|e^{b\nu^{\frac13}t}\nabla h_{3,\neq}\|_{L^{2}L^{2}}^{2}+\nu^{-1}\|e^{b\nu^{\frac13}t}\partial_{x}h_{3}\|_{L^{2}L^{2}}^{2}
		\big),
	\end{aligned}
\end{equation*}
where we use 
$\|e^{b\nu^{\frac13}t}\partial_{x}h_{2}\|_{L^{2}L^{2}}^{2}
\leq C\big(\|e^{b\nu^{\frac13}t}\nabla H_{2}\|_{L^{2}L^{2}}^{2}
+\|e^{b\nu^{\frac13}t}\partial_{x}h_{3}\|_{L^{2}L^{2}}^{2} \big).$

Next we denote
\begin{equation*}
	\begin{aligned}
		I_{k}=&\|e^{b\nu^{\frac13}t}\nabla H_{2,k}\|_{L^{2}L^{2}}^{2}+\|e^{b\nu^{\frac13}t}\partial_{x}h_{3,k}\|_{L^{2}L^{2}}^{2}+\nu^{\frac23}\|e^{b\nu^{\frac13}t}\nabla(h_{3,k})_{\neq}\|_{L^{2}L^{2}}^{2},
		\quad k=1,\cdots,7. 
	\end{aligned}
\end{equation*}	
As $ (h_{j,4})_{\neq}=0 $ for $ j\in \{2,3\},$  $ I_{4}=0. $

For $ I_{1}, $ using Lemma \ref{lem: zero and non-zero mode}, we get
\begin{equation*}
	I_{1}\leq C\nu^{-1}E_{2}^{2}\big(E_{7}+\nu^{\frac43}\|\triangle u_{3,\neq}\|_{X_{b}}^{2} \big).
\end{equation*}	
Due to
\begin{equation*}
	\begin{aligned}
		&\|\nabla H_{2,k}\|_{L^{2}}+\|\partial_{x}h_{3,k}\|_{L^{2}}+\nu^{\frac23}\|\nabla(h_{3,k})_{\neq}\|_{L^{2}}
		%			\\\leq&C\left(\|(h_{2,k})_{\neq}\|_{H^{1}}+\|\kappa\|_{H^{3}}\|(h_{3,k})_{\neq}\|_{H^{1}}+\|(h_{3,k})_{\neq}\|_{H^{1}} \right)
		\leq C\left(\|\nabla(h_{2,k})_{\neq}\|_{L^{2}}+\|\nabla(h_{3,k})_{\neq}\|_{L^{2}} \right),
	\end{aligned}
\end{equation*}
there holds
\begin{equation}\label{Ik}
	I_{k}\leq C\big(\|e^{b\nu^{\frac13}t}\nabla(h_{2,k})_{\neq}\|_{L^{2}L^{2}}^{2}+\|e^{b\nu^{\frac13}t}\nabla(h_{3,k})_{\neq}\|_{L^{2}L^{2}}^{2} \big).
\end{equation}

For $ I_{2}, I_{5}, I_{6} $ and $ I_{7}, $ by Lemma \ref{lem 2}, Lemma \ref{lem: zero and non-zero mode} and (\ref{Ik}), one obtains
\begin{equation*}
	I_{2}\leq C\nu^{-1}E_{2}^{2}E_{7},~~~ I_{5}\leq C\nu^{-1}E_{2}^{2}E_{7},~~~
	I_{6}\leq C\nu^{-1}E_{4}^{4},~~~  I_{7}\leq C\nu E_{1}^{2}E_{7}.
\end{equation*}

For $ I_{3}, $ by Lemma \ref{kappa} and (\ref{widehat u10 H2}), we find 
\begin{equation*}
	\begin{aligned}
		&\|\kappa\|_{H^{1}}\leq C\|\widehat{u_{1,0}}\|_{H^{2}}\leq CE_{1}\nu t\leq C\nu t,\\
		&\|\kappa\|_{H^{3}}\leq C\|\widehat{u_{1,0}}\|_{H^{4}}\leq C. 
	\end{aligned}
\end{equation*}
 Then, it holds
\begin{equation*}
	\begin{aligned}
		&\|\nabla H_{2,3}\|_{L^{2}}\leq\|\nabla h_{2,3}\|_{L^{2}}+C\|\kappa\|_{H^{2}}\|(h_{3,3})_{\neq}\|_{H^{1}}\\\leq&\|\nabla h_{2,3}\|_{L^{2}}+C\|\kappa\|_{H^{1}}^{\frac12}\|\kappa\|_{H^{3}}^{\frac12}\|(h_{3,3})_{\neq}\|_{H^{1}}\leq\|\nabla h_{2,3}\|_{L^{2}}+C(t\nu)^{\frac12}\|\nabla h_{3,3}\|_{L^{2}},
	\end{aligned}
\end{equation*}
which along with Lemma \ref{lem 2} show that 
\begin{equation*}
	\begin{aligned}
		I_{3}\leq C\big(\|e^{2a\nu^{\frac13}t}\nabla h_{2,3}\|_{L^{2}L^{2}}^{2}+\nu^{\frac23}\|e^{2a\nu^{\frac13}t}\nabla h_{3,3}\|_{L^{2}L^{2}}^{2}+\|e^{2a
			\nu^{\frac13}t}\partial_{x}h_{3,3}\|_{L^{2}L^{2}}^{2} \big)\leq C\nu^{-1}E_{4}^{4},
	\end{aligned}
\end{equation*}
where we use the fact that $ (t\nu)^{\frac12}e^{b\nu^{\frac13}t}\leq C\nu^{\frac13}te^{2a\nu^{\frac13}t}. $

Collecting the estimates of $ I_{1}-I_{7}, $ we conclude that
\begin{equation*}
	\begin{aligned}
		&\|e^{b\nu^{\frac13}t}\nabla H_{2}\|_{L^{2}L^{2}}^{2}+\|e^{b\nu^{\frac13}t}\partial_{x}h_{3}\|_{L^{2}L^{2}}^{2}+\nu^{\frac23}\|e^{b\nu^{\frac13}t}\nabla h_{3,\neq}\|_{L^{2}L^{2}}^{2}\\\leq&C\nu^{-1}E_{2}^{2}\left(E_{7}+\nu^{\frac43}\|\triangle u_{3,\neq}\|_{X_{b}}^{2} \right)+C\nu^{-1}E_{4}^{4}+C\nu E_{1}^{2}E_{7}.
	\end{aligned}
\end{equation*}
Noting that $ E_{1}\leq \varepsilon_{0} $ and $ E_{2}\leq \varepsilon_{0}\nu $, and taking $ \varepsilon_{0} $ small enough,  we conclude that
\begin{equation*}
	\begin{aligned}
		E_{6}^{2}\leq C(E_{7}+\nu^{\frac43}\|\triangle u_{3,\neq}\|_{X_{b}}^{2})\leq C\left(\|u_{\rm in}\|_{H^{2}}^{2}+\nu^{-2}E_{4}^{4}+\nu^{-2}E_{5}^{2} \right).
	\end{aligned}
\end{equation*}
\end{proof}

%%%%%%%%%%%%%%%%%%%%%/

%%%%%%%%%%%%%%%%%%%%%%%%%

\appendix
\section{Space-time estimates}
Firstly, we recall the following space-time estimates.
\begin{proposition}[{\bf Proposition 4.1} in \cite{wei2}]\label{prop Lf}
Let $ f $ satisfy
\begin{equation*}
	\partial_{t}f-\nu\triangle f+y\partial_{x}f=\partial_{x}f_{1}+f_{2}+\nabla\cdot f_{3},
\end{equation*}
for $ t\in[0,T]. $ If $ P_{0}f=P_{0}f_{1}=P_{0}f_{2}=P_{0}f_{3}=0, $ then for $ a\geq 0, $ there holds
\begin{equation*}
	\begin{aligned}
		\|f\|_{X_{a}}^{2}\leq C\left(\|f(0)\|_{L^{2}}^{2}+\|e^{a\nu^{\frac13}t}\nabla f_{1}\|_{L^{2}L^{2}}^{2}+\nu^{-\frac13}\|e^{a\nu^{\frac13}t}f_{2}\|_{L^{2}L^{2}}^{2}+\nu^{-1}\|e^{a\nu^{\frac13}t}f_{3}\|_{L^{2}L^{2}}^{2} \right).
	\end{aligned}
\end{equation*}
\end{proposition}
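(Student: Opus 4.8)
The plan is to pass to Fourier modes and run a single multiplier-based weighted energy estimate, combining enhanced dissipation, inviscid damping and viscous dissipation into the composite norm $X_a$. Because $P_0f=P_0f_1=P_0f_2=P_0f_3=0$, only nonzero streamwise frequencies occur, so I would first Fourier transform in the periodic variables $x,z$. Writing $\phi=\phi_{k,\ell}(t,y)$ for the Fourier coefficient at frequency $k=k_1\in\mathbb Z\setminus\{0\}$, $\ell=k_3\in\mathbb Z$ (and $f_3=(f_{3,1},f_{3,2},f_{3,3})$), the equation becomes the one-dimensional (in $y$) problem
\[
\partial_t\phi+\nu\big(-\partial_y^2+k^2+\ell^2\big)\phi+iky\,\phi=ik\,\widehat{f_1}+\widehat{f_2}+ik\,\widehat{f_{3,1}}+\partial_y\widehat{f_{3,2}}+i\ell\,\widehat{f_{3,3}},
\]
and by Plancherel it suffices to prove the per-mode estimate with constants uniform in $(k,\ell)$ and then sum. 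The energy estimate will be built from a time-dependent Fourier multiplier $M(t,k,\eta)$, with $\eta$ dual to $y$ (equivalently, a hypocoercive Lyapunov functional), designed so that it simultaneously produces the four parts of $\|\cdot\|_{X_a}$: the $L^\infty_tL^2$ energy, the enhanced-dissipation gain, the dissipation $\nu\|\nabla\phi\|^2$, and the inviscid-damping quantity with symbol $|k|(k^2+\eta^2+\ell^2)^{-1/2}$.

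\textbf{Enhanced dissipation.} The first ingredient is the hypocoercivity mechanism for $\partial_t-\nu\partial_y^2+iky$: using $[\partial_y,iky]=ik$, one chooses $M$ (or a functional $\mathcal E_M$ combining $\|\phi\|^2$, a $\nu,k$-scaled $\|\partial_y\phi\|^2$, and a cross term $\mathrm{Im}\langle\phi,\partial_y\phi\rangle$ with small absolute constants) so that for the homogeneous flow
\[
\frac{d}{dt}\mathcal E_M+c\,\nu^{1/3}|k|^{2/3}\,\mathcal E_M+\nu\big(\|\partial_y\phi\|^2+(k^2+\ell^2)\|\phi\|^2\big)\le 0 .
\]
Since $k\in\mathbb Z\setminus\{0\}$ forces $|k|^{2/3}\ge 1$, multiplying by $e^{2a\nu^{1/3}t}$ and taking $a<c$ absorbs the exponential weight into the coercive term, giving $\|e^{a\nu^{1/3}t}\phi\|_{L^\infty_tL^2}^2+\nu^{1/3}\|e^{a\nu^{1/3}t}\phi\|_{L^2_tL^2}^2+\nu\|e^{a\nu^{1/3}t}\nabla\phi\|_{L^2_tL^2}^2\lesssim\|\phi(0)\|^2$.

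\textbf{Inviscid damping and the forcing.} For the remaining component $\|e^{a\nu^{1/3}t}\nabla\triangle^{-1}\partial_x f\|_{L^2L^2}^2$ one uses that along the mixing characteristics $\eta\approx-kt$ the symbol $|k|(k^2+\eta^2+\ell^2)^{-1/2}$ behaves like $(1+|t-t_*|)^{-1}$ with critical time $t_*=-\eta/k$, hence is square-integrable in $t$ away from $t_*$, while near $t_*$ the enhanced-dissipation decay is spent; $M$ is arranged to dominate this weight as well. The forcing is absorbed term by term: pairing $\nabla\cdot f_3$ and $f_2$ with $\phi$ and applying Cauchy--Schwarz sends them into $\tfrac14\nu\|\nabla\phi\|^2+C\nu^{-1}\|f_3\|^2$ and into $\tfrac14 c\,\nu^{1/3}|k|^{2/3}\|\phi\|^2+C\nu^{-1/3}\|f_2\|^2$ (using $|k|^{2/3}\ge 1$), which the coercive terms above absorb, yielding the $\nu^{-1}$ and $\nu^{-1/3}$ coefficients. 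The delicate term is $\partial_x f_1$: rather than pair it with $\phi$ (a derivative loss), one estimates its time integral by duality against the inviscid-damping piece,
\[
\Big|\int_0^T\langle e^{2a\nu^{1/3}t}\partial_x f_1,\ \phi\rangle\,dt\Big|\le\|e^{a\nu^{1/3}t}\nabla f_1\|_{L^2L^2}\ \|e^{a\nu^{1/3}t}\nabla\triangle^{-1}\partial_x\phi\|_{L^2L^2},
\]
since dividing the symbol $ik$ of $\partial_x$ by that of $\nabla\triangle^{-1}\partial_x$ leaves $(k^2+\eta^2+\ell^2)^{1/2}$, the symbol of $\nabla$; the second factor is then absorbed into the left-hand side by Young's inequality. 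Summing over $(k,\ell)$ by Plancherel completes the argument.

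\textbf{Main obstacle.} The heart of the matter is constructing the multiplier $M$ (equivalently the Lyapunov functional) that, with constants uniform in $\nu$ and $k$, produces at once the enhanced-dissipation rate $\nu^{1/3}|k|^{2/3}$, the viscous dissipation, and the inviscid-damping weight; the tension concentrates near the critical time $t_*$, where the inviscid-damping gain degenerates and must be traded against the enhanced dissipation --- exactly the interplay between dissipation and growth encoded by the Fourier multipliers of Bedrossian--Germain--Masmoudi and exploited in the quasilinear scheme of Wei--Zhang. A secondary but essential point is that $\partial_x f_1$ costs a full derivative which is recovered only in the weighted-in-time (inviscid-damping) norm, so the four components of $X_a$ cannot be estimated independently and the system must be closed simultaneously.
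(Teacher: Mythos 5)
The paper does not actually prove this proposition: it is quoted verbatim as Proposition 4.1 of Wei--Zhang \cite{wei2}, so there is no internal proof to compare against. Your outline is precisely the argument used there, namely a Fourier--multiplier weighted energy estimate: transform in $(x,z)$ (and $y$), use a time-dependent multiplier $M(t,k,\eta,\ell)$ that combines an enhanced-dissipation weight (logarithmic derivative $\gtrsim\nu^{1/3}$ away from the critical time $t_*=\eta/k$, compensated near $t_*$ by the viscous term $\nu|\eta-kt|^2$) with an inviscid-damping weight whose logarithmic derivative is $k^2/(k^2+(\eta-kt)^2+\ell^2)$; absorb $f_2$ and $\nabla\cdot f_3$ by Cauchy--Schwarz into $\nu^{1/3}\|f\|^2$ and $\nu\|\nabla f\|^2$ (yielding exactly the coefficients $\nu^{-1/3}$ and $\nu^{-1}$); and handle $\partial_x f_1$ by the duality pairing $|\langle\partial_xf_1,f\rangle|\le\|\nabla f_1\|\,\|\nabla\triangle^{-1}\partial_xf\|$, absorbing the second factor into the inviscid-damping component of $X_a$. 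All of these steps are correct, including the key point that $\partial_xf_1$ can only be paid for through the damping norm. What your write-up defers is the explicit construction of $M$ together with the verification that its total multiplicative variation is bounded uniformly in $(k,\eta,\ell,\nu)$, so that the $M$-weighted and unweighted norms are equivalent; for a proposition of this type that construction is the entire technical content, but it is the standard one of Bedrossian--Germain--Masmoudi and Wei--Zhang, and your description of the properties it must satisfy is accurate.
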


The following proposition can be derived from Proposition 4.3 in \cite{wei2}, and we omit it.
\begin{proposition}\label{prop Lf 1}
Let $ (f,g) $ satisfy
\begin{equation*}
	\left\{
	\begin{array}{lr}
		\partial_{t}f-\nu\triangle f+y\partial_{x}f=\triangle f_{1}+f_{2}, \\
		\partial_{t}g-\nu\triangle g+y\partial_{x}g-2\partial_{x}\partial_{z}\triangle^{-2}f=g_{1}, 	\end{array}
	\right.
\end{equation*}
for $ t\in[0,T]. $ If $ P_{0}f=P_{0}f_{1}=P_{0}f_{2}=P_{0}g=P_{0}g_{1}=0, $ then for $ a\geq 0, $ it holds that
\begin{equation*}
	\begin{aligned}
		&\|f\|_{X_{a}}^{2}+\|(\partial_{x}^{2}+\partial_{z}^{2})g\|_{X_{a}}^{2}\leq C\big(\|f(0)\|_{L^{2}}^{2}+\|g(0)\|_{H^{2}}^{2}+\nu^{-1}\|e^{a\nu^{\frac13}t}\nabla f_{1}\|_{L^{2}L^{2}}^{2}\\&+\nu^{-\frac13}\|e^{a\nu^{\frac13}t}f_{2}\|_{L^{2}L^{2}}^{2}+\nu^{-1}\|e^{a\nu^{\frac13}t}(\partial_{x},\partial_{z})g_{1}\|_{L^{2}L^{2}}^{2}  \big).
	\end{aligned}
\end{equation*}
\end{proposition}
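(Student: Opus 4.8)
The plan is to estimate the two components of the coupled system in order, reducing everything to the scalar space–time bound of Proposition \ref{prop Lf}, following the strategy of Proposition 4.3 in \cite{wei2}.

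First I would treat $f$. Writing $\triangle f_{1}=\nabla\cdot(\nabla f_{1})$, the equation for $f$ is of the form covered by Proposition \ref{prop Lf} with right–hand side $\partial_{x}(0)+f_{2}+\nabla\cdot(\nabla f_{1})$, and since $P_{0}f=P_{0}f_{1}=P_{0}f_{2}=0$ it yields directly
\[
\|f\|_{X_{a}}^{2}\le C\big(\|f(0)\|_{L^{2}}^{2}+\nu^{-\frac13}\|e^{a\nu^{\frac13}t}f_{2}\|_{L^{2}L^{2}}^{2}+\nu^{-1}\|e^{a\nu^{\frac13}t}\nabla f_{1}\|_{L^{2}L^{2}}^{2}\big),
\]
which is the $f$–part of the claim. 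In particular the inviscid–damping piece $\|e^{a\nu^{\frac13}t}\nabla\triangle^{-1}\partial_{x}f\|_{L^{2}L^{2}}^{2}$ and the enhanced–dissipation piece $\nu^{\frac13}\|e^{a\nu^{\frac13}t}f\|_{L^{2}L^{2}}^{2}$ of $\|f\|_{X_{a}}^{2}$ are now controlled data quantities.

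Next, set $h=(\partial_{x}^{2}+\partial_{z}^{2})g$. Since $\partial_{x},\partial_{z}$ commute with $\partial_{t}-\nu\triangle+y\partial_{x}$, applying $(\partial_{x}^{2}+\partial_{z}^{2})$ to the $g$–equation gives
\[
\partial_{t}h-\nu\triangle h+y\partial_{x}h=(\partial_{x}^{2}+\partial_{z}^{2})g_{1}+2\partial_{x}\partial_{z}\triangle^{-2}(\partial_{x}^{2}+\partial_{z}^{2})f .
\]
The inhomogeneity $(\partial_{x}^{2}+\partial_{z}^{2})g_{1}=\nabla\cdot(\partial_{x}g_{1},0,\partial_{z}g_{1})$ goes into the $\nabla\cdot f_{3}$ slot of Proposition \ref{prop Lf} and produces the $\nu^{-1}\|e^{a\nu^{\frac13}t}(\partial_{x},\partial_{z})g_{1}\|_{L^{2}L^{2}}^{2}$ contribution. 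The coupling forcing is the crux: I would write it as $\partial_{x}\big(2\partial_{z}\triangle^{-1}\big(\triangle^{-1}(\partial_{x}^{2}+\partial_{z}^{2})f\big)\big)$, i.e. as $\partial_{x}$ of a zeroth–order Fourier multiplier applied to $f$ (the symbol bound $|k_{1}k_{3}(k_{1}^{2}+k_{3}^{2})||k|^{-4}\lesssim|k_{1}||k|^{-1}$ shows it is even controlled by the inviscid–damping quantity $\nabla\triangle^{-1}\partial_{x}f$), and feed it into the $\partial_{x}f_{1}$ slot. A bare application of Proposition \ref{prop Lf} controls its contribution by $\|e^{a\nu^{\frac13}t}f\|_{L^{2}L^{2}}^{2}$, which only yields $\nu^{-\frac13}\|f\|_{X_{a}}^{2}$ and hence loses a factor $\nu^{\frac13}$; the refinement in Proposition 4.3 of \cite{wei2} is that, because this forcing carries a genuine $\partial_{x}$ together with the smoothing $\triangle^{-1}$ — the very structure that drives inviscid damping and enhanced dissipation — the semigroup/resolvent argument recovers this $\nu^{\frac13}$, so that the coupling term contributes only $C\|f\|_{X_{a}}^{2}$ to $\|h\|_{X_{a}}^{2}$. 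Combining with the $f$–estimate above then closes the proof.

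I expect the coupling term $2\partial_{x}\partial_{z}\triangle^{-2}(\partial_{x}^{2}+\partial_{z}^{2})f$ to be the only genuine obstacle: it is the 3D lift–up coupling surviving at the non-zero modes, and — in contrast with the zero–mode lift–up, which forces the $O(\nu^{-1})$ growth of $u_{1,0}$ — here the accompanying $\partial_{x}$ lets enhanced dissipation and inviscid damping absorb it with no loss in $\nu$. Making this quantitative is exactly what the sharp scalar space–time estimate of \cite{wei2} provides; checking that its hypotheses hold here (the zero–mode conditions $P_{0}h=P_{0}g_{1}=0$, which follow from $P_{0}g=P_{0}g_{1}=0$, and the elementary multiplier bounds above, valid for non-zero modes $k_{1}\neq0$ so that $|k|\ge1$) is routine.
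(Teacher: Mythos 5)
The paper offers no proof of Proposition \ref{prop Lf 1}: it only records that the statement ``can be derived from Proposition 4.3 in \cite{wei2}'' and omits the derivation, so there is no line-by-line argument to compare against. Your reduction of the $f$-component (writing $\triangle f_{1}=\nabla\cdot(\nabla f_{1})$ and applying Proposition \ref{prop Lf}), your derivation of the equation for $h=(\partial_{x}^{2}+\partial_{z}^{2})g$, and your treatment of $(\partial_{x}^{2}+\partial_{z}^{2})g_{1}=\nabla\cdot(\partial_{x}g_{1},0,\partial_{z}g_{1})$ are all correct and are surely part of the omitted derivation.

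The genuine gap is exactly where you flag it, and nothing you write closes it. None of the three forcing slots of the scalar Proposition \ref{prop Lf} absorbs the coupling term $2\partial_{x}\partial_{z}\triangle^{-2}(\partial_{x}^{2}+\partial_{z}^{2})f$ without a factor $\nu^{-\frac13}$: in the $f_{2}$ slot the cost is $\nu^{-\frac13}\|e^{a\nu^{1/3}t}\nabla\triangle^{-1}\partial_{x}f\|_{L^{2}L^{2}}^{2}\le\nu^{-\frac13}\|f\|_{X_{a}}^{2}$; in the $\partial_{x}f_{1}$ slot one needs $\|e^{a\nu^{1/3}t}\nabla\bigl(2\partial_{z}\triangle^{-2}(\partial_{x}^{2}+\partial_{z}^{2})f\bigr)\|_{L^{2}L^{2}}^{2}$, and the relevant symbol $|k|\,|k_{3}|(k_{1}^{2}+k_{3}^{2})|k|^{-4}$ is bounded by $1$ but \emph{not} by $|k_{1}||k|^{-1}$ (take $k_{1}=1$ and $k_{3}$ large), so again one only gets $\|e^{a\nu^{1/3}t}f\|_{L^{2}L^{2}}^{2}\le\nu^{-\frac13}\|f\|_{X_{a}}^{2}$; the $\nabla\cdot f_{3}$ slot is worse. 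Such a loss would propagate a factor $\nu^{-\frac13}$ into the bound for $E_{4,1}$ and break the bootstrap, so it cannot be tolerated. Your assertion that ``the semigroup/resolvent argument recovers this $\nu^{\frac13}$'' is precisely the content of \cite[Proposition 4.3]{wei2}, which is \emph{not} a corollary of the scalar estimate but a genuinely coupled multiplier--energy estimate on the pair $(f,(\partial_{x}^{2}+\partial_{z}^{2})g)$, exploiting the interaction between the inviscid-damping component of $\|f\|_{X_{a}}$ and the structure of the weights in the sheared frame. In the end you, like the paper, import the key step wholesale from \cite{wei2}; that is a legitimate citation, but your framing of the argument as a reduction to Proposition \ref{prop Lf} plus a ``refinement'' leaves the crux unproved, and the heuristic you offer for why the $\nu^{\frac13}$ is recovered is not substantiated by the multiplier bounds you give.
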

The following lemma is important in proving Proposition \ref{Lvf 0} and Proposition \ref{Lvf}.
\begin{lemma}\label{kappa}
The definitions of $\kappa, \rho_{1}$ and $ \rho_{2} $ are given by (\ref{define kappa}) and (\ref{rho1 rho2}), then there hold
$$	\|\kappa\|_{H^{1}}\leq C\|\widehat{u_{1,0}}\|_{H^{2}},\quad \|\kappa\|_{H^{3}}\leq C\|\widehat{u_{1,0}}\|_{H^{4}},\quad \|\partial_{t}\kappa\|_{H^{1}}\leq C\|\partial_{t}\widehat{u_{1,0}}\|_{H^{2}},$$
$$ \|\rho_{1}\|_{H^{2}}+\|\rho_{2}\|_{H^{2}}\leq C\|\widehat{u_{1,0}}\|_{H^{4}},\quad \|\partial_{t}\rho_{1}\|_{L^{2}}\leq C\|\partial_{t}\widehat{u_{1,0}}\|_{H^{2}}. $$
\end{lemma}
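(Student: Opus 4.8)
The plan is to prove Lemma \ref{kappa} by a sequence of elementary product and composition estimates in Sobolev spaces on the two-dimensional domain $(y,z)\in\mathbb{R}\times\mathbb{T}$, exploiting the smallness of $\widehat{u_{1,0}}$ that is built into the bootstrap assumptions. The starting observation is that $\partial_{y}V=1+\partial_{y}\widehat{u_{1,0}}$ with $\|\partial_{y}\widehat{u_{1,0}}\|_{H^{3}}\le C\|\widehat{u_{1,0}}\|_{H^{4}}\le C\varepsilon_{0}$, so that, taking $\varepsilon_{0}$ small enough, $\partial_{y}V\ge\frac12$ pointwise and $\tfrac1{\partial_{y}V}=\tfrac1{1+\partial_{y}\widehat{u_{1,0}}}$ is well defined, bounded, and lies in $H^{3}$ with norm controlled by a constant. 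This is the ``composition with a smooth function'' step: for $g=1+\partial_{y}\widehat{u_{1,0}}$ bounded away from zero, $\|1/g\|_{H^{k}}\le \Phi(\|g\|_{H^{k}})$ for $k\le 3$ by the algebra property of $H^{k}(\mathbb{R}\times\mathbb{T})$ in dimension two (using $H^{k}\hookrightarrow L^{\infty}$ for $k\ge 2$, and for $k=1$ the pointwise lower bound plus $\nabla(1/g)=-g^{-2}\nabla g$). From there $\kappa=\partial_{z}\widehat{u_{1,0}}\cdot(1/\partial_{y}V)$, and the product estimate $\|fg\|_{H^{k}}\le C\|f\|_{H^{k}}\|g\|_{H^{k}}$ (for $k\le3$) gives $\|\kappa\|_{H^{1}}\le C\|\partial_{z}\widehat{u_{1,0}}\|_{H^{1}}\le C\|\widehat{u_{1,0}}\|_{H^{2}}$ and $\|\kappa\|_{H^{3}}\le C\|\partial_{z}\widehat{u_{1,0}}\|_{H^{3}}\le C\|\widehat{u_{1,0}}\|_{H^{4}}$.

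Next I would handle the time derivative. From $\kappa=\partial_{z}\widehat{u_{1,0}}/(1+\partial_{y}\widehat{u_{1,0}})$ one computes
\begin{equation*}
\partial_{t}\kappa=\frac{\partial_{z}\partial_{t}\widehat{u_{1,0}}}{1+\partial_{y}\widehat{u_{1,0}}}-\frac{\partial_{z}\widehat{u_{1,0}}\,\partial_{y}\partial_{t}\widehat{u_{1,0}}}{(1+\partial_{y}\widehat{u_{1,0}})^{2}}.
\end{equation*}
Both terms are products of functions that are bounded in $H^{1}$ (namely $1/\partial_{y}V$ and $1/(\partial_{y}V)^{2}$, bounded in $H^{2}\hookrightarrow$ multipliers of $H^{1}$; $\partial_{z}\widehat{u_{1,0}}$, bounded in $H^{1}$ by $\varepsilon_{0}$) with one factor carrying $\partial_{t}\widehat{u_{1,0}}$; applying the product estimate and the already-established bounds on $\widehat{u_{1,0}}$ gives $\|\partial_{t}\kappa\|_{H^{1}}\le C\|\partial_{t}\widehat{u_{1,0}}\|_{H^{2}}$. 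For $\rho_{1}$ and $\rho_{2}$, defined in (\ref{rho1 rho2}) as rational expressions in $\kappa$, $\partial_{y}\kappa$, $\partial_{z}\kappa$, $\partial_{y}V$, and $1+\kappa^{2}$, the same toolkit applies: the denominators $\partial_{y}V(1+\kappa^{2})$ and $1+\kappa^{2}$ are bounded below (since $\partial_{y}V\ge\frac12$ and $1+\kappa^{2}\ge1$), so their reciprocals are in $H^{2}$ with constant norm; the numerators are sums of products of $\kappa$ and its first derivatives, hence controlled in $H^{2}$ by $\|\kappa\|_{H^{3}}\le C\|\widehat{u_{1,0}}\|_{H^{4}}$. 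Thus $\|\rho_{1}\|_{H^{2}}+\|\rho_{2}\|_{H^{2}}\le C\|\widehat{u_{1,0}}\|_{H^{4}}$.

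Finally, for $\|\partial_{t}\rho_{1}\|_{L^{2}}$ I would differentiate the quotient defining $\rho_{1}$ in time by the Leibniz and quotient rules; every resulting term is a product of an $L^{\infty}$ (or $H^{2}$) factor built from $\kappa$ and its spatial derivatives — all bounded by $\varepsilon_{0}$ or a constant — with exactly one factor of the form $\partial_{t}\kappa$ or $\partial_{t}\partial_{y}\kappa$ or $\partial_{t}\partial_{z}\kappa$. Since $\|\partial_{t}\kappa\|_{H^{1}}\le C\|\partial_{t}\widehat{u_{1,0}}\|_{H^{2}}$ controls $\partial_{t}\kappa$ and its first spatial derivatives in $L^{2}$, and the other factors are bounded in the appropriate multiplier norm, we obtain $\|\partial_{t}\rho_{1}\|_{L^{2}}\le C\|\partial_{t}\widehat{u_{1,0}}\|_{H^{2}}$. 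The main obstacle, such as it is, is purely bookkeeping: one must be careful that the composition estimate for $1/g$ is only needed up to $H^{3}$ regularity (so that the two-dimensional Sobolev embedding $H^{2}\hookrightarrow L^{\infty}$ is available and the algebra property holds) and that the smallness $\|\widehat{u_{1,0}}\|_{H^{4}}\le C\varepsilon_{0}$ is invoked precisely to keep $\partial_{y}V$ away from zero — no genuinely hard analysis is involved, and the lemma is in effect a catalogue of routine nonlinear estimates tailored to the specific combinations that appear later in the energy argument.
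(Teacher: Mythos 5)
The paper itself gives no proof of Lemma \ref{kappa}: it is stated in the appendix without argument, being essentially imported from the framework of Wei--Zhang \cite{wei2}. Your proposal supplies exactly the proof one would expect — smallness of $\|\widehat{u_{1,0}}\|_{H^{4}}$ to keep $\partial_{y}V=1+\partial_{y}\widehat{u_{1,0}}$ bounded below, the algebra property of $H^{k}(\mathbb{R}\times\mathbb{T})$ for $k\ge 2$ in two dimensions, and product/quotient rules for the time derivatives — and it is correct in substance. One technical imprecision is worth fixing: on the unbounded domain $\mathbb{R}_{y}\times\mathbb{T}_{z}$ the function $1/\partial_{y}V$ does \emph{not} lie in $H^{3}$ (constants are not square-integrable in $y$), so the statement that it ``lies in $H^{3}$ with norm controlled by a constant'' is literally false. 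The standard repair is to write $1/\partial_{y}V=1-\partial_{y}\widehat{u_{1,0}}/\partial_{y}V$ and estimate the deviation from $1$ in $H^{3}$ (equivalently, use that $1/\partial_{y}V$ is a bounded multiplier on $H^{k}$, $k\le 3$, via its $W^{k,\infty}$ or $1+H^{3}$ structure); the same remark applies to $1/(1+\kappa^{2})$ and $1/(\partial_{y}V)^{2}$ in the $\rho_{1},\rho_{2}$ and $\partial_{t}\kappa$ estimates. With that cosmetic adjustment every step goes through and all five inequalities follow as you describe.
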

\begin{proposition}[{\bf Proposition 4.4} in \cite{wei2}]\label{Lvf 0}
Let $ f $ satisfy
\begin{equation*}
	\mathcal{L}_{V}f=\partial_{x}f_{1}+f_{2}+\nabla\cdot f_{3},
\end{equation*}
for $ t\in[0,T]. $ Assume that
$\|\widehat{u_{1,0}}\|_{H^{4}}+\|\partial_{t}\widehat{u_{1,0}}\|_{H^{2}}/\nu<\delta_{2},$
for some small constant $ \delta_{2} $ independent of $ \nu $ and $ T. $ If $ P_{0}f=P_{0}f_{1}=P_{0}f_{2}=P_{0}f_{3}=0, $ then for $ a\geq 0, $ it holds that
\begin{equation*}
	\begin{aligned}
		\|f\|_{X_{a}}^{2}\leq C\big(\|f(0)\|_{L^{2}}^{2}+\|e^{a\nu^{\frac13}t}\nabla f_{1}\|_{L^{2}L^{2}}^{2}+\nu^{-\frac13}\|e^{a\nu^{\frac13}t}f_{2}\|_{L^{2}L^{2}}^{2}+\nu^{-1}\|e^{a\nu^{\frac13}t}f_{3}\|_{L^{2}L^{2}}^{2}
		\big).
	\end{aligned}
\end{equation*}
\end{proposition}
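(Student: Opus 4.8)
The plan is to straighten the inhomogeneous shear $V=y+\widehat{u_{1,0}}$ by a near--identity change of variables and thereby reduce the claim to the Couette estimate of Proposition \ref{prop Lf}. Since $\partial_yV=1+\partial_y\widehat{u_{1,0}}\in[1/2,3/2]$ once $\|\widehat{u_{1,0}}\|_{H^4}<\delta_2$ with $\delta_2$ small, the map $\Phi_t:(x,y,z)\mapsto(X,Y,Z)=(x,V(t,y,z),z)$ is, for each $t$, a smooth diffeomorphism equal to the identity at $t=0$ (because $\widehat{u_{1,0}}(0)=0$), with all derivatives of $\Phi_t$ and $\Phi_t^{-1}$ controlled in $L^\infty$ by $\|\widehat{u_{1,0}}\|_{H^4}$ (this is where the $H^4$ hypothesis is used, exactly as in the bounds on $\kappa$ in Lemma \ref{kappa}). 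Writing $\tilde f=f\circ\Phi_t^{-1}$ and using that $x$ is untouched and $\partial_t y=-\partial_t\widehat{u_{1,0}}/\partial_yV$ along $\Phi_t^{-1}$, one gets $(\partial_tf)\circ\Phi_t^{-1}=\partial_t\tilde f+(\partial_t\widehat{u_{1,0}})\partial_Y\tilde f$, $(V\partial_xf)\circ\Phi_t^{-1}=Y\partial_X\tilde f$, and $(\triangle f)\circ\Phi_t^{-1}=\triangle\tilde f+\tilde{\mathcal A}\tilde f$, where $\tilde{\mathcal A}$ is a second--order operator whose coefficients are $O(\delta_2)$ in $L^\infty$. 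Hence $\tilde f$ solves a genuine Couette equation
\[
\partial_t\tilde f-\nu\triangle\tilde f+Y\partial_X\tilde f=\partial_X\tilde f_1+\tilde f_2+\nabla\cdot\tilde f_3+\nu\tilde{\mathcal A}\tilde f-(\partial_t\widehat{u_{1,0}})\partial_Y\tilde f ,
\]
all terms still having vanishing $P_0$ (the projection only averages in $X=x$); moreover $\|\tilde f(0)\|_{L^2}=\|f(0)\|_{L^2}$, and $\tilde f_j$ is comparable to $f_j\circ\Phi_t^{-1}$ up to $O(\delta_2)$ corrections that can be placed into the remaining forcing slots.

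Next I would apply Proposition \ref{prop Lf} to $\tilde f$. The genuine forcings $\partial_X\tilde f_1,\tilde f_2,\nabla\cdot\tilde f_3$ reproduce the right--hand side claimed in Proposition \ref{Lvf 0}. The term $\nu\tilde{\mathcal A}\tilde f$ is written as $\nabla\cdot g_3$ with $|g_3|\lesssim\nu\delta_2|\nabla\tilde f|$ plus a lower--order piece of size $\lesssim\nu\delta_2|\nabla\tilde f|$, so it costs at most $\nu^{-1}(\nu\delta_2)^2\|\nabla\tilde f\|_{L^2L^2}^2+\nu^{-1/3}(\nu\delta_2)^2\|\nabla\tilde f\|_{L^2L^2}^2\le C\delta_2^2\,\nu\|\nabla\tilde f\|_{L^2L^2}^2$, which is absorbed by the $\nu\|\nabla\tilde f\|_{L^2L^2}^2$ component of $\|\tilde f\|_{X_a}^2$ since $\delta_2$ is $\nu$--independent; likewise the drift $(\partial_t\widehat{u_{1,0}})\partial_Y\tilde f$, using $\|\partial_t\widehat{u_{1,0}}\|_{H^2}\le\delta_2\nu$, goes into the $f_2$--slot at cost $\nu^{-1/3}(\delta_2\nu)^2\|\nabla\tilde f\|_{L^2L^2}^2\le C\delta_2^2\,\nu\|\nabla\tilde f\|_{L^2L^2}^2$, again absorbable. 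Finally I transfer the estimate back: the $L^\infty L^2$, $L^2L^2$ and $\nabla$--in--$L^2L^2$ building blocks of the $X_a$--norm are equivalent for $f$ and $\tilde f$ with constants $1+O(\delta_2)$ because the Jacobian of $\Phi_t$ is $I+O(\delta_2)$, while the inviscid--damping piece $\nabla\triangle^{-1}\partial_x f$ is compared with $\nabla\triangle^{-1}\partial_X\tilde f$ through $\nabla(\triangle^{-1}-\tilde\triangle^{-1})\partial_X\tilde f=\nabla\triangle^{-1}\tilde{\mathcal A}\,\tilde\triangle^{-1}\partial_X\tilde f$, estimating the $O(\delta_2)$ operator $\tilde{\mathcal A}$ in $\dot{H}^{-1}$ by $C\delta_2\|\nabla\triangle^{-1}\partial_X\tilde f\|_{L^2L^2}$ (legitimate here because $\partial_x$ is bounded below on the nonzero $x$--modes). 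Assembling the four pieces of the $X_a$--norm and absorbing all $O(\delta_2)$ errors yields Proposition \ref{Lvf 0}; the zero $x$--mode of $f$ is excluded by hypothesis and needs no estimate.

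The main obstacle is exactly this last bookkeeping step: one must verify that every error produced by the non--constant, time--dependent coefficients of $\tilde{\mathcal A}$ and by the moving--frame drift --- in particular the delicate inviscid--damping quantity involving $\triangle^{-1}$, which does not transform cleanly under $\Phi_t$ --- is genuinely dominated by the good dissipative quantities furnished by Proposition \ref{prop Lf}, with a constant that does not degenerate as $\nu\to0$. This is precisely where both smallness hypotheses on $\widehat{u_{1,0}}$ enter: $\|\widehat{u_{1,0}}\|_{H^4}<\delta_2$ keeps $\partial_yV$ uniformly positive and makes the coefficient and commutator errors $O(\delta_2)$, and $\|\partial_t\widehat{u_{1,0}}\|_{H^2}<\delta_2\nu$ tames the drift; it is the only point at which the argument departs from the pure Couette case of Proposition \ref{prop Lf}.
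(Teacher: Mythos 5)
You should note first that the paper offers no proof of Proposition \ref{Lvf 0} at all: it is imported verbatim as Proposition 4.4 of \cite{wei2}, so there is no in-paper argument to compare against. Your reduction --- straightening the shear by $(x,y,z)\mapsto (x,V(t,y,z),z)$ and invoking the Couette estimate of Proposition \ref{prop Lf} --- is sound in outline and is essentially the mechanism behind the cited result: the quantities $\kappa=\partial_zV/\partial_yV$, the good derivative $\partial_z-\kappa\partial_y$ and the weights $\rho_1,\rho_2$ that pervade Section 6 of this paper are precisely the Jacobian data of your map $\Phi_t$, so the $V$-adapted frame is built into the whole scheme. You also correctly avoid the trap that sinks the naive perturbation argument: writing $\mathcal{L}_V=\mathcal{L}+\widehat{u_{1,0}}\partial_x$ and putting $\widehat{u_{1,0}}\partial_xf=\nabla\cdot(\widehat{u_{1,0}}f e_1)$ into the $f_3$ slot costs $\nu^{-1}\delta_2^2\|f\|_{L^2L^2}^2$, which the $\nu^{1/3}\|f\|_{L^2L^2}^2$ component of $X_a$ cannot absorb for $\nu$-independent $\delta_2$; the coordinate change converts this unabsorbable transport error into an $O(\nu\delta_2)$ second-order error $\nu\tilde{\mathcal{A}}\tilde f$ and an $O(\nu\delta_2)$ drift, both of which the dissipative term $\nu\|\nabla\tilde f\|_{L^2L^2}^2$ on the left does absorb. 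What keeps this a sketch rather than a proof are the two points you yourself flag: (i) the divergence-form rewriting of $\nu\tilde{\mathcal{A}}\tilde f$ and of $(\nabla\cdot f_3)\circ\Phi_t^{-1}$ so that no derivative lands on $f_3$, and (ii) the transfer of the inviscid-damping component $\|e^{a\nu^{1/3}t}\nabla\triangle^{-1}\partial_xf\|_{L^2L^2}$ back to the original frame through the resolvent identity for $\triangle^{-1}-\tilde{\triangle}^{-1}$. Both do go through --- because $\nabla^2V=\nabla^2\widehat{u_{1,0}}\in H^2\hookrightarrow L^\infty$ with norm $O(\delta_2)$, and because $P_0f=0$ keeps $\nabla\triangle^{-1}\partial_x$ bounded on $L^2$ --- but they need to be written out in full for the argument to stand on its own rather than as a plausible reconstruction of the proof in \cite{wei2}.
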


\begin{proposition}[{\bf Proposition 4.7} in \cite{wei2}]\label{Lvf}
Let $ f $ satisfy
\begin{equation*}
	\mathcal{L}_{V}f=f_{1}+f_{2}+f_{3},
\end{equation*}
for $ t\in[0,T]. $ Assume that
$\|\widehat{u_{1,0}}\|_{H^{4}}+\|\partial_{t}\widehat{u_{1,0}}\|_{H^{2}}/\nu<\delta_{3},$
for some small constant $ \delta_{3} $ independent of $ \nu $ and $ T. $ If $ P_{0}f=P_{0}f_{1}=P_{0}f_{2}=P_{0}f_{3}=0, $ then for $ a\geq 0, $ it holds that
\begin{equation*}
	\begin{aligned}
		\|\partial_{x}^{2}f\|_{X_{a}}^{2}+\|\partial_{x}(\partial_{z}-\kappa\partial_{y})f\|_{X_{a}}^{2}&\leq C\big(\|f(0)\|_{H^{2}}^{2}+\|e^{a\nu^{\frac13}t}\triangle f_{1}\|_{L^{2}L^{2}}^{2}+\nu^{-\frac13}\|e^{a\nu^{\frac13}t}\partial_{x}^{2}f_{2}\|_{L^{2}L^{2}}^{2}\\&+\nu^{-\frac13}\|e^{a\nu^{\frac13}t}\partial_{x}(\partial_{z}-\kappa\partial_{y})f_{2}\|_{L^{2}L^{2}}^{2}+\nu^{-1}\|e^{a\nu^{\frac13}t}\partial_{x}f_{3}\|_{L^{2}L^{2}}^{2}		
		\big).
	\end{aligned}
\end{equation*}	
\end{proposition}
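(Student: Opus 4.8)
The plan is to bootstrap Proposition \ref{Lvf} from the base estimate of Proposition \ref{Lvf 0} by commuting the two operators $\partial_{x}^{2}$ and $\partial_{x}\Lambda$, with $\Lambda:=\partial_{z}-\kappa\partial_{y}$, through the equation $\mathcal{L}_{V}f=f_{1}+f_{2}+f_{3}$. The decisive algebraic fact is that $\Lambda$ is the good derivative attached to the transport field $V=y+\widehat{u_{1,0}}$: since $\kappa=\partial_{z}V/\partial_{y}V$, one has $\Lambda V=\partial_{z}V-\kappa\,\partial_{y}V=0$, and since $\kappa$ is independent of $x$ one also has $[\partial_{x},\Lambda]=0$ and $[\partial_{x},V\partial_{x}]=0$. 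Consequently $[\mathcal{L}_{V},\partial_{x}]=0$, and a direct computation gives
\[
[\mathcal{L}_{V},\Lambda]g=-(\partial_{t}\kappa)\partial_{y}g+\nu\,(\triangle\kappa)\partial_{y}g+2\nu\,\nabla\kappa\cdot\nabla\partial_{y}g,
\]
in which the transport term $V\partial_{x}$ contributes nothing. All the coefficients appearing here — $\partial_{t}\kappa$, $\triangle\kappa$, $\nabla\kappa$ — are controlled by the smallness hypothesis $\|\widehat{u_{1,0}}\|_{H^{4}}+\nu^{-1}\|\partial_{t}\widehat{u_{1,0}}\|_{H^{2}}<\delta_{3}$ via Lemma \ref{kappa}.

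First I would treat $\partial_{x}^{2}f$. Since $[\mathcal{L}_{V},\partial_{x}]=0$, applying $\partial_{x}^{2}$ yields $\mathcal{L}_{V}(\partial_{x}^{2}f)=\partial_{x}(\partial_{x}f_{1})+\partial_{x}^{2}f_{2}+\nabla\cdot(\partial_{x}f_{3},0,0)$, which is precisely of the form to which Proposition \ref{Lvf 0} applies (each forcing has vanishing $x$-average because it carries a $\partial_{x}$). Using $\|\nabla\partial_{x}f_{1}\|_{L^{2}}\le\|\triangle f_{1}\|_{L^{2}}$, this already produces the stated bound for $\|\partial_{x}^{2}f\|_{X_{a}}^{2}$, with $\|\partial_{x}^{2}f(0)\|_{L^{2}}\le\|f(0)\|_{H^{2}}$. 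Next, for $\partial_{x}\Lambda f$, combining the commutator identity with $[\mathcal{L}_{V},\partial_{x}]=0$ gives
\[
\mathcal{L}_{V}(\partial_{x}\Lambda f)=\partial_{x}\Lambda(f_{1}+f_{2}+f_{3})-\partial_{x}\big[(\partial_{t}\kappa)\partial_{y}f\big]+\nu\,\partial_{x}\big[(\triangle\kappa)\partial_{y}f\big]+2\nu\,\partial_{x}\big[\nabla\kappa\cdot\nabla\partial_{y}f\big].
\]
The genuine forcing terms $\partial_{x}\Lambda f_{j}$ are distributed among the $\partial_{x}(\cdot)$, $(\cdot)$ and $\nabla\cdot(\cdot)$ slots of Proposition \ref{Lvf 0}: writing $\Lambda f_{1}$ into the first slot and bounding $\|\nabla\Lambda f_{1}\|_{L^{2}}\lesssim(1+\|\kappa\|_{H^{3}})\|\triangle f_{1}\|_{L^{2}}\lesssim\|\triangle f_{1}\|_{L^{2}}$, putting $\partial_{x}\Lambda f_{2}$ into the second, and $\partial_{x}\Lambda f_{3}=\nabla\cdot g_{3}$ with $\|g_{3}\|_{L^{2}}\lesssim(1+\|\kappa\|_{H^{3}})\|\partial_{x}f_{3}\|_{L^{2}}$ into the third, reproduces exactly the four terms on the right of the proposition.

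It remains to absorb the three commutator terms. Each is rewritten, after one integration by parts, in the $\partial_{x}(\cdot)+(\cdot)+\nabla\cdot(\cdot)$ template; then, using Lemma \ref{kappa} (so that $\|\partial_{t}\kappa\|_{H^{1}}\lesssim\nu\delta_{3}$ and $\|\nabla\kappa\|_{H^{1}}+\|\triangle\kappa\|_{L^{2}}\lesssim\delta_{3}$), together with anisotropic product estimates of the type used repeatedly above (e.g. in Lemma \ref{lem: zero and non-zero mode}) and the $x$-Poincar\'e inequality $\|g\|_{L^{2}}\le\|\partial_{x}g\|_{L^{2}}$ valid because $P_{0}g=0$, every such term is bounded by $\delta_{3}^{2}$ (or by a positive power of $\nu$) times quantities already contained in $\|\partial_{x}^{2}f\|_{X_{a}}^{2}+\|\partial_{x}\Lambda f\|_{X_{a}}^{2}$, and hence absorbed into the left-hand side once $\delta_{3}$ is small. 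The main obstacle is the last commutator $\nu\,\partial_{x}[\nabla\kappa\cdot\nabla\partial_{y}f]$, which carries one more derivative on $f$ than a plain $f_{2}$-type forcing tolerates; to avoid losing a derivative one must not estimate $\nabla\kappa$ crudely but instead use the decomposition $\nabla\kappa\cdot\nabla g=\rho_{1}\,\nabla V\cdot\nabla g+\rho_{2}\,(\partial_{z}-\kappa\partial_{y})g$ of (\ref{kappa u3})--(\ref{rho1 rho2}): the $\rho_{2}$-summand is already a good-derivative term, while the $\rho_{1}$-summand, being aligned with $\nabla V=\partial_{y}V\,(0,1,\kappa)$, recombines with the transport structure of $\mathcal{L}_{V}$ so that after a further integration by parts it is controlled through $\partial_{x}\nabla f$ multiplied by the small coefficients $\rho_{1},\rho_{2}$ (of size $\delta_{3}$ in $H^{2}$ by Lemma \ref{kappa}) and through $X_{a}$-norm quantities; the $\triangle\kappa$ piece, for which only $\triangle\kappa\in H^{1}$ is available, has to be handled by an anisotropic product bound rather than an $L^{\infty}$ estimate. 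This derivative count — verifying that the worst commutator really comes with a net smallness factor — is where the argument closes; everything else is bookkeeping. Summing the $\partial_{x}^{2}f$ and $\partial_{x}\Lambda f$ estimates then gives Proposition \ref{Lvf}.
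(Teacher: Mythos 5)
The paper does not prove this proposition at all: it is imported verbatim as Proposition 4.7 of \cite{wei2}, so there is no internal proof to compare against. Your reconstruction — bootstrapping from Proposition \ref{Lvf 0} by commuting $\partial_x^2$ and $\partial_x\Lambda$ (with $\Lambda=\partial_z-\kappa\partial_y$) through $\mathcal{L}_V$, using $\Lambda V=0$, $[\partial_x,\mathcal{L}_V]=0$ and the commutator identity $[\mathcal{L}_V,\Lambda]g=-(\partial_t\kappa)\partial_yg+\nu(\triangle\kappa)\partial_yg+2\nu\nabla\kappa\cdot\nabla\partial_yg$ — is sound, and the derivative/weight bookkeeping for the genuine forcings matches the stated right-hand side. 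Two points deserve tightening. First, $\partial_x\Lambda f_3$ is not exactly a divergence: $\partial_x\Lambda f_3=\nabla\cdot\big(0,-\kappa\partial_xf_3,\partial_xf_3\big)+(\partial_y\kappa)\partial_xf_3$, and the zero-order remainder must be routed through the $f_2$-slot, where $\nu^{-\frac13}\|\partial_y\kappa\|_{L^\infty}^2\|\partial_xf_3\|^2\lesssim\delta_3^2\nu^{-1}\|\partial_xf_3\|^2$ keeps it inside the claimed bound. Second, your treatment of the worst commutator $2\nu\partial_x[\nabla\kappa\cdot\nabla\partial_yf]$ via the $\rho_1,\rho_2$ decomposition is more machinery than is needed and is left vague ("recombines with the transport structure"); the clean closure is to write it as $2\nu\nabla\cdot(\nabla\kappa\,\partial_x\partial_yf)-2\nu(\triangle\kappa)\partial_x\partial_yf$, put the first piece in the $\nabla\cdot(\cdot)$ slot and the second in the $(\cdot)$ slot, and estimate both with Lemma \ref{sob_14} together with the commutation $\Lambda\partial_y=\partial_y\Lambda+(\partial_y\kappa)\partial_y$ and the $x$-Poincar\'e inequality, which yields
\begin{equation*}
\nu\|e^{a\nu^{\frac13}t}\nabla\kappa\,\partial_x\partial_yf\|_{L^2L^2}^2\leq C\delta_3^2\,\nu\big(\|e^{a\nu^{\frac13}t}\nabla\partial_x^2f\|_{L^2L^2}^2+\|e^{a\nu^{\frac13}t}\nabla\partial_x\Lambda f\|_{L^2L^2}^2\big)\leq C\delta_3^2\big(\|\partial_x^2f\|_{X_a}^2+\|\partial_x\Lambda f\|_{X_a}^2\big),
\end{equation*}
which is absorbed for $\delta_3$ small; the other two commutators are handled identically with the extra factors $\|\partial_t\kappa\|_{H^1}\lesssim\nu\delta_3$ and $\nu\|\triangle\kappa\|_{H^1}\lesssim\nu\delta_3$. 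With these adjustments your argument closes and gives a legitimate, self-contained proof of the cited estimate.
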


\begin{proposition}[{\bf Proposition 4.9} in \cite{wei2}]\label{tilde Lv}
Let $ f $ satisfy $ \widetilde{\mathcal{L}_{V}}f=f_{1} $ for $ t\in[0, T]. $ Assume that
$\|\widehat{u_{1,0}}\|_{H^{4}}+\|\partial_{t}\widehat{u_{1,0}}\|_{H^{2}}/\nu<\delta_{4},$
for some small constant $ \delta_{4}\in(0, \delta_{3}) $ independent of $ \nu $ and $ T. $ If $ P_{0}f=P_{0}f_{1}=0, $ then for $ a\geq 0, $ it holds that
\begin{equation*}
	\|\triangle f\|_{X_{a}}^{2}\leq C\left(\|\triangle f(0)\|_{L^{2}}^{2}+\nu^{-1}\|e^{a\nu^{\frac13}t}\nabla f_{1}\|_{L^{2}L^{2}}^{2} \right).
\end{equation*}
\end{proposition}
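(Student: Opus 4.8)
The plan is to follow \cite{wei2} and view the twisted operator $\widetilde{\mathcal{L}_V}$ as a perturbation of the linearized Couette operator. Write $\mathcal{L}_0:=\partial_t-\nu\triangle+y\partial_x-2\partial_y\triangle^{-1}\partial_x$, which is exactly $\widetilde{\mathcal{L}_V}$ in the degenerate case $\widehat{u_{1,0}}\equiv 0$ (so that $V=y$, $\kappa=0$, $\partial_yV=1$); for $\mathcal{L}_0$ the claimed bound on $\triangle f$ in the $X_a$-norm is essentially the resolvent/space-time estimate for $3$D Couette flow and is of the same nature as Proposition \ref{prop Lf 1}. The heart of the matter is therefore that the correction $\widetilde{\mathcal{L}_V}-\mathcal{L}_0$ is, under the hypothesis $\|\widehat{u_{1,0}}\|_{H^4}+\nu^{-1}\|\partial_t\widehat{u_{1,0}}\|_{H^2}<\delta_4$, a genuine perturbation that can be absorbed.

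First I would introduce the time-dependent change of variables $(t,x,y,z)\mapsto(t,x,Y,z)$ with $Y=V(t,y,z)=y+\widehat{u_{1,0}}$; this is a diffeomorphism since $\partial_yV=1+\partial_y\widehat{u_{1,0}}$ is within $C\delta_4$ of $1$ in $L^\infty$ by Lemma \ref{kappa} and Sobolev embedding on $\mathbb{R}\times\mathbb{T}$. Under it one has $V\partial_x=Y\partial_x$, the good combination $\partial_y+\kappa\partial_z$ becomes $\frac{(\partial_yV)^2+(\partial_zV)^2}{\partial_yV}\partial_Y+\kappa\partial_z=\partial_Y+O(\delta_4)$, while $\partial_t$, $\nabla$ and $\triangle$ acquire corrections whose coefficients are built from $\widehat{u_{1,0}},\partial_t\widehat{u_{1,0}},\kappa,\rho_1,\rho_2,\nabla V-e_2$ and are hence controlled, via Lemma \ref{kappa}, by $\delta_4$ and by $\nu\delta_4$ respectively. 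The equation thus takes the form $\mathcal{L}_0 f=f_1+\mathcal{R}[f]$, where $\mathcal{R}[\cdot]$ is linear, each of its terms carries at least one such small factor, and — this is the structural point — any term in which two derivatives of $f$ beyond $\triangle f$ would appear comes multiplied by $\nu$ (or by $\partial_t\widehat{u_{1,0}}=O(\nu\delta_4)$), so that it will be absorbable into the dissipation piece $\nu\|e^{a\nu^{1/3}t}\nabla\triangle f\|_{L^2L^2}^2$ of $\|\triangle f\|_{X_a}^2$.

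Next I would apply $\triangle$. Using $[\triangle,Y\partial_x]=2\partial_x\partial_Y$ and $[\triangle,\partial_Y\triangle^{-1}\partial_x]=0$ one gets $\mathcal{L}_0(\triangle f)=\triangle f_1+\triangle\mathcal{R}[f]-2\partial_x\partial_Y f$; the term $2\partial_x\partial_Y f$ is exactly the shear–dissipation coupling already built into the Couette estimate (it is why the $X_a$-norm carries the $\nabla\triangle^{-1}\partial_x$ weight) and is kept on the left by invoking the estimate for the pair $(\triangle f,\text{lower order})$, as in Proposition \ref{prop Lf 1}, rather than for $\triangle f$ in isolation. Then I would run that estimate on $\triangle f$, feed $f_1$ in through the right-hand side, and expand $\triangle\mathcal{R}[f]$ term by term: every resulting summand is (a coefficient of up to four derivatives of the small quantities, bounded by Lemma \ref{kappa}) times (at most four derivatives on $f$), with the compensating $\nu$ or $\nu\delta_4$ present whenever the top order lands on $f$. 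Choosing $\delta_4$ small then absorbs the full $\triangle\mathcal{R}[f]$-contribution into $\|\triangle f\|_{X_a}^2$, which yields $\|\triangle f\|_{X_a}^2\le C(\|\triangle f(0)\|_{L^2}^2+\nu^{-1}\|e^{a\nu^{1/3}t}\nabla f_1\|_{L^2L^2}^2)$.

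The main obstacle is the interaction of the nonlocal, variable-coefficient term $2(\partial_y+\kappa\partial_z)\triangle^{-1}(\partial_yV\partial_x f)$ with the two-derivative loss inherent in estimating $\triangle f$: after differentiating twice one must check that every commutator involving $\triangle$, $(\partial_y+\kappa\partial_z)$, $\triangle^{-1}$, $\partial_yV\partial_x$, and the coordinate-change corrections outputs either a genuinely lower-order term in $f$, or a top-order term damped by $\nu$, or a term damped by $\delta_4$. The subtlest contributions are those coming from differentiating the change of variables in time, i.e. those proportional to $\partial_t\widehat{u_{1,0}}$ (and $\partial_t\kappa$, $\partial_t\rho_1$), which is exactly why the hypothesis is stated with $\nu^{-1}\|\partial_t\widehat{u_{1,0}}\|_{H^2}$ rather than with a norm of $\widehat{u_{1,0}}$ alone; the corresponding bounds $\|\partial_t\kappa\|_{H^1}\le C\|\partial_t\widehat{u_{1,0}}\|_{H^2}$ and $\|\partial_t\rho_1\|_{L^2}\le C\|\partial_t\widehat{u_{1,0}}\|_{H^2}$ are supplied by Lemma \ref{kappa}.
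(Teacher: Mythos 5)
This proposition is not proved in the paper at all: it is imported verbatim as Proposition~4.9 of \cite{wei2}, so there is no in-paper argument to compare against. Judged on its own, your sketch has the right global shape (treat $\widetilde{\mathcal{L}_V}$ as a perturbation of the constant-coefficient case, exploit a cancellation at the level of $\triangle f$, absorb $O(\delta_4)$ and $O(\nu)$ remainders into $\|\triangle f\|_{X_a}^2$), but two of its load-bearing steps are shaky.

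First, the change of variables $Y=V(t,y,z)$ creates the very difficulty it is meant to remove. The operator $\triangle^{-1}$ inside $\widetilde{\mathcal{L}_V}$ does not transform into the flat inverse Laplacian in $(x,Y,z)$; it becomes the inverse of a variable-coefficient elliptic operator, and the difference is a \emph{nonlocal} commutator that your sketch never estimates. This is precisely the hardest part of the argument, and it is why \cite{wei2} (and this paper's toolkit, Propositions~\ref{Lvf 0}--\ref{Lvf}) work directly with $\mathcal{L}_V$ in the original variables rather than flattening $V$. Second, your treatment of the term $-2\partial_x\partial_Y f$ via ``the coupled estimate as in Proposition~\ref{prop Lf 1}'' misidentifies the mechanism: the coupling in Proposition~\ref{prop Lf 1} is the lift-up term $-2\partial_x\partial_z\triangle^{-2}f$ appearing in the equation for a \emph{second} unknown $g$, not a $\partial_x\partial_y$-term in the equation for $\triangle f$ itself. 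What actually happens is an exact algebraic cancellation, which is the entire reason $\widetilde{\mathcal{L}_V}$ is defined the way it is: applying $\triangle$ to $\widetilde{\mathcal{L}_V}f=f_1$ produces, from the commutator $[\triangle,V\partial_x]f$, the term $2\nabla V\cdot\nabla\partial_x f=2\,\partial_yV\,(\partial_y+\kappa\partial_z)\partial_x f$ (using $\partial_zV=\kappa\,\partial_yV$), and this is cancelled identically by the leading part of $\triangle\bigl[2(\partial_y+\kappa\partial_z)\triangle^{-1}(\partial_yV\,\partial_x f)\bigr]=2(\partial_y+\kappa\partial_z)(\partial_yV\,\partial_x f)+\cdots$. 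After this cancellation one is left with $\mathcal{L}_V(\triangle f)=\triangle f_1+\mathcal{R}$, where every term of $\mathcal{R}$ carries at most one net derivative of $f$ together with a coefficient of size $O(\delta_4)$ controlled by Lemma~\ref{kappa} (e.g.\ $\triangle V\,\partial_x f$, $\bigl((\partial_y+\kappa\partial_z)\partial_yV\bigr)\partial_x f$, $[\triangle,\kappa\partial_z]\triangle^{-1}(\partial_yV\partial_x f)$); writing $\triangle f_1=\nabla\cdot(\nabla f_1)$ and applying Proposition~\ref{Lvf 0} with the $f_3$-slot then yields the stated bound after absorbing $\mathcal{R}$ for $\delta_4$ small. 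I would recommend rebuilding the proof along these lines and dropping the coordinate change.
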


\section{Sobolev embeddings}
The following lemma can be used to estimate the $L^{\infty}$ norm for the zero mode, which can be found in  Lemma 3.1 of \cite{CWW}.
\begin{lemma}\label{lem zero nonzero}
For a given function $f(x,y,z)$ and $f_0=\frac{1}{|\mathbb{T}|}\int_{\mathbb{T}}{f}(t,x,y,z)dx,$ we have
\begin{equation}\label{sob_result_1}
	\begin{aligned}
		&\|f_0\|_{L^{\infty}}\leq C\big(\|\partial_yf_0\|^{\frac{1}{2}}_{L^2}\|f_0\|^{\frac{1}{2}}_{L^2}+\|\partial_y\partial_zf_0\|^{\frac{1}{2}}_{L^2}
		\|\partial_zf_0\|^{\alpha-\frac{1}{2}}_{L^2}
		\|f_0\|^{1-\alpha}_{L^2}\big),\\
		&\|f_0\|_{L^{\infty}}
		\leq 
		C\big(\|\partial_yf_0\|^{\frac{1}{2}}_{L^2}\|f_0\|^{\frac{1}{2}}_{L^2}+\|\partial_y\partial_zf_0\|^{\alpha-\frac{1}{2}}_{L^2}
		\|\partial_zf_0\|^{\frac{1}{2}}_{L^2}
		\|\partial_yf_0\|^{1-\alpha}_{L^2}\big),\\
		&\|f_{0}\|_{L^{\infty}_{z}L^2_y}\leq C\left(\|f_0\|_{L^2}+\|\partial_zf_0\|_{L^2}^{\alpha}\|f_0\|_{L^2}^{1-\alpha}\right),\\
		&\|f_{0}\|_{L^{\infty}_yL^{2}_{z}}
		\leq \|\partial_yf_0\|_{L^2}^{\frac{1}{2}}
		\|f_0\|_{L^2}^{\frac{1}{2}},
	\end{aligned}
\end{equation}	
where $\alpha$ is a constant with $\alpha\in(\frac{1}{2},1].$
\end{lemma}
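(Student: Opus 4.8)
The plan is to reduce each of the four inequalities to one-dimensional interpolation estimates in the variables $y\in\mathbb R$ and $z\in\mathbb T$ taken separately, since $f_0=f_0(y,z)$ is independent of $x$. By density it suffices to treat $f_0$ smooth with enough decay in $y$ that every norm on the right-hand sides is finite. The two basic ingredients are the one-dimensional Agmon inequality on the line, $\|g\|_{L^\infty(\mathbb R)}\le\sqrt2\,\|g\|_{L^2(\mathbb R)}^{1/2}\|g'\|_{L^2(\mathbb R)}^{1/2}$, and the one-dimensional Sobolev embedding on the circle, which for $\alpha\in(\tfrac12,1]$ reads $\|h\|_{L^\infty(\mathbb T)}\le C\bigl(\|h\|_{L^2(\mathbb T)}+\|h\|_{L^2(\mathbb T)}^{1-\alpha}\|h'\|_{L^2(\mathbb T)}^{\alpha}\bigr)$; the restriction $\alpha>\tfrac12$ is precisely the endpoint failure of $H^{1/2}(\mathbb T)\hookrightarrow L^\infty(\mathbb T)$. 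Minkowski's integral inequality will be used freely to commute an $L^\infty$ norm in one variable with an $L^2$ norm in the other.

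I would first prove the fourth (and then the third) inequality, as these serve as building blocks. For the fourth, set $\phi(y)=\|f_0(y,\cdot)\|_{L^2_z}^2$; then $|\phi'(y)|\le 2\|f_0(y,\cdot)\|_{L^2_z}\|\partial_y f_0(y,\cdot)\|_{L^2_z}$, and since $\phi$ and $\phi'$ lie in $L^1(\mathbb R)$ by Cauchy--Schwarz, $\phi$ is continuous and vanishes at $\pm\infty$, whence $\phi(y)\le\tfrac12\|\phi'\|_{L^1(\mathbb R)}\le\|f_0\|_{L^2}\|\partial_y f_0\|_{L^2}$ uniformly in $y$, which is exactly the claim. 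For the third, set $\psi(z)=\|f_0(\cdot,z)\|_{L^2_y}$ on $\mathbb T$; the analogous computation gives $\|\psi\|_{L^2(\mathbb T)}=\|f_0\|_{L^2}$ and $|\psi'(z)|\le\|\partial_z f_0(\cdot,z)\|_{L^2_y}$ a.e.\ (the bound on $\{\psi=0\}$ being justified through the regularization $\psi_\varepsilon=(\psi^2+\varepsilon)^{1/2}$), so $\|\psi'\|_{L^2(\mathbb T)}\le\|\partial_z f_0\|_{L^2}$; feeding these into the periodic Sobolev embedding for $\psi$ yields the third inequality.

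For the first two inequalities I would expand in Fourier series in the periodic variable, $f_0(y,z)=\sum_{k\in\mathbb Z}\widehat{f_0}_k(y)e^{ikz}$, so that $\|f_0\|_{L^\infty}\le|\widehat{f_0}_0(y)|+\sum_{k\ne0}|\widehat{f_0}_k(y)|$. The zero mode is handled by Agmon in $y$: $|\widehat{f_0}_0(y)|\le C\|f_0\|_{L^2}^{1/2}\|\partial_y f_0\|_{L^2}^{1/2}$, which is the first term on the right. For $k\ne0$, apply Agmon in $y$ to each coefficient, use $\widehat{f_0}_k=(ik)^{-1}\,\widehat{\partial_z f_0}_k$ to extract a factor $|k|^{-1}$, split it as $|k|^{-\alpha}\cdot|k|^{\alpha-1}$ with $\alpha>\tfrac12$ so that $\sum_{k\ne0}|k|^{-2\alpha}<\infty$, sum in $k$ by Cauchy--Schwarz, and finally interpolate the resulting fractional $\partial_z$-norms of $f_0$ and of $\partial_y f_0$ against $\|f_0\|_{L^2},\|\partial_z f_0\|_{L^2}$ and $\|\partial_y f_0\|_{L^2},\|\partial_y\partial_z f_0\|_{L^2}$. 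This produces the second term of the first inequality; carrying out the same scheme with the roles of $y$ and $z$ interchanged (Agmon in $y$ first, periodic Sobolev in $z$ second, invoking the third inequality just proved) gives the alternative exponent split in the second inequality. The precise arithmetic of exponents is that of Lemma 3.1 in \cite{CWW}.

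I do not expect a genuine obstacle here: this is a classical family of anisotropic interpolation inequalities. The only points demanding some care are the exclusion of the endpoint $\alpha=\tfrac12$ (the constants blow up as $\alpha\downarrow\tfrac12$ through the divergence of $\sum_{k\ne0}|k|^{-1}$), the small regularization needed to differentiate the sliced $L^2$-norms, and the bookkeeping required to land exactly on the stated exponents rather than on a comparable but differently organized bound.
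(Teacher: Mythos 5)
Your proposal is correct. Note that the paper itself does not prove this lemma; it simply cites Lemma 3.1 of \cite{CWW}, so your argument supplies a self-contained derivation of an imported result rather than an alternative to a proof given here. The substance checks out: the fourth inequality follows with constant $1$ from the fundamental-theorem-of-calculus/Agmon argument on $\phi(y)=\|f_0(y,\cdot)\|_{L^2_z}^2$; the third from the sliced norm $\psi(z)=\|f_0(\cdot,z)\|_{L^2_y}$ together with the periodic embedding $\|h\|_{L^\infty(\mathbb T)}\le C(\|h\|_{L^2}+\|h\|_{L^2}^{1-\alpha}\|h'\|_{L^2}^{\alpha})$, where your regularization $\psi_\varepsilon=(\psi^2+\varepsilon)^{1/2}$ correctly handles $|\psi'|\le\|\partial_zf_0(\cdot,z)\|_{L^2_y}$ on the zero set. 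For the first two inequalities, the step you describe only loosely (``interpolate the resulting fractional $\partial_z$-norms'') does land exactly on the stated exponents: writing $a_k=\|\widehat{f_0}_k\|_{L^2_y}$, $b_k=\|\partial_y\widehat{f_0}_k\|_{L^2_y}$, Agmon in $y$ reduces the nonzero modes to $\sum_{k\ne0}a_k^{1/2}b_k^{1/2}\le(\sum_{k\ne 0}|k|^{-2\alpha})^{1/2}(\sum_{k\ne0}|k|^{2\alpha}a_kb_k)^{1/2}$, and the two inequalities correspond precisely to the two ways of splitting $|k|^{2\alpha}a_kb_k$ as $(|k|^{2\alpha-1}a_k)(|k|b_k)$ or $(|k|a_k)(|k|^{2\alpha-1}b_k)$, followed by H\"older interpolation of the weights $|k|^{4\alpha-2}$ between $|k|^0$ and $|k|^2$ (which is where $\tfrac12\le\alpha\le1$ is used, with $\alpha>\tfrac12$ needed for $\sum|k|^{-2\alpha}<\infty$). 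Your remark about ``interchanging the roles of $y$ and $z$'' for the second inequality is a slightly misleading description — the domains $\mathbb R$ and $\mathbb T$ are not symmetric and what actually changes is only this redistribution of the $|k|$-weight — but the computation you intend is the right one and closes the proof.
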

The following lemma can be used to estimate the $L^{\infty}$ norm for the non-zero mode, and can be found in Lemma 3.2 of \cite{CWW}. 
\begin{lemma}\label{lem nonzreo}
For a given function $ f=f(x,y,z) $, if $ P_{0}f=0, $ it holds that
\begin{equation}\label{f_neq}
	\begin{aligned}
		&\|f\|_{L^{\infty}}\leq C\big(\|\partial_{y}\partial_{z}f\|_{L^{2}}^{\frac12}\|\partial_{x}\partial_{z}f\|_{L^{2}}^{\alpha-\frac12}
		\|\partial_{x}^{2}f\|_{L^{2}}^{\alpha-\frac12}\|\partial_{x}f\|_{L^{2}}^{\frac32-2\alpha}
		+\|\partial_{x}\partial_{y}f\|_{L^{2}}^{\frac12}\|\partial_{x}f\|_{L^{2}}^{\alpha-\frac12}\|f\|_{L^{2}}^{1-\alpha} \big),
		\\&\|f\|_{L^{\infty}_{y,z}L^{2}_{x}}\leq C\big(\|\partial_{y}f\|_{L^{2}}^{\frac12}\|f\|_{L^{2}}^{\frac12}
		+\|\partial_{z}f\|_{L^{2}}^{\frac12}\|\partial_{z}\partial_{y}f\|_{L^{2}}^{\alpha-\frac12}\|\partial_{y}f\|_{L^{2}}^{1-\alpha} \big),
		\\&\|f\|_{L^{\infty}_{x,y}L^{2}_{z}}\leq C\|\partial_{x}f\|_{L^{2}}^{\frac12}\|\partial_{x}\partial_{y}f\|_{L^{2}}^{\alpha-\frac12}
		\|\partial_{y}f\|_{L^{2}}^{1-\alpha},\\&
		\|f\|_{L^{\infty}_{x}L^{2}_{y,z}}\leq C\|\partial_{x}f\|_{L^{2}}^{\alpha}\|f\|_{L^{2}}^{1-\alpha}
		,\\&\|f\|_{L^{\infty}_{z}L^{2}_{x,y}}\leq C\left(\|f\|_{L^{2}}+\|\partial_{z}f\|_{L^{2}}^{\alpha}\|f\|_{L^{2}}^{1-\alpha} \right),\\&
		\|f\|_{L^{\infty}_{y}L^{2}_{x,z}}\leq C\|\partial_{y}f\|_{L^{2}}^{\frac12}\|f\|_{L^{2}}^{\frac12},
		\\&\|f\|_{L^{\infty}_{x,z}L^{2}_{y}}\leq\big(\|\partial_{x}f\|_{L^{2}}^{\alpha}\|f\|_{L^{2}}^{1-\alpha}+\|\partial_{x}\partial_{z}f\|_{L^{2}}^{\frac12}\|\partial_{x}f\|_{L^{2}}^{\alpha-\frac12}\|\partial_{z}f\|_{L^{2}}^{\alpha-\frac12}\|f\|_{L^{2}}^{\frac32-2\alpha} \big),
	\end{aligned}
\end{equation}
where $ \alpha\in (\frac12,\frac34] $ for $ (\ref{f_neq})_{1}$ and $(\ref{f_neq})_{7}, $ and $ \alpha\in(\frac12, 1] $ for others.
\end{lemma}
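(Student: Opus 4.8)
The plan is to derive all seven inequalities from one-dimensional interpolation estimates applied successively in the three coordinate directions, exploiting the hypothesis $P_0 f=0$ at two separate points: to run a genuine Sobolev embedding in the compact $x$-variable (rather than the weaker Agmon bound), and to trade a surviving $L^2_x$ factor for a $\|\partial_x f\|_{L^2}$ factor via Poincar\'e, $\|f\|_{L^2_x}\le C\|\partial_x f\|_{L^2_x}$ (valid since the $x$-frequencies satisfy $|k_1|\ge 1$). The two one-dimensional building blocks are: (a) the Agmon inequality on the line, $\|g\|_{L^\infty(\mathbb{R})}^2\le 2\|g\|_{L^2(\mathbb{R})}\|g'\|_{L^2(\mathbb{R})}$, used in the $y$-direction; and (b) for a mean-zero function on the torus, the interpolation estimate $\|g\|_{L^\infty(\mathbb{T})}\le C\|g\|_{L^2(\mathbb{T})}^{1-\alpha}\|g'\|_{L^2(\mathbb{T})}^{\alpha}$ for any $\alpha\in(\tfrac12,1]$, which follows from $H^\alpha(\mathbb{T})\hookrightarrow L^\infty(\mathbb{T})$ together with the convexity bound $\||k|^\alpha\hat g\|_{\ell^2}\le\|\hat g\|_{\ell^2}^{1-\alpha}\||k|\hat g\|_{\ell^2}^{\alpha}$; this is used in the $x$- and $z$-directions.

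For the $L^2$-in-two-variables estimates $(\ref{f_neq})_2$--$(\ref{f_neq})_7$ I would fix the two variables carrying the $L^2$ norm, apply the relevant one-dimensional $L^\infty$ bound in the single remaining variable, then integrate the square of the resulting pointwise inequality over the two $L^2$-variables, using Minkowski and Cauchy--Schwarz to exchange the order of norms. For instance, $(\ref{f_neq})_6$ (namely $L^\infty_y L^2_{x,z}$) follows by applying Agmon (a) in $y$ and integrating in $(x,z)$; the single-torus-direction cases $(\ref{f_neq})_4$ and $(\ref{f_neq})_5$ follow from (b) in one periodic variable, which is why only $\alpha\in(\tfrac12,1]$ is required there; and the mixed bounds $(\ref{f_neq})_2$, $(\ref{f_neq})_3$ come from iterating (a) and (b) in two directions in succession. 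Whenever a factor $\|f\|_{L^2_x}$ would otherwise remain, I replace it using $P_0 f=0$ and Poincar\'e to generate the $\|\partial_x(\cdot)\|$ powers appearing on the right-hand sides.

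The full $L^\infty$ bound $(\ref{f_neq})_1$ and the double-torus-sup bound $(\ref{f_neq})_7$ are the most delicate, and this is where the restriction $\alpha\in(\tfrac12,\tfrac34]$ enters. For $(\ref{f_neq})_7$ one must embed into $L^\infty$ in the two periodic directions $x$ and $z$ simultaneously; running (b) once in $z$ and once in $x$ expends $2\alpha$ torus-derivatives total, and after distributing them and applying Poincar\'e in $x$ one obtains the four-factor term with exponents $\tfrac12,\ \alpha-\tfrac12,\ \alpha-\tfrac12,\ \tfrac32-2\alpha$; demanding all four be nonnegative gives exactly $\tfrac12\le\alpha\le\tfrac34$, while $\alpha>\tfrac12$ is forced by each torus embedding. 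Inequality $(\ref{f_neq})_1$ is then recovered by one additional Agmon step in $y$ layered on $(\ref{f_neq})_7$, whose first term inherits the same $\tfrac32-2\alpha$ exponent and hence the same upper constraint. I expect the main obstacle to be precisely this exponent bookkeeping: at each interpolation step one must decide which derivative lands in which direction so that the powers recombine into the stated scale-invariant products (each homogeneous of degree one in $f$) with no negative exponent, and verify at the end that the two-periodic-direction cases genuinely require $\alpha\le\tfrac34$, in contrast to the full range $(\tfrac12,1]$ available to the single-direction cases.
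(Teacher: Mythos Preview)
Your approach is correct and is the standard route to such anisotropic interpolation inequalities. The paper does not give a proof of this lemma; it simply cites Lemma~3.2 of \cite{CWW}. So there is no substantive paper-proof to compare against, and your sketch actually goes further than the paper does.

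Two small clarifications worth noting. First, building block (b) requires mean zero on the torus, which you have in $x$ by hypothesis but not in $z$. In the $z$-direction you must first split off the $z$-average; this is exactly why $(\ref{f_neq})_5$ carries the additive $\|f\|_{L^2}$ term and why $(\ref{f_neq})_7$ has the first summand $\|\partial_x f\|_{L^2}^{\alpha}\|f\|_{L^2}^{1-\alpha}$ (it is the bound on the $z$-averaged part, to which only $(\ref{f_neq})_4$ applies). Second, for $(\ref{f_neq})_7$ the iterative application of (b) is slightly awkward because $\|f(x,\cdot,z)\|_{L^2_y}$ need not have mean zero in $x$; the cleanest implementation is to go through the Fourier side: bound $\|f\|_{L^\infty_{x,z}L^2_y}$ by $\sum_{k_1\neq 0,k_3}\|\hat f_{k_1,k_3}\|_{L^2_y}$ via Minkowski, treat $k_3=0$ separately, and for $k_3\neq 0$ use Cauchy--Schwarz with weight $|k_1k_3|^{\alpha}$ followed by the four-term H\"older interpolation with exponents $(\tfrac12,\,\alpha-\tfrac12,\,\alpha-\tfrac12,\,\tfrac32-2\alpha)$. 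This makes your observation about the constraint $\alpha\le\tfrac34$ transparent: it is precisely the nonnegativity of the last H\"older exponent.
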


%Lemma \ref{sob_13} - \ref{sob_15} can be found in \cite{Chen1} and \cite{wei2}, so we omitted the proof.
%\begin{lemma}[{\bf Lemma 11.1} in \cite{Chen1}]\label{sob_13}
%	For given functions $ f_{1} $ and $ f_{2} $, it holds that 
%	\begin{equation*}
%		\begin{aligned}
	%			&\|\nabla(f_{1}f_{2})\|_{L^{2}}\leq C\big(\|(\partial_{z}f_{1},f_{1})\|_{H^{2}}\|(\partial_{x}f_{2},f_{2})\|_{L^{2}}+\|(\partial_{x}\partial_{z}f_{1},\partial_{x}f_{1},\partial_{z}f_{1},f_{1} )\|_{L^{2}}\|f_{2}\|_{H^{2}} \big).
	%		\end{aligned}
%	\end{equation*}	
%\end{lemma}
From Lemma \ref{lem nonzreo}, we can derive the following Lemma \ref{sob ineq}-\ref{sob_04} (see also Lemma 5.1-5.4 in \cite{wei2}).
\begin{lemma}\label{sob ineq}
If $ \partial_{x}f_{1}=0, $ there holds
\begin{equation*}
	\begin{aligned}
		&\|f_{1}f_{2}\|_{H^{2}}\leq  C\|f_{1}\|_{H^{1}}\left(\|f_{2}\|_{H^{2}}
		+\|\partial_{z}f_{2}\|_{H^{2}}\right)+C\|f_{1}\|_{H^{3}}\left(\|f_{2}\|_{L^{2}}+\|\partial_{z}f_{2}\|_{L^{2}} \right),\\
		&\|f_{1}f_{2}\|_{H^{3}}\leq C\|f_{1}\|_{H^{1}}\left(\|f_{2}\|_{H^{3}}+\|\partial_{z}f_{2}\|_{H^{3}} \right)+C\|f_{1}\|_{H^{3}}\left(\|f_{2}\|_{H^{1}}+\|\partial_{z}f_{2}\|_{H^{1}} \right).
	\end{aligned}
\end{equation*}
\end{lemma}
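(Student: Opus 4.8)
The plan is to prove both estimates by the Leibniz rule followed by anisotropic H\"older splittings, exploiting crucially the hypothesis $\partial_x f_1=0$: since $f_1$ and all its derivatives depend only on $(y,z)$, in every bilinear term produced by Leibniz we may place the $f_1$-factor in an $L^\infty$-type norm in the $(y,z)$ variables alone and leave all the $x$-integrability to $f_2$. Concretely, for $k\in\{2,3\}$ I would start from
\begin{equation*}
\|f_1f_2\|_{H^k}^2\leq C\sum_{j=0}^{k}\big\|\nabla^{j}f_1\,\nabla^{k-j}f_2\big\|_{L^2}^2 ,
\end{equation*}
and bound each summand by a split $\|g\,h\|_{L^2}\leq\|g\|_{L^\infty_yL^2_z}\|h\|_{L^2_{x,y}L^\infty_z}$ (or $\|g\|_{L^\infty_zL^2_y}\|h\|_{L^2_{x,y}L^\infty_z}$ when $g$ is the factor carrying more derivatives), with $g$ built from $f_1$ and $h$ from $f_2$.

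For the $f_1$-factor, since $\nabla^{j}f_1$ is a function of $(y,z)$, the two-dimensional anisotropic embeddings of Lemma \ref{lem zero nonzero}, applied with $\nabla^{j}f_1$ in place of $f_0$ in \eqref{sob_result_1}, give $\|\nabla^{j}f_1\|_{L^\infty_yL^2_z}+\|\nabla^{j}f_1\|_{L^\infty_zL^2_y}\leq C\|f_1\|_{H^{j+1}}$; whenever $j$ lies strictly between $0$ and $k$ one first interpolates the intermediate norm $\|f_1\|_{H^{j+1}}$ between $\|f_1\|_{H^1}$ and $\|f_1\|_{H^3}$, using $\|f_1\|_{H^1}\leq\|f_1\|_{H^3}$ to rebalance the exponents so that only $H^1$ and $H^3$ survive. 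For the $f_2$-factor, the one-dimensional Agmon inequality in $z$ yields $\|\nabla^{m}f_2\|_{L^2_{x,y}L^\infty_z}\leq C\|\nabla^{m}f_2\|_{L^2}^{1/2}\|\partial_z\nabla^{m}f_2\|_{L^2}^{1/2}\leq C\big(\|f_2\|_{H^m}+\|\partial_z f_2\|_{H^m}\big)$, which is exactly why $\partial_z f_2$, and neither $\partial_x f_2$ nor $\partial_y f_2$, appears on the right-hand side. Feeding these into the bilinear terms and closing with Young's inequality on the resulting geometric means --- for instance $\|f_1\|_{H^1}^{3/4}\|f_1\|_{H^3}^{1/4}\|f_2\|_{L^2}^{1/2}\|f_2\|_{H^2}^{1/2}\leq\|f_1\|_{H^1}^{1/2}\|f_1\|_{H^3}^{1/2}\|f_2\|_{L^2}^{1/2}\|f_2\|_{H^2}^{1/2}\leq C\|f_1\|_{H^1}\|f_2\|_{H^2}+C\|f_1\|_{H^3}\|f_2\|_{L^2}$ --- produces the two claimed inequalities, the $H^3$ one being the $H^2$ argument with one extra derivative distributed.

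The main difficulty is the anisotropic bookkeeping in the ``interior'' terms $\nabla^{j}f_1\,\nabla^{k-j}f_2$ with $0<j<k$, together with the term carrying all derivatives on $f_1$ in the $H^3$ estimate: one must decide, term by term, which of the variables $y,z$ carries the $L^\infty$ norm on the $f_1$-factor versus the $f_2$-factor so that simultaneously (a) the $f_1$-factor is controlled by $\|f_1\|_{H^1}$ or $\|f_1\|_{H^3}$ only, and (b) the $f_2$-factor costs nothing beyond a single $\partial_z$. This forces one to use the sharper cross-terms in \eqref{sob_result_1} --- and, for the $f_2$-factor when $f_2$ is a non-zero mode as in all of the paper's applications, the corresponding sharper estimates of Lemma \ref{lem nonzreo} in \eqref{f_neq} --- with the exponent $\alpha$ tuned close to $1/2$; once these choices are pinned down, the interpolation and Young steps are routine.
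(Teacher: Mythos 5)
Your proof follows essentially the route the paper intends: the paper does not prove this lemma but defers to Lemma 5.1 of \cite{wei2}, and that argument is precisely Leibniz plus anisotropic H\"older/Agmon splittings --- $L^\infty$ in $(y,z)$ for the $x$-independent factor via Lemma \ref{lem zero nonzero}, and an $L^\infty_z$ gain for $f_2$ costing only one $\partial_z$ --- followed by interpolation of the intermediate $H^{j}$ norms of $f_1$ between $H^1$ and $H^3$ and Young's inequality, exactly as you describe. Two inessential slips: the parenthetical alternative splitting $\|g\|_{L^\infty_zL^2_y}\|h\|_{L^2_{x,y}L^\infty_z}$ is not a valid H\"older pairing (both factors carry $L^2_y$; the correct partner would be $\|h\|_{L^2_zL^\infty_yL^2_x}$), and the top-order term $\nabla^3 f_1\, f_2$ needs only the elementary bound $\|f_2\|_{L^\infty_{y,z}L^2_x}\le C\left(\|f_2\|_{H^1}+\|\partial_z f_2\|_{H^1}\right)$ from two successive one-dimensional Agmon inequalities, so no appeal to the nonzero-mode Lemma \ref{lem nonzreo} or restriction on $f_2$ is required.
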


\begin{lemma}\label{sob_15}
For given functions $ f_{1}, f_{2} $ and $ j\in\{1,3\}, $ there holds
\begin{equation*}
	\|f_{1}\partial_{j}f_{2}\|_{L^{2}}\leq C\left(\|\partial_{j}f_{1}\|_{L^{2}}+\|f_{1}\|_{L^{2}} \right)\|\triangle f_{2}\|_{L^{2}},
\end{equation*}
\begin{equation*}
	\|f_{1}f_{2}\|_{L^{2}}+\|\partial_{j}(f_{1}f_{2})\|_{L^{2}}\leq C\left(\|\partial_{j}f_{1}\|_{L^{2}}+\|f_{1}\|_{L^{2}} \right)\|f_{2}\|_{H^{2}}.
\end{equation*}
\end{lemma}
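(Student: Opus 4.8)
The plan is to obtain the second inequality from the first together with two routine bounds, and to concentrate the real work on the first inequality. For the second one, write $\partial_j(f_1f_2)=(\partial_j f_1)f_2+f_1\partial_j f_2$; then $\|(\partial_j f_1)f_2\|_{L^2}\le\|\partial_j f_1\|_{L^2}\|f_2\|_{L^\infty}\le C\|\partial_j f_1\|_{L^2}\|f_2\|_{H^2}$ by the three–dimensional embedding $H^2\hookrightarrow L^\infty$, while $\|f_1 f_2\|_{L^2}$ is handled exactly by the splitting below (with $f_2$ in place of $\partial_j f_2$) and $\|f_1\partial_j f_2\|_{L^2}$ by the first inequality after noting $\|\triangle f_2\|_{L^2}\le C\|f_2\|_{H^2}$. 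So it remains to prove the first inequality.

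For the first inequality the approach is an anisotropic H\"older split that uses crucially that $j\in\{1,3\}$, i.e. that $x_j$ is one of the two \emph{periodic} directions; write $\perp$ for the remaining two variables (one periodic, one on $\mathbb{R}$). I would first record two elementary facts. By Plancherel on $\mathbb{T}\times\mathbb{R}\times\mathbb{T}$ one has $\|\triangle f_2\|_{L^2}=\|\nabla^2 f_2\|_{L^2}$, so $\|\triangle f_2\|_{L^2}$ dominates every second–order derivative of $f_2$, including the mixed ones; and since $\partial_j f_2$ has zero average in the periodic variable $x_j$, its lowest $x_j$–frequency is nonzero, so $\|\partial_j f_2\|_{L^2}\le\|\partial_j^2 f_2\|_{L^2}\le\|\triangle f_2\|_{L^2}$. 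Then I would split
\[
\|f_1\,\partial_j f_2\|_{L^2}\ \le\ \|f_1\|_{L^\infty_{x_j}L^2_{\perp}}\,\big\|\partial_j f_2\big\|_{L^2_{x_j}L^\infty_{\perp}},
\]
and bound the first factor by Agmon's inequality in the compact variable $x_j$: $\|f_1\|_{L^\infty_{x_j}L^2_{\perp}}^2\le C\|f_1\|_{L^2}\big(\|f_1\|_{L^2}+\|\partial_j f_1\|_{L^2}\big)$, which is precisely the factor on the right–hand side (this uses no mean–zero hypothesis on $f_1$). For the second factor I would invoke the anisotropic Sobolev inequalities of Lemma \ref{lem nonzreo} (applied after splitting $f_2$ into its zero and non–zero $x$–modes, the zero mode being treated via Lemma \ref{lem zero nonzero}) together with the two facts above, so that $\big\|\partial_j f_2\big\|_{L^2_{x_j}L^\infty_{\perp}}\le C\|\triangle f_2\|_{L^2}$, the $x_j$–mean–free gain making up the difference that would otherwise force a third derivative. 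The same scheme with $f_2$ in place of $\partial_j f_2$ gives $\|f_2\|_{L^2_{x_j}L^\infty_\perp}\le C\|f_2\|_{H^2}$ by a two–dimensional Agmon estimate in $\perp$, which is where $\|f_1 f_2\|_{L^2}$ in the second inequality comes from.

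The hard part will be exactly this last step, controlling $\big\|\partial_j f_2\big\|_{L^2_{x_j}L^\infty_{\perp}}$ by $\|\triangle f_2\|_{L^2}$. A careless tensorised Agmon bound of the two–dimensional $L^\infty_\perp$–norm of $\partial_j f_2$ produces a term carrying three derivatives on $f_2$ (e.g. $\|\partial_j\partial_{x_k}\partial_y f_2\|_{L^2}$ after integrating in $x_j$), which is not controlled by $\|\triangle f_2\|_{L^2}$; so the estimate has to be organised — exploiting that $\partial_j$ is a good periodic–direction derivative, that $\partial_j f_2$ is $x_j$–mean–free so that $\|\partial_j f_2\|_{L^2}\le\|\triangle f_2\|_{L^2}$, and that $\triangle f_2$ supplies all second derivatives — so that the charge on $f_2$ never exceeds two derivatives while the charge on $f_1$ stays at the single derivative $\partial_j f_1$. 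Once this balancing is carried out along the lines of the product estimates in \cite{wei2}, both displayed inequalities follow by collecting the pieces.
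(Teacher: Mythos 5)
Your reduction of the second display to the first is fine, and so is your bound $\|f_1\|_{L^\infty_{x_j}L^2_\perp}\le C(\|f_1\|_{L^2}+\|\partial_jf_1\|_{L^2})$ via the one--dimensional Agmon inequality in the periodic variable $x_j$. The gap sits exactly where you place it and is not repaired by what you write. With your ordering of the anisotropic H\"older split, the remaining factor is $\|\partial_jf_2\|_{L^2_{x_j}L^\infty_\perp}$: the two--dimensional supremum is taken of the whole slice $\partial_jf_2(x_j,\cdot)$ at each \emph{fixed} $x_j$ and only afterwards integrated. Since in two dimensions $L^\infty$ cannot be reached from first derivatives in $L^2$, every Agmon/Gagliardo--Nirenberg bound of $\|\partial_jf_2(x_j,\cdot)\|_{L^\infty_\perp}$ carries a positive power of a second transverse derivative of $\partial_jf_2$, i.e.\ of $\|\nabla_\perp^2\partial_jf_2\|_{L^2}$ --- three derivatives on $f_2$, which $\|\triangle f_2\|_{L^2}$ does not control. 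The ``$x_j$--mean--free gain'' you invoke only upgrades $\|\partial_jf_2\|_{L^2}$ to $\|\partial_j^2f_2\|_{L^2}$; it does nothing to lower the cost of the two--dimensional $L^\infty_\perp$ norm, and once the sup over $x_\perp$ sits inside the $x_j$--integral there is no mechanism left for redistributing derivatives, because the estimate acts on a single function of $x_\perp$ rather than mode by mode in $x_j$. The promised ``organisation'' is therefore exactly the content of the lemma, and it is not supplied.

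The inequality is true, but the H\"older split must be taken in the opposite order,
\begin{equation*}
\|f_1\,\partial_jf_2\|_{L^2}\le\|f_1\|_{L^2_\perp L^\infty_{x_j}}\,\|\partial_jf_2\|_{L^\infty_\perp L^2_{x_j}},
\end{equation*}
so that the $L^2_{x_j}$ norm is the \emph{inner} one on the $\partial_jf_2$ factor (the $f_1$ factor is still $\le C(\|f_1\|_{L^2}+\|\partial_jf_1\|_{L^2})$ by integrating the pointwise one--dimensional Agmon bound over $x_\perp$). Expanding $f_2=\sum_kf_{2,k}(x_\perp)e^{ikx_j}$ gives
\begin{equation*}
\|\partial_jf_2\|_{L^\infty_\perp L^2_{x_j}}^2=c\,\sup_{x_\perp}\sum_{k\ne0}k^2|f_{2,k}(x_\perp)|^2\le c\sum_{k\ne0}k^2\|f_{2,k}\|_{L^\infty_\perp}^2,
\end{equation*}
and the two--dimensional Agmon inequality on $x_\perp$ (one periodic variable $x_{j'}$, one real variable $y$), namely $\|h\|_{L^\infty_\perp}^2\le C\left(\|h\|_{L^2}\|\partial_yh\|_{L^2}+\|\partial_{j'}h\|_{L^2}\|\partial_yh\|_{L^2}+\|h\|_{L^2}\|\partial_{j'}\partial_yh\|_{L^2}\right)$, applied to each mode $h=f_{2,k}$ and combined with Cauchy--Schwarz in $k$ and $|k|\ge1$, turns every term into a product $\|\partial^\alpha f_2\|_{L^2}\|\partial^\beta f_2\|_{L^2}$ with $|\alpha|=|\beta|=2$, hence $\le C\|\triangle f_2\|_{L^2}^2$. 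This mode--by--mode distribution of the weight $k^2$ is the missing balancing step, and it is only available with the reversed ordering. (The paper gives no proof of Lemma \ref{sob_15}, quoting it from Lemma 5.2 of \cite{wei2}, where the argument has the above structure.)
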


\begin{lemma}\label{sob_04}
It holds that for $ j\in\{1,3\} $
\begin{equation*}
	\begin{aligned}
		&\|f_{1}f_{2}\|_{L^{2}}\leq C\left(\|\partial_{j}f_{1}\|_{H^{1}}+\|f_{1}\|_{H^{1}} \right)\|f_{2}\|_{L^{2}}+C\left(\|\partial_{j}f_{1}\|_{L^{2}}+\|f_{1}\|_{L^{2}} \right)\|f_{2}\|_{H^{1}},\\
		&\|\partial_{j}(f_{1}f_{2})\|_{L^{2}}\leq C\|(\partial_{j}f_{1},f_{1})\|_{H^{1}}\|(\partial_{j}f_{2},f_{2})\|_{L^{2}}+C\|(\partial_{j}f_{1},f_{1})\|_{L^{2}}\|(\partial_{j}f_{2},f_{2})\|_{H^{1}}.
	\end{aligned}
\end{equation*}
If $ P_{0}f_{1}=0, $ then for $ s=1,2,3, $ there holds
\begin{equation*}
	\|f_{1}f_{2}\|_{H^{s}}\leq C\|\partial_{x}f_{1}\|_{H^{s+1}}\|f_{2}\|_{L^{2}}+C\|\partial_{x}f_{1}\|_{L^{2}}\|f_{2}\|_{H^{s+1}}.
\end{equation*}
\end{lemma}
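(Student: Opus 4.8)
The plan is to prove all three inequalities by reducing the $L^2(\mathbb{T}\times\mathbb{R}\times\mathbb{T})$ norm of a product to a two–dimensional product estimate in the $(y,z)$ variables, the distinguished direction $j\in\{1,3\}$ (that is $x$ or $z$, one of the periodic directions) being the one in which a one–dimensional Agmon inequality is applied. The key two–dimensional ingredient is the borderline bound $\|GH\|_{L^2(\mathbb{R}\times\mathbb{T})}\le C\big(\|G\|_{H^1}\|H\|_{L^2}+\|G\|_{L^2}\|H\|_{H^1}\big)$, which I would obtain from Hölder $\|GH\|_{L^2}\le\|G\|_{L^4}\|H\|_{L^4}$, the two–dimensional Gagliardo–Nirenberg inequality $\|g\|_{L^4}\le C\|g\|_{L^2}^{1/2}\|\nabla g\|_{L^2}^{1/2}$, and the arithmetic–geometric inequality. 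It is precisely this last step, splitting $\|\nabla G\|^{1/2}\|G\|^{1/2}\|\nabla H\|^{1/2}\|H\|^{1/2}\le \tfrac12(\|\nabla G\|\|H\|+\|G\|\|\nabla H\|)$, that produces the two–term structure appearing on the right–hand side of each claimed inequality.

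For the first inequality I would take $j=1$ (the case $j=3$ being identical after exchanging $x$ and $z$), and apply Hölder in $x$ pointwise in $(y,z)$, $\|f_1f_2\|_{L^2_x}\le \|f_1\|_{L^\infty_x}\|f_2\|_{L^2_x}$, so that $\|f_1f_2\|_{L^2}\le\|GH\|_{L^2_{y,z}}$ with $G(y,z)=\|f_1(\cdot,y,z)\|_{L^\infty_x}$ and $H(y,z)=\|f_2(\cdot,y,z)\|_{L^2_x}$. The two–dimensional estimate then reduces matters to controlling $\|G\|_{L^2_{y,z}},\|G\|_{H^1_{y,z}}$ and $\|H\|_{L^2_{y,z}},\|H\|_{H^1_{y,z}}$. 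A one–dimensional Agmon inequality in the periodic variable $x$ gives $\|G\|_{L^2_{y,z}}\le C\|f_1\|_{L^2}^{1/2}\|\partial_xf_1\|_{L^2}^{1/2}\le C(\|\partial_jf_1\|_{L^2}+\|f_1\|_{L^2})$ and, after commuting with $\nabla_{y,z}$, $\|G\|_{H^1_{y,z}}\le C(\|\partial_jf_1\|_{H^1}+\|f_1\|_{H^1})$; on the other hand $\|H\|_{L^2_{y,z}}=\|f_2\|_{L^2}$ and $\|H\|_{H^1_{y,z}}\le C\|f_2\|_{H^1}$ trivially. Feeding these into the two–dimensional bound yields exactly the stated inequality.

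The second inequality follows by the Leibniz rule $\partial_j(f_1f_2)=(\partial_jf_1)f_2+f_1(\partial_jf_2)$ together with the same reduction, where in each of the two products I would assign the $L^\infty_x$ slot to the factor carrying fewer derivatives, so that the one–dimensional Agmon step never costs a second $\partial_x$; this keeps every $f_1$– and $f_2$–norm at the level recorded in the statement. For the third inequality the hypothesis $P_0f_1=0$ is decisive: Poincaré in the periodic variable $x$ upgrades $\|f_1\|_{L^2_x}\le C\|\partial_xf_1\|_{L^2_x}$, so every undifferentiated occurrence of $f_1$ may be replaced by $\partial_xf_1$, which is why only $\partial_xf_1$ appears on the right. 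I would expand $\|f_1f_2\|_{H^s}$ by the Leibniz rule into a sum of terms $\partial^\alpha f_1\,\partial^\beta f_2$ with $|\alpha|+|\beta|\le s$, bound each by placing the factor of higher order in $L^2$ and the other in a suitable mixed $L^\infty$ norm, and control those $L^\infty$ norms of $f_1$ through the $\partial_x$–weighted anisotropic embeddings of Lemma \ref{lem nonzreo} (valid since $P_0f_1=0$), together with Lemma \ref{lem zero nonzero} where a surviving zero mode of $f_2$ must be treated.

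The main obstacle is the borderline two–dimensional product estimate: because $H^1(\mathbb{R}\times\mathbb{T})$ does not embed into $L^\infty$, one cannot simply bound one factor in $L^\infty$ and the other in $L^2$, and the $L^4$–Gagliardo–Nirenberg plus arithmetic–geometric balancing is indispensable — it is the mechanism behind the two summands. The secondary difficulty, confined to the case $s=3$ of the third inequality, is the anisotropic bookkeeping needed to verify that, after the Poincaré reduction and the Leibniz expansion, the total derivative count on each factor never exceeds $H^{s+1}$; once the admissible assignment of $L^\infty$ and $L^2$ slots is fixed for every Leibniz term this is routine, each individual bound being a direct application of an embedding already established in Lemma \ref{lem nonzreo} or Lemma \ref{lem zero nonzero}.
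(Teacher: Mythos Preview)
Your approach is correct and follows the same circle of ideas the paper points to. The paper does not give a detailed proof of this lemma; it only states that Lemmas~\ref{sob ineq}--\ref{sob_04} ``can be derived'' from the anisotropic embeddings of Lemma~\ref{lem nonzreo} and refers to \cite[Lemmas 5.1--5.4]{wei2}. Your route for the first two inequalities --- one--dimensional Agmon in the periodic variable $j$ followed by the two--dimensional $L^4$ Ladyzhenskaya/Gagliardo--Nirenberg inequality on the remaining plane, together with the arithmetic--geometric splitting --- is exactly the mechanism underlying those references, so nothing is genuinely different.

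Two minor remarks. First, when you differentiate $G(y,z)=\|f_1(\cdot,y,z)\|_{L^\infty_x}$ you should record that $|\partial_yG|\le\|\partial_yf_1(\cdot,y,z)\|_{L^\infty_x}$ a.e.\ by the Lipschitz property of the norm; this is what makes ``commuting with $\nabla_{y,z}$'' legitimate. Second, your sketch for the third inequality via Lemma~\ref{lem nonzreo} is workable but heavier than necessary: once the first inequality is in hand, the third follows more cleanly by applying it (with $j=1$) to each Leibniz term $\partial^\alpha f_1\,\partial^\beta f_2$, using Poincar\'e $\|\partial^\alpha f_1\|_{H^k}\le C\|\partial_x\partial^\alpha f_1\|_{H^k}$, and absorbing the intermediate cross terms $\|\partial_xf_1\|_{H^m}\|f_2\|_{H^{s+1-m}}$ into the two endpoint terms via $\|g\|_{H^m}\le\|g\|_{L^2}^{1-m/(s+1)}\|g\|_{H^{s+1}}^{m/(s+1)}$ and Young's inequality. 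This avoids the case--by--case anisotropic bookkeeping you flag as the secondary difficulty.
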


\begin{lemma}[{\bf Lemma 5.5} in \cite{wei2}]\label{sob_14}
Assume that $ \|\widehat{u_{1,0}}\|_{H^{4}}<\delta_{1} $ with $ \delta_{1} $ as in {\rm Lemma \ref{kappa}}.
If $ \partial_{x}f_{1}=0, P_{0}f_{2}=0, $ then it holds that
\begin{equation*}
	\|f_{1}f_{2}\|_{L^{2}}\leq C\|f_{1}\|_{H^{1}}\left(\|f_{2}\|_{L^{2}}+\|(\partial_{z}-\kappa\partial_{y})f_{2}\|_{L^{2}} \right),
\end{equation*}
\begin{equation*}
	\|\nabla\triangle^{-1}(f_{1}f_{2})\|_{L^{2}}\leq C\|f_{1}\|_{L^{2}}\left(\|f_{2}\|_{L^{2}}+\|(\partial_{z}-\kappa\partial_{y})f_{2}\|_{L^{2}} \right),
\end{equation*}
\begin{equation*}
	\|\nabla(f_{1}f_{2})\|_{L^{2}}\leq C\|f_{1}\|_{H^{1}}\left(\|f_{2}\|_{H^{1}}+\|(\partial_{z}-\kappa\partial_{y})f_{2}\|_{H^{1}} \right),
\end{equation*}
and for $ j=2,3, $
\begin{equation*}
	\|[\partial_{j}\triangle^{-1},f_{1}]\triangle f_{2}\|_{H^{1}}\leq C\|f_{1}\|_{H^{2}}\left(\|\nabla f_{2}\|_{L^{2}}+\|(\partial_{z}-\kappa\partial_{y})\nabla f_{2}\|_{L^{2}} \right).
\end{equation*}
\end{lemma}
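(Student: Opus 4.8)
The hypotheses encode a clean anisotropic structure. Since $\partial_x f_1=0$, the function $f_1$ equals its own zero mode, so the zero-mode embeddings of Lemma \ref{lem zero nonzero} apply to it; since $P_0 f_2=0$, Lemma \ref{lem nonzreo} applies to $f_2$ and, by Poincaré in $x$, $\|f_2\|_{L^2}\le C\|\partial_x f_2\|_{L^2}\le C\|\nabla f_2\|_{L^2}$. Write $W:=\partial_z-\kappa\partial_y$ for the good derivative, with $\kappa=\partial_zV/\partial_yV$ and $V=y+\widehat{u_{1,0}}$. The geometric fact driving everything is that $W$ annihilates $V$ (indeed $WV=\partial_zV-\kappa\partial_yV=0$), so $W$ is tangent to the level sets of $V$; in the straightened variables $(Y,Z)=(V(t,y,z),z)$ one checks $\partial_Z|_Y=\partial_z-\kappa\partial_y=W$. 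Under the smallness assumption $\|\widehat{u_{1,0}}\|_{H^4}<\delta_1$, Lemma \ref{kappa} gives $\|\kappa\|_{H^3}\le C\delta_1$ and $\partial_yV=1+\partial_y\widehat{u_{1,0}}\in[\tfrac12,2]$, so $(y,z)\mapsto(Y,Z)$ is a bi-Lipschitz diffeomorphism with Jacobian close to $1$; consequently the $L^2$, $H^s$ and mixed norms $L^\infty_yL^2_z$, $L^2_{x,y}L^\infty_z$ transform equivalently, and $\|Wh\|_{L^2}\sim\|\partial_Z\tilde h\|_{L^2}$, where $\tilde h$ denotes the pullback.

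\textbf{The two derivative-free product estimates.} For the first and third inequalities (those without $\triangle^{-1}$) I pass to $(Y,Z)$, where $W$ becomes the coordinate derivative $\partial_Z$ and the estimates reduce to their Cartesian analogues, e.g. $\|\tilde f_1\tilde f_2\|_{L^2}\le C\|\tilde f_1\|_{H^1}(\|\tilde f_2\|_{L^2}+\|\partial_Z\tilde f_2\|_{L^2})$. This Cartesian inequality follows from the anisotropic Hölder split $\|\tilde f_1\tilde f_2\|_{L^2}\le\|\tilde f_1\|_{L^\infty_YL^2_Z}\,\|\tilde f_2\|_{L^2_{x,Y}L^\infty_Z}$, the bound $\|\tilde f_1\|_{L^\infty_YL^2_Z}\le C\|\tilde f_1\|_{H^1}$ (the last line of Lemma \ref{lem zero nonzero}), and one-dimensional Agmon in $Z$, $\|g\|_{L^\infty_Z}^2\le C\|g\|_{L^2_Z}(\|g\|_{L^2_Z}+\|\partial_Zg\|_{L^2_Z})$, integrated in $(x,Y)$. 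Transforming back, and using $\|\tilde f_1\|_{H^1}\le C\|f_1\|_{H^1}$ (which holds because $\partial_Y,\partial_Z$ pull back to $\tfrac1{\partial_yV}\partial_y$ and $W$, both controlled since $\|\kappa\|_{L^\infty}$ is small), yields the first inequality. The gradient version is then obtained by differentiating the product and treating the commutators $[\nabla,W]$, which produce factors $\nabla\kappa$ bounded through $\|\kappa\|_{H^3}\le C$, so that the $W$-structure is preserved.

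\textbf{The estimates involving $\triangle^{-1}$.} Here I use that $\nabla\triangle^{-1}$ is bounded on functions with $P_0=0$ (its symbol $k/|k|^2$ has size $1/|k|\le1$ since $|k_1|\ge1$). For the second inequality the point is to lower the demand on $f_1$ from $H^1$ to $L^2$, which I do by duality: testing against $\Phi\in L^2$ and setting $\psi=\triangle^{-1}\nabla\cdot\Phi$ (so $\|\psi\|_{H^1}\le C$, $P_0\psi=0$), I rewrite $\langle\nabla\triangle^{-1}(f_1f_2),\Phi\rangle=\langle f_1f_2,\triangle^{-1}\nabla\cdot\Phi\rangle=|\mathbb{T}|\,\langle f_1,(f_2\psi)_0\rangle_{y,z}$ and bound $\|(f_2\psi)_0\|_{L^2_{y,z}}\le C\|f_2\psi\|_{L^2}$ by the good-derivative product estimate, so that the derivative furnished by the smoothing lands on $\psi$ rather than on $f_1$. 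For the commutator (fourth inequality) I expand, using the Laplacian product rule $f_1\triangle f_2=\triangle(f_1f_2)-2\nabla f_1\cdot\nabla f_2-(\triangle f_1)f_2$, to obtain $[\partial_j\triangle^{-1},f_1]\triangle f_2=(\partial_jf_1)f_2-\partial_j\triangle^{-1}\big(2\nabla f_1\cdot\nabla f_2+(\triangle f_1)f_2\big)$; taking the $H^1$ norm, using $\partial_j\triangle^{-1}\colon L^2\to H^1$ and the product and gradient estimates already proved (together with $\|f_2\|_{H^1}\le C\|\nabla f_2\|_{L^2}$ by Poincaré), every surviving term carries one derivative of $f_1$, giving the factor $\|f_1\|_{H^2}$, while the discrepancy between $W\nabla$ and $\nabla W$ is absorbed by the smallness of $\|\nabla\kappa\|_{L^\infty}$.

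\textbf{Main obstacle.} The genuinely delicate part is the pair of $\triangle^{-1}$-estimates, because the change of variables that turns $W$ into a coordinate derivative does not commute with $\triangle^{-1}$, so the elliptic smoothing must be exploited in the original frame. Concretely, the difficulty in the second inequality is to gain a full derivative onto $f_1$ (passing from $H^1$ to $L^2$) while only the good derivative $Wf_2$, and not $\partial_yf_2$, is available on the right-hand side; this is exactly what forces the duality formulation and the careful bookkeeping of which spatial direction the smoothing acts in. For the commutator one must verify that, after the algebraic reduction above and the use of Poincaré, no term is left that would require $\partial_yf_2$ alone, so that the anisotropic good-derivative structure closes against $\|f_1\|_{H^2}$. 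Tracking $W$ faithfully through $\triangle^{-1}$ is the crux throughout.
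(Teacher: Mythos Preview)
The paper does not supply its own proof of this lemma: it is quoted verbatim as Lemma~5.5 of Wei--Zhang~\cite{wei2} and used as a black box, so there is no in-paper argument to compare against. Your outline matches the strategy of~\cite{wei2}: straighten the good derivative $W=\partial_z-\kappa\partial_y$ via the bi-Lipschitz change of variables $(y,z)\mapsto(V,z)$ (legitimate since $\partial_yV\in[\tfrac12,2]$ under the smallness of $\|\widehat{u_{1,0}}\|_{H^4}$), reduce the first and third inequalities to anisotropic Agmon/H\"older in the new frame, and handle the two $\triangle^{-1}$ estimates in the original frame because the change of variables does not commute with $\triangle^{-1}$.

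Two places in your sketch deserve a word of caution. In the duality argument for the second inequality you write ``bound $\|(f_2\psi)_0\|_{L^2_{y,z}}\le C\|f_2\psi\|_{L^2}$ by the good-derivative product estimate''; but the first inequality as stated requires one factor to be $x$-independent, which neither $f_2$ nor $\psi$ is. The fix is to keep the zero-mode structure: after Cauchy--Schwarz in $x$ one is left with $\|FG\|_{L^2_{Y,Z}}$ for $F(Y,Z)=\|\tilde f_2(\cdot,Y,Z)\|_{L^2_x}$ and $G(Y,Z)=\|\tilde\psi(\cdot,Y,Z)\|_{L^2_x}$, and then the split $\|FG\|_{L^2}\le\|F\|_{L^2_YL^\infty_Z}\|G\|_{L^\infty_YL^2_Z}$ plus one-dimensional Agmon in $Z$ (for $F$) and in $Y$ (for $G$) closes with only $\|\psi\|_{H^1}$. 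In the commutator expansion the term $\partial_j\triangle^{-1}\big((\triangle f_1)f_2\big)$ cannot be estimated by invoking the first inequality (that would cost $\|\triangle f_1\|_{H^1}\le\|f_1\|_{H^3}$); instead one uses $\|(\triangle f_1)f_2\|_{L^2}\le\|\triangle f_1\|_{L^2_{y,z}}\|f_2\|_{L^\infty_{y,z}L^2_x}$ and controls the second factor, in straightened coordinates, by $\|\nabla f_2\|_{L^2}+\|W\nabla f_2\|_{L^2}$ via two-dimensional Agmon in $(Y,Z)$. With these refinements your plan is complete and coincides with the argument in~\cite{wei2}.
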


\section*{Acknowledgement}
The authors would like to thank Professors Zhifei Zhang and  Weiren Zhao  for some helpful communications.
W. Wang was supported by National Key R\&D Program of China (No. 2023YFA1009200) and NSFC under grant 12471219 and 12071054.

\section*{Declaration of competing interest}
The authors declare that they have no known competing financial interests
or personal relationships that could have appeared to influence the work reported in this paper.


\begin{thebibliography}{99}	%\addcontentsline{toc}{section}{References}
\bibitem{A2025} Arbon R. (2025). Enhanced Dissipation, Taylor Dispersion, and Inviscid Damping of	Couette flow in the Boussinesq system on the Plane. arXiv: 2501.00690.
\bibitem{BBZD2023} Bedrossian J., Bianchini R.,  Coti Zelati M. and Dolce M. (2023). Nonlinear inviscid damping and shear-buoyancy instability in the two-dimensional Boussinesq equations. Comm. Pure Appl. Math., 76(12), 3685-3768.
\bibitem{Bedro1}Bedrossian J., Germain P. and Masmoudi N. (2017). On the stability threshold for the 3D Couette flow in Sobolev regularity. Ann. Math., 185(2), 541-608.
\bibitem{BGM1} Bedrossian J., Germain P. and Masmoudi N. (2020). Dynamics near the subcritical transition of the 3D Couette flow I: Below threshold case. Mem. Amer. Math. Soc., 266, 1294.
%	\bibitem{BGM2} Bedrossian J., Germain P. and Masmoudi N. (2022). Dynamics near the subcritical transition of the 3D Couette flow II: Above threshold case. Mem. Amer. Math. Soc. 279, no. 1377, v+135 pp.
\bibitem{BMV2016} Bedrossian, J., Masmoudi, N. and Vicol, V. (2016). Enhanced dissipation and inviscid damping in the inviscid limit of the Navier–Stokes equations near the two dimensional Couette flow. Arch. Rational Mech. Anal., 219, 1087-1159.
\bibitem{BVW2018} Bedrossian J., Vicol V. and Wang F. (2018). The Sobolev stability threshold for 2D shear flows near Couette. J. Nonlinear Sci., 28, 2051-2075.
\bibitem{BH2024} Beekie R. and He S. Transition Threshold for Strictly Monotone Shear Flows in Sobolev Spaces. arXiv: 2409.09219.
\bibitem{Cha2002} Chapman, S. J. Subcritical transition in channel flows. J. Fluid Mech., 451 (2002), 35–97.
\bibitem{CDLZ} Chen Q., Ding S., Lin Z. and Zhang Z. (2024). Nonlinear stability for 3D plane Poiseuille flow in a finite channel. arXiv: 2310.11694.
\bibitem{CLWZ2020} Chen Q., Li T., Wei D. and Zhang Z. (2020). Transition threshold for the 2-D Couette flow in a finite channel. Arch. Rational Mech. Anal., 238, 125-183.
\bibitem{Chen1}Chen Q., Wei D. and Zhang Z. (2024). Transition threshold for the 3D Couette flow in a finite channel. Mem. Amer. Math. Soc., 296, 1478.
\bibitem{CD1996} Constantin P., Doering C. R. (1996). Heat transfer in convective turbulence. Nonlinearity, 9(4), 1049-1060.
%	\bibitem{CKRZ2008} Constantin P., Kiselev A., Ryzhik L. and Zlatos A. (2008). Diffusion and mixing in fluid flow, Ann. Math., 168(2), 643-674.
\bibitem{CotiDel2025}  Coti Zelati M. and Del Zotto A. (2025). Suppression of lift-up effect in the 3D Boussinesq equations around a stably stratified Couette flow. Quart. Appl. Math., 83(2), 389–401.
\bibitem{Zelati2024} Coti Zelati M., Del Zotto A.  and Widmayer K. (2024). Stability of viscous three-dimensional stratified Couette flow via dispersionand mixing. arXiv: 2402.15312V1.
\bibitem{CotiDelW2025} Coti Zelati M., Del Zotto A. and Widmayer K. On the stability of viscous three-dimensional rotating Couette flow. arXiv: 2501.17735.
\bibitem{ZEW2020}  Coti Zelati M., Elgindi T. M. and Widmayer K. (2020). Enhanced dissipation in the Navier-Stokes equations near the Poiseuille flow. Comm. Math. Phys., 378, 987-1010.
\bibitem{CWW}Cui S., Wang L. and Wang W. Suppression of blow-up for the 3D Patlak-Keller-Segel-Navier-Stokes system via the Couette flow. arXiv: 2412.19197.
\bibitem{DWZ2021} Deng W., Wu J. and Zhang P. (2021). Stability of Couette flow for 2D Boussinesq system with vertical dissipation. J. Funct. Anal., 281, 109255.
\bibitem{DL2022} Ding S. and Lin Z. (2022). Enhanced dissipation and transition threshold for the 2-D plane Poiseuille flow via resolvent estimate. J. Differ. Equ, 332, 404-439.
\bibitem{DC1996} Doering C. R. and Constantin P. (1996). Variational bounds on energy dissipation in incompressible flows. III. Convection, Phys. Rev. E, Stat. Phys. Plasmas Fluids Relat. Interdiscip. Topics 53 (6), 5957-5981.
%	\bibitem{EP1975} Ellingsen T. and Palm E.(1975). Stability of linear flow. The Physics of Fluids 18, no. 4, 487-488.
\bibitem{G1998} Getling A. V. (1998). Rayleigh-Bénard Convection. Advanced Series in Nonlinear Dynamics, vol. 11, World Scientific Publishing Co., Inc., River Edge, NJ, Structures and dynamics.
\bibitem{G1982} Gill A. (1982). Atmosphere-Ocean Dynamics. International Geophysics. vol. 30, Academic Press, Cambridge.
\bibitem{HSX2024-1}Huang W., Sun Y. and Xu X. (2024). On the Sobolev stability threshold for 3D Navier-Stokes equations with rotation near the Couette flow. arXiv: 2409.05104.
\bibitem{HSX2024-2} Huang W., Sun Y. and Xu X. (2024). Stability of the Couette flow for 3D Navier-Stokes equations with rotation. arXiv: 2412.11005.
%	\bibitem{KRSR1967} Kline S. J., Reynolds W.C., Schraub F. A. and Runstadler P.w. (1967). The structure of turbulent boundary layers. Journal of Fluid Mechanics. 30, no. 4, 741-773.

\bibitem{LZ2024} Li H. and Zhao W. (2024). Asymptotic stability in the critical space of 2D monotone shear flow in the viscous fluid. Comm. Math. Phys., 405(11), 267.
\bibitem{LWZ2020}Li T., Wei D. and Zhang Z.(2020). Pseudospectral bound and transition threshold for the 3D Kolmogorov flow. Comm. Pure Appl. Math., 73, 465-557.
\bibitem{LISS2020} Liss K. (2020). On the Sobolev stability threshold of 3D Couette flow in a uniform magnetic field. Comm. Math. Phys., 377(2), 859-908.
\bibitem{M2003} Majda A. (2003). Introduction to PDEs and Waves for the Atmosphere and Ocean (Vol. 9). American Mathematical Soc..
\bibitem{MSZ2022} Masmoudi N., Said-Houari B. and Zhao W. (2022). Stability of the Couette flow for a 2D Boussinesq system without thermal diffusivity. Arch. Rational Mech. Anal., 245(2), 645-752.
\bibitem{MZ2022} Masmoudi N. and Zhao W. (2022). Stability threshold of the 2D Couette flow in Sobolev spaces. Ann. Inst. H. Poincare C Anal. Non Lineaire., 39, 245-325.
\bibitem{NZ2024} Niu B. and Zhao W. (2024). Improved stability threshold of the two-dimensional Couette flow for Navier-Stokes-Boussinesq systems via quasi-linearization. arXiv: 2409.16216.
%	\bibitem{O1907} Orr W. (1907). The stability or instability of steady motions of a perfect liquid and of a viscous liquid, Part I: a perfect liquid. Proc. Royal Irish Acad. Sec. A: Math. Phys. Sci., 27:9-68.
\bibitem{P1987} Pedlosky J. (1987). Geophysical Fluid Dynamics, Springer, New York.
\bibitem{RZZ2025} Rao Y., Zhang Z. and Zi R. (2025). Stability threshold of Couette flow for the 3D MHD equations. J. Funct. Anal., 288(5), 110796.
\bibitem{WGF2024} Wang G. (2024). Enhanced dissipation and stability of Poiseuille flow for two-dimensional Boussinesq system. arXiv: 2405.11787.
\bibitem{wei2}Wei D. and Zhang Z. (2020). Transition threshold for the 3d Couette flow in sobolev space. Comm. Pure Appl. Math., 74(11), 2398-2479.
\bibitem{WZ2023} Wei D. and Zhang Z. (2023). Nonlinear enhanced dissipation and inviscid damping for the 2-D Couette flow. Tunis. J. Math., 5, 573-592.
%	{\color{red}\bibitem{WZ0} Wei D. and Zhang Z. (2024). Stability threshold of the 2D Couette flow in a finite channel.}
\bibitem{WZZ2020} Wei D., Zhang Z. and Zhao W. (2020). Linear inviscid damping and enhanced dissipation for the Kolmogorov flow. Adv. Math., 362, 106963
\bibitem{ZZZ2022} Zeng L., Zhang Z. and Zi R. (2022). Linear stability of the Couette flow in the 3D isentropic compressible Navier-Stokes equations. SIAM J. Math. Anal., 54(5), 5698-5741.
\bibitem{ZZ2023} Zhai C. and Zhao W. (2023). Stability threshold of the Couette flow for Navier–stokes Boussinesq system with large Richardson number $ \gamma^{2}>\frac14 $. SIAM J. Math. Anal., 55(2), 1284-1318.
\bibitem{Zhangzi2023} Zhang Z. and Zi R. (2023). Stability threshold of Couette flow for 2D Boussinesq equations in
Sobolev spaces. J. Math. Pures Appl., 179, 123-182.
\bibitem{ZOTTO2023} Zotto A. Del. (2023). Enhanced dissipation and transition threshold for the Poiseuille flow in a periodic strip. SIAM J. Math. Anal., 55, 4410-4424.
\end{thebibliography}
\end{document}